\numberwithin{equation}{section}
\newtheorem{theorem}{Theorem}[section]
\crefname{theorem}{Theorem}{Theorems}
\newtheorem{lemma}{Lemma}[section]
\crefname{lemma}{Lemma}{Lemmas}
\newtheorem{prop}{Proposition}[section]
\crefname{prop}{Proposition}{Propositions}
\newtheorem{remark}{Remark}[section]
\crefname{remark}{Remark}{Remarks}
\crefname{section}{Section}{Sections}
\crefname{subsection}{Subsection}{Subsections}
\crefname{appendix}{Appendix}{Appendices}
\DeclareFontFamily{OT1}{rsfs}{}
\DeclareFontShape{OT1}{rsfs}{m}{n}{ <-7> rsfs5 <7-10> rsfs7 <10-> rsfs10}{}
\DeclareMathAlphabet{\mycal}{OT1}{rsfs}{m}{n}
\newcommand{\mres}
{\mathbin{\vrule height 1.6ex depth 0pt width 0.13ex\vrule height 0.13ex depth 0pt width 1.3ex}}
\title{Cubic microlattices embedded in nematic liquid crystals:\\ a Landau-de Gennes study}
\author{Razvan-Dumitru Ceuca\thanks{BCAM,  Basque  Center  for  Applied  Mathematics,  Mazarredo  14,  48009  Bilbao,  Bizkaia,  Spain.
(rceuca@bcamath.org)} \thanks{UPV/EHU, Universidad del País Vasco/Euskal Herriko Unibertsitatea, Barrio Sarriena s/n, 48940 Leioa, Bizkaia, Spain} \thanks{UAIC, Alexandru Ioan Cuza University of Iasi, Bulevardul Carol I, Nr. 11, 700506 Iasi, Romania}}
\date{\today}
\begin{document}

\maketitle

\begin{abstract}
We consider a Landau-de Gennes model for a connected cubic lattice scaffold in a nematic host, in a dilute regime. We analyse the homogenised limit for both cases in which the lattice of embedded particles presents or not cubic symmetry and then we compute the free effective energy of the composite material. 

In the cubic symmetry case, we impose different types of surface anchoring energy densities, such as quartic, Rapini-Papoular or more general versions, and, in this case, we show that we can tune any coefficient from the corresponding bulk potential, especially the phase transition temperature. 

In the case with loss of cubic symmetry, we prove similar results in which the effective free energy functional has now an additional term, which describes a change in the preferred alignment of the liquid crystal particles inside the domain. 

Moreover, we compute the rate of convergence for how fast the surface energies converge to the \linebreak homogenised one and also for how fast the minimisers of the free energies tend to the minimiser of the homogenised free energy. 
\end{abstract}

\section{Introduction}

We consider a cubic microlattice scaffold constructed of connected particles of micrometer scale, within a nematic liquid crystal. In this article, we treat the particles of the cubic microlattice as being inclusions from the mathematical point of view, while they might be interpreted as colloids from the physical point of view, even though they do not possess all of their properties. The cubic microlattice scaffold is also called a bicontinuous porous solid matrix (BPSM) in the physics literature (for example, see \cite{BPSM1}, \cite{nematiccage} or \cite{BPSM2}). By cubic microlattice scaffold we understand a connected family of parallelipipeds or cubes of different sizes, placed in a periodic fashion, as in Figure \ref{fig:nematiccage1}, where only the embedded particles have been shown. For simplicity, we might refer to this object as being a scaffold or a cubic microlattice. This type of scaffold is usually obtained using the \emph{two-photon polymerization} (TPP or 2PP) process, which represents a technique of 3D-manufacturing structures and which can generate stand-alone objects. An overview of the field of TPP processes can be found in \cite{2PP}. There are numerous experiments, theory and computer simulations regarding embedding microparticles into nematic liquid crystals (for example, see \cite{Ravnik1}, \cite{Ravnik2} and \cite{Ravnik3}).

\begin{figure}[h]
     \centering
     \includegraphics[width=0.5\textwidth]{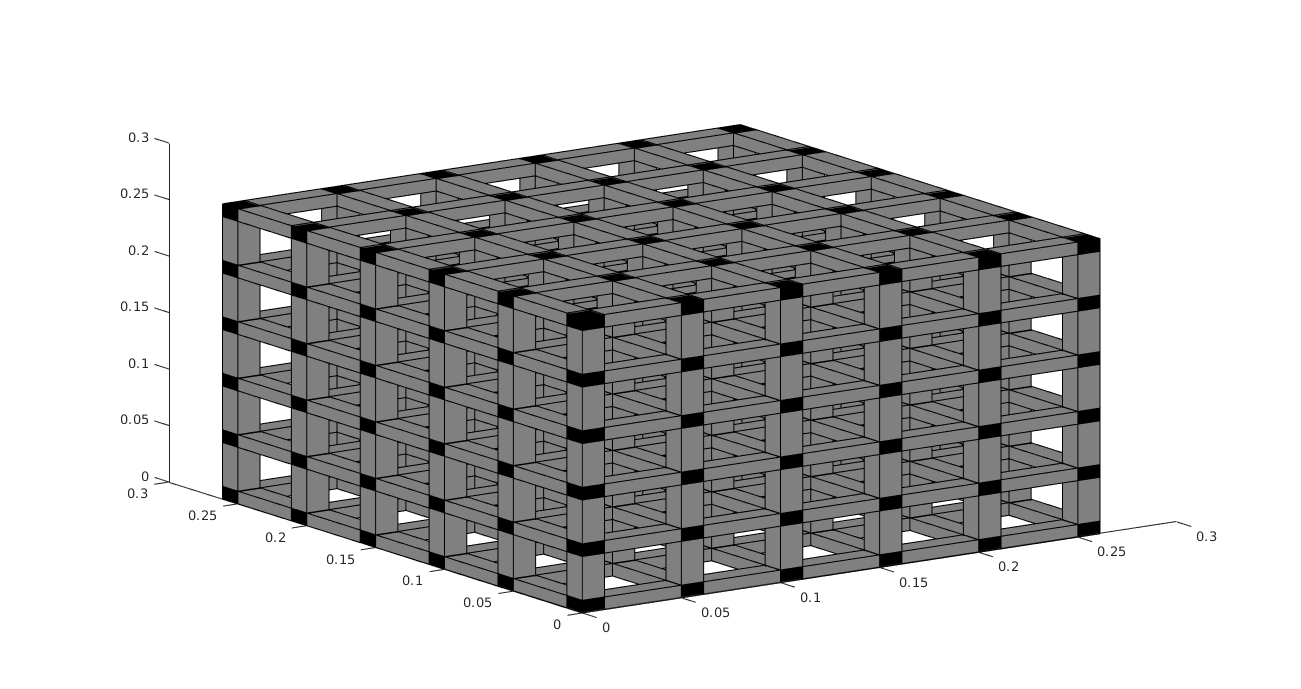}
     \caption{Example of a cubic microlattice.}
     \label{fig:nematiccage1}
\end{figure}

The system bears mathematical similarities to that of colloids embedded into nematic liquid crystals. The mathematical studies of nematic colloids (the mixture of colloidal particles embedded into nematic liquid crystals) are split into two broad categories:
\begin{itemize}
\item[•] one is dealing with the effect produced by a small number of particles in this mixture, with a focus on the defect patterns that arise in the alignment of the nematic particles induced by the interaction at the boundary of the colloid between the two combined materials (see, for example, \cite{Alama1, Alama2, Canevari1, Canevari2, Canevari3, Canevari4, Wang});
\item[•] the other one treats the study of the collective effects, that is the homogenisation process (see, for example, \cite{Bennett, Berlyland, Calderer, CanevariZarnescu1} and \cite{CanevariZarnescu2}).
\end{itemize}

This work continues within the second direction, that is studying the homogenised material, and it is built on the work from \cite{CanevariZarnescu1} and \cite{CanevariZarnescu2}, which was also based on \cite{Bennett, Berlyland, Calderer}. In \cite{CanevariZarnescu1} and \cite{CanevariZarnescu2}, the \textit{inclusion} is considered to be the union of some disconnected particles, obtained from different or identical model particles, in such a way that the distance between the particles is considerable larger than the size of them, which is called the dilute regime. Also, in this regime, the volume fraction of colloids tends to zero. 

In this article, we are going to consider the case of a cubic microlattice scaffold, as shown in Figure \ref{fig:nematiccage1}. The idea of using such a particular geometry for the scaffold comes from the work done in \cite{nematiccage}. At the same time, this geometric configuration is more relevant from the physical point of view, since in \cite{CanevariZarnescu1} and in \cite{CanevariZarnescu2} one cannot position \textit{a priori} the colloidal particles in a periodic fashion. Here the periodicity is automatically generated by the structure of the cubic microlattice. We construct two types of scaffolds: one with identical cubes centered in a periodic 3D lattice of points, cubes which are inter-connected by parallelipipeds, and one where we replace the cube with a parallelipiped with three different length sides. If by cubic symmetry we understand the family of rotations that leave a cube invariant, then the first case is when the scaffold particles have cubic symmetry and the second one is with the loss of this type of symmetry. The main new aspects of this work are:
\begin{itemize}
\item[•] the set of all the inclusions is now connected;
\item[•] the model particle that we use (that is, a parallelipiped or a cube) grants us the possibility to compute the surface contribution for arbitrarily high order terms in the surface energy density - hence, a generalisation has been done for higher order polynomials in the bulk energy potential that admit at least one local minimiser (see \cref{th:gen});
\item[•] in the case where the cubic symmetry is lost, we obtain a new term into the homogenised limit that can be seen as a change in the preferred alignment of the liquid crystal particles inside the domain (see \cref{th:asym});
\item[•] we obtain a rate of convergence for how fast the surface energies converge to the homogenised one (more details in \cref{prop:rate_of_conv}); in \cref{remark:rate_of_convergence}, we also obtain a rate of convergence for how fast the sequence of minimisers of the free energies tend to a minimiser of the homogenised free energy; 
\end{itemize}

Liquid crystal materials, which typically consist of either rod-like or disc-like molecules, can achieve a state of matter which has properties between those of conventional liquids and those of solid crystals. The liquid crystal state of matter is one where there exists a long range orientational order for the molecules. In order to quantify the local preferred alignment of the rod-like molecules, we use the theory of Q-tensors (for more details, see \cite{Mottram}). A background of the field of liquid crystal materials can be found in \cite{deGennes}.
 
Let $\Omega\subset\mathbb{R}^3$ be an open and bounded domain from $\mathbb{R}^3$. For every $\varepsilon>0$, we construct a cubic microlattice $\mathcal{N}_\varepsilon$ inside of $\Omega$, such that, as $\varepsilon\rightarrow 0$, the volume of the scaffold tends to 0. More details regarding the construction of the cubic microlattice can be found in \cref{section:assumptions} and in \cref{section:constructing_lattice}. 

Let $\Omega_{\varepsilon}=\Omega\setminus\mathcal{N}_{\varepsilon}$, which represents the space where only liquid crystal particles can be found. We use functions $Q:\Omega_{\varepsilon}\rightarrow\mathcal{S}_0$ to describe the orientation of the liquid crystal particles, where: $$\mathcal{S}_{0}=\{Q\in\mathbb{R}^{3\times 3}:\;Q=Q^T,\;\text{tr}(Q)=0\},$$ is denoted as the set of $Q$-tensors. In the space $\mathcal{S}_0$, if we define $|Q|=(\text{tr}(Q^2)\big)^{1/2}$, for any $Q\in\mathcal{S}_0$, we can see that $\mathcal{S}_0$ is a normed linear space and the so-called Frobenius norm is induced by the scalar product $Q\cdot P=\text{tr}(Q\cdot P)$.

We consider the following Landau-De Gennes free energy functional: 
\begin{equation}\label{eq:variat1}
\mathcal{F}_{\varepsilon}[Q]:=\int_{\Omega_{\varepsilon}} \big(f_e(\nabla Q)+f_b(Q)\big)\text{d}x+\dfrac{\varepsilon^{3}}{\varepsilon^{\alpha}(\varepsilon-\varepsilon^{\alpha})}\int_{\partial\mathcal{N}_{\varepsilon}}f_s(Q,\nu)\text{d}\sigma,
\end{equation} 
where $f_e$ represents the \textit{elastic energy}, $f_b$ the \textit{bulk energy}, $f_s$ the \textit{surface density energy}, $\alpha$ is a real parameter and $\partial\mathcal{N}_{\varepsilon}$ the \textit{surface of the scaffold}. The parameter $\alpha$ is chosen such that the term $\varepsilon^{\alpha}(\varepsilon-\varepsilon^{\alpha})$ is proportional with the surface terms from $\partial\mathcal{N}_{\varepsilon}$ in order that, in the limit $\varepsilon\rightarrow 0$, it balances the effect given by the areas of the surface terms. All of these quantities are described and detailed more in \cref{section:assumptions}.

The \textit{elastic energy}, also called the distortion energy, penalises the distortion of $Q$ in the space and, in the Landau-de Gennes theory, it is usually considered to be a positive definite quadratic form in $\nabla Q$.

The \textit{bulk energy} in our case consists only of the thermotropic energy, which is a potential function that describes the preferred state of the liquid crystal, that is either uniaxial, biaxial or isotropic\footnote{The isotropic case corresponds to the case in which $Q=0$. The uniaxial case corresponds to the one in which two of the eigenvalues of $Q$ are equal and the third one has a different value. The biaxial case corresponds to the case in which all the eigenvalues have different values.}. For large values of the temperature, the minimum of this energy is obtained in the isotropic case, that is $Q=0$, and for small values, the minimum set is a connected set of the form $s(\nu\otimes \nu-\mathbb{I}_3/3)$, with $\nu\in \mathbb{S}^2$ and $\mathbb{I}_3$ the identity $3\times 3$ matrix, and this is a connected set diffeomorphic with the real projective plane. The simplest form that we can take for the \textit{bulk energy} in our case is the quartic expansion:
\begin{align*}
f_b(Q)=a\;\text{tr}(Q^2)-b\;\text{tr}(Q^3)+c\;\text{tr}(Q^2)^2,
\end{align*}
where the coefficient $a$ depends on the temperature of the liquid crystal and $b$ and $c$ depend on the properties of the liquid crystal material, with $b,c>0$.

The \textit{surface energy} describes the interaction between the liquid crystal material and the boundary of the scaffold. We assume, for simplicity, that it depends only on $Q$ and on $\nu$, where $\nu$ is the outward normal at the boundary of the cubic microlattice. One of the most common forms for the \textit{surface energy} is the Rapini-Papoular energy:
\begin{align}\label{eq:intro_Rapini-Papoular}
f_s(Q,\nu)=W\;\text{tr}\big(Q-s_+\big(\nu\otimes\nu-\mathbb{I}_3/3\big)\big)^2,
\end{align}
where $W$ is a coefficient measuring the strength of the anchoring, $s_+$ is measuring the deviation from the homeotropic (perpendicular) anchoring to the boundary and $\mathbb{I}_3$ is the $3\times 3$ identity matrix. 

We are interested in studying the behaviour of the whole material when $\varepsilon\rightarrow 0$. We will show that in our dilute regime we obtain for the homogenised material an energy functional of the following form
\begin{equation*}
\mathcal{F}_0[Q]:=\int_{\Omega}\big(f_e(\nabla Q)+f_b(Q)+f_{hom}(Q)\big)\text{d}x,
\end{equation*}
where $f_{hom}$ is defined in \eqref{defn:f_hom} and in \eqref{defn:f_hom_sym}, depending on the choice of $f_b$. Unlike in the standard homogenisation studies, our focus will be on \textit{a priori} designing the $f_{hom}$, in terms of the available parameters of the system.

The article is organised in the following manner:
\begin{itemize}
\item[•] in \cref{section:assumptions_and_main_results} we present the technical assumptions that we have chosen and the main results of this article;
\item[•] in \cref{section:prop_F_eps} we present the study of the properties of the functional $\mathcal{F}_{\varepsilon}$ for a fixed value for $\varepsilon>0$;
\item[•] in \cref{section:conv_local_min} we glue together the properties studied in the previous section and analyse the $\Gamma$-limit of $\mathcal{F}_{\varepsilon}$ as $\varepsilon\rightarrow 0$ and we prove the main theorems stated in \cref{section:assumptions_and_main_results};
\item[•] in \cref{section:rate_of_conv} we analyse the rate of convergence of the sequence of surface energies to the homogenised surface functional, where the main result is \cref{prop:rate_of_conv}, but we also analyse the rate of convergence of the sequence of minimisers of the free energies to a minimiser of the homogenised free energy (see \cref{remark:rate_of_convergence})
\end{itemize}
\noindent and
\begin{itemize}
\item[•] in \cref{section:appendix} we prove various results, the most important of which is the proposition regarding the explicit extension function that we use in \cref{subsection:extension}.
\end{itemize} 

\section{Tehnical assumptions and main results}\label{section:assumptions_and_main_results}

\subsection{Assumptions, notations and main result}\label{section:assumptions}

Let $\Omega\subset\mathbb{R}^3$ be a bounded, Lipschitz domain, that models the ambient liquid crystal, and let $\mathcal{C}\subset\mathbb{R}^3$ be the model particle for the cubic microlattice. Since $\Omega$ is bounded in $\mathbb{R}^3$, then:
\begin{align}\label{defn:L_0_l_0_h_0}
\exists\; L_0,\;l_0,\;h_0\in[0,+\infty)\;\text{such that}\;\overline{\Omega}\subseteq[-L_0,L_0]\times[-l_0,l_0]\times[-h_0,h_0].
\end{align}

In Figure \ref{fig:nematiccage}, we illustrate some examples of cubic microlattices, where the ``connecting" boxes (which can be seen better in Figure \ref{fig:nematiccage1} as being the black cubes) are cubes of size $\varepsilon^{\alpha}$, with $\alpha=1.4999$ \footnote{We choose $\alpha$ close to the value 3/2 in order to make the difference between the lengths of the sides of the black cubes and the gray parallelipipeds from Figure \ref{fig:nematiccage1} more visible, for relatively ``large" values of $\varepsilon$ ($0.01$, $0.05$ or $0.001$).} and $\varepsilon$ has a positive value close to 0, since we desire to work in the dilute regime. The distance between two closest black cubes is equal to $\varepsilon$, therefore the length of the black cubes is significantly smaller than the distance between them, by using the exponent $\alpha$. \footnote{The reason why we represent the lattice only in the box $[0,l]^3$, with $l=5\varepsilon$, is that if we keep the same $l$ and shrink $\varepsilon$, then the number of boxes appearing in the image would be significantly larger, hence, as we make $\varepsilon$ smaller, we also zoom in to have a better picture of what is happening for small values of $\varepsilon$.} For Figure \ref{fig:nematiccage1}, $\varepsilon=0.05$, $\alpha=1.4999$ and $l=0.25$, so we have the same ratio between $\varepsilon$ and $l$.

\begin{figure}[h]
     \centering
     \begin{subfigure}[b]{0.48\textwidth}
         \centering
         \includegraphics[width=\textwidth]{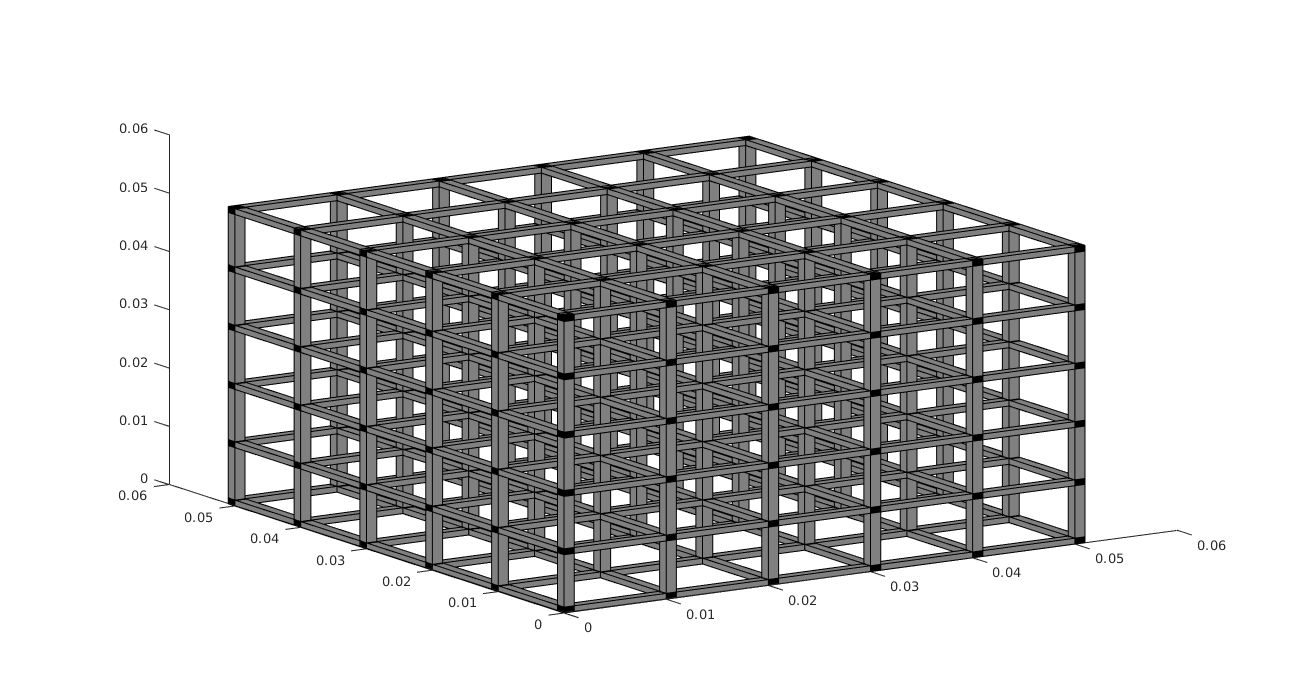}
         \caption{$\varepsilon=0.01$ and $l=0.05$;}
     \end{subfigure}
     \hfill
     \begin{subfigure}[b]{0.48\textwidth}
         \centering
         \includegraphics[width=\textwidth]{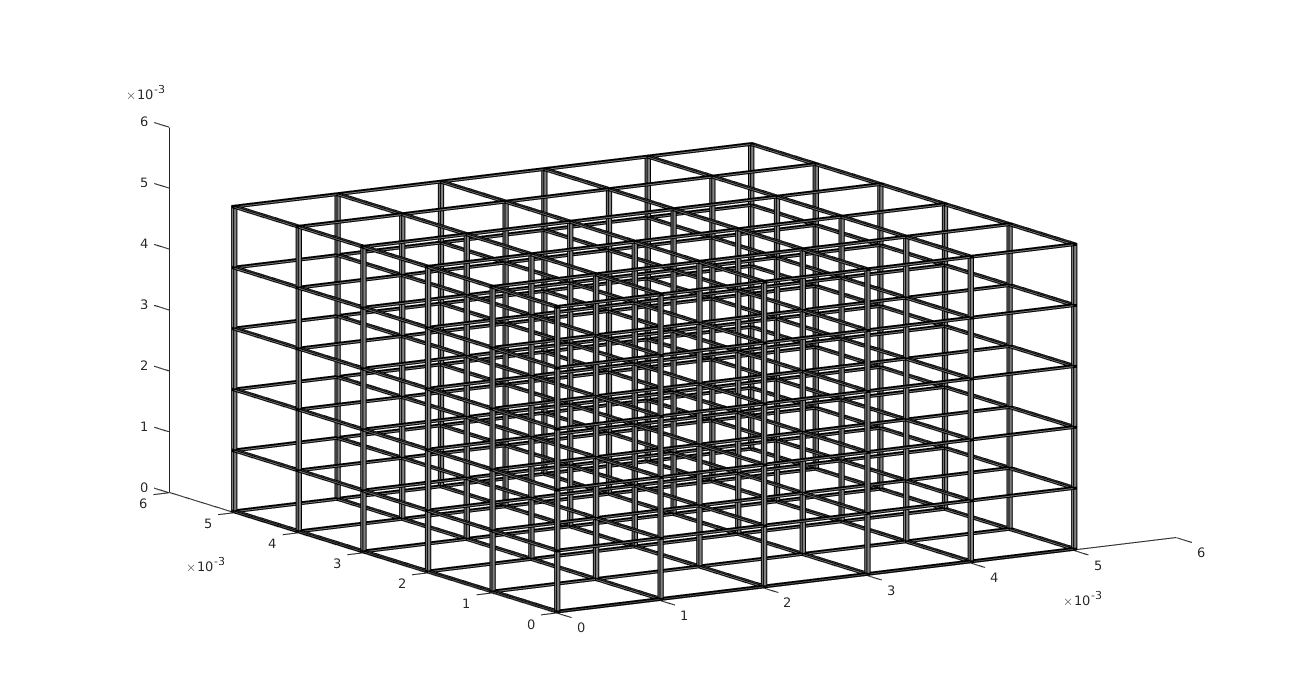}
         \caption{$\varepsilon=0.001$ and $l=0.005$.}
     \end{subfigure} 
     \caption{Cubic microlattices constructed in the box $[0,l]^3$ with $\alpha=1.4999$.}
        \label{fig:nematiccage}
\end{figure}

In order to construct such a scaffold, we use as a model particle the cube:
\begin{align}\label{defn:initial_cube}
\mathcal{C}=\bigg[-\dfrac{1}{2},\dfrac{1}{2}\bigg]^3.
\end{align}
We denote by $\partial\mathcal{C}$ the surface of the cube $\mathcal{C}$, which we also write it as:
\begin{align}\label{defn:C_x_C_y_C_z}
\partial\mathcal{C}=\mathcal{C}^x\;\cup\;\mathcal{C}^y\;\cup\;\mathcal{C}^z,
\end{align}
where $\mathcal{C}^x$ is the union of the two faces of the cube that are perpendicular to the $x$ direction and in the same way are defined $\mathcal{C}^y$ and $\mathcal{C}^z$.

Then, for a fixed value of $\varepsilon>0$ and an $\varepsilon$-independent positive constant $\alpha$, we define
\begin{equation}\label{defn:initial_cube_alpha}
\mathcal{C}^{\alpha}=\bigg[-\dfrac{\varepsilon^{\alpha}}{2p},+\dfrac{\varepsilon^{\alpha}}{2p}\bigg]\times\bigg[-\dfrac{\varepsilon^{\alpha}}{2q},+\dfrac{\varepsilon^{\alpha}}{2q}\bigg]\times\bigg[-\dfrac{\varepsilon^{\alpha}}{2r},+\dfrac{\varepsilon^{\alpha}}{2r}\bigg],
\end{equation}
with $p,\;q,\;r\in[1,+\infty)$. We call the scaffold symmetric whenever $p=q=r$. In Figures \ref{fig:nematiccage1} and \ref{fig:nematiccage}, we have $p=q=r=1$.

We construct now the lattice
\begin{equation}\label{defn:points1}
\mathcal{X}_{\varepsilon}=\{x\in\Omega\;:\;x=(x_1,x_2,x_3),\;\text{dist}(x,\partial\Omega)\geq\varepsilon\;\text{and}\;x_k/\varepsilon\in\mathbb{Z}\;\text{for}\;k\in\overline{1,3}\},
\end{equation}
which we rewrite it as:
\begin{align}\label{defn:points2}
\mathcal{X}_{\varepsilon}=\{x_{\varepsilon}^i\;:\;i\in\overline{1,N_{\varepsilon}}\},\;\text{where}\;N_{\varepsilon}=\text{card}\big(\mathcal{X}_{\varepsilon}\big).
\end{align}

Hence, the first part of the scaffold is the family of parallelipipeds 
\begin{equation}\label{defn:ceps}
\mathcal{C}_{\varepsilon}=\bigcup_{i=1}^{N_{\varepsilon}}\mathcal{C}_{\varepsilon}^i,\;\text{where}\;\mathcal{C}_{\varepsilon}^i=x^i_{\varepsilon}+\mathcal{C}^{\alpha},\;\text{for every}\;i\in\overline{1,N_{\varepsilon}},
\end{equation}
which represents the union of all black parallelipipeds from Figure \ref{fig:nematiccage1}.

We add now the lattice
\begin{align}\label{defn:Y_eps}
\mathcal{Y}_{\varepsilon}&:=\bigg\{y_{\varepsilon}\in\Omega\;:\;\exists i,j\in\overline{1,N_{\varepsilon}}\;\text{such that}\;\big|x_{\varepsilon}^i-x_{\varepsilon}^j\big|=\varepsilon\;\text{and}\;y_{\varepsilon}=\dfrac{1}{2}\big(x_{\varepsilon}^i+x_{\varepsilon}^j\big)\bigg\}\notag\\
&\hspace{-16mm}\text{and let}\;M_{\varepsilon}=\text{card}\big(\mathcal{Y}_{\varepsilon}\big).
\end{align}

The lattice $\mathcal{Y}_{\varepsilon}$ helps us construct the gray parallelipipeds from Figure \ref{fig:nematiccage1}, that is, the ``connecting boxes". We split this lattice into three parts, since the gray parallelipipeds are elongated into three different directions, granted by the axes of the Cartesian coordinate system in $\mathbb{R}^3$. We denote by $\mathcal{P}_{\varepsilon}$ the union of all of these parallelipipeds. More details regarding the construction of these objects can be found in \cref{section:constructing_lattice}.

 Let $\mathcal{N}_{\varepsilon}=\mathcal{C}_{\varepsilon}\cup\mathcal{P}_{\varepsilon}$ be the entire scaffold and $\partial\mathcal{N}_{\varepsilon}$ its surface. 

Regarding the \textit{elastic energy} that we use, we observe that having a fixed distortion of $Q$, the liquid crystal material should remain unchanged under translations and rotations. We consider the following form for the \textit{elastic energy}:
\begin{align*}
f_e(\nabla Q)&:=\sum_{i,j,k\in\{1,2,3\}}\bigg[\dfrac{L_1}{2}\bigg(\dfrac{\partial Q_{ij}}{\partial x_k}\bigg)^2+\dfrac{L_2}{2}\dfrac{\partial Q_{ij}}{\partial x_j}\dfrac{\partial Q_{ik}}{\partial x_k}+\dfrac{L_3}{2}\dfrac{\partial Q_{ik}}{\partial x_j}\dfrac{\partial Q_{ij}}{\partial x_k}\bigg],
\end{align*}
where $Q_{ij}$ is the $(ij)^{th}$ component of $Q$, $(x_1,x_2,x_3)$ represents the usual cartesian coordinates and $e_{ijk}$ represents the Levi-Civita symbol.

Regarding the \textit{surface energy densities}, we use different forms. One is the Rapini-Papoular \textit{surface energy density}, presented in \eqref{eq:intro_Rapini-Papoular}, which has a single minimum at the point in which the dependent variables take value granted by the surface treatment, which in this case represents the perpendicular \textit{alignment}. 

Another form for the \textit{surface energy density} is represented by the planar degenerate anchoring case:
\begin{align*}
f_s(Q,\nu)=k_a\;(\nu\cdot Q^2\nu)+k_b\;(\nu\cdot Q\nu)(\nu\cdot Q^2\nu)+k_c\;(\nu\cdot Q^2\nu)^2+a'\;\text{tr}(Q^2)+\dfrac{2b'}{3}\;\text{tr}(Q^3)+\dfrac{c'}{2}\;\text{tr}(Q^2)^2,
\end{align*}
where $k_a$, $k_b$, $k_c$, $a'$, $b'$ and $c'$ are constants, in which the preferred alignment for the liquid crystal material is to lie parallel to the boundary of the scaffold.

In order to describe the \textit{surface energy}, we need a better description of $\partial\mathcal{N}_{\varepsilon}$, therefore we analyse what faces from every parallelipiped constructed are in contact with the liquid crystal. More precisely, the liquid crystal is in contact with the scaffold:
\begin{itemize}
\item[•] on only four of the six faces of the parallelipipeds centered in points from $\mathcal{Y}_{\varepsilon}$, that is, on every $\mathcal{T}_x^k$, $\mathcal{T}_y^l$ and $\mathcal{T}_z^m$, defined in \eqref{defn:T_x_k}, \eqref{defn:T_y_l} and \eqref{defn:T_z_m};
\item[•] only on the edges of the parallelipipeds centered in some of the points from $\mathcal{X}_{\varepsilon}$, parallelipipeds which are in the ``interior" of the scaffold, meaning that they are not ``visible" - in this case, the interaction is neglected (the black parallelipipeds from Figure \ref{fig:nematiccage1} which have the only role of connecting the six adjacent gray parallelipipeds and they are not visible from that point of view, since they are ``inside" of the scaffold); let
\begin{align}\label{defn:N_eps_1}
N_{\varepsilon,1}\;=\text{ the total number of parallelipipeds from this case;}
\end{align}
\item[•] on at most five of the six faces of the parallelipipeds centered in some of the points from $\mathcal{X}_{\varepsilon}$, parallelipipeds which are at the ``outer" boundary of the scaffold (which are the closest to $\partial\Omega$) - in this case, we prove that the interaction is neglectable (the black parallelipipeds from Figure \ref{fig:nematiccage1} that are visible); let
\begin{align}\label{defn:N_eps_2}
N_{\varepsilon,2}\;=\text{ the total number of parallelipipeds from this case;}
\end{align}
and let
\begin{align}\label{defn:S_i}
\mathcal{S}^i\;=&\text{ the union of all the rectangles (at most five in this case) that}\notag\\
&\text{\hspace{4mm} are in contact with the liquid crystal material,}
\end{align}
for any $i\in\overline{1,N_{\varepsilon,2}}$.
\end{itemize}

From relations \eqref{defn:N_eps_1} and \eqref{defn:N_eps_2}, we have $N_{\varepsilon}=N_{\varepsilon,1}+N_{\varepsilon,2}$. Using \eqref{defn:T_x_k}, \eqref{defn:T_y_l}, \eqref{defn:T_z_m} and \eqref{defn:S_i}, we can write $\partial\mathcal{N}_{\varepsilon}=\partial\mathcal{N}_{\varepsilon}^{\mathcal{S}}\cup\partial\mathcal{N}_{\varepsilon}^{\mathcal{T}}$, where:

\begin{equation}\label{defn:surface}
\partial\mathcal{N}_{\varepsilon}^{\mathcal{S}}=\bigg(\bigcup_{i=1}^{N_{\varepsilon,2}}\mathcal{S}^i\bigg)\;\;\text{and}\;\;\partial\mathcal{N}_{\varepsilon}^{\mathcal{T}}=\bigg(\bigcup_{k=1}^{X_{\varepsilon}}\mathcal{T}_x^k\bigg)\cup\bigg(\bigcup_{l=1}^{Y_{\varepsilon}}\mathcal{T}_y^l\bigg)\cup\bigg(\bigcup_{m=1}^{Z_{\varepsilon}}\mathcal{T}_z^m\bigg).
\end{equation}

Let $J_{\varepsilon}[Q]$ be the \textit{surface energy} term from \eqref{eq:variat1} and let us split this term into two parts:

\begin{align}\label{defn:Jeps}
J_{\varepsilon}[Q]&=J_{\varepsilon}^{\mathcal{S}}[Q]+J_{\varepsilon}^{\mathcal{T}}[Q],
\end{align}
where
\begin{align}\label{defn:Jeps_S}
J_{\varepsilon}^{\mathcal{S}}[Q]&=\dfrac{\varepsilon^{3-\alpha}}{\varepsilon-\varepsilon^{\alpha}}\displaystyle{\int_{\partial\mathcal{N}_{\varepsilon}^{\mathcal{S}}}f_s(Q,\nu)\text{d}\sigma}=\dfrac{\varepsilon^{3-\alpha}}{\varepsilon-\varepsilon^{\alpha}}\sum_{i=1}^{N_{\varepsilon,2}}\int_{\mathcal{S}^i}f_s(Q,\nu)\text{d}\sigma,
\end{align}
using \eqref{defn:surface}, and
\begin{align}
J_{\varepsilon}^{\mathcal{T}}[Q]&=\dfrac{\varepsilon^{3-\alpha}}{\varepsilon-\varepsilon^{\alpha}}\displaystyle{\int_{\partial\mathcal{N}_{\varepsilon}^{\mathcal{T}}}f_s(Q,\nu)\text{d}\sigma},
\end{align}
which can be also expressed using \eqref{defn:surface} as
\begin{align}\label{defn:Jeps_T}
J_{\varepsilon}^{\mathcal{T}}[Q]&=J_{\varepsilon}^{X}[Q]+J_{\varepsilon}^{Y}[Q]+J_{\varepsilon}^{Z}[Q],
\end{align}
where:
\begin{align}\label{defn:Jeps_T_X_Y_Z}
\begin{cases}
J_{\varepsilon}^{X}[Q]&=\dfrac{\varepsilon^{3-\alpha}}{\varepsilon-\varepsilon^{\alpha}}\displaystyle{\sum_{k=1}^{X_{\varepsilon}}\int_{\mathcal{T}_x^k}f_s(Q,\nu)\text{d}\sigma};\\
\vspace{-4mm}&\\
J_{\varepsilon}^{Y}[Q]&=\dfrac{\varepsilon^{3-\alpha}}{\varepsilon-\varepsilon^{\alpha}}\displaystyle{\sum_{l=1}^{Y_{\varepsilon}}\int_{\mathcal{T}_y^l}f_s(Q,\nu)\text{d}\sigma};\\
\vspace{-4mm}&\\
J_{\varepsilon}^{Z}[Q]&=\dfrac{\varepsilon^{3-\alpha}}{\varepsilon-\varepsilon^{\alpha}}\displaystyle{\sum_{m=1}^{Z_{\varepsilon}}\int_{\mathcal{T}_z^m}f_s(Q,\nu)\text{d}\sigma}.
\end{cases}
\end{align}

The main reason why we are using relation \eqref{defn:Jeps} is because we can prove that $J_{\varepsilon}^{\mathcal{S}}[Q]$ has no influence over the homogenised material, since this surface term describes only the interaction between the liquid crystal and some parts of the ``outer" boundary of the scaffold (close to $\partial\Omega$), which are small and fewer in number compared to the interactions with the other parts of the scaffold (see \cref{subsection:nocontribution}).

\begin{remark}
In this paper, we use the notation $A\lesssim B$ for two real numbers $A$ and $B$ whenever there exists an $\varepsilon$-independent constant $C$ such that $A\leq C\cdot B$.
\end{remark}

We assume furthermore that: 
\begin{itemize}
\item[($A_1$)] $\Omega\subset\mathbb{R}^3$ is a smooth and bounded domain

\item[($A_2$)] \textit{$1<\alpha<\dfrac{3}{2}$;}
\end{itemize}

As we have seen already, the condition $1<\alpha$ grants the existence of the parallelipipeds generated in equations \eqref{defn:P_x}, \eqref{defn:P_y} and \eqref{defn:P_z}, but it also ensures the dilute regime. For a detailed proof of the last statement, consult \cref{subsection:appendix1}.

\begin{itemize}
\item[($A_3$)] \textit{There exists a constant $\lambda_{\Omega}>0$ such that} $$\text{dist}(z^i_{\varepsilon},\partial\Omega)+\dfrac{1}{2}\inf_{j\neq i}|z^j_{\varepsilon}-z^i_{\varepsilon}|\geq \lambda_{\Omega}\varepsilon$$ \textit{for any $\varepsilon>0$ and any center $z^i_{\varepsilon}$ of an object (either a black cube or a gray parallelipiped) that is contained the cubic microlattice, where $i\in\overline{1,(N_{\varepsilon}+M_{\varepsilon})}$.}
\end{itemize}

\begin{itemize}
\item[($A_4$)] \textit{As $\varepsilon\rightarrow 0$, the measures 
\begin{equation}\label{defn:measures}
\mu_{\varepsilon}^X:=\varepsilon^3\sum_{k=1}^{X_{\varepsilon}}\delta_{y^{x,k}_{\varepsilon}},\;\mu_{\varepsilon}^Y:=\varepsilon^3\sum_{l=1}^{Y_{\varepsilon}}\delta_{y^{y,l}_{\varepsilon}}\;\text{and}\;\mu_{\varepsilon}^Z:=\varepsilon^3\sum_{m=1}^{Z_{\varepsilon}}\delta_{y^{z,m}_{\varepsilon}}
\end{equation}
converge weakly* (as measures in $\mathbb{R}^3$) to the Lebesgue measure restricted on $\Omega$, denoted $\emph{dx}\mres\Omega$.}
\end{itemize}

We say that a function $f:\mathcal{S}_0\otimes\mathbb{R}^{3}\rightarrow\mathbb{R}$ is strongly convex if there exists $\theta>0$ such that the function $\tilde{f}:\mathbb{S}_0\otimes\mathbb{R}^{3}\rightarrow\mathbb{R}$ defined by $\tilde{f}(D)=f(D)-\theta|D|^2$ is convex.

\begin{itemize}
\item[($A_5$)] \textit{$f_e:\mathcal{S}_0\otimes\mathbb{R}^3\rightarrow[0,+\infty)$ is differentiable, strongly convex and there exists a constant $\lambda_e>0$ such that} $$\lambda_e^{-1}|D|^2\leq f_e(D)\leq \lambda_e|D|^2,\;\;\;\;|(\nabla f_e)(D)|\leq\lambda_e(|D|+1),$$ \textit{for any $D\in\mathcal{S}_0\times\mathbb{R}^3$. }
\item[($A_6$)] \textit{$f_b:\mathcal{S}_0\rightarrow\mathbb{R}$ is continuous, bounded from below and there exists a constant $\lambda_b>0$ such that \linebreak $|f_b(Q)|\leq\lambda_b(|Q|^6+1)$ for any $Q\in\mathcal{S}_0$.}
\item[($A_7$)] \textit{$f_s:\mathcal{S}_0\times\mathbb{S}^2\rightarrow\mathbb{R}$ is continuous and there exists a strictly positive constant $\lambda_s$ such that, for any $Q_1,Q_2\in\mathcal{S}_0$ and any $\nu\in\mathbb{S}^2$, we have $$|f_s(Q_1,\nu)-f_s(Q_2,\nu)|\leq\lambda_s|Q_1-Q_2|\big(|Q_1|^3+|Q_2|^3+1\big).$$}
\end{itemize}

It is easy to see from here that $f_s$ has a quartic growth in $Q$.
\vspace{3mm}

\textbf{The homogenised functional.}  Let $f_{hom}:\mathcal{S}_0\rightarrow\mathbb{R}$ be the function defined as:

\begin{equation}\label{defn:f_hom}
f_{hom}(Q):=\dfrac{q+r}{qr}\int_{\mathcal{C}^x}f_s(Q,\nu)\text{d}\sigma+\dfrac{p+r}{pr}\int_{\mathcal{C}^y}f_s(Q,\nu)\text{d}\sigma+\dfrac{p+q}{pq}\int_{\mathcal{C}^z}f_s(Q,\nu)\text{d}\sigma,
\end{equation}

\noindent for any $Q\in\mathcal{S}_0$, where $\mathcal{C}^x$, $\mathcal{C}^y$ and $\mathcal{C}^z$ are defined in \eqref{defn:C_x_C_y_C_z}. From ($A_7$), we can deduce that $f_{hom}$ is also continuous and that it has a quartic growth. If we work in the symmetric case, that is $p=q=r$, then relation \eqref{defn:f_hom} becomes:

\begin{equation}\label{defn:f_hom_sym}
f_{hom}(Q):=\dfrac{2}{p}\int_{\partial\mathcal{C}}f_s(Q,\nu)\text{d}\sigma.
\end{equation}

The main results of these notes concerns the asymptotic behaviour of local minimisers of the functional $\mathcal{F}_{\varepsilon}$, as $\varepsilon\rightarrow 0$.

Let $g\in H^{1/2}(\partial\Omega,\mathcal{S}_0)$ be a boundary datum. We denote by $H^1_g(\Omega,\mathcal{S}_0)$ the set of maps $Q$ from $H^1(\Omega,\mathcal{S}_0)$ such that $Q=g$ on $\partial\Omega$ in the trace sense. Similarly, we define $H^1_g(\Omega_{\varepsilon},\mathcal{S}_0)$ to be $H^1(\Omega_{\varepsilon})$ with $Q=g$ on $\partial\Omega$ in the trace sense.

We use the harmonic extension operator, $E_{\varepsilon}:H^1_g(\Omega_{\varepsilon},\mathcal{S}_0)\rightarrow H^1_g(\Omega,\mathcal{S}_0)$, defined in the following way: $E_{\varepsilon}Q:=Q$ on $\Omega_{\varepsilon}$ and inside the scaffold, $E_{\varepsilon}Q$ is the unique solution of the following problem:
\begin{equation*}
\left\{
\begin{array}{ll}
\Delta E_{\varepsilon}Q=0 & \text{in}\;\mathcal{N}_{\varepsilon}\\
E_{\varepsilon}Q\equiv Q & \text{on}\; \partial\mathcal{N}_{\varepsilon}.
\end{array}
\right. 
\end{equation*}

Using this framework, we can produce the main result of this work:

\begin{theorem}\label{th:local_min}
Suppose that the assumptions ($A_1$)-($A_7$) are satisfied. Let $Q_0\in H^1_g(\Omega,\mathcal{S}_0)$ be an isolated $H^1$-local minimiser for $\mathcal{F}_0$, that is, there exists $\delta_0>0$ such that $\mathcal{F}_0[Q_0]<\mathcal{F}_0[Q]$ for any $Q\in H^1_g(\Omega,\mathcal{S}_0)$ such that $\big\|Q-Q_0\big\|_{H^1_g(\Omega,\mathcal{S}_0)}\leq \delta_0$ and $Q\neq Q_0$. Then for any $\varepsilon$ sufficiently small enough, there exists a sequence of $H^1$-local minimisers $Q_{\varepsilon}$ of $\mathcal{F}_{\varepsilon}$ such that $E_{\varepsilon}Q_{\varepsilon}\rightarrow Q_0$ strongly in $H^1_g(\Omega,\mathcal{S}_0)$.
\end{theorem}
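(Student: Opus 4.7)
The plan is to follow the classical Kohn--Sternberg strategy that passes from $\Gamma$-convergence of the approximating functionals to the existence of nearby local minimisers. The harmonic extension $E_\varepsilon$ provides the common ambient space $H^1_g(\Omega,\mathcal{S}_0)$ in which to compare $\mathcal{F}_\varepsilon$ with $\mathcal{F}_0$, and the relevant convergence will be ``$E_\varepsilon Q_\varepsilon\to Q$ strongly in $H^1_g(\Omega,\mathcal{S}_0)$''.

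\textbf{Step 1 ($\Gamma$-convergence).} I would first prove $\mathcal{F}_\varepsilon\xrightarrow{\Gamma}\mathcal{F}_0$ with respect to this convergence. The bulk and elastic parts are routine: weak lower semicontinuity from the convexity in ($A_5$), combined with $|\mathcal{N}_\varepsilon|\to 0$ (guaranteed by $\alpha<3/2$ from ($A_2$)) and the harmonicity of $E_\varepsilon Q$ on $\mathcal{N}_\varepsilon$, handles both the liminf and recovery directions. The substantive part is the surface energy: using the decomposition \eqref{defn:Jeps}, one disposes of $J_\varepsilon^{\mathcal S}$ via \cref{subsection:nocontribution}, and for $J_\varepsilon^{\mathcal T}$ one invokes the growth in ($A_7$) together with a trace/Poincar\'e inequality on each connecting parallelepiped to replace $Q_\varepsilon$ on the four long faces by a local cell-mean $\overline{Q_\varepsilon}$ up to an $o(1)$ error. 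The remaining sum is a Riemann-type integral against $\mu_\varepsilon^X,\mu_\varepsilon^Y,\mu_\varepsilon^Z$ defined in \eqref{defn:measures}; the weak-$\ast$ convergence in ($A_4$) identifies its limit as $\int_\Omega f_{hom}(Q)\,dx$ with the anisotropic weights of \eqref{defn:f_hom}, reducing to \eqref{defn:f_hom_sym} in the symmetric case.

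\textbf{Step 2 (Recovery sequence).} For the $\Gamma$-$\limsup$ at $Q_0$, take $Q_\varepsilon:=Q_0|_{\Omega_\varepsilon}$, preceded by a smooth convolution and a diagonal extraction if $Q_0\notin W^{1,\infty}$. The same Riemann-sum analysis gives $\mathcal{F}_\varepsilon[Q_\varepsilon]\to\mathcal{F}_0[Q_0]$, while $E_\varepsilon Q_\varepsilon\to Q_0$ strongly in $H^1_g(\Omega,\mathcal{S}_0)$ because $Q_0$ and the harmonic extension of its trace on $\partial\mathcal{N}_\varepsilon$ agree outside a set of vanishing measure, and their Dirichlet energies inside $\mathcal{N}_\varepsilon$ are uniformly controlled by $\|\nabla Q_0\|_{L^2(\mathcal{N}_\varepsilon)}^2$ via the minimising property of harmonic extensions.

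\textbf{Step 3 (Constrained minimisation and interior property).} Fix $\delta\in(0,\delta_0)$ and minimise $\mathcal{F}_\varepsilon$ over the weakly closed set $B_\delta:=\{Q\in H^1_g(\Omega_\varepsilon,\mathcal{S}_0):\|E_\varepsilon Q-Q_0\|_{H^1_g(\Omega,\mathcal{S}_0)}\leq\delta\}$. The direct method (coercivity and weak lower semicontinuity of $\mathcal{F}_\varepsilon$, which fall out of the same arguments as Step 1 at fixed $\varepsilon$) produces a minimiser $Q_\varepsilon^\ast$. The recovery sequence forces $\mathcal{F}_\varepsilon[Q_\varepsilon^\ast]\leq\mathcal{F}_0[Q_0]+o(1)$; compactness gives a subsequential limit $E_\varepsilon Q_\varepsilon^\ast\to Q_\ast\in B_\delta$, and the $\liminf$ inequality forces $\mathcal{F}_0[Q_\ast]\leq\mathcal{F}_0[Q_0]$. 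The isolation hypothesis then pins $Q_\ast=Q_0$, and equality in the liminf yields strong convergence along the whole sequence; for $\varepsilon$ small this places $Q_\varepsilon^\ast$ strictly inside $B_\delta$, so $Q_\varepsilon^\ast$ is a genuine $H^1$-local minimiser of $\mathcal{F}_\varepsilon$. The main obstacle is the surface-term analysis in Step 1: the scaling $\varepsilon^{3-\alpha}/(\varepsilon-\varepsilon^\alpha)$ is engineered so that the area of the four long faces of each connecting parallelepiped, summed over $O(\varepsilon^{-3})$ centres, weights Lebesgue measure on $\Omega$ by precisely the coefficients in \eqref{defn:f_hom}; making this precise uniformly in energy-bounded $Q_\varepsilon$ --- in particular, justifying the cell-wise constant replacement with an error that vanishes in the limit --- is where the bulk of the technical work sits.
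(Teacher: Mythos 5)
Your proposal is correct and follows essentially the same strategy as the paper: a $\Gamma$-convergence statement for $\mathcal{F}_\varepsilon$ with respect to the extension-based convergence (treating $J_\varepsilon^{\mathcal S}$ as negligible and identifying the limit of $J_\varepsilon^{\mathcal T}$ through the measures $\mu_\varepsilon^X,\mu_\varepsilon^Y,\mu_\varepsilon^Z$), combined with the Kohn--Sternberg constrained-minimisation argument on the ball $\{Q:\|E_\varepsilon Q-Q_0\|_{H^1}\le\delta\}$, the isolation hypothesis, and the strong-convexity upgrade to strong $H^1$ convergence, which places the constrained minimiser in the interior of the ball. The only cosmetic difference is technical: you replace $Q$ on the contact faces by cell means via a scaled trace/Poincar\'e inequality, whereas the paper freezes $Q$ at the cell centres for Lipschitz maps and then passes to general $H^1_g$ maps by density together with the uniform continuity estimate of \cref{lemma:boundednabla}.
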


\subsection{Applications to the Landau-de Gennes model}

We use the classical terms from the Landau-de Gennes model for nematic liquid crystals. 

For the \textit{elastic energy density}, we take:
\begin{align*}
f_e(\nabla Q):=L_1\partial_k Q_{ij}\partial_k Q_{ij}+L_2\partial_j Q_{ij}\partial_k Q_{ik}+L_3 \partial_j Q_{ik}\partial_k Q_{ij},
\end{align*}
where the Einstein's summation convention is assumed. 

In order to fulfill assumption ($A_5$), we take as in \cite{Longa}:
\begin{align}\label{eq:ineq_for_elastic}
L_1>0, \hspace{5mm}-L_1<L_3<2 L_1, \hspace{5mm}-\dfrac{3}{5}L_1-\dfrac{1}{10}L_3<L_2. 
\end{align}

For the \textit{bulk energy density}, we use several versions of it. The first one is the classical quartic polynomial in the scalar invariants of $Q$, that is:
\begin{align}\label{defn:f_b_LDG}
f_b^{LDG}(Q):=a\,\text{tr}(Q^2)-b\,\text{tr}(Q^3)+c\,\big(\text{tr}(Q^2)\big)^2,
\end{align}
which verifies the conditions of \cref{th:local_min}.

We also prove similar results for a general polynomial in the scalar invariants of $Q$, that is:
\begin{align}\label{defn:f_b_gen}
f_b^{gen}[Q]=\sum_{k=2}^{N} a_k\,\text{tr}(Q^k),
\end{align}
where $N\in\mathbb{N}$, $N\geq 4$ is fixed, with the coefficients $a_k\in\mathbb{R}$ chosen such that the polynomial $h:\mathbb{R}\rightarrow\mathbb{R}$, defined by $h(x)=\sum_{k=2}^{N}a_k x^k$, for any $x\in\mathbb{R}$, admits at least one local minimum over $\mathbb{R}$. 

In all the cases, the coefficient of $\text{tr}(Q^2)$ depends on the temperature at which the phase transition occurs.  More specifically, $a$ from \eqref{defn:f_b_LDG} is of the form $a:=a_{*}(T-T_{*})$, in which $a_{*}$ is a material parameter and $T_{*}$ is the characteristic temperature of the nematic liquid crystal material (the temperature where the isotropic state starts losing \textit{local} stability).

For each of the versions of the \textit{bulk energy densities}, we choose suitable \textit{surface energy densities}, such that, in the homogenised functional, the surface terms have a similar form with the effective bulk energy, which is now in the same form as in the non-homogenised situation, but with different coefficients, most important of which the coefficient of $\text{tr}(Q^2)$ is now different.

\cref{th:local_min} holds for any values of $p$, $q$ and $r$, that is, for any type of parallelipiped chosen for the construction of the scaffold. In reality, 2PP (two-photon polymerization) materials with cubic symmetry properties have been obtained (for example, \cite{nematiccage}) and they do represent an object of interest in the construction of nematic scaffolds.

If the scaffold presents symmetries, then the physical invariances require
\begin{equation*}
f_s(UQU^T,Uu)=f_s(Q,u),\;\forall (Q,u)\in\mathcal{S}_0\times\mathbb{R}^3,\;U\in\mathcal{O}(3)
\end{equation*}
\noindent and this leads, according to Proposition 2.6 from \cite{CanevariZarnescu1}, to a \textit{surface energy} of the form 
\begin{equation*}
f_s(Q,\nu)=\tilde{f}_s(\text{tr}(Q^2),\text{tr}(Q^3),\nu\cdot Q\nu,\nu\cdot Q^2\nu),\;\forall (Q,\nu)\in\mathcal{S}_0\times\mathbb{R}^3.
\end{equation*}

Even though we have this result, we still use a more general form for $f_s(Q,\nu)$, in which we include terms of the form $\nu\cdot Q^k\nu$, since they grant an easier way to compute the homogenised functional, in this case with this type of scaffold. Still, according to \cref{prop:int_en_dens_LDG}, we obtain in the homogenised functional terms of the form $\text{tr}(Q^k)$, with $k\geq 4$, which depend only on $\text{tr}(Q^2)$ and $\text{tr}(Q^3)$, since $\text{tr}(Q)=0$. In order to prove this statement, let $\lambda_1$, $\lambda_2$ and $\lambda_3$ the eigenvalues of $Q$. Then they satisfy the system:
\begin{align*}
\begin{cases}
\lambda_1+\lambda_2+\lambda_3=0\\
\lambda_1^2+\lambda_2^2+\lambda_3^2=\text{tr}(Q^2)\\
\lambda_1^3+\lambda_2^3+\lambda_3^3=\text{tr}(Q^3)
\end{cases}
\end{align*}
and, by solving the system, we can see that $\lambda_1$, $\lambda_2$ and $\lambda_3$ can be viewed as functions of $\text{tr}(Q^2)$ and $\text{tr}(Q^3)$. Since $\text{tr}(Q^k)=\lambda_1^k+\lambda_2^k+\lambda_3^k$, for any $k\in\mathbb{N}$, $k\geq 1$, then it is easy to see from here that $\text{tr}(Q^k)$, for $k\geq 4$, is depending only on $\text{tr}(Q^2)$ and $\text{tr}(Q^3)$. Indeed, by Cayley-Hamilton theorem, the identity:
\begin{align*}
Q^3-\dfrac{1}{2}\text{tr}(Q^2)Q-\dfrac{1}{3}\text{tr}(Q^3)\mathbb{I}_3=0
\end{align*}
becomes valid for any Q-tensor $Q$, where $\mathbb{I}_3$ is the $3\times 3$ identity matrix. Multiplying this identity succcessively by $Q$, $Q^2$, $Q^3$ and so on and taking the trace we obtain the claim. 

\subsubsection{The case $p=q=r$}

Assuming $p=q=r$ implies that the parallelipipeds constructed in \eqref{defn:ceps} are actually cubes and that the ``cells" of the nematic scaffold are also cubes. 

For each of the \textit{bulk energies} presented before, we take different \textit{surface energy densities}. One of the corresponding choices of the \textit{surface energy} $f_s$ in the case of \eqref{defn:f_b_LDG} is:
\begin{align}\label{defn:f_s_LDG}
f_s^{LDG}(Q,\nu)=\dfrac{p}{4}\bigg((a'-a)(\nu\cdot Q^2\nu)-(b'-b)(\nu\cdot Q^3\nu)+2(c'-c)(\nu\cdot Q^4\nu)\bigg)
\end{align}
where $a'$, $b'$ and $c'$ are the desired coefficients in the homogenised bulk potential, such that in the homogenised material, we have:
\begin{align}\label{defn:f_hom_LDG}
f_{hom}^{LDG}(Q)=(a'-a)\,\text{tr}(Q^2)-(b'-b)\,\text{tr}(Q^3)+(c'-c)\,\big(\text{tr}(Q^2)\big)^2.
\end{align}

We are interested in studying the behaviour of the whole material when $\varepsilon\rightarrow 0$, that is, studying the following functionals:
\begin{align}\label{defn:F_eps_LDG}
\mathcal{F}_{\varepsilon}^{LDG}[Q_{\varepsilon}]&:=\int_{\Omega_{\varepsilon}}\big(f_e(\nabla Q_{\varepsilon})+a\,\text{tr}(Q_{\varepsilon}^2)-b\,\text{tr}(Q_{\varepsilon}^3)+c\,\big(\text{tr}(Q_{\varepsilon}^2)\big)^2\big)\text{d}x+\dfrac{\varepsilon^{3-\alpha}}{\varepsilon-\varepsilon^{\alpha}}\int_{\partial\mathcal{N}_{\varepsilon}}f_s^{LDG}(Q_{\varepsilon},\nu)\text{d}\sigma 
\end{align}
and
\begin{align}\label{defn:F_0_LDG}
\mathcal{F}_0^{LDG}[Q]&:=\int_{\Omega}\big(f_e(\nabla Q)+a'\,\text{tr}(Q^2)-b'\,\text{tr}(Q^3)+c'\,\big(\text{tr}(Q^2)\big)^2\big)\text{d}x, 
\end{align}
with $a'$, $b'$ and $c'$ being the desired parameters.

\begin{theorem}\label{th:LDG}
Let $(a,b,c)$ and $(a',b',c')$ be two set of parameters with $c>0$ and $c'>0$. Suppose that the assumptions ($A_1$)-($A_7$) are satisfied and also the inequalities from \eqref{eq:ineq_for_elastic}. Then, for any isolated $H^1$-local minimiser $Q_0$ of the functional $\mathcal{F}_0^{LDG}$ defined by \eqref{defn:F_0_LDG}, and for $\varepsilon>0$ sufficiently small enough, there exists a sequence of local minimisers $Q_{\varepsilon}$ of the functionals $\mathcal{F}_{\varepsilon}^{LDG}$, defined by \eqref{defn:F_eps_LDG}, such that $E_{\varepsilon}Q_{\varepsilon}\rightarrow Q_0$ strongly in $H^1_g(\Omega,\mathcal{S}_0)$.  
\end{theorem}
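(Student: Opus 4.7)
The plan is to realise \cref{th:LDG} as a direct application of \cref{th:local_min}. Two ingredients are needed: verify that the specific LdG densities $f_e$, $f_b^{LDG}$ and $f_s^{LDG}$ satisfy hypotheses ($A_5$)--($A_7$), and show that the abstract homogenised density $f_{hom}$ from \eqref{defn:f_hom_sym} coincides with $f_{hom}^{LDG}$ from \eqref{defn:f_hom_LDG}, so that $\mathcal{F}_0=\mathcal{F}_0^{LDG}$.

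For the first ingredient, ($A_5$) is standard: the Longa inequalities \eqref{eq:ineq_for_elastic} are exactly those making $f_e$ a positive definite quadratic form in $\nabla Q$, from which the two-sided bounds, strong convexity and the affine gradient bound all follow. For ($A_6$), $f_b^{LDG}$ is a polynomial of degree four in $Q$, hence continuous and of the required sixth-order growth; boundedness from below comes from the dominant quartic term $c\,(\text{tr}(Q^2))^2=c|Q|^4$ with $c>0$ absorbing the cubic term $-b\,\text{tr}(Q^3)$ via Young's inequality. For ($A_7$), $f_s^{LDG}$ is a polynomial of degree four in $Q$ with coefficients continuous in $\nu\in\mathbb{S}^2$, and the weighted Lipschitz estimate is the standard mean value bound for such polynomials.

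The main computation is to evaluate $f_{hom}$ explicitly. Since $p=q=r$, formula \eqref{defn:f_hom_sym} gives $f_{hom}(Q)=\tfrac{2}{p}\int_{\partial\mathcal{C}}f_s^{LDG}(Q,\nu)\,\text{d}\sigma$. The boundary $\partial\mathcal{C}$ is the union of six unit-area faces with outward normals $\pm e_1,\pm e_2,\pm e_3$; since $\nu\mapsto\nu\cdot Q^k\nu$ is even, a direct calculation yields $\int_{\partial\mathcal{C}}(\nu\cdot Q^k\nu)\,\text{d}\sigma=2\,\text{tr}(Q^k)$ for every $k\geq 1$. Substituting this into $f_s^{LDG}$ the factor $\tfrac{p}{4}$ cancels the prefactor $\tfrac{2}{p}$ up to a factor $\tfrac12$, producing
$$
f_{hom}(Q)=(a'-a)\,\text{tr}(Q^2)-(b'-b)\,\text{tr}(Q^3)+2(c'-c)\,\text{tr}(Q^4).
$$
The factor $2$ multiplying $(c'-c)$ in the definition of $f_s^{LDG}$ is tuned precisely so that the $\text{tr}(Q^4)$ term converts into $(\text{tr}(Q^2))^2$ via the traceless identity $2\,\text{tr}(Q^4)=(\text{tr}(Q^2))^2$. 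This is the Cayley--Hamilton relation recalled above: multiplying $Q^3=\tfrac{1}{2}\text{tr}(Q^2)Q+\tfrac{1}{3}\text{tr}(Q^3)\mathbb{I}_3$ by $Q$ and taking the trace gives $\text{tr}(Q^4)=\tfrac{1}{2}(\text{tr}(Q^2))^2$. With this substitution $f_{hom}(Q)=f_{hom}^{LDG}(Q)$.

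With both ingredients in hand, \cref{th:local_min} applies and produces, for each isolated $H^1$-local minimiser $Q_0$ of $\mathcal{F}_0^{LDG}$ and every sufficiently small $\varepsilon>0$, a local minimiser $Q_\varepsilon$ of $\mathcal{F}_\varepsilon^{LDG}$ with $E_\varepsilon Q_\varepsilon\to Q_0$ strongly in $H^1_g(\Omega,\mathcal{S}_0)$. I do not anticipate any real obstacle: the only non-routine step is the algebraic identity $\text{tr}(Q^4)=\tfrac{1}{2}(\text{tr}(Q^2))^2$ on $\mathcal{S}_0$, and once it is in place the verification is purely mechanical.
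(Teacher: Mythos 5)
Your proposal is correct and follows essentially the same route as the paper: reduce to \cref{th:local_min}, compute $\int_{\partial\mathcal{C}}(\nu\cdot Q^k\nu)\,\text{d}\sigma=2\,\text{tr}(Q^k)$ (the paper's \cref{prop:int_en_dens_LDG}), and convert $2\,\text{tr}(Q^4)$ into $(\text{tr}(Q^2))^2$ via Cayley--Hamilton on traceless tensors. The only difference is cosmetic: you verify ($A_5$)--($A_7$) explicitly and derive the trace identity by multiplying the Cayley--Hamilton relation by $Q$, whereas the paper leaves the hypothesis check implicit and computes the identity through the eigenvalues.
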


\begin{proof}
This theorem is a particular case of \cref{th:local_min}. It is sufficient to prove that relation \eqref{defn:f_hom_LDG} can be obtained via \eqref{defn:f_hom_sym}, that is:
\begin{align*}
f_{hom}^{LDG}(Q)=\dfrac{2}{p}\int_{\partial\mathcal{C}}f_s^{LDG}(Q,\nu)\text{d}\sigma=(a'-a)\;\text{tr}(Q^2)-(b'-b)\;\text{tr}(Q^3)+(c'-c)\;\big(\text{tr}(Q^2)\big)^2.
\end{align*}

Using \cref{prop:int_en_dens_LDG}, we have:
\begin{align*}
\int_{\partial\mathcal{C}}\nu\cdot Q^2\nu\text{d}\sigma =2\text{tr}(Q^2),\hspace{3mm}
\int_{\partial\mathcal{C}}\nu\cdot Q^3\nu\text{d}\sigma =2\text{tr}(Q^3)\hspace{3mm}\text{and}\hspace{3mm}\int_{\partial\mathcal{C}}\nu\cdot Q^4\nu\text{d}\sigma =2\text{tr}(Q^4),
\end{align*}
from which we get
\begin{align*}
\dfrac{2}{p}\int_{\partial\mathcal{C}}f_s^{LDG}(Q,\nu)\text{d}\sigma &=\dfrac{2}{p}\cdot\dfrac{p}{4}\big((a'-a)\cdot 2\text{tr}(Q^2)-(b'-b)\cdot 2\text{tr}(Q^3)+2(c'-c)\cdot 2\text{tr}(Q^4)\big)\Rightarrow\\
\Rightarrow f_{hom}^{LDG}(Q)&=(a'-a)\text{tr}(Q^2)-(b'-b)\text{tr}(Q^3)+(c'-c)\cdot 2\text{tr}(Q^4).
\end{align*}

Since $Q\in\mathcal{S}_0$, then, by Cayley-Hamilton theorem, if $\lambda_1$, $\lambda_2$ and $\lambda_3$ are the eigenvalues of $Q$, we have:
\begin{align*}
\begin{cases}
\lambda_1+\lambda_2+\lambda_3=\text{tr}(Q)=0\\
\lambda_1\lambda_2+\lambda_2\lambda_3+\lambda_3\lambda_1=\dfrac{1}{2}\big(\big(\text{tr}(Q)\big)^2-\text{tr}(Q^2)\big)=-\dfrac{1}{2}\text{tr}(Q^2)
\end{cases}
\end{align*}
and
\begin{align*}
\text{tr}(Q^4)=\lambda_1^4+\lambda_2^4+\lambda_3^4&=(\lambda_1^2+\lambda_2^2+\lambda_3^2)^2-2(\lambda_1^2\lambda_2^2+\lambda_2^2\lambda_3^2+\lambda_3^2\lambda_1^2)\\
&=(\lambda_1^2+\lambda_2^2+\lambda_3^2)^2-2\big((\lambda_1\lambda_2+\lambda_2\lambda_3+\lambda_3\lambda_1)^2-2\lambda_1\lambda_2\lambda_3(\lambda_1+\lambda_2+\lambda_3)\big)\\
&=\big(\text{tr}(Q^2)\big)^2-2\bigg(-\dfrac{1}{2}\text{tr}(Q^2)\bigg)^2\\
&=\dfrac{1}{2}\big(\text{tr}(Q^2)\big)^2
\end{align*}
from which we get the relation $2\text{tr}(Q^4)=\big(\text{tr}(Q^2)\big)^2$.

Hence, we conclude that:
\begin{align*}
f_{hom}^{LDG}(Q)&=(a'-a)\text{tr}(Q^2)-(b'-b)\text{tr}(Q^3)+(c'-c)\big(\text{tr}(Q^2)\big)^2.
\end{align*}
\end{proof}

If we assume that in the case of \eqref{defn:f_b_LDG} we have $b=c=0$ ($f_b^{RP}=a\;\text{tr}(Q^2)$) and we desire $b'=c'=0$, that is, the only nonzero coefficients are $a$ and $a'$, then another suitable choice for $f_s$ is given by the Rapini-Papoular form \eqref{eq:intro_Rapini-Papoular}:
\begin{align}\label{defn:f_s_RP}
f_s^{RP}(Q,\nu)=\dfrac{p}{12}(a'-a)\;\text{tr}(Q-Q_{\nu})^2,
\end{align}
where $Q_{\nu}=\nu\otimes\nu-\mathbb{I}_3/3$ and $\mathbb{I}_3$ is the $3\times 3$ identity matrix.

In this case, we have:
\begin{align}\label{defn:F_eps_RP}
\mathcal{F}_{\varepsilon}^{RP}[Q_{\varepsilon}]&:=\int_{\Omega_{\varepsilon}}\big(f_e(\nabla Q_{\varepsilon})+a\,\text{tr}(Q_{\varepsilon}^2)\big)\text{d}x+\dfrac{p}{2}\cdot(a'-a)\cdot\bigg(\dfrac{\varepsilon^{3-\alpha}}{\varepsilon-\varepsilon^{\alpha}}\int_{\partial\mathcal{N}_{\varepsilon}}\;\text{tr}(Q_{\varepsilon}-Q_{\nu})^2\text{d}\sigma\bigg) 
\end{align}
and we prove that
\begin{align}\label{defn:f_hom_RP}
f_{hom}^{RP}(Q)=(a'-a)\;\text{tr}(Q^2),
\end{align}
and
\begin{align}\label{defn:F_0_RP}
\mathcal{F}_0^{RP}[Q]&:=\int_{\Omega}\big(f_e(\nabla Q)+a'\,\text{tr}(Q^2)\big)\text{d}x.
\end{align}

\begin{theorem}\label{th:RP}
Let $a$ and $a'$ be two parameters. Suppose that the assumptions ($A_1$)-($A_7$) are satisfied and also the inequalities from \eqref{eq:ineq_for_elastic}. Then, for any isolated $H^1$-local minimiser $Q_0$ of the functional $\mathcal{F}_0^{RP}$ defined by \eqref{defn:F_0_RP}, and for $\varepsilon>0$ sufficiently small enough, there exists a sequence of local minimisers $Q_{\varepsilon}$ of the functionals $\mathcal{F}_{\varepsilon}^{RP}$, defined by \eqref{defn:F_eps_RP}, such that $E_{\varepsilon}Q_{\varepsilon}\rightarrow Q_0$ strongly in $H^1_g(\Omega,\mathcal{S}_0)$.  
\end{theorem}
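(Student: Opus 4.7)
The plan is to reduce \cref{th:RP} to \cref{th:local_min}, mirroring the argument used for \cref{th:LDG}. Two items need verification: first, that the data $f_e$, $f_b^{RP}(Q) = a\,\text{tr}(Q^2)$ and $f_s^{RP}$ of \eqref{defn:f_s_RP} satisfy ($A_5$)--($A_7$); second, that the homogenised density produced by \eqref{defn:f_hom_sym} applied to $f_s^{RP}$ coincides with $(a'-a)\,\text{tr}(Q^2)$ up to a $Q$-independent constant, so that the abstract $\mathcal{F}_0$ of \cref{th:local_min} differs from $\mathcal{F}_0^{RP}$ of \eqref{defn:F_0_RP} only by an additive constant (which does not alter the set of $H^1$-local minimisers).

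The verification of the assumptions is essentially routine. ($A_5$) under \eqref{eq:ineq_for_elastic} is exactly as in \cref{th:LDG}. For ($A_6$), continuity and the growth bound $|a\,\text{tr}(Q^2)|\leq |a|(|Q|^6+1)$ are immediate, and lower-boundedness is inherited from $\text{tr}(Q^2)\geq 0$. For ($A_7$), since $Q\mapsto \text{tr}(Q-Q_\nu)^2$ is a quadratic polynomial in $Q$ with $\nu$-smooth coefficients, the required (actually stronger, purely quadratic) local Lipschitz estimate holds uniformly for $\nu\in\mathbb{S}^2$.

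The core step is the direct computation of $f_{hom}^{RP}$. From \eqref{defn:f_hom_sym},
\begin{equation*}
f_{hom}^{RP}(Q) = \dfrac{2}{p}\int_{\partial\mathcal{C}} f_s^{RP}(Q,\nu)\,\text{d}\sigma = \dfrac{a'-a}{6}\int_{\partial\mathcal{C}}\text{tr}(Q-Q_\nu)^2\,\text{d}\sigma.
\end{equation*}
I would expand the integrand using $|\nu|=1$ (hence $(\nu\otimes\nu)^2=\nu\otimes\nu$, so that a short computation gives $\text{tr}(Q_\nu^2)=\tfrac{2}{3}$) together with $\text{tr}(Q)=0$ (which turns $\text{tr}(Q Q_\nu)$ into $\nu\cdot Q\nu$), obtaining $\text{tr}(Q-Q_\nu)^2 = \text{tr}(Q^2) - 2\,\nu\cdot Q\nu + \tfrac{2}{3}$. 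Then I integrate face by face over the six unit-area faces of $\partial\mathcal{C}$: the first term yields $6\,\text{tr}(Q^2)$; the middle term yields $2\,\text{tr}(Q) = 0$ (on the two faces normal to $e_i$ the integrand equals $Q_{ii}$); the constant term yields $4$. Combining gives $f_{hom}^{RP}(Q) = (a'-a)\,\text{tr}(Q^2) + \tfrac{2}{3}(a'-a)$. The trailing $Q$-independent constant only shifts the total energy by $\tfrac{2}{3}(a'-a)|\Omega|$, so local minimisers of the abstract $\mathcal{F}_0$ coincide with those of $\mathcal{F}_0^{RP}$, and \cref{th:local_min} delivers the sequence $Q_\varepsilon$ with the claimed convergence. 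I do not foresee a genuine analytic obstacle; the main step is the surface integral computation above, and the only subtlety is bookkeeping the harmless additive constant.
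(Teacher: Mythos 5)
Your proposal is correct and follows essentially the same route as the paper: reduce to \cref{th:local_min} and check that \eqref{defn:f_hom_sym} applied to $f_s^{RP}$ yields $(a'-a)\,\mathrm{tr}(Q^2)$ plus a harmless additive constant. Your face-by-face evaluation of $\int_{\partial\mathcal{C}}\mathrm{tr}(Q-Q_\nu)^2\,\mathrm{d}\sigma=6\,\mathrm{tr}(Q^2)+4$ simply reproves \cref{prop:int_en_dens_RP} inline, which is exactly what the paper invokes.
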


\begin{proof}
The proof follows the same steps as in the proof of \cref{th:LDG}, using \cref{prop:int_en_dens_RP}. We only have to prove that relation \eqref{defn:f_hom_RP} can be obtained using \eqref{defn:f_hom_sym}, knowing that \eqref{defn:f_s_RP} holds.

From \eqref{defn:f_hom_sym} and \cref{prop:int_en_dens_RP}, we have:
\begin{align*}
f_{hom}^{RP}(Q)&=\dfrac{2}{p}\int_{\partial\mathcal{C}}f_s^{RP}(Q,\nu)\text{d}\sigma=\dfrac{2}{p}\cdot\dfrac{p}{12}(a'-a)\int_{\partial\mathcal{C}}\text{tr}(Q-Q_{\nu})\text{d}\sigma \\
&=\dfrac{(a'-a)}{6}\big(6\text{tr}(Q^2)+4\big)=(a'-a)\text{tr}(Q^2)+\dfrac{2}{3}(a'-a).
\end{align*}

We can eliminate the constant $\dfrac{2}{3}(a'-a)$ from $f_{hom}^{RP}$, since it does not influence the minimisers of the functional $\mathcal{F}_{\varepsilon}^{RP}$, so we obtain: $f_{hom}^{RP}(Q)=(a'-a)\text{tr}(Q^2)$.
\end{proof}

For the more general case described by \eqref{defn:f_b_gen}, we can choose:
\begin{align}\label{defn:f_s_gen}
f_s^{gen}(Q,\nu)=\dfrac{p}{4}\sum_{k=2}^{M}b_k(\nu\cdot Q^k\nu),
\end{align}
where $(b_k)_{k\in\overline{2,M}}$ are the coefficients of the polynomial $i:\mathbb{R}\rightarrow\mathbb{R}$ of degree $M\in\mathbb{N}$, $M\geq 4$, defined by $i(x)=\sum_{k=2}^{M}b_k x^k$, for any $x\in\mathbb{R}$, with the property that $i$ admits at least one local minimum over $\mathbb{R}$.

In the same manner, we have 
\begin{align*}
f_{hom}^{gen}(Q)=\sum_{k=2}^{\text{max}\{M,N\}}c_k\;\text{tr}(Q^k),
\end{align*}
where, for any $k\in\overline{2,\text{max}\{M,N\}}$, we have
\begin{align*}
c_k=\begin{cases}
a_k+b_k,\;\text{if}\;2\leq k\leq\text{min}\{M,N\}\\
a_k,\;\text{if}\;\text{min}\{M,N\}<k\leq\text{max}\{M,N\}\;\text{and}\;M\leq N\\
b_k,\;\text{if}\;\text{min}\{M,N\}<k\leq\text{max}\{M,N\}\;\text{and}\;M\geq N.
\end{cases}
\end{align*}

In this case, $\mathcal{F}_{\varepsilon}$ and $\mathcal{F}_0$ become:
\begin{align}\label{defn:F_eps_gen}
\mathcal{F}_{\varepsilon}^{gen}[Q_{\varepsilon}]:=\int_{\Omega_{\varepsilon}}\bigg(f_e(\nabla Q_{\varepsilon})+\sum_{k=2}^{N}a_k\;\text{tr}(Q_{\varepsilon}^k)\bigg)\text{d}x+\dfrac{p}{4}\cdot\sum_{k=2}^{M}b_k\cdot\bigg(\dfrac{\varepsilon^{3-\alpha}}{\varepsilon-\varepsilon^{\alpha}}\int_{\partial\mathcal{N}_{\varepsilon}}(\nu\cdot Q_{\varepsilon}^k\nu)\text{d}\sigma\bigg)
\end{align}
and
\begin{align}\label{defn:F_0_gen}
\mathcal{F}_{0}^{gen}[Q]=\int_{\Omega}\bigg(f_e(\nabla Q)+\sum_{k=2}^{\text{max}\{M,N\}}c_k\;\text{tr}(Q^k)\bigg)\text{d}x.
\end{align}

\begin{theorem}\label{th:gen}
Let $(a_k)_{k\in\overline{2,N}}$ and $(b_k)_{k\in\overline{2,M}}$ be such that the polynomials $h$ and $i$ defined earlier admit at least one local minimum over $\mathbb{R}$. Suppose that the assumptions ($A_1$)-($A_7$) are satisfied and also the inequalities from \eqref{eq:ineq_for_elastic}. Then, for any isolated $H^1$-local minimiser $Q_0$ of the functional $\mathcal{F}_0^{gen}$ defined by \eqref{defn:F_0_gen}, and for $\varepsilon>0$ sufficiently small enough, there exists a sequence of local minimisers $Q_{\varepsilon}$ of the functionals $\mathcal{F}_{\varepsilon}^{gen}$, defined by \eqref{defn:F_eps_gen}, such that $E_{\varepsilon}Q_{\varepsilon}\rightarrow Q_0$ strongly in $H^1_g(\Omega,\mathcal{S}_0)$.  
\end{theorem}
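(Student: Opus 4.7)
The plan is to reduce \cref{th:gen} to \cref{th:local_min} by verifying that the pair $(f_b^{gen}, f_s^{gen})$ falls within hypotheses ($A_1$)--($A_7$) and by computing explicitly the homogenised surface contribution $f_{hom}^{gen}$ via formula \eqref{defn:f_hom_sym}. The argument will run exactly in parallel to the proofs of \cref{th:LDG} and \cref{th:RP}, with the complication that the polynomial degree is now arbitrary rather than fixed at four.

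First, I would verify the abstract assumptions on $f_b^{gen}$ and $f_s^{gen}$. The elastic assumption ($A_5$) is already guaranteed by \eqref{eq:ineq_for_elastic}. For ($A_6$), continuity of $f_b^{gen}$ is immediate, the growth bound $|f_b^{gen}(Q)|\lesssim |Q|^6+1$ is obtained by invoking the Cayley--Hamilton reduction recalled just before \cref{th:LDG} to rewrite $\text{tr}(Q^k)$ for $k\geq 4$ in terms of $\text{tr}(Q^2)$ and $\text{tr}(Q^3)$, and boundedness from below follows from the hypothesis that the scalar polynomial $h$ admits a local minimum together with the leading-term analysis applied after the same reduction. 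For ($A_7$), each summand $\nu\cdot Q^k\nu$ is a polynomial of degree $k$ in $Q$ depending smoothly on $\nu$, and the telescoping identity
\begin{equation*}
\nu\cdot Q_1^k\nu-\nu\cdot Q_2^k\nu=\nu\cdot\sum_{j=0}^{k-1}Q_1^j(Q_1-Q_2)Q_2^{k-1-j}\nu
\end{equation*}
produces the required Lipschitz-type bound with the cubic growth factor once the finite sum over $2\leq k\leq M$ is taken.

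Second, I would compute $f_{hom}^{gen}$ in the symmetric case $p=q=r$. Formula \eqref{defn:f_hom_sym} gives
\begin{equation*}
f_{hom}^{gen}(Q)=\dfrac{2}{p}\int_{\partial\mathcal{C}}f_s^{gen}(Q,\nu)\,\text{d}\sigma=\dfrac{1}{2}\sum_{k=2}^{M}b_k\int_{\partial\mathcal{C}}(\nu\cdot Q^k\nu)\,\text{d}\sigma.
\end{equation*}
The key algebraic input is the identity $\int_{\partial\mathcal{C}}\nu\cdot Q^k\nu\,\text{d}\sigma=2\,\text{tr}(Q^k)$ valid for every $k\geq 2$, which extends the cases $k=2,3,4$ recorded in \cref{prop:int_en_dens_LDG}. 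It follows immediately by noting that $\partial\mathcal{C}$ consists of the six unit squares with outward normals $\pm e_1,\pm e_2,\pm e_3$; each pair of opposite faces contributes $2(e_i\cdot Q^k e_i)=2(Q^k)_{ii}$ since $\nu\cdot Q^k\nu$ is invariant under $\nu\mapsto -\nu$, and summing over $i\in\{1,2,3\}$ produces the trace. Substituting back, $f_{hom}^{gen}(Q)=\sum_{k=2}^{M}b_k\,\text{tr}(Q^k)$, and adding this to $f_b^{gen}$ recovers $\sum_{k=2}^{\max\{M,N\}}c_k\,\text{tr}(Q^k)$ with the coefficients $c_k$ exactly as defined in the statement, so that the homogenised functional coincides with $\mathcal{F}_0^{gen}$ from \eqref{defn:F_0_gen}. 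An application of \cref{th:local_min} then yields the desired sequence $Q_\varepsilon$ of $H^1$-local minimisers of $\mathcal{F}_\varepsilon^{gen}$ with $E_\varepsilon Q_\varepsilon\to Q_0$ strongly in $H^1_g(\Omega,\mathcal{S}_0)$.

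The main obstacle I foresee is the algebraic identity extending \cref{prop:int_en_dens_LDG} to arbitrary $k$; once the cubic-symmetry reduction above is made explicit, the rest is bookkeeping. A secondary technical point is ensuring ($A_6$) is not violated by potentially super-sextic terms in $f_b^{gen}$, which is handled precisely by the Cayley--Hamilton reduction that collapses every $\text{tr}(Q^k)$ with $k\geq 4$ into a polynomial of degree at most six in $|Q|$; one must also check that the resulting polynomial inherits the local-minimum property from $h$ so that isolated $H^1$-local minimisers of $\mathcal{F}_0^{gen}$ are available to feed into \cref{th:local_min}.
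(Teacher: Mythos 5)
Your proposal follows the paper's own route: \cref{th:gen} is deduced from \cref{th:local_min}, with the homogenised density computed from \eqref{defn:f_hom_sym} and the identity $\int_{\partial\mathcal{C}}\nu\cdot Q^k\nu\,\text{d}\sigma=2\,\text{tr}(Q^k)$. Note that this identity is not an ``extension'' you need to supply: \cref{prop:int_en_dens_LDG} is already stated (and proved, by exactly the opposite-faces computation you describe) for every $k\geq 2$. Your computation $f_{hom}^{gen}(Q)=\sum_{k=2}^{M}b_k\,\text{tr}(Q^k)$ and the identification of the coefficients $c_k$ are correct, so the reduction to \cref{th:local_min} is precisely the paper's (very short) proof.

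The one point that is wrong is your claimed verification of ($A_6$) and ($A_7$). The Cayley--Hamilton reduction expresses $\text{tr}(Q^k)$, $k\geq 4$, as a polynomial in $\text{tr}(Q^2)$ and $\text{tr}(Q^3)$, but that polynomial is still homogeneous of degree $k$ in $Q$ (e.g. $\text{tr}(Q^4)=\tfrac{1}{2}(\text{tr}(Q^2))^2$, and $\text{tr}(Q^6)$ contains $(\text{tr}(Q^2))^3$), so $\text{tr}(Q^k)$ grows like $|Q|^k$; it does not ``collapse into a polynomial of degree at most six in $|Q|$'', and for $N>6$ the bound $|f_b^{gen}(Q)|\lesssim|Q|^6+1$ of ($A_6$) simply fails unless the offending coefficients vanish. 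Likewise your telescoping estimate yields $|f_s^{gen}(Q_1,\nu)-f_s^{gen}(Q_2,\nu)|\lesssim|Q_1-Q_2|\big(|Q_1|^{M-1}+|Q_2|^{M-1}+1\big)$, which is the cubic-factor bound of ($A_7$) only when $M\leq 4$. This does not sink the proof of the theorem as stated, because ($A_1$)--($A_7$) are part of its hypotheses (which is exactly how the paper treats the matter: its proof is just the citation of \cref{th:local_min} together with \cref{prop:int_en_dens_LDG}); but the verification you sketch cannot be carried out for general $M$ and $N$, and if one wanted the assumptions to hold automatically for these polynomial densities one would need $M\leq 4$ and $N\leq 6$, or a different growth framework altogether.
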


\begin{proof}
This theorem is a particular case of \cref{th:local_min}. Using once again \cref{prop:int_en_dens_LDG}, the proof is finished. 
\end{proof}

\subsubsection{The case $p\neq q\neq r\neq p$}

Assume now that $p$, $q$ and $r$ are three different real values, each greater than or equal to 1. For this case, we only present one of the theorems stated in the last subsection.

Relation \eqref{defn:f_hom} states that:
\begin{equation*}
f_{hom}(Q)=\dfrac{q+r}{qr}\int_{\mathcal{C}^x}f_s(Q,\nu)\text{d}\sigma+\dfrac{p+r}{pr}\int_{\mathcal{C}^y}f_s(Q,\nu)\text{d}\sigma+\dfrac{p+q}{pq}\int_{\mathcal{C}^z}f_s(Q,\nu)\text{d}\sigma.
\end{equation*}

We only illustrate how to proceed for the case in which we have 
\begin{align*}
f_b(Q)=a\;\text{tr}(Q^2)-b\;\text{tr}(Q^3)+c\;\text{tr}(Q^4)=a\;\text{tr}(Q^2)-b\;\text{tr}(Q^3)+\dfrac{c}{2}\;\big(\text{tr}(Q^2)\big)^2,
\end{align*} 
with $c>0$, and similar results can be obtained for the other cases in which we modify the form of $f_b$. 

Let 
\begin{align*}
A=\dfrac{1}{3}\begin{pmatrix}
-\dfrac{2}{p}+\dfrac{1}{q}+\dfrac{1}{r} & 0 & 0\\
0 & \dfrac{1}{p}-\dfrac{2}{q}+\dfrac{1}{r} & 0\\
0 & 0 & \dfrac{1}{p}+\dfrac{1}{q}-\dfrac{2}{r}
\end{pmatrix}\hspace{3mm}\text{and}\hspace{3mm}B=\begin{pmatrix}
\dfrac{1}{q}+\dfrac{1}{r} & 0 & 0\\
0 & \dfrac{1}{p}+\dfrac{1}{r} & 0\\
0 & 0 & \dfrac{1}{p}+\dfrac{1}{q}
\end{pmatrix}.
\end{align*}
and $\omega=\dfrac{2}{3}\bigg(\dfrac{1}{p}+\dfrac{1}{q}+\dfrac{1}{r}\bigg)$. Note that $A$, $B$ and $\omega$ are constants depending only on the choice of $p$, $q$ and $r$. Moreover, we have $\text{tr}(A)=0$ and $B=\omega\mathbb{I}_3+A$, where $\mathbb{I}_3$ is the $3\times 3$ identity matrix.

Consider now
\begin{align}\label{defn:f_s_asym}
f_s^{asym}(Q,\nu)=\dfrac{1}{2\omega}\big((a'-a)(\nu\cdot Q^2\nu)-(b'-b)(\nu\cdot Q^3\nu)+(c'-c)(\nu\cdot Q^4\nu)\big),
\end{align}
with $a'$, $b'$ and $c'$ real parameters such that $c'>0$ and the associated free energy functional:
\begin{align}\label{defn:F_eps_asym}
\mathcal{F}_{\varepsilon}^{asym}[Q_{\varepsilon}]:=\int_{\Omega}\big(f_e(\nabla Q_{\varepsilon})+a\;\text{tr}(Q_{\varepsilon}^2)-b\;\text{tr}(Q_{\varepsilon}^3)+c\;\text{tr}(Q_{\varepsilon}^4)\big)\text{d}x+\dfrac{\varepsilon^{3-\alpha}}{\varepsilon-\varepsilon^{\alpha}}\int_{\partial\mathcal{N}_{\varepsilon}}f_s^{asym}(Q_{\varepsilon},\nu).
\end{align}

We prove in the next theorem that the homogenised functional is:
\begin{align}\label{defn:f_hom_asym}
f_{hom}^{asym}(Q)&=\big((a'-a)\text{tr}(Q^2)-(b'-b)\text{tr}(Q^3)+(c'-c)\text{tr}(Q^4)\big)+\notag\\
&\hspace{2mm}+\dfrac{1}{\omega}\big((a'-a)\text{tr}(A\cdot Q^2)-(b'-b)\text{tr}(A\cdot Q^3)+(c'-c)\text{tr}(A\cdot Q^4)\big).
\end{align}

\begin{theorem}\label{th:asym}
Let $(a,b,c)$ and $(a',b',c')$ be two set of parameters with $c>0$ and $c'>0$. Suppose that the assumptions ($A_1$)-($A_7$) are satisfied and also the inequalities from \eqref{eq:ineq_for_elastic}. Then, for $\varepsilon>0$ sufficiently small enough and for any isolated $H^1$-local minimiser $Q_0$ of the functional:
\begin{align*}
\mathcal{F}_0^{asym}[Q]&:=\int_{\Omega}\big(f_e(\nabla Q)+a'\emph{tr}(Q^2)-b'\emph{tr}(Q^3)+c'\big(\emph{tr}(Q^2)\big)^2\big)\emph{d}x+\notag\\
&\hspace{2mm}+\dfrac{1}{\omega}\int_{\Omega}\big((a'-a)\emph{tr}(A\cdot Q^2(x))-(b'-b)\emph{tr}(A\cdot Q^3(x))+(c'-c)\emph{tr}(A\cdot Q^4(x))\big)\emph{d}x
\end{align*}
there exists a sequence of local minimisers $Q_{\varepsilon}$ of the functionals $\mathcal{F}_{\varepsilon}^{asym}$, defined by \eqref{defn:F_eps_asym}, such that $E_{\varepsilon}Q_{\varepsilon}\rightarrow Q_0$ strongly in $H^1_g(\Omega,\mathcal{S}_0)$.  
\end{theorem}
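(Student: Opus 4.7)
The plan is to deduce this theorem from Theorem \ref{th:local_min}. First I would check that the data $(f_e, f_b, f_s^{asym})$ satisfy hypotheses $(A_5)$--$(A_7)$: $f_e$ is strongly convex under \eqref{eq:ineq_for_elastic}; the bulk $f_b(Q) = a\,\text{tr}(Q^2) - b\,\text{tr}(Q^3) + c\,\text{tr}(Q^4)$ is continuous, bounded below (thanks to $c > 0$), and obeys the $|Q|^6 + 1$ growth; $f_s^{asym}$ is a polynomial of degree four in $Q$ with no $\nu$-dependence beyond the quadratic form $\nu\cdot Q^k\nu$, and hence is Lipschitz in $Q$ with the quartic factor required by $(A_7)$.

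The heart of the matter is therefore to verify that the homogenised integrand produced by formula \eqref{defn:f_hom} reduces, after combining with $f_b$, to the integrand appearing in $\mathcal{F}_0^{asym}$. To this end I would first compute, for every integer $k \geq 1$, the elementary integrals over the faces of the unit cube $\mathcal{C} = [-1/2, 1/2]^3$: on $\mathcal{C}^x$ the outward normal is $\pm e_1$ so $\nu \cdot Q^k \nu = (Q^k)_{11}$, which gives $\int_{\mathcal{C}^x} \nu \cdot Q^k \nu \, d\sigma = 2(Q^k)_{11}$, and analogously for the $y$- and $z$-faces. Substituting into \eqref{defn:f_hom} with $f_s = f_s^{asym}$ and collecting terms by the power $k \in \{2, 3, 4\}$, one obtains
\begin{equation*}
f_{hom}^{asym}(Q) = \frac{1}{\omega}\bigl[(a'-a)\,\text{tr}(B \cdot Q^2) - (b'-b)\,\text{tr}(B \cdot Q^3) + (c'-c)\,\text{tr}(B \cdot Q^4)\bigr],
\end{equation*}
where $B$ is exactly the diagonal matrix introduced in the statement, since the $(i,i)$-entry $(q+r)/(qr) = 1/q + 1/r$ (and its cyclic analogues) produces precisely the diagonal of $B$.

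The decomposition $B = \omega\,\mathbb{I}_3 + A$ then splits $\text{tr}(B \cdot Q^k) = \omega\,\text{tr}(Q^k) + \text{tr}(A \cdot Q^k)$, which immediately produces \eqref{defn:f_hom_asym}: a symmetric piece giving the renormalised coefficients $(a'-a, -(b'-b), c'-c)$ on $\text{tr}(Q^k)$, and a traceless-weighted piece $\frac{1}{\omega}\,\text{tr}(A \cdot Q^k)$ that encodes the loss of cubic symmetry. Adding the bulk contribution $f_b$ (which combines with the first piece to give coefficients $a', -b', c'$ on $\text{tr}(Q^k)$) and, if desired, rewriting $\text{tr}(Q^4) = \tfrac{1}{2}(\text{tr}(Q^2))^2$ for $Q \in \mathcal{S}_0$ via Cayley--Hamilton exactly as in the proof of Theorem \ref{th:LDG}, recovers the functional $\mathcal{F}_0^{asym}$ stated in the theorem.

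With the correspondence $\mathcal{F}_0 = \mathcal{F}_0^{asym}$ and $\mathcal{F}_\varepsilon = \mathcal{F}_\varepsilon^{asym}$ established, Theorem \ref{th:local_min} directly supplies the sequence of local minimisers and their strong $H^1_g$-convergence after harmonic extension. There is no serious obstacle beyond the linear-algebraic bookkeeping above; the only genuinely new phenomenon is the appearance of the anisotropic term $\frac{1}{\omega}\,\text{tr}(A \cdot Q^k)$, which is a clean consequence of the fact that the scaffold weighting matrix $B$ fails to be a scalar multiple of $\mathbb{I}_3$ when $p, q, r$ are not all equal, and whose traceless part $A$ records the preferred anisotropy axis inherited from the scaffold geometry.
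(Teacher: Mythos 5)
Your proposal is correct and follows essentially the same route as the paper: reduce to \cref{th:local_min}, compute $\int_{\mathcal{C}^x}\nu\cdot Q^k\nu\,\text{d}\sigma=2(Q^k)_{11}$ (and its analogues), plug $f_s^{asym}$ into \eqref{defn:f_hom} to get $\frac{1}{\omega}\,\text{tr}(B\cdot Q^k)$ terms, and split $B=\omega\mathbb{I}_3+A$ to obtain \eqref{defn:f_hom_asym}. The only additions beyond the paper's argument are your explicit verification of $(A_5)$--$(A_7)$ and the Cayley--Hamilton rewriting of $\text{tr}(Q^4)$, both of which are harmless bookkeeping.
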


\begin{proof}
We follow the same steps as in \cref{th:LDG} and in \cref{th:RP}, that is, we prove that relation \eqref{defn:f_hom_asym} can be obtained using \eqref{defn:f_hom} and \eqref{defn:f_s_asym}.

In the proof of \cref{prop:int_en_dens_LDG}, we obtain that:
\begin{align*}
\int_{\mathcal{C}^x}\nu\cdot Q^k\nu\text{d}\sigma =2q_{11,k},\hspace{3mm}
\int_{\mathcal{C}^y}\nu\cdot Q^k\nu\text{d}\sigma =2q_{22,k}\hspace{3mm}\text{and}\hspace{3mm}\int_{\mathcal{C}^z}\nu\cdot Q^k\nu\text{d}\sigma =2q_{33,k},
\end{align*}
for any $k\in\mathbb{N}$, $k\neq 0$, where $q_{ij,k}$ is the $ij$-th component of $Q^k$, from which we get:
\begin{align*}
\int_{\mathcal{C}^x}f_s^{asym}(Q,\nu)\text{d}\sigma=\dfrac{1}{\omega}((a'-a)q_{11,2}-(b'-b)q_{11,3}+(c'-c)q_{11,4})\\
\int_{\mathcal{C}^y}f_s^{asym}(Q,\nu)\text{d}\sigma=\dfrac{1}{\omega}((a'-a)q_{22,2}-(b'-b)q_{22,3}+(c'-c)q_{22,4})\\
\int_{\mathcal{C}^z}f_s^{asym}(Q,\nu)\text{d}\sigma=\dfrac{1}{\omega}((a'-a)q_{33,2}-(b'-b)q_{33,3}+(c'-c)q_{33,4}).
\end{align*}

Using now \eqref{defn:f_hom}, we obtain:
\begin{align*}
f_{hom}^{asym}(Q)&=\dfrac{1}{\omega}(a'-a)\bigg(q_{11,2}\bigg(\dfrac{1}{q}+\dfrac{1}{r}\bigg)+q_{22,2}\bigg(\dfrac{1}{p}+\dfrac{1}{r}\bigg)+q_{33,2}\bigg(\dfrac{1}{p}+\dfrac{1}{q}\bigg)\bigg)-\\
&-\dfrac{1}{\omega}(b'-b)\bigg(q_{11,3}\bigg(\dfrac{1}{q}+\dfrac{1}{r}\bigg)+q_{22,3}\bigg(\dfrac{1}{p}+\dfrac{1}{r}\bigg)+q_{33,3}\bigg(\dfrac{1}{p}+\dfrac{1}{q}\bigg)\bigg)+\\
&+\dfrac{1}{\omega}(c'-c)\bigg(q_{11,4}\bigg(\dfrac{1}{q}+\dfrac{1}{r}\bigg)+q_{22,4}\bigg(\dfrac{1}{p}+\dfrac{1}{r}\bigg)+q_{33,4}\bigg(\dfrac{1}{p}+\dfrac{1}{q}\bigg)\bigg)\\
\end{align*}
which we can see as:
\begin{align*}
f_{hom}^{asym}(Q)&=\dfrac{1}{\omega}\big((a'-a)\text{tr}(B\cdot Q^2)-(b'-b)\text{tr}(B\cdot Q^3)+(c'-c)\text{tr}(B\cdot Q^4)\big)\\
\end{align*}
and since $B=\omega\mathbb{I}_3+A$, we obtain:
\begin{align*}
f_{hom}^{asym}(Q)&=\big((a'-a)\text{tr}(Q^2)-(b'-b)\text{tr}(Q^3)+(c'-c)\text{tr}(Q^4)\big)+\\
&+\dfrac{1}{\omega}\big((a'-a)\text{tr}(A\cdot Q^2)-(b'-b)\text{tr}(A\cdot Q^3)+(c'-c)\text{tr}(A\cdot Q^4)\big),
\end{align*}
from which we conclude.
\end{proof}

\begin{remark}
We have obtained in this case a part which is exactly the same as in the case in which we have cubic symmetry, but also three terms of the form $\emph{tr}(A\cdot Q^k)$ which describe a new preferred alignment of the liquid crystal particles inside of the domain, given by the loss of the cubic symmetry of the scaffold.
\end{remark}

\section{Properties of the functional \texorpdfstring{$\mathcal{F}_{\varepsilon}$}{Fe}}\label{section:prop_F_eps}

\subsection{Analytical tools: trace and extension}\label{subsection:extension}

The main result of this subsection consists on a $L^p$ inequality, which is adapted from lemma 3.1. from \cite{CanevariZarnescu1}, because our scaffold now consists on inter-connected particles and the interaction between the liquid crystal and the cubic microlattice happens only up to five faces of the particles of the scaffold. 

In the following, given a set $\mathcal{P}\subset\mathbb{R}^{2}$ and a real number $a>0$, we define $a\mathcal{P}=\{ax:x\in\mathcal{P}\}$.

\begin{lemma}\label{lemma:bP-aP}
Let $\mathcal{P}\subseteq\mathbb{R}^2$ be a compact, convex set whose interior contains the origin. Let $a$ and $b$ be positive numbers such that $a<b$. Then there exists a bijective, Lipschitz map $\phi:b\mathcal{P}\setminus a\mathcal{P}\rightarrow \overline{B}_b\setminus\overline{B}_a$ that has a Lipschitz inverse and satisfies $$\|\nabla\phi\|_{L^{\infty}(b\mathcal{P}\setminus a\mathcal{P})}+\|\nabla(\phi^{-1})\|_{L^{\infty}(\overline{B}_b\setminus\overline{B}_a)}\leq C(\mathcal{P}),$$ where $C(\mathcal{P})$ is a positive constant that depends only on $\mathcal{P}$ and neither on $a$ nor $b$. 
\end{lemma}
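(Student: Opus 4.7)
My strategy is to build $\phi$ as the radial rescaling that unfolds level sets of the Minkowski gauge of $\mathcal{P}$ onto concentric circles. Since $\mathcal{P}$ is compact and convex with $0$ in its interior, there exist $0<r_-\leq r_+<\infty$ with $\overline{B}_{r_-}\subseteq \mathcal{P}\subseteq \overline{B}_{r_+}$; consequently the Minkowski gauge $\mu(x):=\inf\{t>0:x\in t\mathcal{P}\}$ is positively $1$-homogeneous, convex, satisfies $|x|/r_+\leq \mu(x)\leq |x|/r_-$, and is globally Lipschitz on $\mathbb{R}^2$ with constant at most $1/r_-$ (from subadditivity and the inclusion $B_{r_-}\subset\mathcal{P}$). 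The boundary then admits a polar description $\partial\mathcal{P}=\{\rho(\omega)\omega:\omega\in S^1\}$ with $\rho:=1/\mu|_{S^1}\in[r_-,r_+]$, and $\rho$ inherits a Lipschitz bound $\mathrm{Lip}(\rho)\leq r_+^2/r_-$ from $\mu$ and the lower bound $\mu|_{S^1}\geq 1/r_+$.

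I then define, for $x,y\neq 0$,
\[
\phi(x):=\mu(x)\,\frac{x}{|x|},\qquad \phi^{-1}(y):=\rho\!\left(\frac{y}{|y|}\right)\! y.
\]
The scaling relation $\partial(t\mathcal{P})=\{t\rho(\omega)\omega:\omega\in S^1\}$ makes it immediate that $\phi$ maps $\partial(t\mathcal{P})$ bijectively onto $\partial B_t$ for each $t\in[a,b]$, so $\phi$ is a bijection $b\mathcal{P}\setminus a\mathcal{P}\to \overline{B}_b\setminus\overline{B}_a$ with the displayed inverse.

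For the Lipschitz estimates, differentiating almost everywhere (by Rademacher, since $\mu$ is Lipschitz) yields
\[
D\phi(x)=\frac{x}{|x|}\otimes\nabla\mu(x)+\frac{\mu(x)}{|x|}\!\left(I-\frac{x}{|x|}\otimes\frac{x}{|x|}\right),
\]
whose two terms are bounded in operator norm by $\|\nabla\mu\|_\infty\leq 1/r_-$ and by $2\mu(x)/|x|=2/\rho(x/|x|)\leq 2/r_-$, respectively. For the inverse,
\[
D\phi^{-1}(y)=\rho\!\left(\frac{y}{|y|}\right) I + y\otimes\nabla\!\left[\rho\!\left(\frac{y}{|y|}\right)\right]
\]
has first term bounded by $r_+$; in the second term, the factor $|y|$ from the outer product cancels the $1/|y|$ coming from differentiating $y\mapsto y/|y|$, giving a bound by $\mathrm{Lip}(\rho)\leq r_+^2/r_-$.

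The main obstacle---and the reason the Minkowski gauge is the right tool---is ensuring the bounds do not degenerate as $a\to 0$ or $b-a\to 0$. This is automatic here because every potentially singular factor $\mu(x)/|x|$ collapses to $1/\rho(x/|x|)$, a quantity controlled purely by the geometry of $\mathcal{P}$ through $r_-$, with no reference to the radii $a,b$. All constants therefore depend only on $r_-, r_+$, and $\mathrm{Lip}(\rho)$, i.e.\ only on $\mathcal{P}$, which is exactly the uniformity claimed in the statement.
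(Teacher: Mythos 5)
Your construction is correct, and it is essentially the same argument the paper relies on: the paper gives no details and simply defers to Lemma 3.2 of \cite{CanevariZarnescu1}, whose proof is precisely this radial rescaling built from the Minkowski gauge of $\mathcal{P}$, here adapted to $\mathbb{R}^2$. Your writeup supplies the details correctly (gauge Lipschitz bounds via $\overline{B}_{r_-}\subseteq\mathcal{P}\subseteq\overline{B}_{r_+}$, explicit inverse through $\rho=1/\mu|_{S^1}$, and gradient bounds independent of $a,b$), so nothing further is needed.
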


The proof of \cref{lemma:bP-aP} follows the same steps as Lemma 3.2. from \cite{CanevariZarnescu1}, the only difference being that now we are in the case of $\mathbb{R}^2$ instead of $\mathbb{R}^3$.

\begin{lemma}\label{lemma:bP-aP-ineq}
Let $\mathcal{P}\subseteq\mathbb{R}^2$ be a compact, convex set whose interior contains the origin and $n\in[2,4]$. Then, there exists $C=C(\mathcal{P},\phi)>0$, such that for any $0<a\leq b$ and any $u\in H^1(b\mathcal{P}\setminus a\mathcal{P})$, there holds $$\oint_{\partial\big(a\mathcal{P}\big)}|u|^n\text{d}s\lesssim \dfrac{2aC}{b^2-a^2}\int_{b\mathcal{P}\setminus a\mathcal{P}}|u|^n\text{d}x+\dfrac{nC}{2}\int_{b\mathcal{P}\setminus a\mathcal{P}}\big(|u|^{2n-2}+|\nabla u|^2\big)\text{d}x,$$
where $\displaystyle{\oint}$ represents the curvilinear integral in $\mathbb{R}^2$.
\end{lemma}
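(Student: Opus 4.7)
The plan is to reduce the statement to a circular annulus via the bi-Lipschitz map $\phi$ from \cref{lemma:bP-aP}, and then to prove the inequality there by a weighted radial trace argument in polar coordinates.

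\textbf{Reduction to the disk.} Setting $\tilde u := u \circ \phi^{-1} \in H^1(\overline B_b \setminus \overline B_a)$, the uniform Lipschitz bounds on $\phi$ and $\phi^{-1}$ provided by \cref{lemma:bP-aP} imply that, up to a multiplicative constant depending only on $\mathcal{P}$, each of the integrals of $|u|^n$, $|u|^{2n-2}$, and $|\nabla u|^2$ over $b\mathcal{P}\setminus a\mathcal{P}$ is comparable to its counterpart for $\tilde u$ over $\overline B_b\setminus \overline B_a$, and similarly $\oint_{\partial(a\mathcal{P})}|u|^n\,ds$ is comparable to $\oint_{\partial B_a}|\tilde u|^n\,d\sigma$ since both boundary curves are mapped onto each other by $\phi$ with bounded distortion. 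Hence it suffices to establish the claim in the case of a true circular annulus, and the $\phi$-dependence is absorbed into the constant $C(\mathcal{P},\phi)$.

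\textbf{Radial estimate on the annulus.} By density, assume $u$ is smooth. Working in polar coordinates $(r,\theta)$, for each fixed $\theta$ the fundamental theorem of calculus, combined with $|u|^{n-2}u\cdot\partial_s u \leq |u|^{n-1}|\partial_s u|$ and Young's inequality $|u|^{n-1}|\partial_s u| \leq \tfrac{1}{2}(|u|^{2n-2}+|\partial_s u|^2)$, yields
\begin{equation*}
|u(a,\theta)|^n \leq |u(r,\theta)|^n + n\int_a^b |u(s,\theta)|^{n-1}|\partial_s u(s,\theta)|\,ds \leq |u(r,\theta)|^n + \frac{n}{2}\int_a^b \bigl(|u|^{2n-2}+|\partial_s u|^2\bigr)(s,\theta)\,ds
\end{equation*}
for every $r\in[a,b]$. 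Now multiply by $r$ and integrate over $r\in[a,b]$, which produces the factor $(b^2-a^2)/2$ on the left-hand side; then divide by $(b^2-a^2)/2$, multiply by $a$, and integrate in $\theta$. The left-hand side becomes $\oint_{\partial B_a}|u|^n\,d\sigma$, the first term on the right collapses to $(2a/(b^2-a^2))\int_{B_b\setminus B_a}|u|^n\,dx$ since $r\,dr\,d\theta$ is the planar area element, and for the second term, using the trivial bound $a\leq s$ on $[a,b]$ so that $a\,ds\,d\theta \leq s\,ds\,d\theta$ together with $|\partial_s u| \leq |\nabla u|$, one obtains $\tfrac{n}{2}\int_{B_b\setminus B_a}(|u|^{2n-2}+|\nabla u|^2)\,dx$. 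Pulling back through $\phi$ finishes the proof.

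\textbf{Main obstacle.} The crux is obtaining the specific weight $a/(b^2-a^2)$ on the right-hand side, sharper than the generic $1/(b-a)$ that an unweighted trace inequality would deliver. This refinement is produced precisely by weighting the radial average by $r$ so as to match the planar area element, with the extra power of $a$ supplied by the arc length of $\partial B_a$. Such sharpness is essential for the dilute regime of the paper, where $a$ is of order $\varepsilon^{\alpha}$ and $b$ of order $\varepsilon$ with $\alpha>1$, so the factor $2a/(b^2-a^2)\sim \varepsilon^{\alpha-2}$ controls the surface contribution in the asymptotic analysis. The restriction $n\in[2,4]$ is what ensures that the exponent $2n-2$ stays in the range controlled by the elastic and bulk bounds in $(A_5)$--$(A_6)$.
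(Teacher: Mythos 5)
Your proof is correct and follows essentially the same route as the paper: reduction to the circular annulus via the bi-Lipschitz map of \cref{lemma:bP-aP}, then the radial fundamental-theorem-of-calculus estimate with Young's inequality, weighting by $r$ to recover the planar area element and using $a\le\tau$ on $[a,b]$ before integrating in $\theta$. The only cosmetic difference is the order in which you divide by $(b^2-a^2)/2$ and multiply by $a$, which does not change the argument.
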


\begin{proof}
Using \cref{lemma:bP-aP}, we can restrict without loss of generality to the case in which $\mathcal{P}=\overline{B}_1$, which is the two dimensional unit disk, centered in origin. Then $\partial B_{\tau}=\{x\in\mathbb{R}^2:|x|=\tau\}=\{(\rho,\theta):\rho=\tau,\;\theta\in[0,2\pi]\}$, for any $\tau>0$, and we can write, for any $\rho\in[a,b]$ and any $\theta\in[0,2\pi]$:
\begin{align*}
|u|^n(a,\theta)&=|u|^n(\rho,\theta)-\int_{a}^{\rho}\partial_{\tau}\big(|u|^n\big)(\tau,\theta)\text{d}\tau\\
&\leq |u|^n(\rho,\theta)+n\int_a^{\rho}\big(|u|^{n-1}\cdot |\partial_{\tau}u|\big)(\tau,\theta)\text{d}\tau\\
&\leq |u|^n(\rho,\theta)+\dfrac{n}{2}\int_a^{\rho}\big(|u|^{2n-2}+|\partial_{\tau}u|^2\big)(\tau,\theta)\text{d}\tau\\
|u|^n(a,\theta)&\leq |u|^n(\rho,\theta)+\dfrac{n}{2}\int_a^b\big(|u|^{2n-2}+|\nabla u|^2\big)(\tau,\theta)\text{d}\tau
\end{align*}

If we multiply both sides by $\rho$ and integrate over $[a,b]$ with respect to $\rho$, we get:
\begin{align*}
|u|^n(a,\theta)\int_a^b\rho\;\text{d}\rho&\leq \int_a^b|u|^n(\rho,\theta)\cdot\rho\;\text{d}\rho+\dfrac{n}{2}\int_a^b\rho\;\text{d}\rho\int_a^b\big(|u|^{2n-2}+|\nabla u|^2\big)(\tau,\theta)\text{d}\tau\\
\dfrac{b^2-a^2}{2}|u|^n(a,\theta)&\leq \int_a^b|u|^n(\rho,\theta)\cdot\rho\;\text{d}\rho+\dfrac{n(b^2-a^2)}{4}\int_a^b\big(|u|^{2n-2}+|\nabla u|^2\big)(\tau,\theta)\text{d}\tau.
\end{align*}

Since for any $\tau\in[a,b]$ we have $\tau>a$, then:
\begin{align*}
\dfrac{b^2-a^2}{2a}|u|^n(a,\theta)\cdot a&\leq \int_a^b|u|^n(\rho,\theta)\cdot\rho\;\text{d}\rho+\dfrac{n(b^2-a^2)}{4a}\int_a^b\big(|u|^{2n-2}+|\nabla u|^2\big)(\tau,\theta)\cdot \tau\;\text{d}\tau.
\end{align*}

Now we integrate with respect to $\theta$ over $[0,2\pi]$ and we get:
\begin{align*}
\dfrac{b^2-a^2}{2a}\int_0^{2\pi}|u|^n(a,\theta)\cdot a\;\text{d}\theta&\leq \int_0^{2\pi}\int_a^b|u|^n(\rho,\theta)\cdot\rho\;\text{d}\rho\text{d}\theta+\dfrac{n(b^2-a^2)}{4a}\int_0^{2\pi}\int_a^b\big(|u|^{2n-2}+|\nabla u|^2\big)(\tau,\theta)\cdot \tau\;\text{d}\tau\text{d}\theta\\
\dfrac{b^2-a^2}{2a}\oint_{\partial B_a}|u|^n\text{d}s &\leq \int_{B_b\setminus B_a}|u|^n\text{d}x+\dfrac{n(b^2-a^2)}{4a}\int_{B_b\setminus B_a}\big(|u|^{2n-2}+|\nabla u|^2\big)\text{d}x,
\end{align*}
therefore
\begin{align*}
\oint_{\partial B_a}|u|^n\text{d}s\leq \dfrac{2a}{b^2-a^2}\int_{B_b\setminus B_a}|u|^n\text{d}x+\dfrac{n}{2}\int_{B_b\setminus B_a}\big(|u|^{2n-2}+|\nabla u|^2\big)\text{d}x.
\end{align*}

If we apply now the Lipschitz homeomorphism $\phi$ defined by \cref{lemma:bP-aP}, the conclusion follows.
\end{proof}

\begin{lemma}\label{lemma:surface}
For any $Q\in H^1(\Omega_{\varepsilon},\mathcal{S}_0)$ and any $n\in[2,4]$, there holds:
\begin{equation*}
\dfrac{\varepsilon^3}{\varepsilon^{\alpha}(\varepsilon-\varepsilon^{\alpha})}\int_{\partial\mathcal{N}_{\varepsilon}^{\mathcal{T}}}|Q|^n\emph{d}\sigma \lesssim \dfrac{n}{2}\cdot \dfrac{\varepsilon^{2-\alpha}}{1-\varepsilon^{\alpha-1}}\int_{\Omega_{\varepsilon}}\big(|Q|^{2n-2}+|\nabla Q|^2\big)\emph{d}x + \dfrac{1}{2(1-\varepsilon^{\alpha-1})^2}\int_{\Omega_{\varepsilon}}|Q|^n \emph{d}x.
\end{equation*}
\end{lemma}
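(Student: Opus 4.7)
The plan is to reduce the surface integral on $\partial\mathcal{N}_\varepsilon^{\mathcal{T}}$ to the two-dimensional cross-sectional inequality of \cref{lemma:bP-aP-ineq} by slicing each connecting parallelepiped along its axial direction, and then to sum the resulting sleeve estimates using the disjointness afforded by the periodic lattice.

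First I split $\partial\mathcal{N}_\varepsilon^{\mathcal{T}}$ into the three axial families $\mathcal{T}_x^k$, $\mathcal{T}_y^l$, $\mathcal{T}_z^m$ and, by the symmetric roles of the three coordinate directions, focus on the $x$-family. For each $k$, the set $\mathcal{T}_x^k$ is the union of the four lateral faces of a parallelepiped centered at $y^{x,k}_\varepsilon$, elongated in the $x_1$-direction over an interval $I_k$ of length $\sim(\varepsilon-\varepsilon^\alpha)$, with cross-section the rectangle $\varepsilon^\alpha\mathcal{P}$, where $\mathcal{P}\subset\mathbb{R}^2$ is the fixed rectangle of half-sides $1/(2q)$ and $1/(2r)$. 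Hence $\mathcal{T}_x^k = I_k\times\partial(\varepsilon^\alpha\mathcal{P})$, and I apply \cref{lemma:bP-aP-ineq} slice-by-slice with $a=\varepsilon^\alpha$ and $b=c_0\varepsilon$, for a fixed small $c_0\in(0,1)$ depending only on $p,q,r$. Integrating over $x_1\in I_k$ and summing over $k$ bounds the $x$-family surface integral by volume integrals over the 3D sleeves $\Sigma_k:=I_k\times(c_0\varepsilon\mathcal{P}\setminus\varepsilon^\alpha\mathcal{P})\subset\Omega_\varepsilon$. The constant $c_0$ is chosen small enough that the $\Sigma_k$ are pairwise disjoint within the $x$-family, disjoint from the sleeves of the $y$- and $z$-families, and disjoint from the black cubes $\mathcal{C}_\varepsilon^i$, so that the sum is majorised by a single integral over $\Omega_\varepsilon$.

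Finally, I multiply by the prefactor $\varepsilon^3/(\varepsilon^\alpha(\varepsilon-\varepsilon^\alpha))=\varepsilon^{2-\alpha}/(1-\varepsilon^{\alpha-1})$ and simplify. The coefficient of $\int_{\Omega_\varepsilon}|Q|^n\,\mathrm{d}x$ becomes
\[
\frac{\varepsilon^{2-\alpha}}{1-\varepsilon^{\alpha-1}}\cdot\frac{2\varepsilon^\alpha}{c_0^2\varepsilon^2-\varepsilon^{2\alpha}} \;\lesssim\; \frac{2}{(1-\varepsilon^{\alpha-1})^2(1+\varepsilon^{\alpha-1})} \;\lesssim\; \frac{1}{(1-\varepsilon^{\alpha-1})^2},
\]
and the coefficient of $\int_{\Omega_\varepsilon}(|Q|^{2n-2}+|\nabla Q|^2)\,\mathrm{d}x$ becomes $\sim(n/2)\,\varepsilon^{2-\alpha}/(1-\varepsilon^{\alpha-1})$, which are exactly the required forms. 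The main obstacle is the geometric bookkeeping for disjointness of the sleeves: the transverse spacing of $\mathcal{Y}_\varepsilon$ is $\varepsilon$, so taking $b=\varepsilon$ directly would generically produce overlaps, and the sleeves of the three families meet near each black cube, so one must cap $I_k$ slightly inside the cube and choose $c_0<1$ carefully. These adjustments affect only the constant hidden in $\lesssim$ and do not change the $\varepsilon$-scaling.
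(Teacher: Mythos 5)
Your overall strategy is exactly the paper's: slice each connecting parallelepiped transversally, apply the two-dimensional estimate of \cref{lemma:bP-aP-ineq} on each cross-section with $a=\varepsilon^{\alpha}$, integrate in the axial variable, multiply by the prefactor and sum over the lattice; the $\varepsilon$-scaling you obtain is correct. The one place where your bookkeeping, as stated, does not work is the treatment of what you call the main obstacle. First, exact disjointness of the sleeves of the three families cannot be arranged by choosing $c_0<1$: as soon as $\varepsilon^{\alpha-1}<c_0$ (which happens for every fixed $c_0$ once $\varepsilon$ is small), an $x$-sleeve and a $y$-sleeve attached to the same black cube overlap in a box of size of order $c_0\varepsilon$ next to the cube corner, because both annular cross-sections reach out to distance $\sim c_0\varepsilon$ while the cube only occupies a region of size $\varepsilon^{\alpha}$. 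Second, ``capping $I_k$ slightly inside the cube'' is not an admissible repair: if you shorten the axial interval, the slices no longer cover the end bands of $\mathcal{T}_x^k$, so part of the surface integral is simply dropped — that is a loss of surface area, not merely a change of the constant hidden in $\lesssim$.

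The fix is the one the paper itself uses: keep the full axial interval $I_p$, check that each sleeve is contained in $\Omega_{\varepsilon}$ (which holds, since the transverse neighbours are at distance $\varepsilon$ and the $y$-, $z$-connectors have axial extent inside the cubes), and do not ask for disjointness at all — it suffices that the overlap multiplicity is bounded independently of $\varepsilon$. In your construction a point lies in at most three sleeves (one per family); in the paper, which takes the larger choice $b=2\varepsilon-\varepsilon^{\alpha}$ so that $b^2-a^2=4\varepsilon(\varepsilon-\varepsilon^{\alpha})$, the sets $\mathcal{U}_x^k$ are counted at most four times. Summing with bounded multiplicity then gives the claim, the multiplicity being absorbed into $\lesssim$. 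Note also that the paper's choice of $b$ reproduces the constants $\frac{n}{2}\,\frac{\varepsilon^{2-\alpha}}{1-\varepsilon^{\alpha-1}}$ and $\frac{1}{2(1-\varepsilon^{\alpha-1})^2}$ exactly, whereas with $b=c_0\varepsilon$ you only get them up to a $c_0$-dependent factor and for $\varepsilon$ small enough that $\varepsilon^{\alpha-1}\le c_0$; this is harmless for the statement as written, but there is no gain in shrinking $b$ once disjointness is abandoned.
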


\begin{proof}
Let $I_{\varepsilon}[Q]=\dfrac{\varepsilon^3}{\varepsilon^{\alpha}(\varepsilon-\varepsilon^{\alpha})}\displaystyle{\int_{\partial\mathcal{N}_{\varepsilon}^{\mathcal{T}}}|Q|^n\text{d}\sigma}$ and $$I_{\varepsilon}^{X}[Q]=\dfrac{\varepsilon^{3}}{\varepsilon^{\alpha}(\varepsilon-\varepsilon^{\alpha})}\sum_{k=1}^{X_{\varepsilon}}\int_{\mathcal{T}_x^k} |Q|^n \text{d}\sigma.$$

Let $\overline{e_1}=(1,0,0)^T$, $\overline{e_2}=(0,1,0)^T$, $\overline{e_3}=(0,0,1)^T$ and $k\in\overline{1,X_{\varepsilon}}$. Then, according to previous definitions, $y_{\varepsilon}^{x,k}$ is the center of the parallelipiped $\mathcal{P}_{\varepsilon}^{x,k}$ with the ``contact" faces $\mathcal{T}_x^k$. If this is an ``inner" parallelipiped of the scaffold, then Figure \eqref{fig:crosss} shows a cross section of a neighbourhood of $\mathcal{P}_{\varepsilon}^{x,k}$, surrounding $\mathcal{T}_x^k$, a section which is parallel to the $yOz$ plane and which is passing through $y_{\varepsilon}^{x,k}+\delta\overline{e_1}$, where $\delta\in I_p:=\bigg[-\dfrac{p\varepsilon-\varepsilon^{\alpha}}{2p},\dfrac{p\varepsilon-\varepsilon^{\alpha}}{2p}\bigg]$. If the parallelipiped is next to $\partial\Omega$, then the same argument works, since we have relations \eqref{defn:points1} and \eqref{defn:Y_eps}. 

Let $\mathcal{T}_x^k(\delta)$ be
\begin{equation*}
\mathcal{T}_x^k(\delta)=\bigg\{y_{\varepsilon}^{x,k}+\delta\overline{e_1}+y\overline{e_2}+z\overline{e_3}\;\bigg|\; -\dfrac{\varepsilon^{\alpha}}{2q}\leq y\leq\dfrac{\varepsilon^{\alpha}}{2q};\;-\dfrac{\varepsilon^{\alpha}}{2r}\leq z\leq\dfrac{\varepsilon^{\alpha}}{2r}\bigg\},
\end{equation*}
which represents the centered white rectangle from Figure \eqref{fig:crosss}.

Let $\mathcal{V}_x^k(\delta)$ be
\begin{equation*}
\mathcal{V}_x^k(\delta)=\bigg\{y_{\varepsilon}^{x,k}+\delta\overline{e_1}+y\overline{e_2}+z\overline{e_3}\;\bigg|\; -\varepsilon+\dfrac{\varepsilon^{\alpha}}{2q}\leq y\leq\varepsilon-\dfrac{\varepsilon^{\alpha}}{2q};\;-\varepsilon+\dfrac{\varepsilon^{\alpha}}{2r}\leq z\leq\varepsilon-\dfrac{\varepsilon^{\alpha}}{2r}\bigg\}\setminus\mathcal{T}_x^k(\delta),
\end{equation*}
which represents the darker shaded area from Figure \eqref{fig:crosss}, containing only liquid crystal particles, that is $\mathcal{V}_x^k(\delta)\subset\Omega_{\varepsilon}$, for any $\delta\in I_p$.

In our case, $\mathcal{V}_x^k(\delta)$ plays the role of $b\mathcal{P}\setminus a\mathcal{P}$ from \cref{lemma:surface}.

If for every $\delta\in I_p$, we apply the translation $y_{\varepsilon}^k+\delta\overline{e_1}$ to the origin of the system, then for 
\begin{align*}
\mathcal{P}=\{0\}\times\bigg[-\dfrac{1}{2q},\dfrac{1}{2q}\bigg]\times\bigg[-\dfrac{1}{2r},\dfrac{1}{2r}\bigg],
\end{align*}
we can choose $a=\varepsilon^{\alpha}$, therefore $\varepsilon^{\alpha}\mathcal{P}=\mathcal{T}_x^k(\delta)$. In order to choose $b$, we assume: $\dfrac{b}{2r}\leq \varepsilon-\dfrac{\varepsilon^{\alpha}}{2r}$ and $\dfrac{b}{2q}\leq \varepsilon-\dfrac{\varepsilon^{\alpha}}{2q}$, that is: $b\leq 2q\varepsilon-\varepsilon^{\alpha}$ and $b\leq 2r\varepsilon-\varepsilon^{\alpha}$. Since $p,q,r\geq 1$, we can choose $b=2\varepsilon-\varepsilon^{\alpha}$. In this way, we have $b\mathcal{P}\setminus a\mathcal{P}\subset\mathcal{V}_x^k(\delta)$ and we also have $b\geq a\Leftrightarrow 2\varepsilon-\varepsilon^{\alpha}\geq \varepsilon^{\alpha}\Leftrightarrow \alpha\geq 1$.

\begin{figure}[ht]
\centering
\includegraphics[scale=0.35]{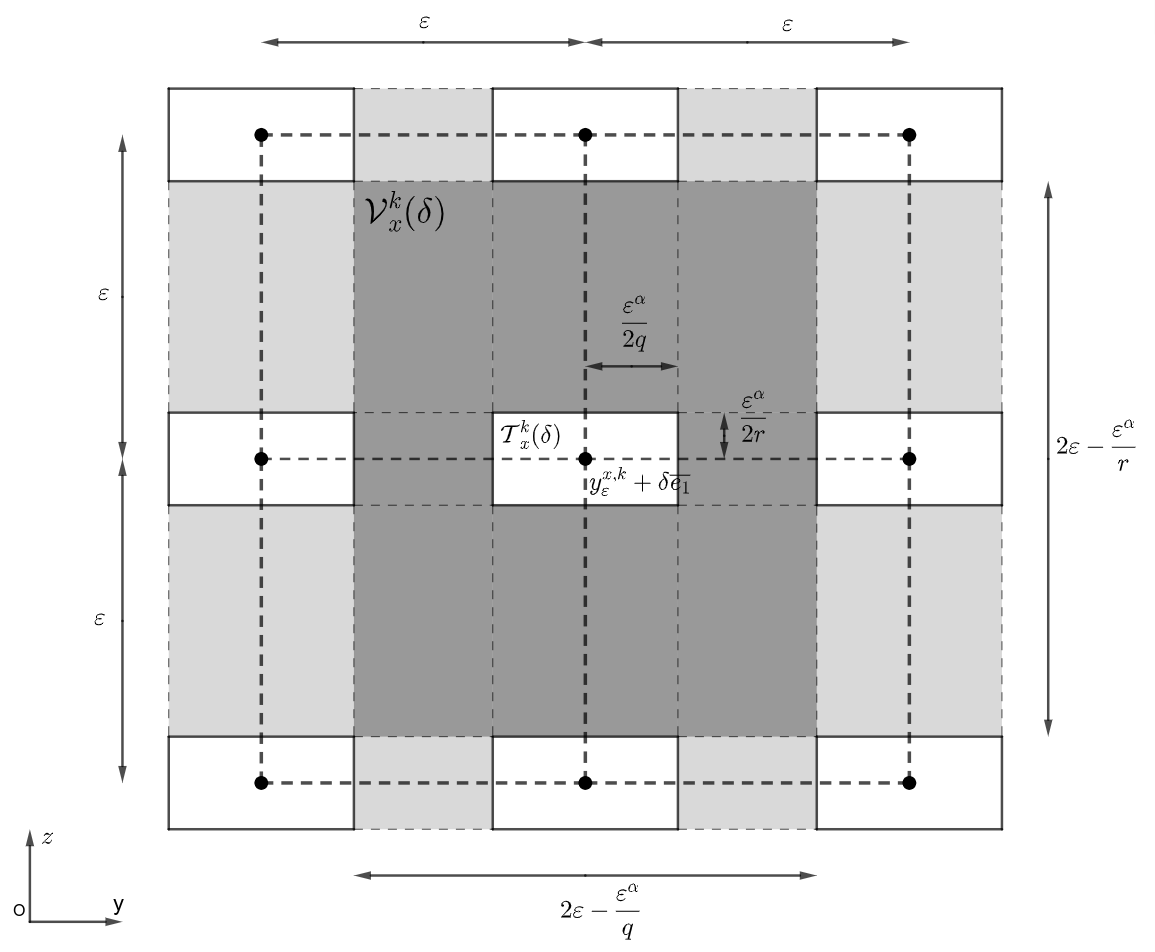}
\caption{Cross section of the scaffold, parallel to $yOz$ plane, passing through $y_{\varepsilon}^{x,k}+\delta\overline{e_1}$. The gray shaded areas represent the liquid crystal and the white rectangles represent the sections of the parts of the scaffold nearby.}
\label{fig:crosss}
\end{figure}

Therefore, we can apply \cref{lemma:bP-aP-ineq} for $Q$ with $a=\varepsilon^{\alpha}$, $b=2\varepsilon-\varepsilon^{\alpha}$ and $\mathcal{P}$ defined as before, hence:
\begin{align*}
\oint_{\partial\mathcal{T}_x^k(\delta)}|Q|^n\text{d}s &\lesssim \dfrac{2\varepsilon^{\alpha}}{(2\varepsilon-\varepsilon^{\alpha})^2-\varepsilon^{2\alpha}}\int_{b\mathcal{P}\setminus\mathcal{T}_x^k(\delta)}|Q|^n\text{d}x+\dfrac{n}{2}\int_{b\mathcal{P}\setminus\mathcal{T}_x^k(\delta)}\big(|Q|^{2n-2}+|\nabla Q|^2\big)\text{d}x
\end{align*}
and since $b\mathcal{P}\subset\mathcal{V}_x^k(\delta)$, we have:
\begin{align*}
\oint_{\partial\mathcal{T}_x^k(\delta)}|Q|^n\text{d}s &\lesssim \dfrac{\varepsilon^{\alpha}}{2\varepsilon(\varepsilon-\varepsilon^{\alpha})}\int_{\mathcal{V}_x^k(\delta)}|Q|^n\text{d}x+\dfrac{n}{2}\int_{\mathcal{V}_x^k(\delta)}\big(|Q|^{2n-2}+|\nabla Q|^2\big)\text{d}x,
\end{align*}
for every $\delta\in I_p$. Integrating now with respect to $\delta$ over $I_p$, we get:
\begin{align*}
\int_{I_p}\bigg(\oint_{\partial\mathcal{T}_x^k(\delta)}|Q|^n\text{d}s\bigg)\text{d}\delta &\lesssim \dfrac{\varepsilon^{\alpha}}{2\varepsilon(\varepsilon-\varepsilon^{\alpha})}\int_{I_p}\bigg(\int_{\mathcal{V}_x^k(\delta)}|Q|^n\text{d}x\bigg)\text{d}\delta+\dfrac{n}{2}\int_{I_p}\bigg(\int_{\mathcal{V}_x^k(\delta)}\big(|Q|^{2n-2}+|\nabla Q|^2\big)\text{d}x\bigg)\text{d}\delta\\
\int_{\mathcal{T}_x^k}|Q|^n\text{d}\sigma &\lesssim \dfrac{\varepsilon^{\alpha}}{2\varepsilon(\varepsilon-\varepsilon^{\alpha})}\int_{\mathcal{U}_x^k}|Q|^n\text{d}x+\dfrac{n}{2}\int_{\mathcal{U}_x^k}\big(|Q|^{2n-2}+|\nabla Q|^2\big)\text{d}x,
\end{align*}
where $\mathcal{U}_x^k:=\bigcup_{\delta\in I_p}\mathcal{V}_x^k(\delta)\subset\Omega_{\varepsilon}$ is now a three dimensional object. Hence:
\begin{align*}
\dfrac{\varepsilon^{3-\alpha}}{\varepsilon-\varepsilon^{\alpha}}\int_{\mathcal{T}_x^k}|Q|^n\text{d}\sigma &\lesssim \dfrac{n}{2}\cdot \dfrac{\varepsilon^{2-\alpha}}{1-\varepsilon^{\alpha-1}}\int_{\mathcal{U}_x^k}\big(|Q|^{2n-2}+|\nabla Q|^2\big)\text{d}x+\dfrac{1}{2(1-\varepsilon^{\alpha-1})^2}\int_{\mathcal{U}_x^k}|Q|^n\text{d}x.
\end{align*}

Repeating the same argument for all the other parallelipipeds of type $\mathcal{T}$ and considering the fact that parts of $\mathcal{U}_x^k$ are added only up to four times (by constructing the same sets for the nearby particles from the scaffold), then the conclusion follows.
\end{proof}

Since we are interested in the homogenised material, it is useful to consider maps defined on the entire $\Omega$ and for this we use the harmonic extension operator $E_{\varepsilon}:H^1_g(\Omega_{\varepsilon},\mathcal{S}_0)\rightarrow H^1_g(\Omega,\mathcal{S}_0)$, defined as follows: for $Q\in H^1_g(\Omega_{\varepsilon},\mathcal{S}_0)$, we take $E_{\varepsilon}Q\equiv Q$ in $\Omega_{\varepsilon}$ and inside $\mathcal{N}_{\varepsilon}$, $E_{\varepsilon}Q$ solves the following PDE:

\begin{equation}\label{eq:extensionpb}
\left\{
\begin{array}{ll}
\Delta E_{\varepsilon}Q=0 & \text{in}\;\mathcal{N}_{\varepsilon}\\
E_{\varepsilon}Q\equiv Q & \text{on}\; \partial\mathcal{N}_{\varepsilon}
\end{array}
\right. 
\end{equation}

Since $\mathcal{N}_{\varepsilon}$ has a Lipschitz boundary, we can apply Theorem 4.19 from \cite{CioranescuDonato} and see that there exists a unique solution $E_{\varepsilon}Q\in H^{1}(\mathcal{N}_{\varepsilon})$ to the problem \eqref{eq:extensionpb}. Hence the operator $E_{\varepsilon}$ is well defined. Moreover, from \eqref{eq:extensionpb}, we can see that $E_{\varepsilon}Q$ verifies:
\begin{equation}\label{eq:extensionmin}
\|\nabla E_{\varepsilon}Q\|_{L^2(\mathcal{N}_{\varepsilon})}=\text{min}\big\{\|\nabla u\|_{L^2(\mathcal{N}_{\varepsilon})}\;\big|\;u\in H^1(\mathcal{N}_{\varepsilon}),\;u=Q\;\text{on}\;\partial\mathcal{N}_{\varepsilon}\big\}.
\end{equation}

Our aim is now to prove that the extension operator $E_{\varepsilon}$ is uniformly bounded with respect to $\varepsilon>0$. More specifically, we prove that the following lemma holds.

\begin{lemma}\label{lemma:extensionineq}
There exists a constant $C>0$ such that $\|\nabla E_{\varepsilon}Q\|_{L^2(\Omega)}\leq C\|\nabla Q\|_{L^2(\Omega_{\varepsilon})}$ for any $\varepsilon\in(0,\varepsilon_0)$, where $\varepsilon_0$ is suitably small enough, and for any $Q\in H^1_g(\Omega_{\varepsilon},\mathcal{S}_0)$. 
\end{lemma}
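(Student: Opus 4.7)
The starting point is the minimisation property \eqref{eq:extensionmin}: since $E_\varepsilon Q$ minimises the Dirichlet energy among all $H^1(\mathcal{N}_\varepsilon,\mathcal{S}_0)$ maps agreeing with $Q$ on $\partial\mathcal{N}_\varepsilon$, it suffices to exhibit any explicit extension $\tilde Q\in H^1(\mathcal{N}_\varepsilon,\mathcal{S}_0)$ with $\tilde Q=Q$ on $\partial\mathcal{N}_\varepsilon$ and
\begin{equation*}
\|\nabla\tilde Q\|_{L^2(\mathcal{N}_\varepsilon)}\lesssim\|\nabla Q\|_{L^2(\Omega_\varepsilon)}.
\end{equation*}
Combined with the identity $\|\nabla E_\varepsilon Q\|_{L^2(\Omega)}^2=\|\nabla Q\|_{L^2(\Omega_\varepsilon)}^2+\|\nabla E_\varepsilon Q\|_{L^2(\mathcal{N}_\varepsilon)}^2\le\|\nabla Q\|_{L^2(\Omega_\varepsilon)}^2+\|\nabla\tilde Q\|_{L^2(\mathcal{N}_\varepsilon)}^2$, this yields the claim.

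To build $\tilde Q$, I would follow the geometric decomposition $\mathcal{N}_\varepsilon=\mathcal{C}_\varepsilon\cup\mathcal{P}_\varepsilon$ and treat each connecting parallelipiped, say $\mathcal{P}_\varepsilon^{x,k}$, by the same slicing used in the proof of \cref{lemma:surface}. For each $\delta\in I_p$, the cross-section of the scaffold is the small rectangle $\mathcal{T}_x^k(\delta)$, surrounded in $\Omega_\varepsilon$ by the corona $\mathcal{V}_x^k(\delta)$ which contains an annular shell $b\mathcal{P}\setminus a\mathcal{P}$ with $a=\varepsilon^{\alpha}$ and $b=2\varepsilon-\varepsilon^{\alpha}$. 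Applying the bi-Lipschitz homeomorphism $\phi$ of \cref{lemma:bP-aP} slicewise transports this shell to a true annulus $\overline B_b\setminus\overline B_a$; there one can extend the trace $Q|_{\partial\overline B_a}$ into $\overline B_a$ by the harmonic (Poisson) extension, whose $L^2$-gradient on the disk is controlled by $\|\nabla Q\|_{L^2(\overline B_b\setminus\overline B_a)}$ together with a zero-order term handled by a Poincaré inequality on the shell of ratio $b/a\to\infty$. Transporting back via $\phi^{-1}$ and integrating in $\delta$ gives the desired local estimate
\begin{equation*}
\int_{\mathcal{P}_\varepsilon^{x,k}}|\nabla\tilde Q|^2\,\mathrm dx\lesssim\int_{\mathcal{U}_x^k}|\nabla Q|^2\,\mathrm dx,
\end{equation*}
with $\mathcal{U}_x^k=\bigcup_{\delta\in I_p}\mathcal{V}_x^k(\delta)\subset\Omega_\varepsilon$ as in \cref{lemma:surface}; the analogous bounds hold for $\mathcal{P}_\varepsilon^{y,l}$, $\mathcal{P}_\varepsilon^{z,m}$. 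The small boxes $\mathcal{C}_\varepsilon^i$ of type $N_{\varepsilon,1}$ sit entirely in the interior of $\mathcal{N}_\varepsilon$, where the boundary data on $\partial\mathcal{C}_\varepsilon^i$ is supplied by the already-defined extension on the six adjacent connecting parallelipipeds; those of type $N_{\varepsilon,2}$ are handled by the same slicing argument as before.

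The main obstacle will be the bookkeeping for the global estimate: one must show that the coronas $\mathcal{U}_x^k$, $\mathcal{U}_y^l$, $\mathcal{U}_z^m$ attached to distinct scaffold pieces overlap only a bounded number of times (in fact at most a constant depending on $p,q,r$ only), so that summing the local estimates over $k,l,m$ yields
\begin{equation*}
\|\nabla\tilde Q\|_{L^2(\mathcal{P}_\varepsilon)}^2\lesssim\|\nabla Q\|_{L^2(\Omega_\varepsilon)}^2
\end{equation*}
with an $\varepsilon$-independent constant. This is where assumption ($A_3$) on the minimum separation of the lattice centres and the dilute regime $1<\alpha<3/2$ enter: the former controls the covering multiplicity, while the latter ensures $b/a=(2\varepsilon-\varepsilon^{\alpha})/\varepsilon^{\alpha}\to\infty$, so that the Lipschitz constant of $\phi$ from \cref{lemma:bP-aP} and the Poincaré constant on the annulus stay uniformly bounded. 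A second technical point is to verify that the slicewise Poisson extensions glue together into a function in $H^1(\mathcal{N}_\varepsilon,\mathcal{S}_0)$ (measurability and integrability in $\delta$), which is routine since the Poisson kernel on a fixed disk depends smoothly on its boundary data. Once these two items are settled, the proof closes by summing the local bounds and invoking \eqref{eq:extensionmin}.
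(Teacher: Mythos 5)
Your reduction is the same as the paper's: by the minimality property \eqref{eq:extensionmin} it suffices to exhibit one explicit extension of $Q$ into $\mathcal{N}_\varepsilon$ whose gradient is bounded by $\|\nabla Q\|_{L^2(\Omega_\varepsilon)}$, and the paper's proof consists precisely of invoking such a construction from \cref{subsection:existence_of_extension}. The gap is in the construction you propose inside the connecting parallelipipeds. A slicewise Poisson extension of the trace $Q|_{\partial\mathcal{T}^k_x(\delta)}$ controls only the two in-slice derivatives; the derivative in the axial variable $\delta$ is not controlled, and this is not the ``routine'' gluing issue you defer to the end. Formally, $\partial_\delta$ of your competitor is the harmonic extension of the lateral-boundary trace of $\partial_\delta Q$, and for $Q$ merely in $H^1$ this trace neither exists nor can be estimated by $\|\nabla Q\|_{L^2}$ of a three-dimensional neighbourhood (that would require second derivatives). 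The failure is structural, not technical: in the model geometry $A\times I$ with $A=\{1<|x'|<2\}$, $I=(0,1)$, extended into $D\times I$ with $D=\{|x'|<1\}$, take $Q(x',t)=\sin(mt)\,h_m(|x'|)$, $h_m(\rho)=\max\{0,1-m(\rho-1)\}$; then $\|\nabla Q\|^2_{L^2(A\times I)}\sim m$, while the trace on $\partial D$ is $\sin(mt)$, whose slicewise harmonic extension is the constant $\sin(mt)$ on all of $D$, so the axial derivative of the extension has squared $L^2$-norm $\sim m^2$. Hence the slicewise harmonic-extension operator is unbounded from $H^1(A\times I)$ to $H^1(D\times I)$, no constant independent of the data (let alone of $\varepsilon$) can come out of this competitor, and the subsequent bookkeeping on overlaps and on $b/a$ never gets off the ground.

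The paper's appendix avoids exactly this trap by never extending a boundary trace: in \cref{lemma:extension_u} the value of the extension at an interior point is obtained from $Q$ at a reflected point of the surrounding shell, blended by the bump $\varphi$ with the slice average of $Q$; every partial derivative, including the one along the cylinder axis, is then estimated by a bi-Lipschitz change of variables plus Jensen, with a constant independent of the cylinder height $z_0$, and the maps $T_2,T_3,T_4$ (cylinder-to-box as in \cite{Rehberg}, followed by scalings) transport the bound to each $\mathcal{P}^{x,k}_\varepsilon$, $\mathcal{P}^{y,l}_\varepsilon$, $\mathcal{P}^{z,m}_\varepsilon$; the boxes $\mathcal{C}^i_\varepsilon$ are filled afterwards by the three-dimensional analogue \cref{lemma:extension_v}, using the already-extended function on a surrounding region, and these surrounding regions are pairwise disjoint, so the global bound follows by summation. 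If you replace your Poisson kernel by such a reflection-plus-average formula (or any extension operator built from interior values rather than traces), the rest of your plan — the slicing, the bi-Lipschitz map of \cref{lemma:bP-aP}, the bounded overlap of the coronas, and the final appeal to \eqref{eq:extensionmin} — does go through essentially as in the paper. Note also that only a lateral shell of fixed modulus, of width comparable to the cross-section $\varepsilon^\alpha$, is needed; working with the large shell $b/a\to\infty$ and a Poincar\'e constant on a set of diameter $\sim\varepsilon$ is unnecessary and would itself require care, since that Poincar\'e constant grows with the diameter and only seminorm-based (scale-invariant) trace estimates keep the constants uniform.
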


\begin{proof}
By \cref{subsection:existence_of_extension}, we know that there exists $v\in H^1(\Omega)$ such that:
\begin{align*}
\begin{cases}
v\equiv Q\;\text{in}\;\Omega_{\varepsilon}\\
v=Q\;\text{on}\;\partial\mathcal{N}_{\varepsilon}\\
\big\|\nabla v\big\|_{L^2(\Omega)}\lesssim \big\|\nabla Q\big\|_{L^2(\Omega_{\varepsilon})}.
\end{cases}
\end{align*}

Using relation \eqref{eq:extensionmin}, we see that
\begin{align*}
\big\|\nabla E_{\varepsilon}Q\big\|_{L^2(\mathcal{N}_{\varepsilon})}\leq \big\|\nabla v\big\|_{L^2(\mathcal{N}_{\varepsilon})}
\end{align*}
and because $E_{\varepsilon}Q\equiv Q$ in $\Omega_{\varepsilon}$, we have $E_{\varepsilon}Q\equiv v\equiv Q$ in $\Omega_{\varepsilon}$ and therefore:
\begin{align*}
\big\|\nabla E_{\varepsilon}Q\big\|_{L^2(\Omega)}\leq \big\|\nabla v\big\|_{L^2(\Omega)}\lesssim \big\|\nabla Q\big\|_{L^2(\Omega_{\varepsilon})}.
\end{align*}
\end{proof}

\subsection{Zero contribution from the surface terms depending on $\mathcal{S}^i$}\label{subsection:nocontribution}

In this section, we prove that the surface term $J_{\varepsilon}^{\mathcal{S}}$ has a neglectable contribution to the homogenised material, that is $J_{\varepsilon}^{\mathcal{S}}[Q]\rightarrow 0$ as $\varepsilon\rightarrow 0$, for any $Q\in H^{1}_{g}(\Omega,\mathcal{S}_0)$, since we can use the extension operator $E_{\varepsilon}$ defined in the previous subsection.

We start by proving if $Q:\overline{\Omega}\rightarrow\mathcal{S}_0$ is a bounded, Lipschitz map, then $J_{\varepsilon}^{\mathcal{S}}[Q]\rightarrow 0$ as $\varepsilon\rightarrow 0$ and then, by a density argument, for all $Q\in H^1_g(\Omega,\mathcal{S}_0)$.

\begin{lemma}\label{lemma:zerocontrbounded}
Let $Q:\overline{\Omega}\rightarrow\mathcal{S}_0$ be a bounded, Lipschitz map. Then $J_{\varepsilon}^{\mathcal{S}}[Q]\rightarrow 0$, as $\varepsilon\rightarrow 0$, where $J_{\varepsilon}^{\mathcal{S}}$ is defined in \eqref{defn:Jeps} and in \eqref{defn:Jeps_S}.
\end{lemma}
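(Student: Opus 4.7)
The plan is to bound $|J_\varepsilon^{\mathcal{S}}[Q]|$ by the product of three elementary estimates: a uniform pointwise bound on $f_s(Q,\nu)$, an area bound for each contact patch $\mathcal{S}^i$, and a counting bound for the number $N_{\varepsilon,2}$ of boundary-adjacent scaffold particles. The argument is essentially a size/counting computation, and in fact the Lipschitz hypothesis on $Q$ plays no role here --- boundedness alone is enough. (The Lipschitz assumption will presumably matter only when this lemma is upgraded to general $H^1_g$ maps via density.)

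First, since $Q$ is bounded on $\overline{\Omega}$, the continuity of $f_s$ combined with the quartic growth provided by $(A_7)$ yields a constant $C_Q > 0$, independent of $\varepsilon$, such that $|f_s(Q(x),\nu)| \leq C_Q$ for every $x \in \overline{\Omega}$ and every $\nu \in \mathbb{S}^2$. Next, each $\mathcal{S}^i$ is by construction the union of at most five faces of a translated copy of the parallelepiped $\mathcal{C}^\alpha$ defined in \eqref{defn:initial_cube_alpha}; since $p,q,r \geq 1$, each such face has area at most $\varepsilon^{2\alpha}$, giving the uniform bound $|\mathcal{S}^i| \lesssim \varepsilon^{2\alpha}$.

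The only step that needs a little care, and is therefore the main (modest) obstacle, is the counting estimate $N_{\varepsilon,2} \lesssim \varepsilon^{-2}$. By definition, the parallelepipeds counted by $N_{\varepsilon,2}$ are those centered at points $x^i_\varepsilon \in \mathcal{X}_\varepsilon$ which sit at the ``outer'' boundary of the scaffold, i.e.\ those missing at least one of the six adjacent connecting bars of $\mathcal{P}_\varepsilon$. By the definition \eqref{defn:points1} of $\mathcal{X}_\varepsilon$ together with $(A_3)$, such centers lie within an $O(\varepsilon)$-tubular neighbourhood of $\partial\Omega$. Since $\Omega$ is bounded and Lipschitz (assumption $(A_1)$), the boundary $\partial\Omega$ has finite two-dimensional Hausdorff measure; a standard covering of this $O(\varepsilon)$-tubular neighbourhood by balls of radius $\varepsilon$ shows that the number of lattice points it contains is $\lesssim \mathcal{H}^2(\partial\Omega) \, \varepsilon^{-2}$, which is the desired bound.

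Putting these three estimates together in the definition \eqref{defn:Jeps_S} yields
\begin{align*}
|J_\varepsilon^{\mathcal{S}}[Q]| \;\leq\; \frac{\varepsilon^{3-\alpha}}{\varepsilon-\varepsilon^\alpha}\sum_{i=1}^{N_{\varepsilon,2}} \int_{\mathcal{S}^i} |f_s(Q,\nu)|\,\text{d}\sigma \;\lesssim\; \frac{\varepsilon^{3-\alpha}}{\varepsilon-\varepsilon^\alpha}\cdot \varepsilon^{-2}\cdot \varepsilon^{2\alpha}\cdot C_Q.
\end{align*}
Since $\alpha > 1$ by $(A_2)$, for sufficiently small $\varepsilon$ one has $\varepsilon - \varepsilon^\alpha \geq \tfrac{1}{2}\varepsilon$, whence the right-hand side is $\lesssim \varepsilon^{\alpha}$, which tends to zero as $\varepsilon \to 0$. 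This concludes the proof.
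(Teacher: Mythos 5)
Your proposal is correct and follows essentially the same route as the paper: a uniform bound on $f_s(Q,\nu)$ from boundedness of $Q$ and the quartic growth in $(A_7)$, the area bound $|\mathcal{S}^i|\lesssim\varepsilon^{2\alpha}$, and the counting estimate $N_{\varepsilon,2}\lesssim\varepsilon^{-2}$ (which the paper obtains from the bounding box $[-L_0,L_0]\times[-l_0,l_0]\times[-h_0,h_0]$ in \cref{prop:outer_surfaces_go_to_0}, rather than your covering of an $O(\varepsilon)$-neighbourhood of $\partial\Omega$ — both yield the same order), giving $|J_{\varepsilon}^{\mathcal{S}}[Q]|\lesssim\varepsilon^{\alpha}\to 0$. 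Your remark that only boundedness of $Q$ is used here, with Lipschitz continuity reserved for the density step, is also consistent with the paper's argument.
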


\begin{proof}
By \eqref{defn:Jeps_S}, we have:
\begin{align*}
\bigg|\dfrac{\varepsilon^{3-\alpha}}{\varepsilon-\varepsilon^{\alpha}}\sum_{i=1}^{N_{\varepsilon,2}}\int_{\mathcal{S}^i}f_s(Q(t),\nu)d\sigma(t)\bigg| & \leq \dfrac{\varepsilon^{3-\alpha}}{\varepsilon-\varepsilon^{\alpha}}\sum_{i=1}^{N_{\varepsilon,2}}\int_{\mathcal{S}^i}\big|f_s(Q(t),\nu)\big|d\sigma(t)\\
&\leq \dfrac{C\varepsilon^{3-\alpha}}{\varepsilon-\varepsilon^{\alpha}}\sum_{i=1}^{N_{\varepsilon,2}}\int_{\mathcal{S}^i}\big(|Q|^4(t)+1\big)d\sigma(t) \\
&\leq \dfrac{C\varepsilon^{3-\alpha}}{\varepsilon-\varepsilon^{\alpha}}\sum_{i=1}^{N_{\varepsilon,2}}\int_{\partial\mathcal{C}^i_{\varepsilon}}\big(|Q|^4(t)+1\big)d\sigma(t) \\
&\leq \dfrac{\varepsilon^{3+\alpha}}{\varepsilon-\varepsilon^{\alpha}}\cdot\dfrac{2C(p+q+r)}{pqr}\cdot\big(\|Q\|_{L^{\infty}(\overline{\Omega})}^4+1\big)\cdot\sum_{i=1}^{N_{\varepsilon,2}}\int_{\partial\mathcal{C}}d\sigma(t)\\
&\leq \dfrac{\varepsilon^{3+\alpha}}{\varepsilon-\varepsilon^{\alpha}}\cdot\big(\|Q\|^4_{L^{\infty}(\overline{\Omega})}+1\big)\cdot\dfrac{2C(p+q+r)}{pqr}\cdot\sigma(\partial\mathcal{C})\cdot N_{\varepsilon,2},\\
\end{align*}
where $\partial\mathcal{C}$ represents the surface of the model particle $\mathcal{C}$ defined in \eqref{defn:initial_cube}, $\mathcal{C}^{i}_{\varepsilon}$ represents the parallelipipeds constructed in relation \eqref{defn:ceps}, $N_{\varepsilon,2}$ is defined in \eqref{defn:N_eps_2} and $C$ is the $\varepsilon$-independent constant given from the inequality that states that $f_s$ has a quartic growth in $Q$, which can be obtained from assumption ($A_7$). We have also used that $Q$ is bounded on $\overline{\Omega}$. In the proof of \cref{prop:outer_surfaces_go_to_0}, we obtain $N_{\varepsilon,2}\leq\dfrac{L_0l_0+l_0h_0+h_0L_0}{\varepsilon^2}$, hence:
\begin{align*}
\bigg|\dfrac{\varepsilon^{3-\alpha}}{\varepsilon-\varepsilon^{\alpha}}\sum_{i=1}^{N_{\varepsilon,2}}\int_{\mathcal{S}^i}f_s(Q(t),\nu)d\sigma(t)\bigg| &< C'\cdot\dfrac{\varepsilon^{3+\alpha}}{\varepsilon-\varepsilon^{\alpha}}\cdot\dfrac{L_0l_0+L_0h_0+l_0h_0}{\varepsilon^{2}}\Rightarrow\\
\Rightarrow\bigg|\dfrac{\varepsilon^{3-\alpha}}{\varepsilon-\varepsilon^{\alpha}}\sum_{i=1}^{N_{\varepsilon,2}}\int_{\mathcal{S}^i}f_s(Q(t),\nu)d\sigma(t)\bigg| &< C''\cdot\dfrac{\varepsilon^{\alpha}}{1-\varepsilon^{\alpha-1}}\rightarrow 0\;\text{as}\;\varepsilon\rightarrow 0,
\end{align*}
since $\alpha\in\bigg(1,\dfrac{3}{2}\bigg)$, where $L_0$, $l_0$ and $h_0$ are defined in \eqref{defn:L_0_l_0_h_0} and $C'$ and $C''$ are $\varepsilon$-independent constants.

\end{proof}

\begin{lemma}
For any $Q\in H^1_g(\Omega_{\varepsilon},\mathcal{S}_0)$, we have $J_{\varepsilon}^{\mathcal{S}}[Q]\rightarrow 0$ as $\varepsilon\rightarrow 0$.
\end{lemma}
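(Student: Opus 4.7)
The plan is to reduce to the bounded Lipschitz case of Lemma \ref{lemma:zerocontrbounded} by a density argument. First I would transfer the problem to the fixed domain $\Omega$ by setting $\tilde{Q}:=E_{\varepsilon}Q\in H^1_g(\Omega,\mathcal{S}_0)$; since $\tilde{Q}\equiv Q$ on $\Omega_\varepsilon$, their traces on $\partial\mathcal{N}_\varepsilon^{\mathcal{S}}\subset\partial\mathcal{N}_\varepsilon$ coincide, so $J_\varepsilon^{\mathcal{S}}[Q]=J_\varepsilon^{\mathcal{S}}[\tilde{Q}]$, and Lemma \ref{lemma:extensionineq} gives $\|\tilde{Q}\|_{H^1(\Omega)}\lesssim \|Q\|_{H^1(\Omega_\varepsilon)}$ uniformly in $\varepsilon$. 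Given any $\eta>0$, density of $C^\infty(\overline{\Omega},\mathcal{S}_0)$ in $H^1(\Omega,\mathcal{S}_0)$ will yield a smooth (hence bounded and Lipschitz on $\overline{\Omega}$) map $P$ with $\|\tilde{Q}-P\|_{H^1(\Omega)}<\eta$, to which Lemma \ref{lemma:zerocontrbounded} applies.

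Next I would use the splitting
\begin{equation*}
|J_\varepsilon^{\mathcal{S}}[Q]|\le |J_\varepsilon^{\mathcal{S}}[\tilde{Q}]-J_\varepsilon^{\mathcal{S}}[P]|+|J_\varepsilon^{\mathcal{S}}[P]|.
\end{equation*}
The second summand tends to $0$ with $\varepsilon$ by the previous lemma. For the first, assumption ($A_7$) together with Hölder's inequality with exponents $4$ and $4/3$ produces
\begin{equation*}
|J_\varepsilon^{\mathcal{S}}[\tilde{Q}]-J_\varepsilon^{\mathcal{S}}[P]|\lesssim \frac{\varepsilon^{3-\alpha}}{\varepsilon-\varepsilon^{\alpha}}\,\|\tilde{Q}-P\|_{L^4(\partial\mathcal{N}_\varepsilon^{\mathcal{S}})}\Big(\|\tilde{Q}\|^3_{L^4(\partial\mathcal{N}_\varepsilon^{\mathcal{S}})}+\|P\|^3_{L^4(\partial\mathcal{N}_\varepsilon^{\mathcal{S}})}+|\partial\mathcal{N}_\varepsilon^{\mathcal{S}}|^{3/4}\Big),
\end{equation*}
and the area contribution $|\partial\mathcal{N}_\varepsilon^{\mathcal{S}}|\lesssim \varepsilon^{2\alpha-2}$ (already computed inside the proof of Lemma \ref{lemma:zerocontrbounded}) is a vanishing factor. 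The whole outcome then hinges on controlling the surface $L^4$ norms.

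The main obstacle will be to establish a uniform-in-$\varepsilon$ scaled trace inequality
\begin{equation*}
\frac{\varepsilon^{3-\alpha}}{\varepsilon-\varepsilon^{\alpha}}\int_{\partial\mathcal{N}_\varepsilon^{\mathcal{S}}}|u|^4\,\text{d}\sigma\lesssim \|u\|_{H^1(\Omega)}^4+\|u\|_{H^1(\Omega)}^2,
\end{equation*}
i.e.\ the analogue of Lemma \ref{lemma:surface} for $\partial\mathcal{N}_\varepsilon^{\mathcal{S}}$ in place of $\partial\mathcal{N}_\varepsilon^{\mathcal{T}}$. My plan is to mimic the same template: assumption ($A_3$) guarantees that each outer cube sits inside a tube of liquid crystal of relative width at least $\lambda_\Omega$, on whose planar cross-sections Lemma \ref{lemma:bP-aP-ineq} with $n=4$ applies; summing over the $N_{\varepsilon,2}\lesssim \varepsilon^{-2}$ outer cubes, whose tubes have bounded overlap, produces the required bound. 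A careful check of the exponents shows that the prefactor $\varepsilon^{3-\alpha}/(\varepsilon-\varepsilon^{\alpha})$ exactly cancels the blow-up of the $L^4$ trace norms coming from this estimate, so the error term is eventually $\lesssim \eta\,(1+\|\tilde{Q}\|_{H^1(\Omega)}+\|P\|_{H^1(\Omega)})^3$. Sending $\varepsilon\to 0$ first, then $\eta\to 0$, finishes the proof.
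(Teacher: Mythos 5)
Your overall strategy (prove the statement for bounded Lipschitz maps, then conclude by density) is the same as the paper's, but the way you control the approximation error is genuinely different and is where the gap lies. The paper never needs a trace inequality on $\partial\mathcal{N}_{\varepsilon}^{\mathcal{S}}$ that is uniform in $\varepsilon$: at fixed $\varepsilon$ it bounds $\int_{\partial\mathcal{N}_{\varepsilon}^{\mathcal{S}}}|f_s(Q_j,\nu)-f_s(Q,\nu)|\,\text{d}\sigma$ using ($A_7$), the trace embedding into $L^4(\partial\Omega_{\varepsilon})$ and dominated convergence, and then only invokes $|\partial\mathcal{N}_{\varepsilon}^{\mathcal{S}}|\to 0$ together with the vanishing prefactor. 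You instead make everything hinge on the scaled estimate $\frac{\varepsilon^{3-\alpha}}{\varepsilon-\varepsilon^{\alpha}}\int_{\partial\mathcal{N}_{\varepsilon}^{\mathcal{S}}}|u|^4\,\text{d}\sigma\lesssim \|u\|^4_{H^1(\Omega)}+\|u\|^2_{H^1(\Omega)}$, which is nowhere in the paper and which you do not prove. Your sketch for it does not go through as stated: \cref{lemma:bP-aP-ineq} and the foliation of \cref{lemma:surface} control the trace on the \emph{lateral} boundary of a thin tube via annular two-dimensional cross-sections, whereas the sets $\mathcal{S}^i$ are full faces of the outer cubes, transverse to any such foliation; for these you need a one-dimensional trace estimate in the normal direction over a slab of liquid crystal of depth $\sim\varepsilon$ in front of each face. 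Moreover, with that estimate the prefactor does \emph{not} ``exactly cancel'': each face has area $\sim\varepsilon^{2\alpha}$, the slab argument gives $\int_{\partial\mathcal{N}_{\varepsilon}^{\mathcal{S}}}|u|^4\,\text{d}\sigma\lesssim \varepsilon^{-1}\int_{B_{\varepsilon}}|u|^4\text{d}x+\int_{B_{\varepsilon}}\big(|u|^6+|\nabla u|^2\big)\text{d}x$ with $B_{\varepsilon}$ the union of the slabs, and after multiplying by $\varepsilon^{2-\alpha}$ the first term carries $\varepsilon^{1-\alpha}\to+\infty$. Smallness is only recovered by using that $|B_{\varepsilon}|\lesssim N_{\varepsilon,2}\,\varepsilon^{2\alpha+1}\lesssim\varepsilon^{2\alpha-1}$, e.g. via H\"older, $\varepsilon^{1-\alpha}|B_{\varepsilon}|^{1/3}\|u\|^4_{L^6(\Omega)}\lesssim\varepsilon^{(2-\alpha)/3}\|u\|^4_{H^1(\Omega)}$; this extra ingredient is absent from your proposal, so the pivotal step is unsupported as written.

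A second, smaller issue is the order of limits: $\tilde{Q}=E_{\varepsilon}Q$ depends on $\varepsilon$, so the approximant $P$ with $\|\tilde{Q}-P\|_{H^1(\Omega)}<\eta$ also depends on $\varepsilon$, and you cannot then ``send $\varepsilon\to 0$ first'' while applying \cref{lemma:zerocontrbounded} to $P$ (its bound involves $\|P\|^4_{L^{\infty}(\overline{\Omega})}$, which is not controlled along such a family). The fix is to read the lemma as the paper intends, for a fixed $Q\in H^1_g(\Omega,\mathcal{S}_0)$, and approximate $Q$ itself by a fixed smooth $P$; the extension step is unnecessary, since only the trace on $\partial\mathcal{N}_{\varepsilon}^{\mathcal{S}}$ enters and there $E_{\varepsilon}Q=Q$.
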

\begin{proof}
Let $(Q_j)_{j\geq 1}$ be a sequence of smooth maps that converge strongly in $H^1_g(\Omega_{\varepsilon},\mathcal{S}_0)$ to $Q$. By \cref{lemma:zerocontrbounded}, we have $J_{\varepsilon}^{\mathcal{S}}[Q_j]\rightarrow 0$ as $\varepsilon\rightarrow 0$, for any $j\geq 1$. By assumption ($A_7$), we have on $\mathcal{N}_{\varepsilon}^{\mathcal{S}}$:
\begin{align*}
|f_s(Q_j,\nu)-f_s(Q,\nu)|&\leq |Q_j-Q|\big(|Q_j|^3+|Q|^3+1\big)\\
&\lesssim |Q_j-Q|\big(|Q_j-Q|^3+|Q|^3+1\big)\\
&\lesssim |Q_j-Q|^4+|Q_j-Q|\big(|Q|^3+1\big)
\end{align*}

Thanks to the continuity of the trace operator from $H^1(\Omega_{\varepsilon})$ to $H^{1/2}(\partial\Omega_{\varepsilon})$, the Sobolev embedding \linebreak $H^{1/2}(\partial\Omega_{\varepsilon})\hookrightarrow L^4(\partial\Omega_{\varepsilon})$ and the strong convergence $Q_j\rightarrow Q$ in $H^1_g(\Omega_{\varepsilon},\mathcal{S}_0)$, we get that $Q_j\rightarrow Q$ a.e. on $\partial\mathcal{N}_{\varepsilon}^{\mathcal{S}}$, since $\partial\mathcal{N}_{\varepsilon}^{\mathcal{S}}\subset\partial\Omega_{\varepsilon}$. Therefore, there exists $\psi\in L^4(\partial\mathcal{N}_{\varepsilon}^{\mathcal{S}})$ such that $|Q_j-Q|\leq \psi$ a.e. in $\partial\mathcal{N}_{\varepsilon}^{\mathcal{S}}$ and we can write:
\begin{align}\label{eq:noname}
|f_s(Q_j,\nu)-f_s(Q,\nu)|&\lesssim \psi^4+\psi\big(|Q|^3+1)
\end{align}
on $\partial\mathcal{N}_{\varepsilon}^{\mathcal{S}}$, for every $j\geq 1$. 

At the same time, we have the compact Sobolev embedding $H^{1/2}(\partial\Omega_{\varepsilon})\hookrightarrow L^3(\partial\Omega_{\varepsilon})$, therefore $|Q|^3$ is in $L^1(\partial\mathcal{N}_{\varepsilon}^{\mathcal{S}})$. Hence, the right hand side from \eqref{eq:noname} is in $L^1(\partial\mathcal{N}_{\varepsilon}^{\mathcal{S}})$ and we can apply the Lebesgue dominated convergence theorem and get:
\begin{align*}
\lim_{j\rightarrow +\infty}\int_{\partial\mathcal{N}_{\varepsilon}^{\mathcal{S}}}|f_s(Q_j,\nu)-f_s(Q,\nu)|\text{d}\sigma&=0,
\end{align*}
for any $\varepsilon>0$ fixed.

Now, because for $\varepsilon\rightarrow 0$ we get $|\partial\mathcal{N}_{\varepsilon}^{\mathcal{S}}|\rightarrow 0$ (according to \cref{prop:outer_surfaces_go_to_0}) and $\dfrac{\varepsilon^{3-\alpha}}{\varepsilon-\varepsilon^{\alpha}}\rightarrow 0$, the conclusion follows.
\end{proof}

Therefore, from now on we omit the term $J_{\varepsilon}^{\mathcal{S}}$ from the free energy functional and we only study the behaviour of:
\begin{align*}
\mathcal{F}_{\varepsilon}^{\mathcal{T}}[Q]:=\int_{\Omega_{\varepsilon}}\big(f_e(\nabla Q)+f_b(Q)\big)\text{d}x+\dfrac{\varepsilon^{3-\alpha}}{\varepsilon-\varepsilon^{\alpha}}\int_{\partial\mathcal{N}_{\varepsilon}^{\mathcal{T}}}f_s(Q,\nu)\text{d}\sigma,
\end{align*}
which we denote simply by $\mathcal{F}_{\varepsilon}[Q]$, but we keep the same notation for surfaces generated by the scaffold.

\subsection{Equicoercivity of \texorpdfstring{$\mathcal{F}_{\varepsilon}$}{Fe}}

\begin{prop}\label{prop:equicoercivity}
Suppose that the assumptions ($A_1$)-($A_7$) hold and also that there exists $\mu>0$ such that $f_b(Q)\geq\mu|Q|^6-C$, for any $Q\in\mathcal{S}_0$. Let $Q\in H^1_g(\Omega_{\varepsilon},\mathcal{S}_0)$ satisfy $\mathcal{F}_{\varepsilon}[Q]\leq M$, for some $\varepsilon$-independent constant. Then there holds $$\int_{\Omega_{\varepsilon}}|\nabla Q|^2\leq C_M$$ for $\varepsilon>0$ small enough and for some $C_M>0$ depending only on $M$, $f_e$, $f_b$, $f_s$ and $\Omega$.
\end{prop}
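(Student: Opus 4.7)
The plan is to show $\mathcal{F}_\varepsilon[Q]$ controls $\int_{\Omega_\varepsilon}(|\nabla Q|^2+|Q|^6)\,\mathrm{d}x$ up to absorbable remainders, so that the hypothesis $\mathcal{F}_\varepsilon[Q]\le M$ forces the desired uniform $H^1$ bound. The interior elastic and bulk terms already give good lower bounds by $(A_5)$ and the assumed coercivity $f_b(Q)\ge \mu|Q|^6-C$; the only \emph{a priori} unsigned contribution is the surface integral $J_\varepsilon^{\mathcal{T}}[Q]$, and the whole argument reduces to showing that $|J_\varepsilon^{\mathcal{T}}[Q]|$ can be absorbed, for $\varepsilon$ small, using \cref{lemma:surface}.

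First I would combine $(A_5)$ with the coercivity of $f_b$ to write
\[
\mathcal{F}_\varepsilon[Q]\;\ge\; \lambda_e^{-1}\int_{\Omega_\varepsilon}|\nabla Q|^2\,\mathrm{d}x+\mu\int_{\Omega_\varepsilon}|Q|^6\,\mathrm{d}x-C|\Omega|-\bigl|J_\varepsilon^{\mathcal{T}}[Q]\bigr|.
\]
Next, setting $Q_2=0$ in $(A_7)$ yields the quartic growth $|f_s(Q,\nu)|\lesssim |Q|^4+1$, so
\[
\bigl|J_\varepsilon^{\mathcal{T}}[Q]\bigr|\;\lesssim\;\dfrac{\varepsilon^{3-\alpha}}{\varepsilon-\varepsilon^\alpha}\int_{\partial\mathcal{N}_\varepsilon^{\mathcal{T}}}\bigl(|Q|^4+1\bigr)\,\mathrm{d}\sigma.
\]
The constant piece contributes an $\varepsilon$-independent amount: a direct count of the $\sim\varepsilon^{-3}$ connecting parallelipipeds, each carrying a contact surface of order $\varepsilon^{1+\alpha}$, gives $\dfrac{\varepsilon^{3-\alpha}}{\varepsilon-\varepsilon^\alpha}\bigl|\partial\mathcal{N}_\varepsilon^{\mathcal{T}}\bigr|=O(1)$. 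For the more serious $|Q|^4$ piece I would invoke \cref{lemma:surface} with $n=4$ (so $2n-2=6$), obtaining
\[
\dfrac{\varepsilon^{3-\alpha}}{\varepsilon-\varepsilon^\alpha}\int_{\partial\mathcal{N}_\varepsilon^{\mathcal{T}}}|Q|^4\,\mathrm{d}\sigma\;\lesssim\;\dfrac{\varepsilon^{2-\alpha}}{1-\varepsilon^{\alpha-1}}\int_{\Omega_\varepsilon}\bigl(|Q|^6+|\nabla Q|^2\bigr)\mathrm{d}x+\dfrac{1}{2(1-\varepsilon^{\alpha-1})^2}\int_{\Omega_\varepsilon}|Q|^4\,\mathrm{d}x.
\]

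By $(A_2)$ both $2-\alpha>0$ and $\alpha-1>0$, so the prefactor $\varepsilon^{2-\alpha}/(1-\varepsilon^{\alpha-1})$ tends to zero while $1/(1-\varepsilon^{\alpha-1})^2$ stays uniformly bounded. For $\varepsilon<\varepsilon_0$ small enough the $|\nabla Q|^2$ and $|Q|^6$ contributions from the surface estimate can thus be absorbed into $\lambda_e^{-1}\!\int|\nabla Q|^2$ and $\mu\int|Q|^6$ respectively (by making the residual prefactors at most $\tfrac12\lambda_e^{-1}$ and $\tfrac14\mu$). The leftover $|Q|^4$ integral is handled by Young's inequality, $|Q|^4\le \delta|Q|^6+C_\delta$, choosing $\delta$ small enough that it too gets absorbed into $\mu\int|Q|^6$. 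Rearranging will then yield $\int_{\Omega_\varepsilon}|\nabla Q|^2\,\mathrm{d}x\le C_M$ for a constant depending only on $M$, the parameters in $(A_5)$--$(A_7)$, $\mu$, and $|\Omega|$.

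The main obstacle I expect is precisely the appearance of $|\nabla Q|^2$ on the right-hand side of \cref{lemma:surface}: absorption of the surface energy into the elastic term is possible only because this coefficient vanishes as $\varepsilon\to 0$, which is what the inequality $\alpha<2$ (granted by $(A_2)$, in fact with room to spare since $\alpha<3/2$) provides. Without this smallness, the negative part of the surface term could compete with the elastic one and equicoercivity would be out of reach.
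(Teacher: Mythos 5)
Your proposal is correct and follows essentially the same route as the paper: the same lower bounds from $(A_5)$ and the coercivity of $f_b$, the same quartic-growth bound on $f_s$ from $(A_7)$, the same treatment of the constant surface piece (your counting argument is exactly \cref{prop:C_s}), and the same application of \cref{lemma:surface} with $n=4$ followed by absorption of the $|\nabla Q|^2$ and $|Q|^6$ surface contributions thanks to $\varepsilon^{2-\alpha}/(1-\varepsilon^{\alpha-1})\to 0$. The only divergence is the final step for the leftover $\int_{\Omega_\varepsilon}|Q|^4$ term, whose coefficient does not vanish: you absorb it via the pointwise Young inequality $|Q|^4\le\delta|Q|^6+C_\delta$, whereas the paper uses the generalised H\"older inequality to write $\int_{\Omega_\varepsilon}|Q|^4\,\mathrm{d}x\lesssim|\Omega|^{1/3}\big(\int_{\Omega_\varepsilon}|Q|^6\,\mathrm{d}x\big)^{2/3}$ and then shows the resulting function $h_\varepsilon(t)=t^2\big(C_4(\varepsilon)+tC_5(\varepsilon)\big)+C_6$ is bounded above because its leading coefficient tends to $-\mu<0$; both devices exploit the same domination of $|Q|^4$ by the sextic bulk term, and your version is arguably the more direct bookkeeping.
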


\begin{proof}
Assumption $(A_6)$ ensures that $|f_s(Q,\nu)|\lesssim |Q|^4+1$, therefore:
\begin{equation*}
J_{\varepsilon}^{\mathcal{T}}[Q]\geq -C_1\cdot \dfrac{\varepsilon^{3-\alpha}}{\varepsilon-\varepsilon^{\alpha}}\int_{\partial\mathcal{N}_{\varepsilon}^{\mathcal{T}}}(|Q|^4+1)\text{d}\sigma\geq -C_1\cdot \dfrac{\varepsilon^{3-\alpha}}{\varepsilon-\varepsilon^{\alpha}}\int_{\partial\mathcal{N}_{\varepsilon}^{\mathcal{T}}}|Q|^4\text{d}\sigma-C_1\cdot C_s,
\end{equation*}

\noindent according to \cref{prop:C_s}. Using \cref{lemma:surface} with $n=4$, we have:
\begin{equation*}
\dfrac{\varepsilon^{3-\alpha}}{\varepsilon-\varepsilon^{\alpha}}\int_{\partial\mathcal{N}_{\varepsilon}^{\mathcal{T}}}|Q|^4\text{d}\sigma \lesssim \dfrac{2\varepsilon^{2-\alpha}}{(1-\varepsilon^{\alpha-1})}\int_{\Omega_{\varepsilon}}\big(|Q|^6+|\nabla Q|^2\big)\text{d}x+\dfrac{1}{2(1-\varepsilon^{\alpha-1})^2}\int_{\Omega_{\varepsilon}}|Q|^4\text{d}x,
\end{equation*}
hence
\begin{equation*}
J_{\varepsilon}^{\mathcal{T}}[Q]\geq -C_1\cdot C_2\cdot \dfrac{2\varepsilon^{2-\alpha}}{1-\varepsilon^{\alpha-1}}\int_{\Omega_{\varepsilon}}\big(|Q|^6+|\nabla Q|^2\big)\text{d}x-C_1\cdot C_2\cdot\dfrac{1}{2(1-\varepsilon^{\alpha-1})^2}\int_{\Omega_{\varepsilon}}|Q|^4\text{d}x-C_1\cdot C_s.
\end{equation*}

At the same time, from the generalised version of the Hölder's inequality and from the fact that $\Omega$ is bounded, we have
\begin{equation*}
\bigg(\int_{\Omega_{\varepsilon}}|Q|^4\text{d}x\bigg)^{1/4}\leq|\Omega_{\varepsilon}|^{1/12}\cdot\bigg(\int_{\Omega_{\varepsilon}}|Q|^6\text{d}x\bigg)^{1/6}\Rightarrow \int_{\Omega_{\varepsilon}}|Q|^4\text{d}x<|\Omega|^{1/3}\bigg(\int_{\Omega_{\varepsilon}} |Q|^6\text{d}x\bigg)^{2/3}
\end{equation*}
and so
\begin{equation}\label{eq:equi1}
J_{\varepsilon}^{\mathcal{T}}[Q]\geq -C_3\cdot\dfrac{\varepsilon^{2-\alpha}}{1-\varepsilon^{\alpha-1}}\int_{\Omega_{\varepsilon}}\big(|Q|^6+|\nabla Q|^2)\text{d}x-C_3\cdot\dfrac{1}{2(1-\varepsilon^{\alpha-1})^2}\cdot  |\Omega|^{1/3}\bigg(\int_{\Omega_{\varepsilon}}|Q|^6\text{d}x\bigg)^{2/3}-C_3,
\end{equation}
where $C_3=\text{max}\{C_1\cdot C_2,\;C_1\cdot C_s\}$.
\vspace{2mm}

Since $f_b(Q)\geq\mu |Q|^6-C$, $f_e(\nabla Q)\geq \lambda_e^{-1}|\nabla Q|^2$ (according to ($A_5$) and ($A_6$)) and $|\Omega_{\varepsilon}|\leq|\Omega|$, we have:
\begin{equation}\label{eq:equi2}
\int_{\Omega_{\varepsilon}}\big(f_b(Q)+f_e(\nabla Q)\big)\text{d}x\geq \mu\int_{\Omega_{\varepsilon}}|Q|^6\text{d}x+\lambda_e^{-1}\int_{\Omega_{\varepsilon}}|\nabla Q|^2\text{d}x-C|\Omega|
\end{equation}
and because $\mathcal{F}_{\varepsilon}[Q]\leq M$, combining \eqref{eq:equi1} and \eqref{eq:equi2}, we obtain
\begin{equation}\label{eq:equi3}
\bigg(\lambda_e^{-1}-C_3\cdot \dfrac{\varepsilon^{2-\alpha}}{1-\varepsilon^{\alpha-1}}\bigg)\int_{\Omega_{\varepsilon}}|\nabla Q|^2\text{d}x \leq h_{\varepsilon}\bigg(\bigg(\int_{\Omega_{\varepsilon}}|Q|^6\text{d}x\bigg)^{1/3}\bigg),
\end{equation}
where
\begin{equation*}
h_{\varepsilon}(t)=t^2\cdot\big(C_4(\varepsilon)+t\cdot C_5(\varepsilon)\big)+C_6,
\end{equation*}
for any $t\geq 0$, with $C_4(\varepsilon)=\dfrac{C_3\cdot|\Omega|^{1/3}}{2(1-\varepsilon^{\alpha-1})^2}$,  $C_5(\varepsilon)=C_3\cdot \dfrac{\varepsilon^{2-\alpha}}{1-\varepsilon^{\alpha-1}}-\mu$ and $C_6=\big(M+C_3+C|\Omega|\big)$. As $\varepsilon\rightarrow 0$, we have $C_4(\varepsilon)\searrow \dfrac{C_3\cdot|\Omega|^{1/3}}{2}>0$ and $C_5(\varepsilon)\searrow (-\mu)<0$. Hence, for $\varepsilon>0$ small enough, we have:
\begin{align}\label{eq:ineq_C4_C5}
\dfrac{C_3\cdot|\Omega|^{1/3}}{2}<C_4(\varepsilon)<C_3\cdot|\Omega|^{1/3}\;\text{and}\;-\mu<C_5(\varepsilon)<-\dfrac{\mu}{2}<0.
\end{align}

Let $t_0(\varepsilon)$ be the solution of the equation $C_4(\varepsilon)+t\cdot C_5(\varepsilon)=0$. We prove that $h_{\varepsilon}(t)$ is bounded from above on $[0,+\infty)$. Computing the critical points of $h_{\varepsilon}$, it is easy to check that $2t_0(\varepsilon)/3$ is the point in which the function attains its maximum over $[0,+\infty)$, which is:
\begin{align*}
\text{max}\big\{h_{\varepsilon}(t):t\in[0,+\infty)\big\}&=\dfrac{4C_4^3(\varepsilon)}{27C_5^2(\varepsilon)}+C_6<\dfrac{4}{27}\cdot C_3^3\cdot |\Omega|\cdot \dfrac{4}{\mu^2}+C_6,
\end{align*}
using \eqref{eq:ineq_C4_C5}. Therefore, the function $h_{\varepsilon}$ is bounded from above on $[0,+\infty)$.

Using the same arguments we can see that $\lambda_e^{-1}-C_3\cdot\dfrac{\varepsilon^{2-\alpha}}{1-\varepsilon^{\alpha-1}}$ is also bounded from below, away from $0$, for $\varepsilon>0$ small enough, and from here the conclusion follows, based on relation \eqref{eq:equi3}.
\end{proof}

\subsection{Lower semi-continuity of \texorpdfstring{$\mathcal{F}_{\varepsilon}$}{Fe}}

\begin{prop}\label{prop:lsc}
Suppose that the assumptions ($A_1$)-($A_7$) are satisfied. Then, the following statement holds: for any positive $M>0$, there exists $\varepsilon_0(M)>0$ such that for any $\varepsilon\in\big(0,\varepsilon_0(M)\big)$ and for any sequence $(Q_j)_{j\in\mathbb{N}}$ from $H^1(\Omega_{\varepsilon},\mathcal{S}_0)$ that converges $H^1$-weakly to a function $Q\in H^1(\Omega_{\varepsilon},\mathcal{S}_0)$ and which satisfies $\|\nabla Q_j\|_{L^{2}(\Omega_{\varepsilon})}\leq M$ for any $j\in\mathbb{N}$, then $$\mathcal{F}_{\varepsilon}[Q]\leq\liminf_{j\rightarrow +\infty} \mathcal{F}_{\varepsilon}[Q_j].$$
\end{prop}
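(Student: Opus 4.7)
The plan is to decompose $\mathcal{F}_\varepsilon = \mathcal{F}^{el}_\varepsilon + \mathcal{F}^{b}_\varepsilon + \mathcal{F}^{surf}_\varepsilon$ (with the simplification from \cref{subsection:nocontribution} that only the contribution on $\partial\mathcal{N}^{\mathcal{T}}_\varepsilon$ matters), establish weak lower semi-continuity of each piece, and let the quantitative ``gain'' provided by strong convexity of $f_e$ absorb the ``deficit'' of the surface term, which is what dictates the $\varepsilon_{0}(M)$ threshold.

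For the elastic piece I would invoke the Bregman-type inequality coming from strong convexity $(A_5)$: since $f_e(D) - \theta|D|^2$ is convex,
\begin{equation*}
f_e(\nabla Q_j) \geq f_e(\nabla Q) + Df_e(\nabla Q):(\nabla Q_j - \nabla Q) + \theta\,|\nabla(Q_j - Q)|^2.
\end{equation*}
The growth bound on $Df_e$ guarantees $Df_e(\nabla Q)\in L^2(\Omega_\varepsilon)$, so the middle term vanishes in the limit by weak $L^2$ convergence of $\nabla Q_j$, yielding
\begin{equation*}
\liminf_{j\to\infty}\int_{\Omega_\varepsilon} f_e(\nabla Q_j)\,\text{d}x \;\geq\; \int_{\Omega_\varepsilon} f_e(\nabla Q)\,\text{d}x \;+\; \theta\,\liminf_{j\to\infty}\|\nabla(Q_j - Q)\|_{L^2(\Omega_\varepsilon)}^{2}.
\end{equation*}
For the bulk piece, Rellich--Kondrachov gives $Q_j \to Q$ strongly in $L^p(\Omega_\varepsilon)$ for every $p<6$, hence (up to a subsequence) a.e.\ on $\Omega_\varepsilon$; since $f_b$ is continuous and bounded below by $(A_6)$, Fatou's lemma delivers $\int f_b(Q)\,\text{d}x \leq \liminf_j \int f_b(Q_j)\,\text{d}x$.

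The surface piece is the main obstacle: $f_s$ has quartic growth, exactly the critical exponent of the 2D boundary embedding $H^{1/2}(\partial\Omega_\varepsilon)\hookrightarrow L^4$, so the trace is only bounded (not compact) at the relevant exponent and $\mathcal{F}^{surf}_\varepsilon$ enjoys no ``soft'' weak continuity. I would use the Lipschitz bound $(A_7)$ together with Hölder $(4,4/3)$ to get
\begin{equation*}
A\bigl|\mathcal{F}^{surf}_\varepsilon[Q_j] - \mathcal{F}^{surf}_\varepsilon[Q]\bigr| \;\lesssim\; \Bigl(A\!\int_{\partial\mathcal{N}^{\mathcal{T}}_\varepsilon}\!|Q_j-Q|^{4}\text{d}\sigma\Bigr)^{1/4}\Bigl(A\!\int_{\partial\mathcal{N}^{\mathcal{T}}_\varepsilon}\!(|Q_j|^{4}+|Q|^{4}+1)\,\text{d}\sigma\Bigr)^{3/4},
\end{equation*}
where $A=\varepsilon^{3-\alpha}/(\varepsilon-\varepsilon^\alpha)$. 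Applying \cref{lemma:surface} with $n=4$ to each of these surface integrals, combined with the Sobolev bound $\|Q_j-Q\|_{L^6(\Omega_\varepsilon)}\lesssim \|\nabla(Q_j-Q)\|_{L^2}$ (valid since $Q_j-Q$ has zero trace on $\partial\Omega$) and the strong convergence $\int_{\Omega_\varepsilon}|Q_j-Q|^4\,\text{d}x\to 0$ from Rellich, bounds the first factor by $C(M)\,\eta(\varepsilon)^{1/4}B^{1/4}+o_j(1)$ with $\eta(\varepsilon):=\varepsilon^{2-\alpha}/(1-\varepsilon^{\alpha-1})\to 0$ and $B:=\liminf_j\|\nabla(Q_j-Q)\|_{L^2}^{2}$; the second factor is bounded by $C(M)$.

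Finally, I would close the estimate by summing the three contributions and applying Young's inequality to the surface deficit, splitting
\begin{equation*}
C(M)\,\eta(\varepsilon)^{1/4}B^{1/4} \;\leq\; \tfrac{\theta}{2}B \;+\; C'(M,\theta)\,\eta(\varepsilon)^{1/3},
\end{equation*}
so that the $\tfrac{\theta}{2}B$ piece is swallowed by the strong-convexity gain $\theta B$, leaving a residual $O(\eta(\varepsilon)^{1/3})$ term which is made negligible by choosing $\varepsilon_0(M)$ small enough. The hard part is precisely this interplay: without the prefactor $A\sim \varepsilon^{2-\alpha}$ entering \cref{lemma:surface} with a favorable sign, the quartic surface term would sit at the critical exponent and one could not recover strict lower semi-continuity from weak $H^1$ convergence alone.
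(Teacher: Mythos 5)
Your treatment of the elastic and bulk terms coincides with the paper's (strong convexity giving a gain $\theta\omega$ with $\omega=\liminf_j\|\nabla(Q_j-Q)\|^2_{L^2(\Omega_\varepsilon)}$, and Rellich plus Fatou for $f_b$), but the surface step has a genuine gap. Your H\"older $(4,4/3)$ factorisation leaves a second factor which is only \emph{bounded}, not small, so after \cref{lemma:surface} the surface deficit you control is of size $C(M)\,\eta(\varepsilon)^{1/4}\omega^{1/4}$, i.e.\ \emph{sublinear} in $\omega$. A term of order $\omega^{1/4}$ cannot be absorbed by the gain $\theta\omega$ uniformly in $\omega\ge 0$ at fixed $\varepsilon$: for $0<\omega<\big(C(M)\eta(\varepsilon)^{1/4}/\theta\big)^{4/3}$ the combined lower bound is negative. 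The Young-inequality step does not repair this, because the residual $C'(M,\theta)\,\eta(\varepsilon)^{1/3}$ is a strictly positive constant independent of $j$; what your argument yields is only $\liminf_j\mathcal F_\varepsilon[Q_j]\ge \mathcal F_\varepsilon[Q]-C'\eta(\varepsilon)^{1/3}$, an approximate lower semicontinuity, whereas the proposition demands the exact inequality for every fixed $\varepsilon<\varepsilon_0(M)$; no choice of $\varepsilon_0(M)$ makes a fixed positive error vanish. (And one cannot instead send $\int_{\partial\mathcal N_\varepsilon^{\mathcal T}}|Q_j-Q|^4\,\mathrm{d}\sigma\to 0$: the embedding $H^{1/2}\hookrightarrow L^4$ on the two-dimensional boundary is critical, hence not compact, which is precisely why a deficit proportional to the gradient gap survives.)

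The paper's proof avoids this by splitting the boundary into $A_j=\{|Q_j-Q|\le |Q|+1\}$ and $B_j=\{|Q_j-Q|>|Q|+1\}$. On $A_j$ the integrand difference is dominated by the \emph{fixed} function $|Q|^4+1\in L^1(\partial\mathcal N_\varepsilon^{\mathcal T})$, and a.e.\ convergence of the traces (compactness of $H^{1/2}\hookrightarrow L^2$ for fixed $\varepsilon$) gives, via dominated convergence, an exact $o_j(1)$ carrying no $\omega$-dependence; on $B_j$ the difference is bounded by $|Q_j-Q|^4$ alone, with no cross term, and \cref{lemma:surface} turns this into $\eta(\varepsilon)(1+M^4)\|\nabla(Q_j-Q)\|^2_{L^2(\Omega_\varepsilon)}+o_j(1)$, a deficit \emph{linear} in $\omega$ with small prefactor, so that $(\theta-C_M\eta(\varepsilon))\,\omega\ge 0$ once $\varepsilon_0(M)$ is chosen with $C_M\eta(\varepsilon)<\theta$. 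Two further points to tighten if you rework your route: the bound $\|Q_j-Q\|_{L^6(\Omega_\varepsilon)}\lesssim\|\nabla(Q_j-Q)\|_{L^2(\Omega_\varepsilon)}$ with an $\varepsilon$-independent constant is not automatic from vanishing trace on $\partial\Omega$, since the domain is perforated; the paper obtains it by passing through the harmonic extension and \cref{lemma:extensionineq}. Also, bounding your second H\"older factor by $C(M)$ would require an $L^4/L^6$ control of $Q_j$ itself, which does not follow from $\|\nabla Q_j\|_{L^2}\le M$ alone; the paper's splitting never needs such a bound.
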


\begin{proof}
The proof of \cref{prop:lsc} follows the same steps as in \cite{CanevariZarnescu1}. We prove this proposition on each component of $\mathcal{F}_{\varepsilon}$. Before that, let $$\omega=\liminf_{j\rightarrow +\infty}\int_{\Omega_{\varepsilon}}|\nabla Q_j|^2 \text{d}x-\int_{\Omega_{\varepsilon}}|\nabla Q|^2\text{d}x.$$

Since $Q_j\rightharpoonup Q$ in $H^1$, then $\nabla Q_j\rightharpoonup \nabla Q$ in $L^2$, therefore $\omega\geq 0$. Moreover, up to extracting a subsequence, we can assume that 
\begin{equation}\label{eq:Q_j_goes_to_Q_(nabla)}
\int_{\Omega_{\varepsilon}}|\nabla Q_j|^2\text{d}x\rightarrow\int_{\Omega_{\varepsilon}}|\nabla Q|^2\text{d}x+\omega
\end{equation} 
as $j\rightarrow +\infty$.

From the assumption ($A_5$), we have that $f_e$ is strongly convex, that is for $\theta>0$ small enough, $\tilde{f_e}(D):=f_e(D)-\theta|D|^2$ is a convex function from $\mathcal{S}_0\otimes \mathbb{R}^3$ to $[0,+\infty)$. In this case, the functional $\int_{\Omega_{\varepsilon}}\tilde{f_e}(\cdot)\text{d}x$ is lower semicontinuous. Therefore $$\liminf_{j\rightarrow +\infty
}\int_{\Omega_{\varepsilon}}\tilde{f_e}(\nabla Q_j)\text{d}x\geq\int_{\Omega_{\varepsilon}}\tilde{f_e}(\nabla Q)\text{d}x,$$ from which we get 
\begin{equation}\label{eq:liminffe}
\liminf_{j\rightarrow +\infty}\int_{\Omega_{\varepsilon}}f_e(\nabla Q_j)\text{d}x-\int_{\Omega_{\varepsilon}}f_e(\nabla Q)\text{d}x\geq \bigg(\liminf_{j\rightarrow +\infty}\int_{\Omega_{\varepsilon}}\tilde{f_e}(\nabla Q_j)\text{d}x-\int_{\Omega_{\varepsilon}}\tilde{f_e}(\nabla Q)\text{d}x\bigg)+\theta\omega\geq 0.
\end{equation}

Since $Q_j\rightharpoonup Q$ in $H^1(\Omega_{\varepsilon})$ and the injection $H^1(\Omega_{\varepsilon})\subset L^2(\Omega_{\varepsilon})$ is compact, then we can assume, up to extracting a subsequence, that $Q_j\rightarrow Q$ a.e. in $\Omega_{\varepsilon}$. Then, from the assumption ($A_6$), we can see that the sequence $\big(f_b(Q_j)\big)_{j\in\mathbb{N}}$ satisfies all the conditions from Fatou's lemma, therefore:
\begin{equation}\label{eq:liminffb}
\liminf_{j\rightarrow +\infty}\int_{\Omega_{\varepsilon}}f_b(Q_j)\text{d}x\geq\int_{\Omega_{\varepsilon}}\liminf_{j\rightarrow +\infty}f_b(Q_j)\text{d}x=\int_{\Omega_{\varepsilon}}f_b(Q)\text{d}x.
\end{equation} 

Regarding the \textit{surface energy}, we split $\partial\mathcal{N}_{\varepsilon}^{\mathcal{T}}$ into:
\begin{align*}
A_j&=\{x\in\partial\mathcal{N}_{\varepsilon}^{\mathcal{T}}\;:\;|Q_j(x)-Q(x)|\leq |Q(x)|+1\}\\
B_j=\partial\mathcal{N}_{\varepsilon}^{\mathcal{T}}\setminus A_j&=\{x\in\partial\mathcal{N}_{\varepsilon}^{\mathcal{T}}\;:\;|Q_j(x)-Q(x)|>|Q(x)|+1\},
\end{align*}
for any $j\in\mathbb{N}$.

Using ($A_7$), we have
\begin{align*}
\int_{A_j}|f_s(Q_j,\nu)-f_s(Q,\nu)|\text{d}\sigma&\leq \int_{A_j}\big(|Q_j|^3+|Q|^3+1\big)\cdot |Q_j-Q|\text{d}\sigma\\
&\leq \int_{A_j}\big((|Q_j-Q|+|Q|)^3+|Q|^3+1\big)\cdot\big(|Q|+1\big)\text{d}\sigma\\
&\lesssim \int_{A_j}(|Q|^3+1)(|Q|+1)\text{d}\sigma\lesssim \int_{A_j}(|Q|^4+1)\text{d}\sigma.
\end{align*}

Then due to the continuous embedding of $H^{1/2}(\partial\mathcal{N}_{\varepsilon})$ into $L^{4}(\partial\mathcal{N}_{\varepsilon})$:
\begin{align*}
\dfrac{\varepsilon^{3-\alpha}}{\varepsilon-\varepsilon^{\alpha}}\int_{A_j}|f_s(Q_j,\nu)-f_s(Q,\nu)|\text{d}\sigma\lesssim \dfrac{\varepsilon^{3-\alpha}}{\varepsilon-\varepsilon^{\alpha}}\int_{A_j}\big(|Q|^4+1\big)\text{d}\sigma<+\infty,
\end{align*}
according also to \cref{prop:C_s}. At the same time, the compact embedding $H^{1/2}(\partial\mathcal{N}_{\varepsilon})\hookrightarrow L^{2}(\partial\mathcal{N}_{\varepsilon})$ and the continuity of the trace operator from $H^{1}(\Omega_{\varepsilon})$ into $H^{1/2}(\partial\mathcal{N}_{\varepsilon})$ grants that $Q_j\rightarrow Q$ a.e. on $\partial\mathcal{N}_{\varepsilon}$, up to extracting a subsequence. We can now apply the dominated convergence theorem and get:
\begin{equation}\label{eq:liminfaj}
\dfrac{\varepsilon^{3-\alpha}}{\varepsilon-\varepsilon^{\alpha}}\int_{A_j}|f_s(Q_j,\nu)-f_s(Q,\nu)|\text{d}\sigma\rightarrow 0\;\text{as}\;j\rightarrow +\infty.
\end{equation}

Regarding the $B_j$ sets, we have, according to ($A_7$):
\begin{align*}
|f_s(Q_j,\nu)-f_s(Q,\nu)|&\leq \lambda_s |Q_j-Q|\big(|Q_j|^3+|Q|^3+1\big)\\
& \lesssim |Q_j-Q|\big(|Q_j|^3+|Q_j-Q|^3+1\big)\;\text{(using}\;|Q|+1<|Q_j-Q|)\\
& \lesssim |Q_j-Q|\big(|Q_j-Q|^3+|Q|^3+|Q_j-Q|^3+1\big)\\
& \lesssim |Q_j-Q|\big(|Q_j-Q|^3+1) \lesssim |Q_j-Q|^4.
\end{align*}

Using \cref{lemma:surface} for $(Q_j-Q)$ with $n=4$, we have:
\begin{align*}
\dfrac{\varepsilon^{3-\alpha}}{\varepsilon-\varepsilon^{\alpha}}\int_{B_j}|f_s(Q_j,\nu)-f_s(Q,\nu)|\text{d}\sigma &\lesssim \dfrac{\varepsilon^{3-\alpha}}{\varepsilon-\varepsilon^{\alpha}}\int_{B_j}|Q_j-Q|^4\text{d}\sigma\lesssim \dfrac{\varepsilon^{3-\alpha}}{\varepsilon-\varepsilon^{\alpha}}\int_{\partial\mathcal{N}_{\varepsilon}^{\mathcal{T}}}|Q_j-Q|^4\text{d}\sigma
\end{align*}
\begin{equation*}
\lesssim \dfrac{2\varepsilon^{2-\alpha}}{1-\varepsilon^{\alpha-1}}\int_{\Omega_{\varepsilon}}|Q_j-Q|^6+|\nabla Q_j-\nabla Q|^2\text{d}x+\dfrac{1}{2(1-\varepsilon^{\alpha-1})^2}\int_{\Omega_{\varepsilon}}|Q_j-Q|^4\text{d}x.
\end{equation*}

Since $\text{H}^1(\Omega_{\varepsilon})$ is compactly embedded into $\text{L}^{4}(\Omega_{\varepsilon})$ and $Q_j\rightharpoonup Q$ in $\text{H}^1(\Omega_{\varepsilon})$, then $Q_j\rightarrow Q$ in $\text{L}^4(\Omega_{\varepsilon})$ and so 
\begin{equation*}
\dfrac{1}{2(1-\varepsilon^{\alpha-1})^2}\int_{\Omega_{\varepsilon}}|Q_j-Q|^4\text{d}x\rightarrow 0,\;\text{as}\;j\rightarrow +\infty.
\end{equation*}

For the term containing $|Q_j-Q|^6$, we proceed in the following way:
\begin{align*}
\int_{\Omega_{\varepsilon}}|Q_j-Q|^6\text{d}x &=\int_{\Omega_{\varepsilon}}|E_{\varepsilon}(Q_j-Q)|^6\text{d}x \leq \int_{\Omega}|E_{\varepsilon}(Q_j-Q)|^6\text{d}x=\big\|E_{\varepsilon}(Q_j-Q)\big\|^6_{L^{6}(\Omega)}\\
&\lesssim \big\|E_{\varepsilon}(Q_j-Q)\big\|^6_{\text{H}^1(\Omega)}\;\text{(by the continuous injection}\;\text{H}^1(\Omega)\subset \text{L}^6(\Omega))\\
&\lesssim \big\|E_{\varepsilon}(Q_j-Q)\big\|^6_{\text{H}^1_0(\Omega)}\;\text{(because}\;Q_j\equiv Q\;\text{on}\;\partial\Omega)\\
&\lesssim \big\|\nabla E_{\varepsilon}(Q_j-Q)\big\|^6_{\text{L}^{2}(\Omega)} = \bigg(\int_{\Omega}|\nabla E_{\varepsilon}(Q_j-Q)|^2\text{d}x\bigg)^3\\
&\lesssim \bigg(\int_{\Omega_{\varepsilon}}|\nabla Q_j-\nabla Q|^2\text{d}x\bigg)^3\;\text{(using}\;\text{\cref{lemma:extensionineq}}).
\end{align*}

Now, because $\|\nabla Q_j\|_{\text{L}^2(\Omega_{\varepsilon})}\leq M$, then:
\begin{align*}
\int_{\Omega_{\varepsilon}}|\nabla Q_j-\nabla Q|^2\text{d}x &\leq \int_{\Omega_{\varepsilon}} \big(|\nabla Q_j|^2+|\nabla Q|^2\big)\text{d}x \lesssim M^2
\end{align*}
and therefore
\begin{equation}\label{eq:liminfbj}
\dfrac{\varepsilon^{3-\alpha}}{\varepsilon-\varepsilon^{\alpha}}\int_{B_j}|f_s(Q_j,\nu)-f_s(Q,\nu)|\text{d}\sigma \lesssim \dfrac{2\varepsilon^{2-\alpha}}{1-\varepsilon^{\alpha-1}}(1+M^4)\int_{\Omega_{\varepsilon}}|\nabla Q_j-\nabla Q|^2\text{d}x+o(1).
\end{equation}

Using that $Q_j\rightharpoonup Q$ in $\text{H}^1(\Omega_{\varepsilon})$ and \eqref{eq:Q_j_goes_to_Q_(nabla)}, we obtain that $\displaystyle{\int_{\Omega_{\varepsilon}}|\nabla Q_j-\nabla Q|^2\text{d}x\rightarrow \omega}$ as $j\rightarrow +\infty$ and combining this with \eqref{eq:liminfaj} and \eqref{eq:liminfbj}, we get:
\begin{equation}\label{eq:liminffs}
\liminf_{j\rightarrow +\infty}J_{\varepsilon}^{\mathcal{T}}[Q_j]-J_{\varepsilon}^{\mathcal{T}}[Q]\geq -C_M\cdot\omega\cdot\dfrac{\varepsilon^{2-\alpha}}{1-\varepsilon^{\alpha-1}},
\end{equation}
where $C_M$ is a constant dependent of $M$ and independent of $\varepsilon$.

According to \eqref{eq:liminffe}, \eqref{eq:liminffb} and \eqref{eq:liminffs}, we finally obtain that
\begin{equation*}
\liminf_{j\rightarrow +\infty}\mathcal{F}_{\varepsilon}[Q_j]-\mathcal{F}_{\varepsilon}[Q]\geq \bigg(\theta -C_M\dfrac{\varepsilon^{2-\alpha}}{1-\varepsilon^{\alpha-1}}\bigg)\omega
\end{equation*}
and since $\dfrac{\varepsilon^{2-\alpha}}{1-\varepsilon^{\alpha-1}}\rightarrow 0$ as $\varepsilon\rightarrow 0$, the conclusion follows.

\end{proof}

\section{Convergence of local minimisers}\label{section:conv_local_min}

\subsection{Pointwise convergence of the surface integral}

The aim of this section is to prove the following statement:

\begin{theorem}\label{th:Je_goes_to_J0}
Suppose that the assumptions ($A_1$)-($A_7$) are satisfied. Then, for any bounded, Lipschitz map $Q:\overline{\Omega}\rightarrow\mathcal{S}_0$, there holds $J_{\varepsilon}^{\mathcal{T}}[Q]\rightarrow J_0[Q]$ as $\varepsilon\rightarrow 0$, where 
\begin{align}\label{defn:J_0}
J_0[Q]=\displaystyle{\int_{\Omega}f_{hom}(Q)\emph{d}x}.
\end{align}
\end{theorem}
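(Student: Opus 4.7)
The plan is to use the decomposition $J_\varepsilon^{\mathcal{T}}[Q] = J_\varepsilon^X[Q] + J_\varepsilon^Y[Q] + J_\varepsilon^Z[Q]$ from \eqref{defn:Jeps_T_X_Y_Z} and to analyse each piece by a \emph{freeze-and-average} argument, relying on assumption ($A_4$) as a Riemann-sum device. I focus on $J_\varepsilon^X$; the other two are identical after permuting $p,q,r$. Each $x$-direction connector centered at $y_\varepsilon^{x,k}$ is a parallelepiped with $x$-extent $\varepsilon - \varepsilon^\alpha/p$ and transverse extents $\varepsilon^\alpha/q,\,\varepsilon^\alpha/r$, so its diameter is $O(\varepsilon)$ and $|\mathcal{T}_x^k| \lesssim \varepsilon^{1+\alpha}$. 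Since $Q$ is bounded and Lipschitz on $\overline{\Omega}$, ($A_7$) bounds the cost of replacing $Q(t)$ by $Q(y_\varepsilon^{x,k})$ on each $\mathcal{T}_x^k$:
\begin{equation*}
\int_{\mathcal{T}_x^k} |f_s(Q(t),\nu) - f_s(Q(y_\varepsilon^{x,k}),\nu)|\,\text{d}\sigma(t) \lesssim \varepsilon\cdot |\mathcal{T}_x^k| \lesssim \varepsilon^{2+\alpha}.
\end{equation*}
Multiplied by the prefactor $\varepsilon^{3-\alpha}/(\varepsilon-\varepsilon^\alpha) = O(\varepsilon^{2-\alpha})$ and summed over the $X_\varepsilon = O(\varepsilon^{-3})$ connectors, the cumulative freezing error is $O(\varepsilon) \to 0$.

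Next, on each of the four lateral rectangles making up $\mathcal{T}_x^k$ the outward normal is one of the constants $\pm e_2,\pm e_3$, which are exactly the normals appearing on $\mathcal{C}^y$ and $\mathcal{C}^z$. Computing the two pairs of face areas and using the identities $f_s(Q_0,e_2)+f_s(Q_0,-e_2)=\int_{\mathcal{C}^y}f_s(Q_0,\nu)\,\text{d}\sigma$ and the analogous one for $\mathcal{C}^z$, a direct calculation gives
\begin{equation*}
\int_{\mathcal{T}_x^k} f_s(Q(y_\varepsilon^{x,k}),\nu)\,\text{d}\sigma = (\varepsilon-\varepsilon^\alpha/p)\,\varepsilon^\alpha \left[\tfrac{1}{r}\int_{\mathcal{C}^y}\!f_s(Q(y_\varepsilon^{x,k}),\nu)\,\text{d}\sigma + \tfrac{1}{q}\int_{\mathcal{C}^z}\!f_s(Q(y_\varepsilon^{x,k}),\nu)\,\text{d}\sigma\right].
\end{equation*}
The prefactor then produces the clean factor $\varepsilon^3(1-\varepsilon^{\alpha-1}/p)/(1-\varepsilon^{\alpha-1}) = \varepsilon^3(1+o(1))$, since $\alpha>1$ by ($A_2$). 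Summing over $k$ gives $J_\varepsilon^X[Q] = \varepsilon^3\sum_{k=1}^{X_\varepsilon} g_X(y_\varepsilon^{x,k}) + o(1)$, where
\begin{equation*}
g_X(x) := \tfrac{1}{r}\int_{\mathcal{C}^y} f_s(Q(x),\nu)\,\text{d}\sigma + \tfrac{1}{q}\int_{\mathcal{C}^z} f_s(Q(x),\nu)\,\text{d}\sigma.
\end{equation*}
Because $Q$ is continuous and $f_s$ is continuous in its first argument by ($A_7$), $g_X \in C(\overline{\Omega})$, and testing the weak-$\ast$ convergence $\mu_\varepsilon^X \rightharpoonup^* \text{d}x\mres\Omega$ from ($A_4$) against $g_X$ yields $J_\varepsilon^X[Q] \to \int_\Omega g_X\,\text{d}x$.

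Running the same argument on $J_\varepsilon^Y$ and $J_\varepsilon^Z$ and adding the three limits, the coefficient of $\int_{\mathcal{C}^x} f_s(Q,\nu)\,\text{d}\sigma$ is $\tfrac{1}{q}+\tfrac{1}{r} = \tfrac{q+r}{qr}$ (obtained from $J_\varepsilon^Z$ and $J_\varepsilon^Y$ respectively), and similarly the coefficients of $\int_{\mathcal{C}^y}$ and $\int_{\mathcal{C}^z}$ come out to $\tfrac{p+r}{pr}$ and $\tfrac{p+q}{pq}$. These agree exactly with \eqref{defn:f_hom}, so $J_\varepsilon^{\mathcal{T}}[Q] \to \int_\Omega f_{hom}(Q)\,\text{d}x = J_0[Q]$, as required. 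The main obstacle is the geometric and asymptotic bookkeeping: identifying which pair of unit-cube faces contributes to each of the three types of connector and checking that the ``shape'' ratio $(1-\varepsilon^{\alpha-1}/p)/(1-\varepsilon^{\alpha-1})$ together with the Lipschitz freezing error both vanish in the limit --- both are secured by the dilute-regime condition from ($A_2$).
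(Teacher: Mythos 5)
Your proposal is correct and follows essentially the same route as the paper: freezing $Q$ at the connector centres (the paper's auxiliary functional $\tilde{J}_{\varepsilon}$), controlling the replacement error via the Lipschitz bound in ($A_7$) together with the counts $X_{\varepsilon},Y_{\varepsilon},Z_{\varepsilon}=O(\varepsilon^{-3})$, rewriting each frozen integral through the face integrals over $\mathcal{C}^x,\mathcal{C}^y,\mathcal{C}^z$ (your $g_X$ is exactly $\tfrac{1}{r}\Psi^Y+\tfrac{1}{q}\Psi^Z$ composed with $Q$), and passing to the limit with the weak* convergence of $\mu_{\varepsilon}^{X},\mu_{\varepsilon}^{Y},\mu_{\varepsilon}^{Z}$ from ($A_4$). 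Your coefficient bookkeeping reproduces \eqref{defn:f_hom}, so the argument is complete and matches the paper's proof.
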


\begin{proof}
Let us fix a bounded, Lipschitz map $Q:\overline{\Omega}\rightarrow\mathcal{S}_0$ and let $\tilde{J}_{\varepsilon}$ be the following functional:
\begin{equation}\label{defn:J_tilde_eps}
\tilde{J}_{\varepsilon}[Q]=\dfrac{\varepsilon^{3-\alpha}}{\varepsilon-\varepsilon^{\alpha}}\bigg(\sum_{k=1}^{X_{\varepsilon}}\int_{\mathcal{T}_x^k}f_s(Q(y^{x,k}_{\varepsilon}),\nu)\text{d}\sigma+\sum_{l=1}^{Y_{\varepsilon}}\int_{\mathcal{T}_y^l}f_s(Q(y^{y,l}_{\varepsilon}),\nu)\text{d}\sigma+\sum_{m=1}^{Z_{\varepsilon}}\int_{\mathcal{T}_z^m}f_s(Q(y^{z,m}_{\varepsilon}),\nu)\text{d}\sigma\bigg),
\end{equation}
where $y_{\varepsilon}^{x,k}$, $y_{\varepsilon}^{y,l}$ and $y_{\varepsilon}^{z,m}$ are defined in \eqref{defn:Y_eps_x}, \eqref{defn:Y_eps_y} and \eqref{defn:Y_eps_z}, $\mathcal{T}_x^k$, $\mathcal{T}_y^l$ and $\mathcal{T}_z^m$ are defined in \eqref{defn:T_x_k}, \eqref{defn:T_y_l} and \eqref{defn:T_z_m} and $X_{\varepsilon}$, $Y_{\varepsilon}$ and $Z_{\varepsilon}$ are defined in \eqref{defn:Y_eps_x_cardinal}, \eqref{defn:Y_eps_y_cardinal} and \eqref{defn:Y_eps_z_cardinal}.

We prove that $\tilde{J}_{\varepsilon}[Q]\rightarrow J_0[Q]$ and that $\big|J_{\varepsilon}^{\mathcal{T}}[Q]-\tilde{J}_{\varepsilon}[Q]\big|\rightarrow 0$ as $\varepsilon\rightarrow 0$, for any $Q$ with the properties set earlier. 

Let 
\begin{align}\label{defn:PSI}
\begin{cases}
\displaystyle{\Psi^{X}(Q(\tau_0))=\int_{\mathcal{C}^x}f_s(Q(\tau_0),\nu(\tau))\text{d}\sigma(\tau)}\\
\vspace{-4mm}\\
\displaystyle{\Psi^Y(Q(\tau_0))=\int_{\mathcal{C}^y}f_s(Q(\tau_0),\nu(\tau))\text{d}\sigma(\tau)}\\
\vspace{-4mm}\\
\displaystyle{\Psi^Z(Q(\tau_0))=\int_{\mathcal{C}^z}f_s(Q(\tau_0),\nu(\tau))\text{d}\sigma(\tau)}
\end{cases}
\end{align}
for any $\tau_0\in\Omega$, where $\mathcal{C}^x$, $\mathcal{C}^y$ and $\mathcal{C}^z$ are defined in \eqref{defn:C_x_C_y_C_z}. Because $f_s$ is continuous on $\mathcal{S}_0\times\mathbb{S}^2$, then $\Psi^X$, $\Psi^Y$ and $\Psi^Z$ are also continuous. In this case, for example, the first sum from \eqref{defn:J_tilde_eps}, denoted as $\tilde{J}_{\varepsilon}^{X}$, becomes: 
\vspace{-3mm}
\begin{align}\label{defn:J_tilde_eps_nice_form}
\tilde{J}_{\varepsilon}^{X}[Q]&=\dfrac{\varepsilon^{3-\alpha}}{\varepsilon-\varepsilon^{\alpha}}\sum_{k=1}^{X_{\varepsilon}}\int_{\mathcal{T}^k_x}f_s(Q(y^{x,k}_{\varepsilon}),\nu)\text{d}\sigma\notag\\
&=\dfrac{(p-\varepsilon^{\alpha-1})}{pr(1-\varepsilon^{\alpha-1})}\cdot\varepsilon^{3}\sum_{k=1}^{X_{\varepsilon}}\int_{\mathcal{C}^{y}}f_s(Q(y_{\varepsilon}^{x,k}),\nu)\text{d}\sigma+\dfrac{(p-\varepsilon^{\alpha-1})}{pq(1-\varepsilon^{\alpha-1})}\cdot\varepsilon^{3}\sum_{k=1}^{X_{\varepsilon}}\int_{\mathcal{C}^{z}}f_s(Q(y_{\varepsilon}^{x,k}),\nu)\text{d}\sigma\notag\\
&=\dfrac{(p-\varepsilon^{\alpha-1})}{pr(1-\varepsilon^{\alpha-1})}\cdot\int_{\Omega}\Psi^Y(Q(\tau))\text{d}\mu_{\varepsilon}^{X}(\tau)+\dfrac{(p-\varepsilon^{\alpha-1})}{pq(1-\varepsilon^{\alpha-1})}\cdot\int_{\Omega}\Psi^Z(Q(\tau))\text{d}\mu_{\varepsilon}^{X}(\tau),
\end{align}
where $\mu_{\varepsilon}^{X}$ is defined in \eqref{defn:measures}, that is, assumption ($A_4$).

According to ($A_4$), as $\varepsilon\rightarrow 0$, $\mu_{\varepsilon}^{X}$ converges weakly* to the Lebesgue measure restricted to $\Omega$ and because $\Psi^Y$ and $\Psi^Z$ are continuous, then:
\begin{equation*}
\tilde{J}_{\varepsilon}^{X}[Q]\rightarrow\dfrac{1}{r}\int_{\Omega}\Psi^Y(Q(\tau))\text{d}\tau+\dfrac{1}{q}\int_{\Omega}\Psi^Z(Q(\tau))\text{d}\tau.
\end{equation*}

Computing in a similar way $\tilde{J}_{\varepsilon}^y$ and $\tilde{J}_{\varepsilon}^z$, we get:
\begin{equation*}
\tilde{J}_{\varepsilon}[Q]\rightarrow \int_{\Omega}\bigg(\dfrac{q+r}{qr}\Psi^X(Q(\tau))+\dfrac{p+r}{pr}\Psi^Y(Q(\tau))+\dfrac{p+q}{pq}\Psi^Z(Q(\tau))\bigg)\text{d}\tau=\int_{\Omega}f_{hom}(Q(\tau))\text{d}\tau=J_0[Q]
\end{equation*}
which implies that $\tilde{J}_{\varepsilon}[Q]\rightarrow J_0[Q]$. For $J_{\varepsilon}^{X}[Q]$ and $\tilde{J}_{\varepsilon}^{X}[Q]$, we have:
\begin{align}\label{eq:J_eps_J_eps_tilde_control1}
\big|J_{\varepsilon}^{X}[Q]-\tilde{J}_{\varepsilon}^{X}[Q]\big|&\leq \dfrac{\varepsilon^{3-\alpha}}{\varepsilon-\varepsilon^{\alpha}}\sum_{k=1}^{X_{\varepsilon}}\int_{\mathcal{T}_x^k}\big|f_s(Q(\tau),\nu(\tau))-f_s(Q(y^{x,k}_{\varepsilon}),\nu(\tau))\big|\text{d}\sigma(\tau)\notag\\
&\lesssim \dfrac{\varepsilon^{3-\alpha}}{\varepsilon-\varepsilon^{\alpha}}\sum_{k=1}^{X_{\varepsilon}}\int_{\mathcal{T}_x^k}\big(|Q(\tau)|^3+|Q(y^{x,k}_{\varepsilon})|^3+1\big)|Q(\tau)-Q(y^{x,k}_{\varepsilon})|\text{d}\sigma(\tau)\notag\\
&\lesssim \dfrac{\varepsilon^{3-\alpha}}{\varepsilon-\varepsilon^{\alpha}}\cdot\big(\|Q\|^3_{L^{\infty}(\overline{\Omega})}+1\big)\cdot \text{Lip}(Q)\cdot \sum_{k=1}^{X_{\varepsilon}} \sigma(\mathcal{T}_x^k)\cdot \text{diam}(\mathcal{T}_x^k),
\end{align}
using that $Q$ is bounded on $\overline{\Omega}$ and where $\text{Lip}(Q)$ is the Lipschitz constant of $Q$, $\sigma(\mathcal{T}_x^k)$ is the total area of $\mathcal{T}_x^k$ and $\text{diam}(\mathcal{T}_x^k)$ is the diameter of $\mathcal{T}_x^k$, which coincides with the diameter of the parallelipiped $\mathcal{P}_{\varepsilon}^{x,k}$, defined in \eqref{defn:P_x}. Hence 
\begin{align}\label{eq:J_eps_J_eps_tilde_control2}
\big|J_{\varepsilon}^{X}[Q]-\tilde{J}_{\varepsilon}^{X}[Q]\big|&\lesssim\dfrac{2(r+q)}{pqr}\cdot  \dfrac{p\varepsilon-\varepsilon^{\alpha}}{\varepsilon-\varepsilon^{\alpha}}\cdot X_{\varepsilon}\cdot\varepsilon^{3}\cdot\sqrt{\bigg(\varepsilon-\dfrac{\varepsilon^{\alpha}}{p}\bigg)^2+\bigg(\dfrac{\varepsilon^{\alpha}}{q}\bigg)^2+\bigg(\dfrac{\varepsilon^{\alpha}}{r}\bigg)^2}
\end{align}

Now, as $\varepsilon\rightarrow 0$, we have: $\sqrt{\bigg(\varepsilon-\dfrac{\varepsilon^{\alpha}}{p}\bigg)^2+\bigg(\dfrac{\varepsilon^{\alpha}}{q}\bigg)^2+\bigg(\dfrac{\varepsilon^{\alpha}}{r}\bigg)^2}\rightarrow 0$; $\dfrac{p\varepsilon-\varepsilon^{\alpha}}{\varepsilon-\varepsilon^{\alpha}}=\dfrac{p-\varepsilon^{\alpha-1}}{1-\varepsilon^{1-\alpha}}\rightarrow p$ because $1<\alpha$; $X_{\varepsilon}\cdot\varepsilon^{3}<\bigg(\dfrac{L_0}{\varepsilon}-1\bigg)\cdot\dfrac{l_0}{\varepsilon}\cdot\dfrac{h_0}{\varepsilon}\cdot\varepsilon^{3}=L_0l_0h_0-\varepsilon l_0h_0$, according to \cref{prop:volumegoesto0}, where $L_0$, $l_0$ and $h_0$ are defined in \eqref{defn:L_0_l_0_h_0}. Since $X_{\varepsilon}$ is positive, we see that: 
\begin{align}\label{eq:control_over_X_eps}
0\leq \lim_{\varepsilon\rightarrow 0} \big(X_{\varepsilon}\cdot \varepsilon^{3}\big)\leq\lim_{\varepsilon\rightarrow 0}\big(L_0l_0h_0-\varepsilon l_0h_0)=L_0l_0h_0<+\infty.
\end{align}.

Therefore $J_{\varepsilon}^{X}[Q]\rightarrow\tilde{J}_{\varepsilon}^{X}[Q]$ as $\varepsilon\rightarrow 0$. We get the same result for the other two components, from which we conclude.

\end{proof}

\begin{remark}
It is easy to see that if we replace the coefficient $\dfrac{\varepsilon^{3}}{\varepsilon^{\alpha}(\varepsilon-\varepsilon^{\alpha})}$ of the \textit{surface energy} term $J_{\varepsilon}$ with:
\begin{itemize}
\item[•] $\dfrac{\varepsilon^3}{(\varepsilon-\varepsilon^{\alpha})^2}\rightarrow 0$, then $J_{\varepsilon}^{X}[Q]\rightarrow 0$ as $\varepsilon\rightarrow 0$;
\item[•] $\dfrac{\varepsilon^3}{\varepsilon^{2\alpha}}=\varepsilon^{3-2\alpha}\rightarrow 0$, then $J_{\varepsilon}^{X}[Q]\rightarrow +\infty$ as $\varepsilon\rightarrow 0$.
\end{itemize}

In both cases, we lose the convergence $J_{\varepsilon}^{\mathcal{T}}[Q]\rightarrow J_0[Q]$.
\end{remark}

\subsection{$\Gamma$-convergence of the approximating free energies}

\begin{lemma}\label{lemma:boundednabla}
Suppose that the assumption ($A_7$) is satisfied. Let $Q_1$ and $Q_2$ from $H^1_g(\Omega,\mathcal{S}_0)$ be such that 
\begin{equation}\label{eq:controlovernabla}
max\{\|\nabla Q_1\|_{\text{L}^2(\Omega)},\|\nabla Q_2\|_{\text{L}^{2}(\Omega)}\}\leq M
\end{equation}
for some $\varepsilon$-independent constant $M$. Then, for $\varepsilon$ sufficiently small, we have:
\begin{equation}\label{eq:alpha/4}
\big|J_{\varepsilon}^{\mathcal{T}}[Q_2]-J_{\varepsilon}^{\mathcal{T}}[Q_1]\big|\leq C_M\big(\varepsilon^{1/2-\alpha/4}+\|Q_2-Q_1\|_{\text{L}^4(\Omega)}\big)
\end{equation}
for some $C_M>0$ depending only on $M$, $f_s$, $\Omega$, $\mathcal{C}$ and $g$.
\end{lemma}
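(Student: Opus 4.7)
The plan is to start from the Lipschitz-in-$Q$ estimate given by assumption $(A_7)$, which yields
\begin{equation*}
\bigl|J_{\varepsilon}^{\mathcal{T}}[Q_2]-J_{\varepsilon}^{\mathcal{T}}[Q_1]\bigr|\;\leq\;\lambda_s\,\dfrac{\varepsilon^{3-\alpha}}{\varepsilon-\varepsilon^{\alpha}}\int_{\partial\mathcal{N}_{\varepsilon}^{\mathcal{T}}}|Q_2-Q_1|\bigl(|Q_1|^3+|Q_2|^3+1\bigr)\,\text{d}\sigma,
\end{equation*}
and then to split this surface integral via H\"older's inequality with conjugate exponents $4$ and $4/3$. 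The key trick is to distribute the prefactor $\varepsilon^{3-\alpha}/(\varepsilon-\varepsilon^{\alpha})$ as the product of its $1/4$ and $3/4$ powers, so that after applying H\"older we obtain two terms of the form $\bigl(\varepsilon^{3-\alpha}(\varepsilon-\varepsilon^{\alpha})^{-1}\int_{\partial\mathcal{N}_{\varepsilon}^{\mathcal{T}}}|\cdot|^4\,\text{d}\sigma\bigr)^{1/4}$ (for $|Q_2-Q_1|$) and $(\ldots)^{3/4}$ (for $(|Q_1|^3+|Q_2|^3+1)^{4/3}\lesssim|Q_1|^4+|Q_2|^4+1$). On both pieces one then applies \cref{lemma:surface} with $n=4$.

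For the $(|Q_1|^3+|Q_2|^3+1)^{4/3}$ factor, \cref{lemma:surface} dominates the surface integral by $\varepsilon^{2-\alpha}(1-\varepsilon^{\alpha-1})^{-1}\int_{\Omega_{\varepsilon}}(|Q_i|^6+|\nabla Q_i|^2)\,\text{d}x+(1-\varepsilon^{\alpha-1})^{-2}\int_{\Omega_{\varepsilon}}|Q_i|^4\,\text{d}x$, plus a contribution $\varepsilon^{3-\alpha}(\varepsilon-\varepsilon^{\alpha})^{-1}|\partial\mathcal{N}_{\varepsilon}^{\mathcal{T}}|$ which is controlled by \cref{prop:C_s}. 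Using the Sobolev embedding $H^1(\Omega)\hookrightarrow L^6(\Omega)$ together with the bound $\|\nabla Q_i\|_{L^2(\Omega)}\leq M$ and the fixed boundary datum $g$ to control $\|Q_i\|_{H^1(\Omega)}$, each of these pieces is uniformly bounded in $\varepsilon$, so the $3/4$-power factor is at most some $C_M$ depending on $M$, $g$, $f_s$, $\Omega$ and $\mathcal{C}$.

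For the $|Q_2-Q_1|^4$ factor, the same application of \cref{lemma:surface} produces
\begin{equation*}
\dfrac{\varepsilon^{3-\alpha}}{\varepsilon-\varepsilon^{\alpha}}\int_{\partial\mathcal{N}_{\varepsilon}^{\mathcal{T}}}|Q_2-Q_1|^4\,\text{d}\sigma\;\lesssim\;\dfrac{\varepsilon^{2-\alpha}}{1-\varepsilon^{\alpha-1}}\int_{\Omega_{\varepsilon}}\bigl(|Q_2-Q_1|^6+|\nabla(Q_2-Q_1)|^2\bigr)\text{d}x+\dfrac{1}{(1-\varepsilon^{\alpha-1})^2}\int_{\Omega_{\varepsilon}}|Q_2-Q_1|^4\,\text{d}x.
\end{equation*}
Since $Q_2-Q_1\in H^1_0(\Omega,\mathcal{S}_0)$ (both maps share the trace $g$), Poincar\'e and Sobolev give $\|Q_2-Q_1\|_{L^6(\Omega)}\lesssim\|\nabla(Q_2-Q_1)\|_{L^2(\Omega)}\leq 2M$, so the $\varepsilon^{2-\alpha}$-weighted bracket is bounded by a constant depending only on $M$. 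Taking the $1/4$-power and using $(a+b)^{1/4}\leq a^{1/4}+b^{1/4}$ produces exactly
\begin{equation*}
C_M^{1/4}\,\varepsilon^{(2-\alpha)/4}\;+\;C^{1/4}\,\|Q_2-Q_1\|_{L^4(\Omega)},
\end{equation*}
and since $(2-\alpha)/4=1/2-\alpha/4$, multiplying by the $C_M$ coming from the $3/4$-factor yields \eqref{eq:alpha/4}.

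The main obstacle is essentially bookkeeping: the H\"older split must be performed so that the $L^4$-in-space norm of $Q_2-Q_1$ appears on its own (without a bad $\varepsilon$-power), while the small factor $\varepsilon^{1/2-\alpha/4}$ is extracted solely from the $H^1$-controlled part. The fact that $\alpha\in(1,3/2)$ is used twice: to guarantee $(1-\varepsilon^{\alpha-1})^{-1}$ stays bounded for $\varepsilon$ small, and to ensure the exponent $2-\alpha>0$ so that $\varepsilon^{(2-\alpha)/4}$ is genuinely a small quantity.
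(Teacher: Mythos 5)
Your proposal is correct and follows essentially the same route as the paper: the bound from ($A_7$), H\"older with exponents $4$ and $4/3$ on the surface integral, \cref{lemma:surface} with $n=4$ applied both to $|Q_i|^4$ and to $|Q_2-Q_1|^4$, Poincar\'e/Sobolev embeddings together with \eqref{eq:controlovernabla} (and \cref{prop:C_s} for the constant term) to absorb everything into $C_M$, and extraction of $\varepsilon^{(2-\alpha)/4}=\varepsilon^{1/2-\alpha/4}$ from the $\varepsilon$-weighted bracket. The only cosmetic difference is that you distribute the prefactor as its $1/4$ and $3/4$ powers before applying H\"older, whereas the paper applies the prefactor to each surface integral after the split; the estimates are identical.
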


\begin{proof}
According to ($A_7$) and H\"{o}lder inequality, we have:
\begin{align*}
\big|J_{\varepsilon}^{\mathcal{T}}[Q_2]-J_{\varepsilon}^{\mathcal{T}}[Q_1]\big|&\lesssim \dfrac{\varepsilon^{3-\alpha}}{\varepsilon-\varepsilon^{\alpha}}\int_{\partial\mathcal{N}_{\varepsilon}^{\mathcal{T}}} \big(|Q_1|^3+|Q_2|^3+1\big)|Q_2-Q_1|\text{d}\sigma 
\end{align*}
\begin{equation*}
\lesssim \dfrac{\varepsilon^{3-\alpha}}{\varepsilon-\varepsilon^{\alpha}}\bigg(\int_{\partial\mathcal{N}_{\varepsilon}^{\mathcal{T}}}|Q_2-Q_1|^4\text{d}\sigma\bigg)^{1/4}\bigg(\bigg(\int_{\partial\mathcal{N}_{\varepsilon}^{\mathcal{T}}}|Q_1|^4\text{d}\sigma\bigg)^{3/4}+\bigg(\int_{\partial\mathcal{N}_{\varepsilon}^{\mathcal{T}}}|Q_2|^4\text{d}\sigma\bigg)^{3/4}+|\partial\mathcal{N}_{\varepsilon}^{\mathcal{T}}|^{3/4}\bigg).
\end{equation*}

If we make use of \cref{lemma:surface}, then:
\begin{align*}
\dfrac{\varepsilon^{3-\alpha}}{\varepsilon-\varepsilon^{\alpha}}\int_{\partial\mathcal{N}_{\varepsilon}^{\mathcal{T}}}|Q_i|^4\text{d}\sigma &\lesssim \dfrac{2\varepsilon^{2-\alpha}}{1-\varepsilon^{\alpha-1}}\int_{\Omega_{\varepsilon}}\big(|Q_i|^6+|\nabla Q_i|^2\big)\text{d}x+\dfrac{1}{2(1-\varepsilon^{\alpha-1})^2}\int_{\Omega_{\varepsilon}}|Q_i|^4\text{d}x,
\end{align*}
for any $i\in\{1,2\}$. By the continuous injection $\text{H}^1(\Omega_{\varepsilon})$ into $\text{L}^6(\Omega_{\varepsilon})$, we have 
\begin{align}\label{eq:control_Q_i}
\dfrac{\varepsilon^{3-\alpha}}{\varepsilon-\varepsilon^{\alpha}}\int_{\partial\mathcal{N}_{\varepsilon}^{\mathcal{T}}}|Q_i|^4\text{d}\sigma &\lesssim \dfrac{2\varepsilon^{2-\alpha}}{1-\varepsilon^{\alpha-1}}\bigg(\big\|\nabla Q_i\big\|^2_{\text{L}^2(\Omega_{\varepsilon})} + \big\|Q_i\big\|^6_{\text{H}^{1}(\Omega_{\varepsilon})}\bigg)+\dfrac{1}{2(1-\varepsilon^{\alpha-1})^2}\big\|Q_i\big\|^4_{\text{L}^4(\Omega_{\varepsilon})}.
\end{align}

Using the Poincar\' e inequality as in Theorem 4.4.7, page 193, from \cite{Ziemer}, the compact embedding \linebreak $H^1(\Omega_{\varepsilon})\hookrightarrow L^4(\Omega_{\varepsilon})$ and the fact that $\Omega_{\varepsilon}\subset\Omega$, we get for $\varepsilon$ small enough:
\begin{align*}
\dfrac{\varepsilon^{3-\alpha}}{\varepsilon-\varepsilon^{\alpha}}\int_{\partial\mathcal{N}_{\varepsilon}^{\mathcal{T}}}|Q_i|^4\text{d}\sigma &\lesssim \dfrac{2\varepsilon^{2-\alpha}}{1-\varepsilon^{\alpha-1}}\bigg(\big\|\nabla Q_i\big\|^2_{L^2(\Omega)}+\big\|\nabla Q_i\big\|^6_{L^2(\Omega)}\bigg)+\dfrac{1}{2(1-\varepsilon^{\alpha-1})^2}\big\|\nabla Q_i\big\|^4_{L^2(\Omega)}\\
&\lesssim \dfrac{2\varepsilon^{2-\alpha}}{1-\varepsilon^{\alpha-1}}\big(M^2+M^6\big)+\dfrac{1}{2(1-\varepsilon^{\alpha-1})^2}M^4,
\end{align*}
where we used \eqref{eq:controlovernabla} and we can see that the right-hand side from the last inequality can be bounded in terms of $M$, since $\dfrac{\varepsilon^{2-\alpha}}{1-\varepsilon^{\alpha-1}}\searrow 0$ and $\dfrac{1}{(1-\varepsilon^{\alpha-1})^2}\searrow 1$ as $\varepsilon\rightarrow 0$. But since $\dfrac{1}{(1-\varepsilon^{\alpha-1})^2}\searrow 1$ as $\varepsilon\rightarrow 0$, we can choose $\varepsilon>0$ such that $\dfrac{1}{(1-\varepsilon^{\alpha-1})^2}<2$ and we can move the constant $2$ under the ``$\lesssim$" sign. Hence, the last relation can be written as:
\begin{align*}
\dfrac{\varepsilon^{3-\alpha}}{\varepsilon-\varepsilon^{\alpha}}\int_{\partial\mathcal{N}_{\varepsilon}^{\mathcal{T}}}|Q_i|^4\text{d}\sigma &\lesssim \varepsilon^{2-\alpha}\big(M^2+M^6\big)+M^4.
\end{align*}

 In a similar fashion, using the same arguments as before for \eqref{eq:control_Q_i}, we get in the case of $\big(Q_2-Q_1\big)$:
\begin{align*}
\dfrac{\varepsilon^{3-\alpha}}{\varepsilon-\varepsilon^{\alpha}}\int_{\partial\mathcal{N}_{\varepsilon}^{\mathcal{T}}}\big|Q_2-Q_1\big|^4\text{d}\sigma &\lesssim \varepsilon^{2-\alpha}\big(M^2+M^6\big)+\big\|Q_2-Q_1\big\|^4_{L^4(\Omega)}.
\end{align*}

Using the same bounds as in \eqref{eq:control_Q_i}, we conclude by observing that there exists a constant $C_M>0$ such that:
\begin{equation*}
\big|J_{\varepsilon}^{\mathcal{T}}[Q_2]-J_{\varepsilon}^{\mathcal{T}}[Q_1]\big|\leq C_M\cdot\big(\big(\varepsilon^{2-\alpha}\big)^{1/4}+\|Q_2-Q_1\|_{\text{L}^4(\Omega)}\big)
\end{equation*}
\end{proof}

\begin{lemma}\label{lemma:J_eps_goes_to_J_0_for_any_Q}
For any $Q\in H^1_g(\Omega,\mathcal{S}_0)$, there holds $J_{\varepsilon}^{\mathcal{T}}[Q]\rightarrow J_0[Q]$ as $\varepsilon\rightarrow 0$.
\end{lemma}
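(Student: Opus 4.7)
The strategy is a density/approximation argument that reduces the claim to the bounded Lipschitz case already handled by \cref{th:Je_goes_to_J0}. First, I would choose a sequence of bounded Lipschitz maps $(Q_j)_{j\geq 1}\subset H^1_g(\Omega,\mathcal{S}_0)$ with $Q_j\to Q$ strongly in $H^1(\Omega)$; such an approximation is standard (write $Q$ as the sum of a fixed $H^1_g$ lifting of $g$ and an $H^1_0$ remainder, then mollify and truncate the remainder, so that each $Q_j$ matches the trace $g$ on $\partial\Omega$ and is Lipschitz and bounded on $\overline{\Omega}$). In particular, there is a uniform constant $M$ with $\|\nabla Q\|_{L^2(\Omega)}\leq M$ and $\|\nabla Q_j\|_{L^2(\Omega)}\leq M$ for all $j$, and by the Rellich-Kondrachov embedding $H^1(\Omega)\hookrightarrow L^4(\Omega)$ one also has $Q_j\to Q$ in $L^4(\Omega)$.

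Next I would use the three-term triangle inequality
\[
\bigl|J_{\varepsilon}^{\mathcal{T}}[Q]-J_0[Q]\bigr|\leq \bigl|J_{\varepsilon}^{\mathcal{T}}[Q]-J_{\varepsilon}^{\mathcal{T}}[Q_j]\bigr|+\bigl|J_{\varepsilon}^{\mathcal{T}}[Q_j]-J_0[Q_j]\bigr|+\bigl|J_0[Q_j]-J_0[Q]\bigr|
\]
and estimate each piece separately. By \cref{lemma:boundednabla}, the first term is controlled by $C_M\bigl(\varepsilon^{1/2-\alpha/4}+\|Q-Q_j\|_{L^4(\Omega)}\bigr)$, and since $\alpha<3/2$ one has $1/2-\alpha/4>1/8>0$, so the $\varepsilon$-power vanishes as $\varepsilon\to 0$. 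By \cref{th:Je_goes_to_J0} applied to the bounded Lipschitz map $Q_j$, the middle term tends to $0$ as $\varepsilon\to 0$ for each fixed $j$. For the last term, the Lipschitz-type estimate in $(A_7)$ transfers under integration in \eqref{defn:f_hom} to an analogous bound $|f_{hom}(P)-f_{hom}(R)|\lesssim |P-R|(|P|^3+|R|^3+1)$; H\"older's inequality with exponents $(4,4/3)$ then yields
\[
\bigl|J_0[Q_j]-J_0[Q]\bigr|\lesssim \|Q_j-Q\|_{L^4(\Omega)}\bigl(\|Q_j\|_{L^4(\Omega)}^{3}+\|Q\|_{L^4(\Omega)}^{3}+|\Omega|^{3/4}\bigr),
\]
which vanishes as $j\to\infty$ since $Q_j\to Q$ in $L^4$ and the $L^4$-norms are uniformly bounded.

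Finally, given $\eta>0$, I would first pick $j$ large enough so that $C_M\|Q_j-Q\|_{L^4(\Omega)}<\eta$ and $|J_0[Q_j]-J_0[Q]|<\eta$, then pick $\varepsilon>0$ small so that $C_M\varepsilon^{1/2-\alpha/4}<\eta$ and $|J_{\varepsilon}^{\mathcal{T}}[Q_j]-J_0[Q_j]|<\eta$, obtaining $|J_{\varepsilon}^{\mathcal{T}}[Q]-J_0[Q]|<3\eta$, and conclude. I do not expect a serious obstacle: the uniform-in-$\varepsilon$ Lipschitz-type estimate provided by \cref{lemma:boundednabla} is precisely the tool that makes this density reduction work, and the only point that requires a little care is producing bounded Lipschitz approximants that stay inside $H^1_g$, which is routine.
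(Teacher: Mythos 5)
Your proposal is correct and follows essentially the same route as the paper: approximate $Q$ by bounded Lipschitz maps in $H^1_g$, split via the same three-term triangle inequality, control the first term with \cref{lemma:boundednabla}, the second with \cref{th:Je_goes_to_J0}, and the third by convergence of $J_0$ along the approximating sequence. The only (harmless) difference is in the third term, where you use the Lipschitz-type bound on $f_{hom}$ inherited from ($A_7$) together with H\"older's inequality, while the paper invokes quartic growth of $f_{hom}$ and dominated convergence; both give $J_0[Q_j]\to J_0[Q]$.
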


\begin{proof}
Let $(Q_j)_{j\geq 1}$ be a sequence of smooth functions that converge strongly to $Q$ in $H^1_g(\Omega,\mathcal{S}_0)$. Then there holds:
\begin{equation*}
\big|J_{\varepsilon}^{\mathcal{T}}[Q]-J_0[Q]\big|\leq\big|J_{\varepsilon}^{\mathcal{T}}[Q]-J_{\varepsilon}^{\mathcal{T}}[Q_j]\big|+\big|J_{\varepsilon}^{\mathcal{T}}[Q_j]-J_0[Q_j]\big|+\big|J_0[Q_j]-J_0[Q]\big|.
\end{equation*}

From \cref{lemma:boundednabla}, we have that
\begin{equation*}
\big|J_{\varepsilon}^{\mathcal{T}}[Q]-J_{\varepsilon}^{\mathcal{T}}[Q_j]\big|\lesssim \varepsilon^{1/2-\alpha/4}+\|Q-Q_j\|_{\text{L}^4(\Omega)}.
\end{equation*}
and we recall that for $\varepsilon\rightarrow 0$ we have $\varepsilon^{1/2-\alpha/4}\rightarrow 0$ because $\alpha\in(1,3/2)$.

Since the $\big(Q_j)_{j\geq 1}$ converge strongly in $H^1_g(\Omega)$, from the compact Sobolev embedding, we get that $Q_j\rightarrow Q$ in $L^4(\Omega)$ as $j\rightarrow +\infty$, therefore $Q_j\rightarrow Q$ a.e. in $\Omega$. 

From \cref{th:Je_goes_to_J0}, we obtain that $J_{\varepsilon}^{\mathcal{T}}[Q_j]\rightarrow J_0[Q_j]$ as $\varepsilon\rightarrow 0$, for any $j\geq 1$.

For the last term, we can write $\big|J_0[Q_j]-J_0[Q]\big|\leq\int_{\Omega}\big|f_{hom}[Q_j]-f_{hom}[Q]\big|\text{d}x$. In here, we have: $f_{hom}$ is continuous, $Q_j\rightarrow Q$ a.e. in $\Omega$ and $f_{hom}$ has a quartic growth in $Q$ (because $f_s$ has the same growth), which implies that: $\big|f_{hom}[Q_j]\big|\lesssim|Q_j|^4+1$. At the same time, we can assume that there exists $\psi \in L^1(\Omega)$ such that $|Q_j|^4\leq \psi$, for any $j\geq 1$, a.e. in $\Omega$. Therefore, we can apply the Lebesgue dominated convergence theorem and get that $J_0[Q_j]\rightarrow J_0[Q]$ as $j\rightarrow +\infty$.

Combining the results from above, we obtain:
\begin{align*}
\limsup_{\varepsilon\rightarrow 0}\big|J_{\varepsilon}^{\mathcal{T}}[Q]-J_{\varepsilon}^{\mathcal{T}}[Q_j]\big|\lesssim \|Q-Q_j\|_{L^4(\Omega)}+\big|J_0[Q_j]-J_0[Q]\big|\rightarrow 0,\hspace{5mm}\text{as }j\rightarrow +\infty,
\end{align*}
from which we conclude.

\end{proof}

We now prove that $\mathcal{F}_{\varepsilon}$ $\Gamma$-converges to $\mathcal{F}_0$ as $\varepsilon\rightarrow 0$, with respect to the weak $H^1$-topology.

\begin{prop}\label{prop:gamma_conv_inf}
Suppose that the assumptions ($A_1$)-($A_7$) are satisfied. Let $Q_{\varepsilon}\in H^1_g(\Omega_{\varepsilon},\mathcal{S}_0)$ be such that $E_{\varepsilon}Q_{\varepsilon}\rightharpoonup Q$ weakly in $H^1(\Omega)$ as $\varepsilon\rightarrow 0$. Then: 
\begin{equation*}
\liminf_{\varepsilon\rightarrow 0}\mathcal{F}_{\varepsilon}[Q_{\varepsilon}]\geq \mathcal{F}_0[Q],\hspace{10mm} \lim_{\varepsilon\rightarrow 0}J_{\varepsilon}^{\mathcal{T}}[Q_{\varepsilon}]=J_0[Q].
\end{equation*}
\end{prop}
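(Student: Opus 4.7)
The plan is to treat the three pieces of $\mathcal{F}_\varepsilon[Q_\varepsilon]$ separately: the surface term, for which I expect to get an actual limit (not just a liminf), the elastic bulk term, and the potential bulk term. The surface limit will feed into the liminf inequality for the whole functional.

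First I would handle the surface term. Since $E_\varepsilon Q_\varepsilon \equiv Q_\varepsilon$ on $\partial \mathcal{N}_\varepsilon$, we have $J_\varepsilon^{\mathcal{T}}[Q_\varepsilon] = J_\varepsilon^{\mathcal{T}}[E_\varepsilon Q_\varepsilon]$, and weak $H^1$-convergence gives a uniform bound on $\|\nabla E_\varepsilon Q_\varepsilon\|_{L^2(\Omega)}$. Applying \cref{lemma:boundednabla} with $Q_1 = Q$ and $Q_2 = E_\varepsilon Q_\varepsilon$ yields
\begin{equation*}
\big|J_\varepsilon^{\mathcal{T}}[E_\varepsilon Q_\varepsilon] - J_\varepsilon^{\mathcal{T}}[Q]\big| \lesssim \varepsilon^{1/2-\alpha/4} + \|E_\varepsilon Q_\varepsilon - Q\|_{L^4(\Omega)},
\end{equation*}
and both terms vanish as $\varepsilon \to 0$ (the first because $\alpha < 3/2$ by $(A_2)$, the second by the compact Sobolev embedding $H^1(\Omega) \hookrightarrow L^4(\Omega)$). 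Combined with \cref{lemma:J_eps_goes_to_J_0_for_any_Q}, which gives $J_\varepsilon^{\mathcal{T}}[Q] \to J_0[Q]$, this yields $\lim_{\varepsilon \to 0} J_\varepsilon^{\mathcal{T}}[Q_\varepsilon] = J_0[Q]$, proving the second claim.

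Next I would treat the elastic term. The key observation is that $(A_5)$ forces $f_e(0) = 0$ via the double bound $0 \leq f_e(D) \leq \lambda_e |D|^2$, so $\int_{\Omega_\varepsilon} f_e(\nabla Q_\varepsilon)\,\text{d}x = \int_\Omega f_e(\chi_{\Omega_\varepsilon} \nabla Q_\varepsilon)\,\text{d}x$. I would then verify that $\chi_{\Omega_\varepsilon} \nabla Q_\varepsilon \rightharpoonup \nabla Q$ weakly in $L^2(\Omega)$: for any $\phi \in L^2(\Omega)$, rewrite the pairing as $\int_\Omega \nabla E_\varepsilon Q_\varepsilon \cdot (\chi_{\Omega_\varepsilon} \phi)\,\text{d}x$ and note that $\chi_{\Omega_\varepsilon} \phi \to \phi$ strongly in $L^2$ by dominated convergence (since $|\mathcal{N}_\varepsilon| \to 0$ in the dilute regime), while $\nabla E_\varepsilon Q_\varepsilon \rightharpoonup \nabla Q$, so the weak$\,\times\,$strong pairing converges to $\int_\Omega \nabla Q \cdot \phi\,\text{d}x$. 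Standard lower semicontinuity of convex integrands under weak $L^2$-convergence then produces $\liminf \int_{\Omega_\varepsilon} f_e(\nabla Q_\varepsilon)\,\text{d}x \geq \int_\Omega f_e(\nabla Q)\,\text{d}x$. For the potential term I would apply Fatou's lemma: $(A_6)$ gives $f_b \geq -C$, and the compact embedding $H^1(\Omega) \hookrightarrow L^2(\Omega)$ yields $E_\varepsilon Q_\varepsilon \to Q$ a.e.\ along a subsequence; together with $\chi_{\Omega_\varepsilon} \to 1$ a.e.\ and continuity of $f_b$, the nonnegative integrands $\chi_{\Omega_\varepsilon} f_b(E_\varepsilon Q_\varepsilon) + C$ converge a.e.\ to $f_b(Q) + C$, so Fatou produces $\liminf \int_{\Omega_\varepsilon} f_b(Q_\varepsilon)\,\text{d}x \geq \int_\Omega f_b(Q)\,\text{d}x$.

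Finally, since both bulk liminfs are bounded below and the surface contribution is an actual limit, the inequality $\liminf(A_\varepsilon + B_\varepsilon + C_\varepsilon) \geq \liminf A_\varepsilon + \liminf B_\varepsilon + \lim C_\varepsilon$ assembles the three estimates into $\liminf \mathcal{F}_\varepsilon[Q_\varepsilon] \geq \mathcal{F}_0[Q]$. The main obstacle is not any single one of these steps but rather the ``vanishing hole'' issue: the bulk integrations are over the $\varepsilon$-dependent domain $\Omega_\varepsilon$, so one cannot apply weak $H^1$ lower semicontinuity to $E_\varepsilon Q_\varepsilon$ on $\Omega$ directly. The zero-extension trick for the gradient (exploiting $f_e(0)=0$) and the Fatou device (exploiting the lower bound on $f_b$) are the essential tools that circumvent this.
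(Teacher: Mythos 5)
Your proposal is correct, and the surface and bulk-potential parts coincide with the paper's own argument: the paper likewise proves $\lim_{\varepsilon\to 0}J^{\mathcal{T}}_{\varepsilon}[Q_{\varepsilon}]=J_0[Q]$ by combining \cref{lemma:boundednabla} (applied to $Q$ and $E_{\varepsilon}Q_{\varepsilon}$, using the uniform $H^1$ bound and the compact embedding into $L^4$) with \cref{lemma:J_eps_goes_to_J_0_for_any_Q}, and treats $f_b$ exactly as you do, via Fatou's lemma together with $E_{\varepsilon}Q_{\varepsilon}\to Q$ a.e.\ and $|\mathcal{N}_{\varepsilon}|\to 0$. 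Where you genuinely diverge is the elastic term. The paper does not pass through weak lower semicontinuity of the convex integrand on the fixed domain $\Omega$; instead it uses the first-order convexity inequality $\int_{\Omega_{\varepsilon}}\big(f_e(\nabla Q_{\varepsilon})-f_e(\nabla Q)\big)\,\mathrm{d}x\geq\int_{\Omega_{\varepsilon}}\nabla f_e(\nabla Q):(\nabla Q_{\varepsilon}-\nabla Q)\,\mathrm{d}x$, splits the right-hand side into an integral over $\Omega$ (which vanishes by weak convergence, since $|\nabla f_e(\nabla Q)|\lesssim|\nabla Q|+1$ gives $\nabla f_e(\nabla Q)\in L^2(\Omega)$) and one over $\mathcal{N}_{\varepsilon}$ (which vanishes by Cauchy--Schwarz and $|\mathcal{N}_{\varepsilon}|\to 0$). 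Your route -- extending the gradient by zero, noting $f_e(0)=0$ from the two-sided quadratic bound in ($A_5$), checking $\chi_{\Omega_{\varepsilon}}\nabla E_{\varepsilon}Q_{\varepsilon}\rightharpoonup\nabla Q$ in $L^2(\Omega)$ by the weak-times-strong pairing, and then quoting weak lower semicontinuity of convex, nonnegative integrands -- is equally valid: it avoids using the differentiability of $f_e$ and the growth bound on $\nabla f_e$, and it works for the full sequence without subsequence extraction in the elastic piece, at the price of invoking the general lsc theorem and of relying on $f_e(0)=0$ (harmless here, and even if $f_e(0)\neq 0$ the error would only be $f_e(0)|\mathcal{N}_{\varepsilon}|\to 0$). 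One cosmetic point: the convergence $\chi_{\Omega_{\varepsilon}}\phi\to\phi$ in $L^2(\Omega)$ is better justified by absolute continuity of the integral, $\int_{\mathcal{N}_{\varepsilon}}|\phi|^2\to 0$ as $|\mathcal{N}_{\varepsilon}|\to 0$, rather than by dominated convergence, since the sets $\mathcal{N}_{\varepsilon}$ are not nested and pointwise convergence of $\chi_{\Omega_{\varepsilon}}$ for the whole sequence is not immediate; the same remark applies to the a.e.\ convergence used in the Fatou step, which should be taken along the subsequence realising the liminf, exactly as in the paper.
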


\begin{proof}
The proof follows the same steps as in Proposition 4.2. from \cite{CanevariZarnescu1}.

Since $E_{\varepsilon}Q_{\varepsilon}\rightharpoonup Q$ in $H^1(\Omega)$, then $\big(E_{\varepsilon}Q_{\varepsilon}\big)_{\varepsilon >0}$ is a bounded sequence in $H^1(\Omega)$. Therefore, we can choose a subsequence $\big(E_{\varepsilon_j}Q_{\varepsilon_j}\big)_{j\geq 1}\subset\big(E_{\varepsilon}Q_{\varepsilon}\big)_{\varepsilon >0}$ such that 
\begin{align*}
\liminf_{\varepsilon\rightarrow 0}\mathcal{F}_{\varepsilon}[Q_{\varepsilon}]&=\lim_{j\rightarrow +\infty}\mathcal{F}_{\varepsilon_j}[Q_{\varepsilon_j}].
\end{align*}

Furthermore, by the compact embeddings $H^1(\Omega)\hookrightarrow L^s(\Omega)$, with $s\in[1,6)$, we have that $E_{\varepsilon_j}Q_{\varepsilon_j}\rightarrow Q$ strongly in $L^s(\Omega)$, for any $s\in[1,6)$. As a result, we also obtain that $E_{\varepsilon_j}Q_{\varepsilon_j}\rightarrow Q$ a.e. in $\Omega$. We denote the subsequence $E_{\varepsilon_j}Q_{\varepsilon_j}$ as $E_{\varepsilon}Q_{\varepsilon}$ for the ease of notation.

Now, according to ($A_5$), we have:
\begin{align}\label{eq:nabla_f_e}
\int_{\Omega_{\varepsilon}}\big(f_e(\nabla Q_{\varepsilon})-f_e(\nabla Q)\big)\text{d}x &\geq \int_{\Omega_{\varepsilon}}\nabla f_e(\nabla Q):(\nabla Q_{\varepsilon}-\nabla Q)\text{d}x=\notag\\
&=\int_{\Omega}\nabla f_e(\nabla Q):(\nabla Q_{\varepsilon}-\nabla Q)\text{d}x-\int_{\mathcal{N}_{\varepsilon}}\nabla f_e(\nabla Q):(\nabla Q_{\varepsilon}-\nabla Q)\text{d}x \geq\notag\\
&\geq \int_{\Omega}\nabla f_e(\nabla Q):(\nabla Q_{\varepsilon}-\nabla Q)\text{d}x-\big\|\nabla f_e(\nabla Q)\big\|_{L^2(\mathcal{N}_{\varepsilon})}\cdot\big\|\nabla Q_{\varepsilon}-\nabla Q\big\|_{L^2(\mathcal{N}_{\varepsilon})}
\end{align}

Because $Q\in H^1(\Omega)$, then $\nabla Q\in L^2(\Omega)$ and, according to ($A_5$), the relation $\big|\nabla f_e(\nabla Q)\big|\lesssim |\nabla Q|+1$ implies that $\nabla f_e(\nabla Q)\in L^2(\Omega)$. Therefore, by the weak convergence $E_{\varepsilon}Q_{\varepsilon}\rightharpoonup Q$ in $H^1(\Omega)$, the first term from the right hand side in \eqref{eq:nabla_f_e} goes to $0$ as $\varepsilon\rightarrow 0$. The second term goes to $0$ as well thanks additionally to the fact that the volume of the scaffold $\mathcal{N}_{\varepsilon}$ tends to $0$ as $\varepsilon\rightarrow 0$, according to \cref{prop:volumegoesto0}. Hence:
\begin{align}\label{eq:liminf_f_e}
\liminf_{\varepsilon\rightarrow 0}\int_{\Omega_{\varepsilon}}f_e(\nabla Q_{\varepsilon})\text{d}x &\geq \lim_{\varepsilon\rightarrow 0}\int_{\Omega_{\varepsilon}}f_e(\nabla Q)\text{d}x=\int_{\Omega}f_e(\nabla Q)\text{d}x.
\end{align}

For the bulk potential we apply Fatou's lemma, since $f_b(Q_{\varepsilon})\chi_{\Omega_{\varepsilon}}\rightarrow f_b(Q)$ a.e. in $\Omega$ (because $f_b$ is continuous, $E_{\varepsilon}Q_{\varepsilon}\rightarrow Q$ a.e. in $\Omega$ and $|\mathcal{N}_{\varepsilon}|\rightarrow 0$, according to \cref{prop:volumegoesto0}) and $f_b$ is bounded from below (according to ($A_6$)), in order to obtain:
\begin{align}\label{eq:liminf_f_b}
\liminf_{\varepsilon\rightarrow 0}\int_{\Omega_{\varepsilon}}f_b(Q_{\varepsilon})\text{d}x &\geq \int_{\Omega}f_b(Q)\text{d}x.
\end{align}

For the \textit{surface energy}, we first use \cref{lemma:boundednabla} in the following inequality:
\begin{align*}
\big|J_{\varepsilon}^{\mathcal{T}}[E_{\varepsilon}Q_{\varepsilon}]-J_0[Q]\big| &\leq \big|J_{\varepsilon}^{\mathcal{T}}[E_{\varepsilon}Q_{\varepsilon}]-J_{\varepsilon}^{\mathcal{T}}[Q]\big|+\big|J_{\varepsilon}^{\mathcal{T}}[Q]-J_0[Q]\big|\\
& \lesssim \varepsilon^{1/2-\alpha/4}+\big\|E_{\varepsilon}Q_{\varepsilon}-Q\big\|_{L^4(\Omega)}+\big|J_{\varepsilon}^{\mathcal{T}}[Q]-J_0[Q]\big|.
\end{align*}

Since we have $\varepsilon^{1/2-\alpha/4}\rightarrow 0$ for $\varepsilon\rightarrow 0$ (because $\alpha\in(1,3/2)$), then combining the result from \cref{lemma:J_eps_goes_to_J_0_for_any_Q} with the fact that $E_{\varepsilon}Q_{\varepsilon}\rightarrow Q$ strongly in $L^4(\Omega)$, we obtain 
\begin{align}\label{eq:liminf_f_s}
\lim_{\varepsilon\rightarrow 0} J_{\varepsilon}^{\mathcal{T}}[Q_{\varepsilon}]=J_0[Q].
\end{align}

The proof is now complete, considering \eqref{eq:liminf_f_e}, \eqref{eq:liminf_f_b} and \eqref{eq:liminf_f_s}. 
\end{proof}

\begin{prop}\label{prop:gamma_conv_sup}
Suppose that the assumptions ($A_1$)-($A_7$) are verified. Then, for any $Q\in H^1_g(\Omega,\mathcal{S}_0)$, there exists a sequence $\big(Q_{\varepsilon}\big)_{\varepsilon>0}$ such that $Q_{\varepsilon}\in H^1(\Omega_{\varepsilon})$, for any $\varepsilon>0$, $E_{\varepsilon}Q_{\varepsilon}\rightharpoonup Q$ in $H^1(\Omega)$ and:
\begin{align*}
\limsup_{\varepsilon\rightarrow 0}\mathcal{F}_{\varepsilon}[Q_{\varepsilon}]&\leq \mathcal{F}_0[Q].
\end{align*}
The sequence $\big(Q_{\varepsilon}\big)_{\varepsilon>0}$ is called a recovery sequence.
\end{prop}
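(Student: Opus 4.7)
The plan is to take the trivial recovery sequence $Q_{\varepsilon} := Q|_{\Omega_{\varepsilon}}$ for each given $Q \in H^1_g(\Omega, \mathcal{S}_0)$ and to show the stronger statement that $\mathcal{F}_{\varepsilon}[Q_{\varepsilon}] \to \mathcal{F}_0[Q]$, which in particular gives the required $\limsup$ inequality. The key point is that essentially all of the analytical heavy lifting has already been carried out in \cref{lemma:J_eps_goes_to_J_0_for_any_Q}, which handles the surface term for arbitrary $Q \in H^1_g(\Omega, \mathcal{S}_0)$, so no external density argument is required at this stage.

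First I would verify that $E_{\varepsilon}Q_{\varepsilon} \to Q$ strongly in $H^1(\Omega)$, which is stronger than the weak convergence demanded in the statement. Since $E_{\varepsilon}Q_{\varepsilon} \equiv Q$ on $\Omega_{\varepsilon}$, the gradients agree there, so only the contribution on $\mathcal{N}_{\varepsilon}$ remains to be controlled. Because $Q$ itself is an admissible competitor in the minimisation problem \eqref{eq:extensionmin}, the minimality property of the harmonic extension gives $\|\nabla E_{\varepsilon}Q_{\varepsilon}\|_{L^2(\mathcal{N}_{\varepsilon})} \leq \|\nabla Q\|_{L^2(\mathcal{N}_{\varepsilon})}$, and hence
\[
\|\nabla E_{\varepsilon}Q_{\varepsilon} - \nabla Q\|_{L^2(\Omega)}^2 \leq 4\,\|\nabla Q\|_{L^2(\mathcal{N}_{\varepsilon})}^2 \to 0,
\]
by absolute continuity of the integral, using $|\mathcal{N}_{\varepsilon}| \to 0$ from \cref{prop:volumegoesto0}. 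Since $E_{\varepsilon}Q_{\varepsilon} - Q$ has vanishing trace on $\partial\Omega$, Poincar\'e's inequality promotes this to strong $H^1$-convergence.

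Next I would verify the three energy terms separately. The growth bound in $(A_5)$ gives $f_e(\nabla Q) \leq \lambda_e |\nabla Q|^2 \in L^1(\Omega)$, so dominated convergence applied to $f_e(\nabla Q)\chi_{\Omega_{\varepsilon}}$ yields $\int_{\Omega_{\varepsilon}} f_e(\nabla Q_{\varepsilon})\,\mathrm{d}x \to \int_{\Omega} f_e(\nabla Q)\,\mathrm{d}x$. For the bulk part, the Sobolev embedding $H^1(\Omega) \hookrightarrow L^6(\Omega)$ together with the sextic growth from $(A_6)$ implies $f_b(Q) \in L^1(\Omega)$, and the same reasoning produces $\int_{\Omega_{\varepsilon}} f_b(Q_{\varepsilon})\,\mathrm{d}x \to \int_{\Omega} f_b(Q)\,\mathrm{d}x$. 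Finally, the surface term satisfies $J_{\varepsilon}^{\mathcal{T}}[Q_{\varepsilon}] = J_{\varepsilon}^{\mathcal{T}}[Q] \to J_0[Q]$ by \cref{lemma:J_eps_goes_to_J_0_for_any_Q}. Summing the three contributions delivers $\mathcal{F}_{\varepsilon}[Q_{\varepsilon}] \to \mathcal{F}_0[Q]$, and so in particular the desired $\limsup$ bound.

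The main obstacle is actually quite mild once the preceding lemmas are in hand: it is the strong $H^1$-convergence of the extension, which depends crucially on the minimising property of $E_{\varepsilon}$ to dominate $\|\nabla E_{\varepsilon}Q_{\varepsilon}\|_{L^2(\mathcal{N}_{\varepsilon})}$ directly by $\|\nabla Q\|_{L^2(\mathcal{N}_{\varepsilon})}$. With a less canonical extension operator one would need a quantitative gradient control on the shrinking scaffold in the spirit of \cref{lemma:extensionineq}; the harmonic extension circumvents this difficulty entirely.
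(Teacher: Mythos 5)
Your proposal is correct and follows essentially the same route as the paper: the trivial recovery sequence $Q_{\varepsilon}=Q|_{\Omega_{\varepsilon}}$, dominated convergence (via $|\mathcal{N}_{\varepsilon}|\to 0$, \cref{prop:volumegoesto0}) for the elastic and bulk terms, and the already established convergence of the surface term (you cite \cref{lemma:J_eps_goes_to_J_0_for_any_Q} directly, the paper routes it through \cref{prop:gamma_conv_inf}, which is the same mechanism). The one genuine addition is your explicit verification that $E_{\varepsilon}Q_{\varepsilon}\to Q$, in fact strongly in $H^1(\Omega)$, via the minimality property \eqref{eq:extensionmin} of the harmonic extension with competitor $Q|_{\mathcal{N}_{\varepsilon}}$ — a point the paper leaves implicit although it is needed to invoke \cref{prop:gamma_conv_inf}.
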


\begin{proof}
Let us define in this case $Q_{\varepsilon}=Q\cdot\chi_{\Omega_{\varepsilon}}$. Since $|\mathcal{N}_{\varepsilon}|\rightarrow 0$ as $\varepsilon\rightarrow 0$ (according to \cref{prop:volumegoesto0}), then $\chi_{\Omega_{\varepsilon}}\rightarrow 1$ strongly in $L^1(\Omega)$ and we can apply Lebesgue's dominated converge theorem in order to obtain that:
\begin{align*}
\lim_{\varepsilon\rightarrow 0}\int_{\Omega_{\varepsilon}}f_e(\nabla Q_{\varepsilon})+f_b(Q_{\varepsilon})\text{d}x &=\int_{\Omega}f_e(\nabla Q)+f_b(Q)\text{d}x\\
\lim_{\varepsilon\rightarrow 0}\big(\mathcal{F}_{\varepsilon}[Q]-J_{\varepsilon}^{\mathcal{T}}[Q]\big)&=\mathcal{F}_0[Q]-J_0[Q].
\end{align*}

By \cref{prop:gamma_conv_inf}, we have that $\displaystyle{\lim_{\varepsilon\rightarrow 0}J_{\varepsilon}^{\mathcal{T}}[Q_{\varepsilon}]=J_0[Q]}$, hence the conclusion follows.
\end{proof}

\cref{prop:gamma_conv_inf} and \cref{prop:gamma_conv_sup} show that $\mathcal{F}_{\varepsilon}$ $\Gamma$-converges to $\mathcal{F}_0$, as $\varepsilon\rightarrow 0$, with respect to the weak $H^1$ topology.

\subsection{Proof of main theorems}

\begin{proof}[Proof of \cref{th:local_min}]
Let $Q_0$ from $H^1_g(\Omega,\mathcal{S}_0)$ be an isolated $H^1$-local minimiser for $\mathcal{F}_0$, that is, there exists $\delta_0 >0$ such that $\mathcal{F}_0[Q_0]<\mathcal{F}_0[Q]$, for any $Q\in H^1_g(\Omega,\mathcal{S}_0)$, such that $0<\big\|Q-Q_0\big\|_{H^1(\Omega)}\leq \delta_0$.

We would like to prove that for any $\varepsilon>0$, there exists $Q_{\varepsilon}\in H^1_g(\Omega_{\varepsilon},\mathcal{S}_0)$, which is a $H^1$-local minimiser for $\mathcal{F}_{\varepsilon}$, such that $E_{\varepsilon}Q_{\varepsilon}\rightarrow Q_0$ strongly in $H^1_g(\Omega,\mathcal{S}_0)$ as $\varepsilon\rightarrow 0$.

For this, let 
\begin{align*}
\mathcal{B}_{\varepsilon}:=\big\{Q\in H^1_g(\Omega_{\varepsilon},\mathcal{S}_0)\;:\:\big\|E_{\varepsilon}Q-Q_0\big\|_{H^1(\Omega)}\leq \delta_0\big\}.
\end{align*}

Using Mazur's lemma, we can show that the set $\mathcal{B}_{\varepsilon}$ is sequentially weakly closed in $H^1(\Omega_{\varepsilon})$. Then, by \cref{prop:lsc}, we can see that, for $\varepsilon$ small enough, $\mathcal{F}_{\varepsilon}$ is lower semicontinuous on $\mathcal{B}_{\varepsilon}$ and, by \cref{prop:equicoercivity}, is also coercive on $\mathcal{B}_{\varepsilon}$, since any $Q\in\mathcal{B}_{\varepsilon}$ has $\big\|\nabla Q\big\|_{L^2(\Omega_{\varepsilon})}<\big\|\nabla Q_0\big\|_{L^2(\Omega)}+\delta_0$. Hence, for any $\varepsilon$ sufficiently small, the functional $\mathcal{F}_{\varepsilon}$ admits at least one minimiser $Q_{\varepsilon}$ from $\mathcal{B}_{\varepsilon}$.

Firstly, we prove that $E_{\varepsilon}Q_{\varepsilon}\rightharpoonup Q_0$ weakly in $H^1(\Omega)$, as $\varepsilon\rightarrow 0$.

Let $\mathcal{B}_0:=\big\{Q\in H^1_g(\Omega,\mathcal{S}_0)\;:\;\big\|Q-Q_0\big\|_{H^1(\Omega)}\leq\delta_0\big\}$. Because $Q_{\varepsilon}\in\mathcal{B}_{\varepsilon}$, then $(E_{\varepsilon}Q_{\varepsilon})_{\varepsilon>0}$ represents a bounded sequence in $H^1(\Omega)$, hence there exists a subsequence, which we still denote $(E_{\varepsilon}Q_{\varepsilon})_{\varepsilon>0}$ for the ease of notation, that converges weakly to a $\tilde{Q}\in \mathcal{B}_0$. We show that $\tilde{Q}=Q_0$.

Since $E_{\varepsilon}Q_{\varepsilon}\rightharpoonup\tilde{Q}$ in $H^1_g(\Omega,\mathcal{S}_0)$, we can apply \cref{prop:gamma_conv_inf} and get:
\begin{align*}
\mathcal{F}_0[\tilde{Q}]&\leq\liminf_{\varepsilon\rightarrow 0}\mathcal{F}_{\varepsilon}[Q_{\varepsilon}]\leq\limsup_{\varepsilon\rightarrow 0}\mathcal{F}_{\varepsilon}[Q_{\varepsilon}].
\end{align*}

But $Q_{\varepsilon}$ is a minimiser of $\mathcal{F}_{\varepsilon}$ on $\mathcal{B}_{\varepsilon}$, therefore, since $Q_0\big|_{\Omega_{\varepsilon}}\in\mathcal{B}_{\varepsilon}$, we get that
\begin{align*}
\limsup_{\varepsilon\rightarrow 0}\mathcal{F}_{\varepsilon}[Q_{\varepsilon}]&\leq \lim_{\varepsilon\rightarrow 0}\mathcal{F}_{\varepsilon}[Q_0]=\lim_{\varepsilon\rightarrow 0}\bigg(\int_{\Omega_{\varepsilon}}f_e(\nabla Q_0)+f_b(Q)\text{d}x+\mathcal{J}_{\varepsilon}^{\mathcal{T}}[Q_0]\bigg)=\mathcal{F}_0[Q_0].
\end{align*}

Hence, we have $\mathcal{F}_0[\tilde{Q}]\leq\mathcal{F}_0[Q_0]$. Because $\tilde{Q}$ is in $\mathcal{B}_0$, that is $\|\tilde{Q}-Q_0\|_{H^1(\Omega)}\leq\delta_0$, then by the definition of $Q_0$, we get that $\tilde{Q}=Q_0$.

We now prove that $E_{\varepsilon}Q_{\varepsilon}\rightarrow Q_0$ strongly in $H^1(\Omega)$, as $\varepsilon\rightarrow 0$.

By ($A_5$), there exists $\theta >0$ such that the function $\tilde{f_e}(D)=f_e(D)-\theta|D|^2$ is convex. We can repeat the same arguments from \cref{prop:gamma_conv_inf}, more specifically, steps \eqref{eq:liminf_f_e} and \eqref{eq:liminf_f_b}, to get:
\begin{align*}
\liminf_{\varepsilon\rightarrow 0}\int_{\Omega_{\varepsilon}}\tilde{f_e}(\nabla Q_{\varepsilon})\text{d}x &\geq \int_{\Omega}\tilde{f_e}(\nabla Q_0)\text{d}x,\\
\theta\liminf_{\varepsilon\rightarrow 0}\int_{\Omega_{\varepsilon}}|\nabla Q_{\varepsilon}|^2\text{d}x &\geq \theta\int_{\Omega}|\nabla Q_0|^2\text{d}x,\\
\liminf_{\varepsilon\rightarrow 0}\int_{\Omega_{\varepsilon}}f_b(Q_{\varepsilon})\text{d}x &\geq \int_{\Omega}f_b(Q_0)\text{d}x.
\end{align*}

From \cref{prop:gamma_conv_inf}, we have that $J_{\varepsilon}^{\mathcal{T}}[Q_{\varepsilon}]\rightarrow J_0[Q_0]$ as $\varepsilon\rightarrow 0$. Also, from the proof that $\tilde{Q}=Q_0$, we can see that
\begin{align*}
\lim_{\varepsilon\rightarrow 0}\mathcal{F}_{\varepsilon}[Q_{\varepsilon}]=\mathcal{F}_0[Q_0],
\end{align*}
which implies that 
\begin{align*}
\lim_{\varepsilon\rightarrow 0}\int_{\Omega_{\varepsilon}}|\nabla Q_{\varepsilon}|^2\text{d}x=\int_{\Omega}|\nabla Q_0|^2\text{d}x.
\end{align*} 

This shows us that $\nabla (E_{\varepsilon}Q_{\varepsilon})\chi_{\Omega_{\varepsilon}}$ converges strongly to $\nabla Q_0$ in $L^2(\Omega)$, where $\chi_{\Omega_{\varepsilon}}$ is the characteristic function of $\Omega_{\varepsilon}$. We now show that $\nabla (E_{\varepsilon}Q_{\varepsilon})\chi_{\mathcal{N}_{\varepsilon}}$ converges strongly to $0$ in $L^2(\Omega)$. We have:
\begin{align*}
\big\|\nabla(E_{\varepsilon}Q_{\varepsilon})\chi_{\mathcal{N}_{\varepsilon}}\big\|_{L^2(\Omega)} &=\bigg(\int_{\Omega}|\nabla E_{\varepsilon}Q_{\varepsilon}|^2\cdot |\chi_{\mathcal{N}_{\varepsilon}}|^2\text{d}x\bigg)^{1/2}\leq \bigg(\int_{\Omega}|\nabla E_{\varepsilon}Q_{\varepsilon}|^2\text{d}x\bigg)^{1/2}\cdot \big\|\chi_{\mathcal{N}_{\varepsilon}}\big\|_{L^{\infty}(\Omega)}\\
&\leq \big\|\nabla E_{\varepsilon}Q_{\varepsilon}\big\|_{L^2(\Omega)}\cdot\big\|\chi_{\mathcal{N}_{\varepsilon}}\big\|_{L^{\infty}(\Omega)}\leq C\cdot \big\|\nabla Q_{\varepsilon}\big\|_{L^2(\Omega_{\varepsilon})}\cdot\big\|\chi_{\mathcal{N}_{\varepsilon}}\big\|_{L^{\infty}(\Omega)}\\
&\leq C\cdot\big(\big\|\nabla Q_0\big\|_{L^2(\Omega)}+\delta_0\big)\cdot\big\|\chi_{\mathcal{N}_{\varepsilon}}\big\|_{L^{\infty}(\Omega)},
\end{align*}
where we have used \cref{lemma:extensionineq} and the fact that $Q_{\varepsilon}\in\mathcal{B}_{\varepsilon}$. According to \cref{prop:volumegoesto0}, we have $|\mathcal{N}_{\varepsilon}|\rightarrow 0$, therefore we have $\nabla (E_{\varepsilon}Q_{\varepsilon})\chi_{\mathcal{N}_{\varepsilon}}\rightarrow 0$ strongly in $L^2(\Omega)$.

Combining all the results, we obtain that $\nabla E_{\varepsilon}Q_{\varepsilon}\rightarrow \nabla Q_0$ strongly in $L^2(\Omega)$, hence $E_{\varepsilon}Q_{\varepsilon}$ converges strongly to $Q_0$ in $H^1(\Omega)$, since the weak convergence $E_{\varepsilon}Q_{\varepsilon}\rightharpoonup Q_0$ in $H^1(\Omega)$ automatically implies the strong convergence $E_{\varepsilon}Q_{\varepsilon}\rightarrow Q_0$ in $L^2(\Omega)$.

\end{proof}

\section{Rate of convergence}\label{section:rate_of_conv}

The aim of this section is the study the rate of convergence of the sequence $J_{\varepsilon}^{\mathcal{T}}[Q_{\varepsilon}]$ to $J_0[Q_0]$, where $J_{\varepsilon}^{\mathcal{T}}$ is defined in \eqref{defn:Jeps_T} and in \eqref{defn:Jeps_T_X_Y_Z}, $J_0$ is defined in \eqref{defn:J_0} and $(Q_{\varepsilon})_{\varepsilon>0}$ is a sequence from $H^1_g(\Omega,\mathcal{S}_0)$ that converges $H^1$-strongly to $Q\in H^1_g(\Omega,\mathcal{S}_0)$. We omit the term $J_{\varepsilon}^{\mathcal{S}}$ because in \cref{subsection:nocontribution} we proved that this term has no contribution to the homogenised functional. 

First, we recall some notations used in the previous sections. For a $Q\in H^1_g(\Omega,\mathcal{S}_0)$, we write $J_0[Q]$ in the following form:
\begin{align}\label{defn:J_0_new_form}
J_0[Q]&=\int_{\Omega}\bigg(\dfrac{1}{r}\Psi^{Y}(Q)+\dfrac{1}{q}\Psi^{Z}(Q)\bigg)\text{d}x+\notag\\
&+\int_{\Omega}\bigg(\dfrac{1}{r}\Psi^{X}(Q)+\dfrac{1}{p}\Psi^{Z}(Q)\bigg)\text{d}x+\notag\\
&+\int_{\Omega}\bigg(\dfrac{1}{q}\Psi^{X}(Q)+\dfrac{1}{p}\Psi^{Y}(Q)\bigg)\text{d}x,
\end{align}
where $\Psi^{X}$, $\Psi^{Y}$ and $\Psi^{Z}$ are defined in \eqref{defn:PSI}. We also write $\tilde{J}_{\varepsilon}[Q]$, defined in \eqref{defn:J_tilde_eps}, as:
\begin{align}\label{defn:J_tilde_eps_new_form}
\tilde{J}_{\varepsilon}[Q]&=\int_{\Omega}\bigg(\dfrac{p\varepsilon-\varepsilon^{\alpha}}{pr(\varepsilon-\varepsilon^{\alpha})}\Psi^{Y}(Q)+\dfrac{p\varepsilon-\varepsilon^{\alpha}}{pq(\varepsilon-\varepsilon^{\alpha})}\Psi^{Z}(Q)\bigg)\text{d}\mu_{\varepsilon}^{X}+\notag\\
&+\int_{\Omega}\bigg(\dfrac{q\varepsilon-\varepsilon^{\alpha}}{qr(\varepsilon-\varepsilon^{\alpha})}\Psi^{X}(Q)+\dfrac{q\varepsilon-\varepsilon^{\alpha}}{pq(\varepsilon-\varepsilon^{\alpha})}\Psi^{Z}(Q)\bigg)\text{d}\mu_{\varepsilon}^{Y}+\notag\\
&+\int_{\Omega}\bigg(\dfrac{r\varepsilon-\varepsilon^{\alpha}}{qr(\varepsilon-\varepsilon^{\alpha})}\Psi^{X}(Q)+\dfrac{r\varepsilon-\varepsilon^{\alpha}}{pr(\varepsilon-\varepsilon^{\alpha})}\Psi^{Y}(Q)\bigg)\text{d}\mu_{\varepsilon}^{Z},
\end{align}
using \eqref{defn:J_tilde_eps_nice_form} and the analogous formulae.

We suppose now that:
\begin{itemize}
\item[($H_1$)] the \textit{surface energy density} $f_s$ is locally Lipschitz continuous.
\end{itemize}  

Using the assumption ($A_7$), from \cref{section:assumptions}, we have: 
\begin{align}\label{eq:properties_f_s_1}
|f_s(Q_1,\nu)-f_s(Q_2,\nu)|&\lesssim |Q_2-Q_1|(|Q_1|^3+|Q_2|^3+1), 
\end{align}
for any $Q_1,Q_2\in\mathcal{S}_0$ and any $\nu\in\mathbb{S}^2$, and
\begin{align}\label{eq:properties_f_s_2}
|f_s(Q,\nu)|&\lesssim |Q|^4+1,
\end{align}
for any $Q\in\mathcal{S}_0$ and any $\nu\in\mathbb{S}^2$.

We now have the following lemma:
\begin{lemma}\label{lemma:control_Psi}
For any $K\in\{X,Y,Z\}$, the function $\Psi^{K}$ is locally Lipschitz continuous and there holds:
\begin{align*}
|\Psi^{K}(Q)|\lesssim |Q|^4+1\hspace{10mm} \big|\nabla\Psi^{K}(Q)\big|\lesssim |Q|^3+1,
\end{align*}
for any $Q\in\mathcal{S}_0$. Moreover, the function $\Psi^K$ satisfies:
\begin{align*}
|\Psi^K(Q_1)-\Psi^K(Q_2)|\lesssim |Q_2-Q_1|(|Q_1|^3+|Q_2|^3+1),
\end{align*}
for any $Q_1,Q_2\in\mathcal{S}_0$.
\end{lemma}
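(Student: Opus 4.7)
The plan is to derive all three bounds as straightforward integral consequences of the pointwise estimates for $f_s$, exploiting the fact that the faces $\mathcal{C}^X$, $\mathcal{C}^Y$, $\mathcal{C}^Z$ are compact surfaces of finite area, independent of $Q$.

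First, for the $L^\infty$-type bound, apply the quartic growth estimate \eqref{eq:properties_f_s_2} pointwise in $\nu$ and integrate:
\begin{equation*}
|\Psi^K(Q)|\leq\int_{\mathcal{C}^K}|f_s(Q,\nu(\tau))|\,\text{d}\sigma(\tau)\lesssim \sigma(\mathcal{C}^K)\cdot(|Q|^4+1)\lesssim |Q|^4+1,
\end{equation*}
for $K\in\{X,Y,Z\}$, since the constants on the right of \eqref{eq:properties_f_s_2} do not depend on $\nu$.

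For the Lipschitz estimate, apply \eqref{eq:properties_f_s_1} pointwise in $\nu$ and integrate over $\mathcal{C}^K$:
\begin{equation*}
|\Psi^K(Q_1)-\Psi^K(Q_2)|\leq\int_{\mathcal{C}^K}|f_s(Q_1,\nu)-f_s(Q_2,\nu)|\,\text{d}\sigma\lesssim \sigma(\mathcal{C}^K)\cdot|Q_1-Q_2|(|Q_1|^3+|Q_2|^3+1),
\end{equation*}
which gives the stated bound and, in particular, implies that $\Psi^K$ is locally Lipschitz continuous, since for $Q_1,Q_2$ in any bounded subset of $\mathcal{S}_0$ the factor $|Q_1|^3+|Q_2|^3+1$ is bounded.

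For the gradient bound, we use hypothesis $(H_1)$: since $f_s(\cdot,\nu)$ is locally Lipschitz in $Q$ for each fixed $\nu$, it is differentiable a.e.\ in $Q$ by Rademacher's theorem, and the estimate derived above forces
\begin{equation*}
|\nabla_Q f_s(Q,\nu)|\lesssim |Q|^3+1
\end{equation*}
at every point of differentiability (the hidden constant being uniform in $\nu$). This pointwise bound is locally integrable with respect to $\sigma$ on $\mathcal{C}^K$, so differentiation under the integral sign applies and yields $\nabla\Psi^K(Q)=\int_{\mathcal{C}^K}\nabla_Q f_s(Q,\nu)\,\text{d}\sigma$ a.e., whence $|\nabla\Psi^K(Q)|\lesssim\sigma(\mathcal{C}^K)(|Q|^3+1)\lesssim |Q|^3+1$.

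The only subtle point is the interpretation of $\nabla\Psi^K$ at points where $f_s(\cdot,\nu)$ fails to be differentiable for a non-negligible set of $\nu$; however, this is not a genuine obstacle, because the conclusion can be read either as ``at every point of differentiability'' or, equivalently, as a bound on the Clarke subdifferential, since the local Lipschitz constant of $\Psi^K$ on a ball of radius $R$ inherits from that of $f_s$ on the same ball and is controlled by $R^3+1$.
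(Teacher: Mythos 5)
Your proposal is correct and follows essentially the same route as the paper, which simply observes that the three bounds follow immediately by integrating the pointwise estimates \eqref{eq:properties_f_s_1}--\eqref{eq:properties_f_s_2} for $f_s$ over the fixed, finite-area faces $\mathcal{C}^X$, $\mathcal{C}^Y$, $\mathcal{C}^Z$, together with assumption ($H_1$). The only simplification worth noting is that the gradient bound can be read off directly from your integrated Lipschitz estimate for $\Psi^K$ (divide by $|Q_1-Q_2|$ and pass to the limit at any point of differentiability), so the detour through Rademacher's theorem for $f_s$ and differentiation under the integral sign is not needed.
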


\begin{proof}
The proof of this lemma follows immediatly, using the definitions of the functions $\Psi^X$, $\Psi^Y$ and $\Psi^Z$ from \eqref{defn:PSI}, the assumption ($H_1$) and the properties of the function $f_s$ from \eqref{eq:properties_f_s_1} and \eqref{eq:properties_f_s_2}.
\end{proof}

We recall now that the measures $\mu_{\varepsilon}^{X}$, $\mu_{\varepsilon}^Y$ and $\mu_{\varepsilon}^Z$, which are defined in \eqref{defn:measures}, converge weakly*, as measures in $\mathbb{R}^3$, to the Lebesgue measure restricted to $\Omega$, according to ($A_4$) from \cref{section:assumptions}. We need to prescribe a rate of convergence and for this we use the $W^{-1,1}$-norm (that is, the dual Lipschitz norm, also known as flat norm in some contexts): 

\begin{align*}
\mathbb{F}_{\varepsilon}:=\max_{K\in\{X,Y,Z\}}\sup\bigg\{\int_{\Omega}\varphi\text{d}\mu_{\varepsilon}^{K}-\int_{\Omega}\varphi\text{d}x\;:\;\varphi\in W^{1,\infty}(\Omega), \|\nabla\varphi\|_{L^{\infty}(\Omega)}+\|\varphi\|_{L^{\infty}(\Omega)}\leq 1\bigg\}.
\end{align*}

\begin{lemma}\label{lemma:Fbb_eps}
There exists a constant $\lambda_{\text{flat}}>0$ such that $\mathbb{F}_{\varepsilon}\leq \lambda_{\text{flat}}\varepsilon$ for any $\varepsilon>0$.
\end{lemma}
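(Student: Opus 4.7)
The plan is to associate with each Dirac mass in $\mu_\varepsilon^K$ a ``Voronoi cube'' of volume exactly $\varepsilon^3$, converting the comparison of $\int\varphi\,\mathrm{d}\mu_\varepsilon^K$ with $\int_\Omega\varphi\,\mathrm{d}x$ into a standard Riemann-sum error estimate on a cubic tessellation. For concreteness take $K=X$: the points $y_\varepsilon^{x,k}$ lie on a translate of $\varepsilon\mathbb{Z}^3$, and I would assign to each the open cube $V_\varepsilon^{x,k}:=y_\varepsilon^{x,k}+(-\varepsilon/2,\varepsilon/2)^3$. These cubes are pairwise disjoint and each has Lebesgue volume exactly $\varepsilon^3$, matching the weight carried by $\delta_{y_\varepsilon^{x,k}}$ in $\mu_\varepsilon^X$, which makes them the natural dual objects for the comparison.

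Given a test function $\varphi\in W^{1,\infty}(\Omega)$ with $\|\varphi\|_{L^\infty(\Omega)}+\|\nabla\varphi\|_{L^\infty(\Omega)}\leq 1$, I would decompose the difference $\int_\Omega\varphi\,\mathrm{d}\mu_\varepsilon^X-\int_\Omega\varphi\,\mathrm{d}x$ into three contributions: (i) the cell-by-cell oscillation $\sum_k\int_{V_\varepsilon^{x,k}\cap\Omega}\bigl(\varphi(y_\varepsilon^{x,k})-\varphi(x)\bigr)\,\mathrm{d}x$; (ii) a boundary defect $\sum_k\varphi(y_\varepsilon^{x,k})\bigl(\varepsilon^3-|V_\varepsilon^{x,k}\cap\Omega|\bigr)$ accounting for cubes that protrude from $\Omega$; and (iii) the uncovered remainder $-\int_{\Omega\setminus U_\varepsilon}\varphi\,\mathrm{d}x$, where $U_\varepsilon:=\bigcup_k V_\varepsilon^{x,k}$. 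Term (i) is bounded per cell by $\|\nabla\varphi\|_{L^\infty}\cdot\mathrm{diam}(V_\varepsilon^{x,k})\cdot|V_\varepsilon^{x,k}|\lesssim \varepsilon^4$, and summing over all cells is harmless since $X_\varepsilon\cdot\varepsilon^3$ is bounded by \eqref{eq:control_over_X_eps}; the total is thus $O(\varepsilon)$.

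The main obstacle is to control (ii) and (iii), which are the only terms that force the correct exponent $\varepsilon^{1}$ on the right-hand side. Both vanish except on cubes that meet a tubular neighbourhood of $\partial\Omega$ of thickness $O(\varepsilon)$, thanks to \eqref{defn:points1} which keeps $\mathcal{X}_\varepsilon$ (and hence $\mathcal{Y}_\varepsilon$) at distance $\gtrsim\varepsilon$ from $\partial\Omega$. Since $\partial\Omega$ is smooth by ($A_1$), the standard tube-volume estimate gives $|\Omega\setminus U_\varepsilon|\lesssim \varepsilon$ and $\sum_k|V_\varepsilon^{x,k}\setminus\Omega|\lesssim \varepsilon$, so (ii) and (iii) are each bounded by $\|\varphi\|_{L^\infty}\cdot O(\varepsilon)$. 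Combining with (i), taking the supremum over admissible $\varphi$, and repeating the argument verbatim for $K=Y,Z$ (only the lattice shift changes), yields $\mathbb{F}_\varepsilon\leq \lambda_{\mathrm{flat}}\,\varepsilon$ with an $\varepsilon$-independent constant $\lambda_{\mathrm{flat}}>0$ depending only on $\Omega$.
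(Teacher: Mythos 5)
Your proposal is correct and follows essentially the same route as the paper: each Dirac mass is paired with its cube of volume $\varepsilon^3$, the cell-by-cell oscillation is bounded by $\|\nabla\varphi\|_{L^\infty}\cdot O(\varepsilon)$ times the covered volume, and the mismatch near $\partial\Omega$ is absorbed in an $O(\varepsilon)$-thick tubular neighbourhood whose volume is $\lesssim\varepsilon$ by smoothness of $\partial\Omega$. The only difference is cosmetic: you track a separate ``protruding cube'' term, whereas the paper notes that the cubes lie inside $\Omega$ (the centers $y_\varepsilon^{x,k}$ stay at distance $\gtrsim\varepsilon$ from $\partial\Omega$) and so works with just the oscillation and uncovered-layer terms.
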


\begin{proof}
Let $\varphi\in W^{1,\infty}(\Omega)$. Then, according to the definition of $\mu_{\varepsilon}^{X}$ from \eqref{defn:measures}, we have:
\begin{align*}
\int_{\Omega}\varphi\text{d}\mu_{\varepsilon}^X=\varepsilon^{3}\sum_{k=1}^{X_{\varepsilon}}\varphi(y_{\varepsilon}^{x,k})=\sum_{k=1}^{X_{\varepsilon}}\int_{y_{\varepsilon}^{x,k}+[-\varepsilon/2,\varepsilon/2]^3}\varphi(y_{\varepsilon}^{x,k})\text{d}x,
\end{align*}
where in the last equality we integrate over the cube with length $\varepsilon$ centered in $y_{\varepsilon}^{x,k}$. Let $\Omega_{\varepsilon}^X$ be the following set:
\begin{align*}
\Omega_{\varepsilon}^{X}:=\bigcup_{k=1}^{X_{\varepsilon}}\big(y_{\varepsilon}^{x,k}+[-\varepsilon/2,\varepsilon/2]^3\big).
\end{align*}
Hence, we can write:
\begin{align*}
\int_{\Omega}\varphi\text{d}\mu_{\varepsilon}^{X}=\int_{\Omega_{\varepsilon}^X}\varphi(y_{\varepsilon}^{x,k})\text{d}x.
\end{align*}

Then:
\begin{align}
\bigg|\int_{\Omega}\varphi\text{d}\mu_{\varepsilon}^X-\int_{\Omega}\varphi\text{d}x\bigg|&\leq \int_{\Omega_{\varepsilon}^X}\big|\varphi-\varphi(y_{\varepsilon}^{x,k})\big|\text{d}x+\int_{\Omega\setminus\Omega_{\varepsilon}^X}\big|\varphi\big|\text{d}x\notag\\
&\leq \dfrac{\varepsilon\sqrt{3}}{2}\|\nabla\varphi\|_{L^{\infty}(\Omega)}\cdot |\Omega_{\varepsilon}^X|+\|\varphi\|_{L^{\infty}(\Omega)}\cdot |\Omega\setminus\Omega_{\varepsilon}^{X}|\label{eq:diff_of_varphi},
\end{align}
where $\dfrac{\varepsilon\sqrt{3}}{2}$ comes from the largest possible value for $|x-y_{\varepsilon}^{x,k}|$, with $x\in\big(y_{\varepsilon}^{x,k}+[-\varepsilon/2,\varepsilon/2]^3\big)$. 

If we look now at the definition of the points $y_{\varepsilon}^{x,k}$ in \eqref{defn:y_eps_x_k}, hence also at the definition of the points $x_{\varepsilon}^{i}$ in \eqref{defn:points1} and \eqref{defn:points2}, we observe that $\Omega\setminus\Omega_{\varepsilon}^{X}\subset\{x\in\Omega\;:\;\text{dist}(x,\partial\Omega)<\varepsilon\}$, therefore, we have $|\Omega\setminus\Omega_{\varepsilon}^X|\leq C\cdot\varepsilon$, where $C$ is an $\varepsilon$-independent constant. At the same time, we have $\Omega_{\varepsilon}^X\subset\Omega\Rightarrow |\Omega_{\varepsilon}^X|\leq |\Omega|$, so \eqref{eq:diff_of_varphi} becomes:
\begin{align*}
\bigg|\int_{\Omega}\varphi\text{d}\mu_{\varepsilon}^X-\int_{\Omega}\varphi\text{d}x\bigg|&\leq \dfrac{\varepsilon\sqrt{3}}{2}\|\nabla\varphi\|_{L^{\infty}(\Omega)}\cdot|\Omega|+C\cdot\varepsilon\|\varphi\|_{L^{\infty}(\Omega)}\lesssim \varepsilon\cdot\|\varphi\|_{W^{1,\infty}(\Omega)}.
\end{align*}

Computing in the same fashion for $\mu_{\varepsilon}^Y$ and $\mu_{\varepsilon}^Z$, we obtain the conclusion.
\end{proof}

We also suppose that:
\begin{itemize}
\item[($H_2$)] $g$ is bounded and Lipschitz, where $g$ represents the prescribed boundary data.
\end{itemize}

Since $\Omega$ is bounded and smooth (by assumption ($A_1$) from \cref{section:assumptions}), we can extend the function $g$ to a bounded and Lipschitz map from $\mathbb{R}^3$ to $\mathcal{S}_0$, denoted still as $g$.

We present an auxiliary result proved in \cite{CanevariZarnescu2}:

\begin{lemma}\label{lemma:Q_eps_beta}
Let $\Omega\subseteq\mathbb{R}^3$ a bounded, smooth domain, and let $g:\Omega\rightarrow\mathcal{S}_0$ be a bounded, Lipschitz map. For any $Q\in H^1_g(\Omega,\mathcal{S}_0)$ and $\sigma\in(0,1)$, there exists a bounded, Lipschitz map $Q_{\sigma}:\overline{\Omega}\rightarrow\mathcal{S}_0$ that satisfies the following properties:
\begin{equation*}
Q_{\sigma}=g\hspace{3mm}\text{on}\;\partial\Omega
\end{equation*}
\begin{equation}\label{eq:Q_sigma}
\|Q_{\sigma}\|_{L^{\infty}(\Omega)}\lesssim \sigma^{-1/2}\big(\|Q\|_{H^1(\Omega)}+\|g\|_{L^{\infty}(\Omega)}\big)
\end{equation}
\begin{equation}\label{eq:nabla_Q_sigma}
\|\nabla Q_{\sigma}\|_{L^{\infty}(\Omega)}\lesssim \sigma^{-3/2}\big(\|Q\|_{H^1(\Omega)}+\|g\|_{W^{1,\infty}(\Omega)}\big)
\end{equation}
\begin{equation}\label{eq:Q-Q_sigma_L^2}
\|Q-Q_{\sigma}\|_{L^2(\Omega)}\lesssim \sigma\|Q\|_{H^1(\Omega)}
\end{equation}
\begin{equation}\label{eq:nabla_Q-Q_sigma_L^2}
\|\nabla Q-\nabla Q_{\sigma}\|_{L^2(\Omega)}\rightarrow 0\hspace{3mm}\text{as}\;\sigma\rightarrow 0.
\end{equation}
\end{lemma}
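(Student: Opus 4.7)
The plan is a standard mollification-plus-boundary-cutoff construction. Extend $g$ to a bounded, Lipschitz map $\tilde g : \mathbb{R}^3 \to \mathcal{S}_0$ (as already noted in the paper), and define $\tilde Q$ on $\mathbb{R}^3$ by $\tilde Q = Q$ on $\Omega$ and $\tilde Q = \tilde g$ on $\mathbb{R}^3\setminus\Omega$; because $Q = g$ on $\partial\Omega$ in the trace sense, this gluing lies in $H^1_{\mathrm{loc}}(\mathbb{R}^3, \mathcal{S}_0)$. Let $\rho_\sigma(x) = \sigma^{-3}\rho(x/\sigma)$ be a standard smooth mollifier, set $Q^* := \tilde Q * \rho_\sigma$, and pick $\eta_\sigma \in C_c^\infty(\Omega)$ with $\eta_\sigma \equiv 1$ on $\{x \in \Omega : \mathrm{dist}(x,\partial\Omega) \ge 2\sigma\}$, $\eta_\sigma \equiv 0$ on $\{x \in \Omega : \mathrm{dist}(x,\partial\Omega) \le \sigma\}$, and $\|\nabla\eta_\sigma\|_{L^\infty} \lesssim \sigma^{-1}$. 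Define
\begin{equation*}
Q_\sigma := \eta_\sigma Q^* + (1-\eta_\sigma)\tilde g.
\end{equation*}
Then $Q_\sigma$ is bounded and Lipschitz, and $Q_\sigma = \tilde g = g$ on $\partial\Omega$.

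The $L^\infty$ estimates \eqref{eq:Q_sigma} and \eqref{eq:nabla_Q_sigma} are the most delicate points. For \eqref{eq:Q_sigma}, the idea is to bound $Q^*$ pointwise by H\"older's inequality in the form $|Q^*(x)| \le \|\tilde Q\|_{L^6(B(x,\sigma))}\,\|\rho_\sigma\|_{L^{6/5}}$. A direct scaling computation gives $\|\rho_\sigma\|_{L^{6/5}} \lesssim \sigma^{-1/2}$, and combining the Sobolev embedding $H^1(\Omega) \hookrightarrow L^6(\Omega)$ with the crude bound $\|\tilde g\|_{L^6(B(x,\sigma))} \lesssim \|g\|_{L^\infty}\sigma^{1/2}$ produces $\|Q^*\|_{L^\infty} \lesssim \sigma^{-1/2}(\|Q\|_{H^1}+\|g\|_{L^\infty})$; the same bound transfers to $Q_\sigma$ since it is a convex combination of $Q^*$ and $\tilde g$. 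For \eqref{eq:nabla_Q_sigma}, expand $\nabla Q_\sigma = \eta_\sigma \nabla Q^* + (1-\eta_\sigma)\nabla\tilde g + \nabla\eta_\sigma(Q^*-\tilde g)$. Writing $\nabla Q^* = (\nabla\tilde Q) * \rho_\sigma$ and applying Cauchy--Schwarz with $\|\rho_\sigma\|_{L^2} \lesssim \sigma^{-3/2}$ yields $\|\nabla Q^*\|_{L^\infty} \lesssim \sigma^{-3/2}\|Q\|_{H^1} + \|\nabla\tilde g\|_{L^\infty}$, while the cutoff term is controlled by $\sigma^{-1}(\|Q^*\|_{L^\infty}+\|g\|_{L^\infty}) \lesssim \sigma^{-3/2}(\|Q\|_{H^1}+\|g\|_{L^\infty})$ using the previous step.

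For \eqref{eq:Q-Q_sigma_L^2}, decompose $Q - Q_\sigma = (1-\eta_\sigma)(Q-\tilde g) + \eta_\sigma(Q - Q^*)$. The first piece is supported in the $2\sigma$ boundary layer, where Hardy's inequality applied to $Q-g \in H^1_0(\Omega)$ gives $\|(1-\eta_\sigma)(Q-\tilde g)\|_{L^2} \lesssim \sigma\|\nabla(Q-g)\|_{L^2} \lesssim \sigma\|Q\|_{H^1}$; the second is controlled by the classical mollification estimate $\|\tilde Q * \rho_\sigma - \tilde Q\|_{L^2(\Omega)} \lesssim \sigma \|\nabla\tilde Q\|_{L^2(\Omega + B_\sigma)} \lesssim \sigma\|Q\|_{H^1}$ (up to an $O(\sigma)$ contribution from $\nabla\tilde g$ on a bounded neighborhood of $\Omega$, absorbed into constants). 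Finally, \eqref{eq:nabla_Q-Q_sigma_L^2} follows from the classical strong convergence $(\nabla\tilde Q)*\rho_\sigma \to \nabla\tilde Q$ in $L^2_{\mathrm{loc}}(\mathbb{R}^3)$ combined with absolute continuity of $\int|\nabla \tilde Q|^2$ over the shrinking boundary layer where $\eta_\sigma \ne 1$. The principal obstacle is obtaining the $\sigma^{-1/2}$ exponent in \eqref{eq:Q_sigma}: a naive Cauchy--Schwarz estimate would only give $\sigma^{-3/2}$, and recovering the extra power of $\sigma$ crucially exploits the three-dimensional Sobolev embedding $H^1 \hookrightarrow L^6$ to place $\rho_\sigma$ in $L^{6/5}$ rather than $L^2$.
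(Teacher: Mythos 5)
You should note first that the paper itself gives no proof of this lemma: it is quoted as an auxiliary result from \cite{CanevariZarnescu2}, so there is no in-paper argument to compare with line by line. Your mollification-plus-boundary-cutoff construction is the standard route for statements of this type and is essentially sound: the glued map $\tilde Q$ lies in $H^1_{\mathrm{loc}}$ because the traces match across $\partial\Omega$, the $L^{6/5}$--$L^{6}$ duality together with $H^1(\Omega)\hookrightarrow L^6(\Omega)$ correctly produces the $\sigma^{-1/2}$ exponent in \eqref{eq:Q_sigma}, and the expansion of $\nabla Q_\sigma$ with $\|\rho_\sigma\|_{L^2}\lesssim\sigma^{-3/2}$ and $\|\nabla\eta_\sigma\|_{L^\infty}\lesssim\sigma^{-1}$ gives \eqref{eq:nabla_Q_sigma}.

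Two points need tightening. First, your argument for \eqref{eq:Q-Q_sigma_L^2} actually yields $\|Q-Q_\sigma\|_{L^2(\Omega)}\lesssim\sigma\big(\|Q\|_{H^1(\Omega)}+\|g\|_{W^{1,\infty}}\big)$, since $\nabla\tilde g$ enters both through Hardy's inequality applied to $Q-\tilde g\in H^1_0(\Omega)$ and through $\|\nabla\tilde Q\|_{L^2}$ on a neighbourhood of $\Omega$; an additive term $\sigma\|g\|_{W^{1,\infty}}$ is not literally of the form $\sigma\|Q\|_{H^1(\Omega)}$. This is harmless but should be addressed: either observe that the weaker inequality suffices for every use of the lemma in \cref{section:rate_of_conv}, or absorb the extra term into a $g$-dependent constant (admissible under the paper's ``$\lesssim$'' convention) by distinguishing $g\equiv 0$, where the term vanishes, from $g\not\equiv 0$, where the trace inequality gives $\|Q\|_{H^1(\Omega)}\gtrsim\|g\|_{H^{1/2}(\partial\Omega)}>0$. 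Second, for \eqref{eq:nabla_Q-Q_sigma_L^2} you only treat $\eta_\sigma(\nabla Q-\nabla Q^*)$ and the layer where $\eta_\sigma\neq 1$, but the delicate contribution is $\nabla\eta_\sigma\,(Q^*-\tilde g)$, which carries a factor $\sigma^{-1}$. On the layer $\Sigma_\sigma=\{x\in\Omega:\mathrm{dist}(x,\partial\Omega)<2\sigma\}$ one needs the localized mollification bound $\|Q^*-\tilde Q\|_{L^2(\Sigma_\sigma)}\lesssim\sigma\|\nabla\tilde Q\|_{L^2(\Sigma_\sigma+B_\sigma)}$ together with Hardy's inequality restricted to $\Sigma_\sigma$, and then absolute continuity of the corresponding integrals over the shrinking layer, to conclude that this term tends to $0$. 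Both repairs use only tools you already invoke, so the proposal is correct in substance but incomplete at these two steps.
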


The main result from this section is the following:

\begin{prop}\label{prop:rate_of_conv}
Suppose that assumptions ($A_1$)-($A_7$) (from \cref{section:assumptions}) and ($H_1$)-($H_2$) (from this section) hold. Then, for any $Q\in H^1_g(\Omega,\mathcal{S}_0)$, there exists a sequence $(Q_{\varepsilon})_{\varepsilon>0}$ in $H^1_g(\Omega,\mathcal{S}_0)$ that converges $H^1(\Omega)$-strongly to $Q$ and satisfies
\begin{equation*}
|J_{\varepsilon}^{\mathcal{T}}[Q_{\varepsilon}]-J_{0}[Q]|\lesssim \varepsilon^{(\alpha-1)/3}\big(\|Q\|^4_{H^1(\Omega)}+1\big),
\end{equation*}
for $\varepsilon$ small enough, where $J_{\varepsilon}^{\mathcal{T}}$ is defined in \eqref{defn:Jeps_T} and $J_0$ is defined in \eqref{defn:J_0_new_form}. The constant that merged with the sign ``$\lesssim$" depends only on the $L^{\infty}$-norms of $g$ and $\nabla g$, but also on $\Omega$, $f_s$ and the initial cube $\mathcal{C}$.
\end{prop}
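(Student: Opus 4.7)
The plan is to use the Lipschitz truncations $Q_\varepsilon := Q_{\sigma(\varepsilon)}$ from \cref{lemma:Q_eps_beta} as the recovery sequence, where $\sigma(\varepsilon) \to 0$ is a power of $\varepsilon$ to be tuned at the end. Strong $H^1$-convergence of $Q_\varepsilon$ to $Q$ is then immediate from \eqref{eq:Q-Q_sigma_L^2}--\eqref{eq:nabla_Q-Q_sigma_L^2}, and the rate estimate is obtained from the triangle decomposition
\begin{equation*}
J_\varepsilon^\mathcal{T}[Q_\sigma] - J_0[Q] \;=\; \bigl(J_\varepsilon^\mathcal{T}[Q_\sigma] - \tilde J_\varepsilon[Q_\sigma]\bigr) + \bigl(\tilde J_\varepsilon[Q_\sigma] - J_0[Q_\sigma]\bigr) + \bigl(J_0[Q_\sigma] - J_0[Q]\bigr),
\end{equation*}
where $\tilde J_\varepsilon$ is the intermediate functional defined in \eqref{defn:J_tilde_eps_new_form}.

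For the first bracket I rerun the geometric estimate at the end of the proof of \cref{th:Je_goes_to_J0}: using $\operatorname{diam}(\mathcal{T}_x^k) \lesssim \varepsilon$ and the boundedness of $X_\varepsilon \varepsilon^3$, and inserting the $L^\infty$ and Lipschitz bounds \eqref{eq:Q_sigma}--\eqref{eq:nabla_Q_sigma}, I obtain $|J_\varepsilon^\mathcal{T}[Q_\sigma] - \tilde J_\varepsilon[Q_\sigma]| \lesssim \varepsilon\,(\|Q_\sigma\|_{L^\infty}^3+1)\operatorname{Lip}(Q_\sigma) \lesssim \varepsilon\,\sigma^{-3}(\|Q\|_{H^1}^4+1)$. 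For the third bracket I combine the local Lipschitz bound on $\Psi^K$ from \cref{lemma:control_Psi} with H\"older's inequality (exponents $(4,4/3)$) and the interpolation $\|Q - Q_\sigma\|_{L^4} \lesssim \|Q - Q_\sigma\|_{L^2}^{1/4}\|Q - Q_\sigma\|_{L^6}^{3/4} \lesssim \sigma^{1/4}(\|Q\|_{H^1}+1)$ (via \eqref{eq:Q-Q_sigma_L^2} and $H^1 \hookrightarrow L^6$), yielding $|J_0[Q_\sigma] - J_0[Q]| \lesssim \sigma^{1/4}(\|Q\|_{H^1}^4+1)$.

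The middle bracket is the crux. I rewrite each coefficient appearing in \eqref{defn:J_tilde_eps_new_form} as its limit from \eqref{defn:J_0_new_form} plus a remainder of size $O(\varepsilon^{\alpha-1})$, which isolates two contributions. The remainder part contributes $\lesssim \varepsilon^{\alpha-1}(\|Q\|_{H^1}^4+1)$ once the uniform boundedness of $\mu_\varepsilon^{K'}(\Omega)$ is combined with the $L^\infty$ bound on $\Psi^K(Q_\sigma)$ furnished by \cref{lemma:control_Psi} and \eqref{eq:Q_sigma}. The main part is a sum of integrals of $\Psi^K(Q_\sigma)$ against $\mathrm{d}\mu_\varepsilon^{K'} - \mathrm{d}x\mres\Omega$, which after normalization is bounded by $\mathbb{F}_\varepsilon\,\|\Psi^K(Q_\sigma)\|_{W^{1,\infty}(\Omega)}$; combining \cref{lemma:control_Psi} with \eqref{eq:Q_sigma}--\eqref{eq:nabla_Q_sigma} gives $\|\Psi^K(Q_\sigma)\|_{W^{1,\infty}} \lesssim \sigma^{-3}(\|Q\|_{H^1}^4+1)$, and \cref{lemma:Fbb_eps} then produces a contribution $\lesssim \varepsilon\,\sigma^{-3}(\|Q\|_{H^1}^4+1)$. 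Summing the three brackets gives
\begin{equation*}
|J_\varepsilon^\mathcal{T}[Q_\sigma] - J_0[Q]| \;\lesssim\; \bigl(\varepsilon\,\sigma^{-3} + \varepsilon^{\alpha-1} + \sigma^{1/4}\bigr)(\|Q\|_{H^1}^4+1),
\end{equation*}
and tuning $\sigma$ as an appropriate power of $\varepsilon$ so as to balance these three competing terms delivers the rate $\varepsilon^{(\alpha-1)/3}$ stated in the proposition.

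The main obstacle is the three-way calibration of $\sigma$: enlarging $\sigma$ improves the first bracket and the flat-norm piece of the middle bracket (since their constants scale like $\operatorname{Lip}(Q_\sigma)(\|Q_\sigma\|_{L^\infty}^3+1)$, which inflates as $\sigma \downarrow 0$), but worsens the $L^4$ approximation error in the third bracket. Tracking the exact power of $\sigma$ generated by each factor of $Q_\sigma$ and $\nabla Q_\sigma$ via \cref{lemma:Q_eps_beta}, and cleanly separating the coefficient-remainder from the flat-norm contribution within (II), is what makes it possible to read off the final exponent $(\alpha-1)/3$ under the standing range $1<\alpha<3/2$.
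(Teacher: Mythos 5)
Your overall architecture is the paper's: the recovery sequence is $Q_{\varepsilon}=Q_{\sigma}$ from \cref{lemma:Q_eps_beta} with $\sigma$ a power of $\varepsilon$, the same three-term splitting through $\tilde J_{\varepsilon}$ is used, the first bracket is handled exactly as in \cref{th:Je_goes_to_J0} (giving $\varepsilon\,\sigma^{-3}$), and the measure part of the middle bracket is controlled by $\mathbb{F}_{\varepsilon}\|\Psi^{K}(Q_{\sigma})\|_{W^{1,\infty}}$ via \cref{lemma:control_Psi} and \cref{lemma:Fbb_eps}. However, two of your intermediate bounds are off, and because of this the final ``tuning of $\sigma$'' cannot deliver the stated exponent. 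First, in the middle bracket your coefficient-remainder term is paired against $\int_{\Omega}\Psi^{K}(Q_{\sigma})\,\mathrm{d}\mu_{\varepsilon}^{K'}$, and the only tool you invoke there is the $L^{\infty}$ bound $\|\Psi^{K}(Q_{\sigma})\|_{L^{\infty}}\lesssim\|Q_{\sigma}\|_{L^{\infty}}^{4}+1\lesssim\sigma^{-2}\big(\|Q\|_{H^1}+\|g\|_{L^{\infty}}\big)^{4}+1$ from \eqref{eq:Q_sigma}; this gives $\varepsilon^{\alpha-1}\sigma^{-2}\big(\|Q\|_{H^1}^{4}+1\big)$, not $\varepsilon^{\alpha-1}\big(\|Q\|_{H^1}^{4}+1\big)$ — the paper's corresponding term is precisely $\varepsilon^{\alpha-2\beta-1}$, and the $\sigma^{-2}$ you dropped is what drives the optimization. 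Second, in the third bracket you needlessly degrade the rate: using Cauchy--Schwarz with exponents $(2,2)$, the cubic factor in $L^{2}$ (controlled through $L^{6}$ and the uniform $H^1$ bound on $Q_{\sigma}$) and \eqref{eq:Q-Q_sigma_L^2} give $|J_0[Q_{\sigma}]-J_0[Q]|\lesssim\sigma\big(\|Q\|_{H^1}^{4}+1\big)$, whereas your $(4,4/3)$ pairing plus interpolation only yields $\sigma^{1/4}$.

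These are not cosmetic issues: with the three terms you actually state, $\varepsilon\sigma^{-3}+\varepsilon^{\alpha-1}+\sigma^{1/4}$, balancing gives at best $\varepsilon^{\min\{1/13,\,\alpha-1\}}$, and with the corrected remainder $\varepsilon^{\alpha-1}\sigma^{-2}$ but your weak third bracket $\sigma^{1/4}$ one only gets $\varepsilon^{(\alpha-1)/9}$; in neither case does the claimed $\varepsilon^{(\alpha-1)/3}$ follow, and you never carry out the exponent arithmetic to check this. To close the gap you must (i) keep the $\sigma^{-2}$ inflation in the remainder term, (ii) prove the full-strength bound $\sigma$ for the third bracket, and then (iii) balance $\varepsilon^{\alpha-1}\sigma^{-2}$ against $\sigma$, which forces $\sigma=\varepsilon^{(\alpha-1)/3}$ (the paper's choice $\beta=(\alpha-1)/3$); the remaining term $\varepsilon\sigma^{-3}=\varepsilon^{2-\alpha}$ is then harmless since $2-\alpha>(\alpha-1)/3$ for $\alpha\in(1,3/2)$. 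Your identification of the three-way calibration as the crux is right, but as written the calibration is asserted rather than performed, and the stated bounds do not support the conclusion.
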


\begin{remark}\label{remark:rate_of_convergence}
The previous proposition allows us to obtain, as claimed,  a rate of convergence for the minimisers $\bar Q_\varepsilon$ of $\mathcal{F}_\varepsilon$ to a minimiser $Q$ of $\mathcal{F}_0$ in terms of $\|\bar Q_\varepsilon-Q\|_{H^1(\Omega)}=o(1)$ as $\varepsilon\to 0$ (i.e. relation \eqref{eq:rate_of_convergence}).

Indeed, this is obtained in the following way. First, let us fix a value for $0<\varepsilon<1$ such that equation \eqref{eq:alpha/4} holds. Then we use the inequality
\begin{align*}
|J_{\varepsilon}^{\mathcal{T}}[\bar Q_{\varepsilon}]-J_0[Q]|&\leq |J_{\varepsilon}^{\mathcal{T}}[\bar Q_{\varepsilon}]-J_{\varepsilon}^{\mathcal{T}}[Q_{\varepsilon}]|+|J_{\varepsilon}^{\mathcal{T}}[Q_{\varepsilon}]-J_0[Q]|,
\end{align*}
where $Q_{\varepsilon}$ is the function from $H^1_g(\Omega,\mathcal{S}_0)$ granted by \cref{lemma:Q_eps_beta}, with $\sigma=\varepsilon^{(\alpha-1)/3}$.

For the first term from the right-hand side from the last inequality, we use relation \eqref{eq:alpha/4} and we obtain, for a fixed $\varepsilon$ sufficiently small:
\begin{align*}
|J_{\varepsilon}^{\mathcal{T}}[\bar Q_{\varepsilon}]-J_{\varepsilon}^{\mathcal{T}}[Q_{\varepsilon}]|&\leq C\cdot\big(\varepsilon^{1/2-\alpha/4}+\|\bar Q_{\varepsilon}-Q_{\varepsilon}\|_{L^4(\Omega)}\big),
\end{align*}
where $C$ is $\varepsilon$-independent.

From the compact Sobolev embedding $H^1(\Omega)\hookrightarrow L^4(\Omega)$, we obtain:
\begin{align*}
|J_{\varepsilon}^{\mathcal{T}}[\bar Q_{\varepsilon}]-J_{\varepsilon}^{\mathcal{T}}[Q_{\varepsilon}]|&\leq C\cdot\big(\varepsilon^{1/2-\alpha/4}+\|\bar Q_{\varepsilon}-Q_{\varepsilon}\|_{H^1(\Omega)}\big).
\end{align*}
Now, we observe that:
\begin{align*}
\|\bar Q_{\varepsilon}-Q_{\varepsilon}\|_{H^1(\Omega)}&\leq \|\bar Q_{\varepsilon}-Q\|_{H^1(\Omega)}+\|Q_{\varepsilon}-Q\|_{H^1(\Omega)}\\
&\leq \|\bar Q_{\varepsilon}-Q\|_{H^1(\Omega)}+\|Q_{\varepsilon}-Q\|_{L^2(\Omega)}+\|\nabla Q_{\varepsilon}-\nabla Q\|_{L^2(\Omega)}\\
&\leq \|\bar Q_{\varepsilon}-Q\|_{H^1(\Omega)}+\varepsilon^{(\alpha-1)/3}\|Q\|_{H^1(\Omega)}+\|\nabla Q_{\varepsilon}-\nabla Q\|_{L^2(\Omega)},
\end{align*}
where we have used relation \eqref{eq:Q-Q_sigma_L^2} in the last row. Relation \eqref{eq:nabla_Q-Q_sigma_L^2} tells us that $\|\nabla Q_{\varepsilon}-\nabla Q\|_{L^2(\Omega)}\rightarrow 0$ as $\varepsilon\rightarrow 0$, hence, by the choice of $\varepsilon$, we can control it with a constant. Since $Q$ is fixed, we can also control $\|Q\|_{H^1(\Omega)}$ with an $\varepsilon$-independent constant. Therefore, we can write:
\begin{align*}
\|\bar Q_{\varepsilon}-Q_{\varepsilon}\|_{H^1(\Omega)}&\lesssim \|\bar Q_{\varepsilon}-Q\|_{H^1(\Omega)}+\varepsilon^{(\alpha-1)/3}.
\end{align*}

Hence, we have:
\begin{align*}
|J_{\varepsilon}^{\mathcal{T}}[\bar Q_{\varepsilon}]-J_{\varepsilon}^{\mathcal{T}}[Q_{\varepsilon}]|&\lesssim \varepsilon^{1/2-\alpha/4}+\varepsilon^{(\alpha-1)/3}+\|\bar Q_{\varepsilon}-Q\|_{H^1(\Omega)}.
\end{align*}
and if we denote by $m_{\alpha}=\text{min}\{1/2-\alpha/4,(\alpha-1)/3\}$ (which is defined depending whether $\alpha$ is bigger or smaller than $10/7$), we can rewrite:
\begin{align*}
|J_{\varepsilon}^{\mathcal{T}}[\bar Q_{\varepsilon}]-J_{\varepsilon}^{\mathcal{T}}[Q_{\varepsilon}]|&\lesssim \varepsilon^{m_{\alpha}}+\|\bar Q_{\varepsilon}-Q\|_{H^1(\Omega)},
\end{align*} 
since $\varepsilon$ is chosen from $(0,1)$.

For the term $|J_{\varepsilon}^{\mathcal{T}}[Q_{\varepsilon}]-J_0[Q]|$, we apply \cref{prop:rate_of_conv}:
\begin{align*}
|J_{\varepsilon}^{\mathcal{T}}[Q_{\varepsilon}]-J_0[Q]|&\lesssim \varepsilon^{(\alpha-1)/3}(\|Q\|^4_{H^1(\Omega)}+1)
\end{align*}
and since $Q$ is fixed, we obtain:
\begin{align*}
|J_{\varepsilon}^{\mathcal{T}}[Q_{\varepsilon}]-J_0[Q]|&\lesssim \varepsilon^{(\alpha-1)/3}\lesssim \varepsilon^{m_{\alpha}},
\end{align*}
using the definition of $m_{\alpha}$.

If we go back to our initial inequality, we obtain:
\begin{align}\label{eq:rate_of_convergence}
|J_{\varepsilon}^{\mathcal{T}}[\bar Q_{\varepsilon}]-J_0[Q]|&\lesssim \varepsilon^{\text{min}\{1/2-\alpha/4,(\alpha-1)/3\}}+\|\bar Q_{\varepsilon}-Q\|_{H^1(\Omega)}.
\end{align}
\end{remark}

\begin{proof}[Proof of \cref{prop:rate_of_conv}]
Let us fix a small $\varepsilon\in(0,1)$ such that:
\begin{align}\label{eq:choice_of_eps_2p}
\dfrac{p\varepsilon-\varepsilon^{\alpha}}{\varepsilon-\varepsilon^{\alpha}}<2p,\;\dfrac{q\varepsilon-\varepsilon^{\alpha}}{\varepsilon-\varepsilon^{\alpha}}<2q\;\text{and}\;\dfrac{r\varepsilon-\varepsilon^{\alpha}}{\varepsilon-\varepsilon^{\alpha}}<2r.
\end{align}
This is possible since $\dfrac{p\varepsilon-\varepsilon^{\alpha}}{\varepsilon-\varepsilon^{\alpha}}\searrow p$, $\dfrac{q\varepsilon-\varepsilon^{\alpha}}{\varepsilon-\varepsilon^{\alpha}}\searrow q$ and $\dfrac{r\varepsilon-\varepsilon^{\alpha}}{\varepsilon-\varepsilon^{\alpha}}\searrow r$  as $\varepsilon\rightarrow 0$ and $p,q,r\geq 1$.

Let now $\beta$ be a positive parameter, to be chosen later, and let $Q_{\varepsilon}:=Q_{\varepsilon^{\beta}}\in H^1_g(\Omega,\mathcal{S}_0)$ be the Lipschitz map given by \cref{lemma:Q_eps_beta}. Then, we have:
\begin{align}\label{eq:control_J_eps_Q_eps_J_0_Q}
|J_{\varepsilon}^{\mathcal{T}}[Q_{\varepsilon}]-J_0[Q]|\leq |J_{\varepsilon}^{\mathcal{T}}[Q_{\varepsilon}]-\tilde{J}_{\varepsilon}[Q_{\varepsilon}]|+|\tilde{J}_{\varepsilon}[Q_{\varepsilon}]-J_0[Q_{\varepsilon}]|+|J_0[Q_{\varepsilon}]-J_0[Q]|,
\end{align}
where $\tilde{J}_{\varepsilon}$ is defined in \eqref{defn:J_tilde_eps}. 

We analyse the first term from the right-hand side from \eqref{eq:control_J_eps_Q_eps_J_0_Q}. Using the same notations as in \cref{th:Je_goes_to_J0}, replacing $\text{Lip}(Q_{\varepsilon})$ (the Lipschitz constant) with $\|\nabla Q_{\varepsilon}\|_{L^{\infty}(\Omega)}$ and combining relations \eqref{eq:J_eps_J_eps_tilde_control1} and \eqref{eq:J_eps_J_eps_tilde_control2}, we obtain:
\begin{align*}
|J_{\varepsilon}^X[Q_{\varepsilon}]-\tilde{J}^X_{\varepsilon}[Q_{\varepsilon}]|&\lesssim \big(\|Q_{\varepsilon}\|^3_{L^{\infty}(\Omega)}+1\big)\cdot\|\nabla Q_{\varepsilon}\|_{L^{\infty}(\Omega)}\cdot\dfrac{2(r+q)}{pqr}\cdot\dfrac{p\varepsilon-\varepsilon^{\alpha}}{\varepsilon-\varepsilon^{\alpha}}\cdot\\
&\hspace{2mm}\cdot X_{\varepsilon}\cdot \varepsilon^{3}\cdot \sqrt{\bigg(\varepsilon-\dfrac{\varepsilon^{\alpha}}{p}\bigg)^2+\bigg(\dfrac{\varepsilon^{\alpha}}{q}\bigg)^2+\bigg(\dfrac{\varepsilon^{\alpha}}{r}\bigg)^2}.
\end{align*}

Using \eqref{eq:control_over_X_eps} and \eqref{eq:choice_of_eps_2p}, we can rewrite the last inequality as follows:
\begin{align}\label{eq:control_J_eps_X_J_tilde_eps_X_sqrt}
|J_{\varepsilon}^X[Q_{\varepsilon}]-\tilde{J}^X_{\varepsilon}[Q_{\varepsilon}]|&\lesssim \big(\|Q_{\varepsilon}\|^3_{L^{\infty}(\Omega)}+1\big)\cdot\|\nabla Q_{\varepsilon}\|_{L^{\infty}(\Omega)}\cdot\sqrt{\bigg(\varepsilon-\dfrac{\varepsilon^{\alpha}}{p}\bigg)^2+\bigg(\dfrac{\varepsilon^{\alpha}}{q}\bigg)^2+\bigg(\dfrac{\varepsilon^{\alpha}}{r}\bigg)^2},
\end{align}
since the term $\dfrac{2(r+q)}{pqr}$ can be bounded with an $\varepsilon$-independent constant. Now, because $p,q,r\geq 1$, we have:
\begin{align*}
\dfrac{\varepsilon^{2\alpha}}{p^2},\dfrac{\varepsilon^{2\alpha}}{q^2},\dfrac{\varepsilon^{2\alpha}}{r^2}\leq \varepsilon^{2\alpha}
\end{align*}
and, because $\varepsilon>0$ and $\alpha\in(1,3/2)$, we also have:
\begin{align*}
0<\varepsilon-\varepsilon^{\alpha}\leq \varepsilon-\dfrac{\varepsilon^{\alpha}}{k}\leq \varepsilon,\;\text{for}\;k\in\{p,q,r\}.
\end{align*}
Therefore, \eqref{eq:control_J_eps_X_J_tilde_eps_X_sqrt} becomes:
\begin{align*}
|J_{\varepsilon}^X[Q_{\varepsilon}]-\tilde{J}^X_{\varepsilon}[Q_{\varepsilon}]|&\lesssim \big(\|Q_{\varepsilon}\|^3_{L^{\infty}(\Omega)}+1\big)\cdot\|\nabla Q_{\varepsilon}\|_{L^{\infty}(\Omega)}\cdot\sqrt{\varepsilon^2+2\varepsilon^{2\alpha}},
\end{align*}
and using the same arguments for $J_{\varepsilon}^Y[Q_{\varepsilon}]$ and $J_{\varepsilon}^Z[Q_{\varepsilon}]$, we obtain:
\begin{align}\label{eq:control_J_eps_J_tilde_no_sqrt}
|J_{\varepsilon}^{\mathcal{T}}[Q_{\varepsilon}]-\tilde{J}_{\varepsilon}[Q_{\varepsilon}]|&\lesssim \big(\|Q_{\varepsilon}\|^3_{L^{\infty}(\Omega)}+1\big)\cdot\|\nabla Q_{\varepsilon}\|_{L^{\infty}(\Omega)}\cdot\sqrt{\varepsilon^2+2\varepsilon^{2\alpha}},
\end{align}
where the constant 3, which comes from adding the three relations obtained, has merged into the ``$\lesssim$" sign.

Using \cref{lemma:Q_eps_beta}, we have:
\begin{align*}
\|Q_{\varepsilon}\|^3_{L^{\infty}(\Omega)}&\lesssim \varepsilon^{-3\beta/2}\big(\|Q\|_{H^1(\Omega)}+\|g\|_{L^{\infty}(\Omega)}\big)^3\\
\|\nabla Q_{\varepsilon}\|_{L^{\infty}(\Omega)}&\lesssim \varepsilon^{-3\beta/2}\big(\|Q\|_{H^1(\Omega)}+\|g\|_{W^{1,\infty}(\Omega)}\big).
\end{align*} 

Now, the constant involved by using the sign ``$\lesssim$" is going to depend also on the $L^{\infty}$-norms of $g$ and $\nabla g$, hence, relation \eqref{eq:control_J_eps_J_tilde_no_sqrt} becomes:
\begin{align*}
|J_{\varepsilon}^{\mathcal{T}}[Q_{\varepsilon}]-\tilde{J}_{\varepsilon}[Q_{\varepsilon}]|&\lesssim \dfrac{\sqrt{\varepsilon^2+\varepsilon^{2\alpha}}}{\varepsilon^{3\beta}}\big(\|Q\|^4_{H^1(\Omega)}+1\big)\lesssim \sqrt{\varepsilon^{2(1-3\beta)}+\varepsilon^{2(\alpha-3\beta)}}\big(\|Q\|^4_{H^1(\Omega)}+1\big).
\end{align*}

Since $\alpha\in(1,3/2)$, we have $1-3\beta<\alpha-3\beta$. Therefore, we can write the last inequality as follows:
\begin{align}\label{eq:first_control_J_eps_J_0_Q_eps}
|J_{\varepsilon}^{\mathcal{T}}[Q_{\varepsilon}]-\tilde{J}_{\varepsilon}[Q_{\varepsilon}]|&\lesssim\varepsilon^{1-3\beta}\big(\|Q\|^4_{H^1(\Omega)}+1\big),
\end{align}
since $\varepsilon\in(0,1)$.

In order to analyse better the second term from \eqref{eq:control_J_eps_Q_eps_J_0_Q}, which contains $\tilde{J}_{\varepsilon}[Q_{\varepsilon}]$ and $J_0[Q_{\varepsilon}]$, we analyse the first terms from \eqref{defn:J_0_new_form} and \eqref{defn:J_tilde_eps_new_form}:
\begin{align*}
\bigg|\dfrac{p\varepsilon-\varepsilon^{\alpha}}{pr(\varepsilon-\varepsilon^{\alpha})}\int_{\Omega}\Psi^{Y}(Q_{\varepsilon})\text{d}\mu_{\varepsilon}^{X}-\dfrac{1}{r}\int_{\Omega}\Psi^{Y}(Q_{\varepsilon})\text{d}x\bigg|&\leq \dfrac{p\varepsilon-\varepsilon^{\alpha}}{pr(\varepsilon-\varepsilon^{\alpha})}\bigg|\int_{\Omega}\Psi^{Y}(Q_{\varepsilon})\text{d}\mu_{\varepsilon}^{X}-\int_{\Omega}\Psi^{Y}(Q_{\varepsilon})\text{d}x\bigg|+\\
&\hspace{2mm}+\bigg|\dfrac{p\varepsilon-\varepsilon^{\alpha}}{pr(\varepsilon-\varepsilon^{\alpha})}-\dfrac{1}{r}\bigg|\cdot\bigg|\int_{\Omega}\Psi^{Y}(Q_{\varepsilon})\text{d}x\bigg|.
\end{align*}

As we have seen before, we have $\dfrac{p\varepsilon-\varepsilon^{\alpha}}{pr(\varepsilon-\varepsilon^{\alpha})}\searrow\dfrac{1}{r}$ and we have chosen $\varepsilon>0$ such that $\dfrac{1}{r}\leq \dfrac{p\varepsilon-\varepsilon^{\alpha}}{pr(\varepsilon-\varepsilon^{\alpha})}<\dfrac{2}{r}$. Moreover, we have $\bigg|\dfrac{p\varepsilon-\varepsilon^{\alpha}}{pr(\varepsilon-\varepsilon^{\alpha})}-\dfrac{1}{r}\bigg|=\dfrac{\varepsilon^{\alpha}(p-1)}{pr(\varepsilon-\varepsilon^{\alpha})}$ and we can impose further conditions regarding the choice of $\varepsilon$, such that $\dfrac{\varepsilon^{\alpha}(p-1)}{pr(\varepsilon-\varepsilon^{\alpha})}<\varepsilon^{\alpha-1}$, which is equivalent to choosing $\varepsilon$ such that $\varepsilon^{\alpha-1}< 1-\dfrac{1}{r}+\dfrac{1}{pr}$. Hence, we have:
\begin{align*}
\bigg|\dfrac{p\varepsilon-\varepsilon^{\alpha}}{pr(\varepsilon-\varepsilon^{\alpha})}\int_{\Omega}\Psi^{Y}(Q_{\varepsilon})\text{d}\mu_{\varepsilon}^{X}-\dfrac{1}{r}\int_{\Omega}\Psi^{Y}(Q_{\varepsilon})\text{d}x\bigg|&\leq \dfrac{2}{r}\bigg|\int_{\Omega}\Psi^{Y}(Q_{\varepsilon})\text{d}\mu_{\varepsilon}^{X}-\int_{\Omega}\Psi^{Y}(Q_{\varepsilon})\text{d}x\bigg|+\varepsilon^{\alpha-1}\bigg|\int_{\Omega}\Psi^{Y}(Q_{\varepsilon})\text{d}x\bigg|.
\end{align*}

Using the definition of $\mathbb{F}_{\varepsilon}$, we have:
\begin{align*}
\bigg|\dfrac{p\varepsilon-\varepsilon^{\alpha}}{pr(\varepsilon-\varepsilon^{\alpha})}\int_{\Omega}\Psi^{Y}(Q_{\varepsilon})\text{d}\mu_{\varepsilon}^{X}-\dfrac{1}{r}\int_{\Omega}\Psi^{Y}(Q_{\varepsilon})\text{d}x\bigg|\leq \dfrac{2}{r}\cdot\mathbb{F}_{\varepsilon}\cdot\|\Psi^{Y}(Q_{\varepsilon})\|_{W^{1,\infty}(\Omega)}+\varepsilon^{\alpha-1}\|\Psi^{Y}(Q_{\varepsilon})\|_{L^{\infty}(\Omega)}.
\end{align*}
Using now the fact that now $Q_{\varepsilon}\in H^1_g(\Omega,\mathcal{S}_0)$, \cref{lemma:control_Psi} and \cref{lemma:Fbb_eps}, we obtain (also by moving the constant $\dfrac{2}{r}$ under the ``$\lesssim$" sign):
\begin{align*}
&\bigg|\dfrac{p\varepsilon-\varepsilon^{\alpha}}{pr(\varepsilon-\varepsilon^{\alpha})}\int_{\Omega}\Psi^{Y}(Q_{\varepsilon})\text{d}\mu_{\varepsilon}^{X}-\dfrac{1}{r}\int_{\Omega}\Psi^{Y}(Q_{\varepsilon})\text{d}x\bigg|\lesssim \varepsilon\big(\|Q_{\varepsilon}\|^4_{L^{\infty}(\Omega)}+1\big)+\\
&\hspace{20mm}+\varepsilon\big(\|Q_{\varepsilon}\|^3_{L^{\infty}(\Omega)}+1\big)\cdot\|\nabla Q_{\varepsilon}\|_{L^{\infty}(\Omega)} + \varepsilon^{\alpha-1}\big(\|Q_{\varepsilon}\|^4_{L^{\infty}(\Omega)}+1\big),
\end{align*}
Applying \cref{lemma:Q_eps_beta}, we get:
\begin{align*}
&\bigg|\dfrac{p\varepsilon-\varepsilon^{\alpha}}{pr(\varepsilon-\varepsilon^{\alpha})}\int_{\Omega}\Psi^{Y}(Q_{\varepsilon})\text{d}\mu_{\varepsilon}^{X}-\dfrac{1}{r}\int_{\Omega}\Psi^{Y}(Q_{\varepsilon})\text{d}x\bigg|\lesssim \varepsilon\bigg(\varepsilon^{-2\beta}\big(\|Q\|_{H^1(\Omega)}+\|g\|_{L^{\infty}(\Omega)}\big)^4+1\bigg)+\\
&\hspace{10mm}+\varepsilon\bigg(\varepsilon^{-3\beta/2}\big(\|Q\|_{H^1(\Omega)}+\|g\|_{L^{\infty}(\Omega)}\big)^3+1\bigg)\cdot\varepsilon^{-3\beta/2}\big(\|Q\|_{H^1(\Omega)}+\|g\|_{W^{1,\infty}(\Omega)}\big) +\\
&\hspace{10mm}+\varepsilon^{\alpha-1}\bigg(\varepsilon^{-2\beta}\big(\|Q\|_{H^1(\Omega)}+\|g\|_{L^{\infty}(\Omega)}\big)^4+1\bigg),
\end{align*}

Moving the terms $\|g\|_{L^{\infty}(\Omega)}$ and $\|g\|_{W^{1,\infty}(\Omega)}$ under the ``$\lesssim$" sign and using the fact that $\beta>0$ and $\varepsilon\in(0,1)$, we have:
\begin{align*}
&\bigg|\dfrac{p\varepsilon-\varepsilon^{\alpha}}{pr(\varepsilon-\varepsilon^{\alpha})}\int_{\Omega}\Psi^{Y}(Q_{\varepsilon})\text{d}\mu_{\varepsilon}^{X}-\dfrac{1}{r}\int_{\Omega}\Psi^{Y}(Q_{\varepsilon})\text{d}x\bigg| \lesssim \big(\varepsilon^{1-2\beta}+\varepsilon^{1-3\beta}+\varepsilon^{\alpha-2\beta-1}\big)\big(\|Q\|^4_{H^1(\Omega)}+1\big).
\end{align*}
Applying the same technique for the other five terms from $J_0$ and $\tilde{J}_{\varepsilon}$, which are in \eqref{defn:J_0_new_form} and \eqref{defn:J_tilde_eps_new_form}, we obtain:
\begin{align*}
|\tilde{J}_{\varepsilon}[Q_{\varepsilon}]-J_0[Q_{\varepsilon}]|\lesssim\big(\varepsilon^{1-2\beta}+\varepsilon^{1-3\beta}+\varepsilon^{\alpha-2\beta-1}\big)\big(\|Q\|^4_{H^1(\Omega)}+1\big)
\end{align*}
and using once again that $\beta>0$ and $\varepsilon\in(0,1)$, we can write:
\begin{align}\label{eq:second_control_J_eps_J_0_Q_eps}
|\tilde{J}_{\varepsilon}[Q_{\varepsilon}]-J_0[Q_{\varepsilon}]|\lesssim\big(\varepsilon^{1-3\beta}+\varepsilon^{\alpha-2\beta-1}\big)\big(\|Q\|^4_{H^1(\Omega)}+1\big).
\end{align}

Moving now to the last term from \eqref{eq:control_J_eps_Q_eps_J_0_Q}, which is $|J_0[Q_{\varepsilon}]-J_0[Q]|$, we once again analyse every difference that can be formed with the six terms from the definition of \eqref{defn:J_0_new_form}. Hence:
\begin{align*}
\bigg|\int_{\Omega}\dfrac{1}{r}\Psi^{Y}(Q_{\varepsilon})\text{d}x-\int_{\Omega}\dfrac{1}{r}\Psi^{Y}(Q)\text{d}x\bigg|\leq \dfrac{1}{r}\int_{\Omega}\big|\Psi^{Y}(Q_{\varepsilon})-\Psi^{Y}(Q)\big|\text{d}x.
\end{align*}
Using \cref{lemma:control_Psi} and moving the constant $\dfrac{1}{r}$ under the ``$\lesssim$" sign, we have:
\begin{align*}
\bigg|\int_{\Omega}\dfrac{1}{r}\Psi^{Y}(Q_{\varepsilon})\text{d}x-\int_{\Omega}\dfrac{1}{r}\Psi^{Y}(Q)\text{d}x\bigg|&\lesssim \int_{\Omega}\big(|Q|^3+|Q_{\varepsilon}|^3+1)|Q-Q_{\varepsilon}|\text{d}x\\
&\lesssim \bigg(\int_{\Omega}\big(|Q|^3+|Q_{\varepsilon}|^3+1\big)^2\text{d}x\bigg)^{1/2}\cdot\bigg(\int_{\Omega}|Q-Q_{\varepsilon}|^2\text{d}x\bigg)^{1/2}\\
&\lesssim \bigg(\int_{\Omega}\big(|Q|^6+|Q_{\varepsilon}|^6+1\big)\text{d}x\bigg)^{1/2}\cdot\|Q-Q_{\varepsilon}\|_{L^2(\Omega)}\\
&\lesssim \big(\|Q\|^3_{L^6(\Omega)}+\|Q_{\varepsilon}\|^3_{L^6(\Omega)}+1\big)\cdot\|Q-Q_{\varepsilon}\|_{L^2(\Omega)}.
\end{align*}
The sequence $(Q_{\varepsilon})_{\varepsilon>0}$ is bounded in $L^6(\Omega)$, due to the continuous Sobolev embedding $H^1(\Omega)\hookrightarrow L^6(\Omega)$ and to \cref{lemma:Q_eps_beta}. Using once again \cref{lemma:Q_eps_beta} to control $\|Q-Q_{\varepsilon}\|_{L^2(\Omega)}$, we obtain:
\begin{align*}
\bigg|\int_{\Omega}\dfrac{1}{r}\Psi^{Y}(Q_{\varepsilon})\text{d}x-\int_{\Omega}\dfrac{1}{r}\Psi^{Y}(Q)\text{d}x\bigg|&\lesssim \varepsilon^{\beta}\big(\|Q\|^4_{H^1(\Omega)}+1\big),
\end{align*}
hence
\begin{align}\label{eq:third_control_J_eps_J_0_Q_eps}
|J_0[Q_{\varepsilon}-J_0[Q]|\lesssim \varepsilon^{\beta}\big(\|Q\|^4_{H^1(\Omega)}+1\big).
\end{align}

Combining now relations \eqref{eq:first_control_J_eps_J_0_Q_eps}, \eqref{eq:second_control_J_eps_J_0_Q_eps} and \eqref{eq:third_control_J_eps_J_0_Q_eps}, we obtain:
\begin{align*}
|J_{\varepsilon}^{\mathcal{T}}[Q_{\varepsilon}]-J_0[Q]|\lesssim \big(\varepsilon^{1-3\beta}+\varepsilon^{\alpha-2\beta-1}+\varepsilon^{\beta}\big)\big(\|Q\|^4_{H^1(\Omega)}+1\big).
\end{align*}

Now we need to find a suitable value for $\beta>0$ such that we can put the minimum positive value between the exponents $1-3\beta$, $\alpha-2\beta-1$ and $\beta$, in order to obtain the best rate of convergence. Since we desire that all exponents are positive, $\alpha-2\beta-1>0\Rightarrow \beta<\dfrac{\alpha-1}{2}$. Since $\dfrac{\alpha-1}{2}<2-\alpha\Leftrightarrow \alpha<\dfrac{5}{3}$, which is true because $\alpha\in(1,3/2)$, then we have: $\alpha-2\beta-1<1-3\beta\Leftrightarrow \beta<2-\alpha$, which is true, because $\beta<\dfrac{\alpha-1}{2}<2-\alpha$. Hence we only consider the exponents $\alpha-2\beta-1$ and $\beta$ and we can see that the optimal choice for $\beta$ is $\dfrac{\alpha-1}{3}$, which is positive because $\alpha\in(1,3/2)$. Hence, we obtain:
\begin{align*}
|J_{\varepsilon}^{\mathcal{T}}[Q_{\varepsilon}]-J_0[Q]|\lesssim \varepsilon^{(\alpha-1)/3}\big(\|Q\|^4_{H^1(\Omega)}+1\big).
\end{align*}
\end{proof}

\begin{appendix}
\section{Appendix}\label{section:appendix}

\subsection{Constructing the cubic microlattice}\label{section:constructing_lattice}

In this subsection, we provide more details regarding the construction of the gray parallelipipeds from Figure \ref{fig:nematiccage1}. 

In each of the points from $\mathcal{Y}_{\varepsilon}$ we construct a parallelipiped that connects the parallelipipeds $\mathcal{C}^i_{\varepsilon}$ and $\mathcal{C}^j_{\varepsilon}$, where $i,j\in\overline{1,N_{\varepsilon}}$ such that $|x_{\varepsilon}^i-x_{\varepsilon}^j|=\varepsilon$.

If $x^i_{\varepsilon}-x^j_{\varepsilon}=\pm(\varepsilon,0,0)^T$, then let:
\begin{align}
& \bullet X_{\varepsilon}=\text{card}\big(\big\{(i,j)\in\overline{1,N_{\varepsilon}}^2\;\big|\;x_{\varepsilon}^i-x_{\varepsilon}^j=(\varepsilon,0,0)^T,\;i<j\big\}\big)\label{defn:Y_eps_x_cardinal}\\
& \bullet \Upsilon_{\varepsilon}^x:\big\{(i,j)\in\overline{1,N_{\varepsilon}}^2\;\big|\;x_{\varepsilon}^i-x_{\varepsilon}^j=(\varepsilon,0,0)^T,\;i<j\big\}\rightarrow\overline{1,X_{\varepsilon}}\text{ a bijection;}\notag\\
& \bullet y_{\varepsilon}^{x,k}=\dfrac{1}{2}(x_{\varepsilon}^i+x_{\varepsilon}^j),\text{ where }k=\Upsilon_{\varepsilon}^x(i,j);\label{defn:y_eps_x_k}\\
& \bullet \mathcal{Y}_{\varepsilon}^x=\bigg\{y_{\varepsilon}^{x,k}\in\mathcal{Y}_{\varepsilon}\;\bigg|\;y_{\varepsilon}^{x,k}=\dfrac{1}{2}(x_{\varepsilon}^i+x_{\varepsilon}^j),\;k=\Upsilon_{\varepsilon}^x(i,j)\bigg\};\label{defn:Y_eps_x}\\
& \bullet \mathcal{P}_{\varepsilon}^{x,k}\;\text{the parallelipiped centered in }y_{\varepsilon}^{x,k}\text{, defined by }\mathcal{P}_{\varepsilon}^{x,k}=y_{\varepsilon}^{x,k}+T_x\mathcal{C}^{\alpha}\text{, where}\label{defn:P_x}\\
&\hspace{14mm} T_x\mathcal{C}^{\alpha}=\bigg[-\dfrac{p\varepsilon-\varepsilon^{\alpha}}{2p},\dfrac{p\varepsilon-\varepsilon^{\alpha}}{2p}\bigg]\times\bigg[-\dfrac{\varepsilon^{\alpha}}{2q},\dfrac{\varepsilon^{\alpha}}{2q}\bigg]\times\bigg[-\dfrac{\varepsilon^{\alpha}}{2r},\dfrac{\varepsilon^{\alpha}}{2r}\bigg];\\
& \bullet \mathcal{T}_x^k\text{ be the union of the four transparent faces of }\mathcal{P}_{\varepsilon}^{x,k}\text{ that have the length equal to }\dfrac{p\varepsilon-\varepsilon^{\alpha}}{p},\notag\\ 
&\text{which are represented in Figure \ref{fig:T_x}.\label{defn:T_x_k}}
\end{align}

If $x^i_{\varepsilon}-x^j_{\varepsilon}=\pm(0,\varepsilon,0)^T$, then let:
\begin{align}
& \bullet Y_{\varepsilon}=\text{card}\big(\big\{(i,j)\in\overline{1,N_{\varepsilon}}^2\;\big|\;x_{\varepsilon}^i-x_{\varepsilon}^j=(0,\varepsilon,0)^T,\;i<j\big\}\big)\label{defn:Y_eps_y_cardinal}\\
& \bullet \Upsilon_{\varepsilon}^y:\big\{(i,j)\in\overline{1,N_{\varepsilon}}^2\;\big|\;x_{\varepsilon}^i-x_{\varepsilon}^j=(0,\varepsilon,0)^T,\;i<j\big\}\rightarrow\overline{1,Y_{\varepsilon}}\text{ a bijection;}\notag\\
& \bullet y_{\varepsilon}^{y,l}=\dfrac{1}{2}(x_{\varepsilon}^i+x_{\varepsilon}^j),\text{ where }l=\Upsilon_{\varepsilon}^y(i,j);\notag\\
& \bullet \mathcal{Y}_{\varepsilon}^y=\bigg\{y_{\varepsilon}^{y,l}\in\mathcal{Y}_{\varepsilon}\;\bigg|\;y_{\varepsilon}^{y,l}=\dfrac{1}{2}(x_{\varepsilon}^i+x_{\varepsilon}^j),\;l=\Upsilon_{\varepsilon}^y(i,j)\bigg\};\label{defn:Y_eps_y}\\
& \bullet \mathcal{P}_{\varepsilon}^{y,l}\;\text{the parallelipiped centered in }y_{\varepsilon}^{y,l}\text{, defined by }\mathcal{P}_{\varepsilon}^{y,l}=y_{\varepsilon}^{y,l}+T_y\mathcal{C}^{\alpha}\text{, where}\label{defn:P_y}\\
&\hspace{14mm} T_y\mathcal{C}^{\alpha}=\bigg[-\dfrac{\varepsilon^{\alpha}}{2p},\dfrac{\varepsilon^{\alpha}}{2p}\bigg]\times\bigg[-\dfrac{q\varepsilon-\varepsilon^{\alpha}}{2q},\dfrac{q\varepsilon-\varepsilon^{\alpha}}{2q}\bigg]\times\bigg[-\dfrac{\varepsilon^{\alpha}}{2r},\dfrac{\varepsilon^{\alpha}}{2r}\bigg];\\
& \bullet \mathcal{T}_y^l\text{ be the union of the four transparent faces of }\mathcal{P}_{\varepsilon}^{y,l}\text{ that have the length equal to }\dfrac{q\varepsilon-\varepsilon^{\alpha}}{q},\notag\\ 
&\text{which are represented in Figure \ref{fig:T_y}.\label{defn:T_y_l}}
\end{align}

If $x^i_{\varepsilon}-x^j_{\varepsilon}=\pm(0,0,\varepsilon)^T$, then let:
\begin{align}
& \bullet Z_{\varepsilon}=\text{card}\big(\big\{(i,j)\in\overline{1,N_{\varepsilon}}^2\;\big|\;x_{\varepsilon}^i-x_{\varepsilon}^j=(0,0,\varepsilon)^T,\;i<j\big\}\big)\label{defn:Y_eps_z_cardinal}\\
& \bullet \Upsilon_{\varepsilon}^y:\big\{(i,j)\in\overline{1,N_{\varepsilon}}^2\;\big|\;x_{\varepsilon}^i-x_{\varepsilon}^j=(0,0,\varepsilon)^T,\;i<j\big\}\rightarrow\overline{1,Z_{\varepsilon}}\text{ a bijection;}\notag\\
& \bullet y_{\varepsilon}^{z,m}=\dfrac{1}{2}(x_{\varepsilon}^i+x_{\varepsilon}^j),\text{ where }m=\Upsilon_{\varepsilon}^z(i,j);\notag\\
& \bullet \mathcal{Y}_{\varepsilon}^z=\bigg\{y_{\varepsilon}^{z,m}\in\mathcal{Y}_{\varepsilon}\;\bigg|\;y_{\varepsilon}^{z,m}=\dfrac{1}{2}(x_{\varepsilon}^i+x_{\varepsilon}^j),\;m=\Upsilon_{\varepsilon}^z(i,j)\bigg\};\label{defn:Y_eps_z}\\
& \bullet \mathcal{P}_{\varepsilon}^{z,m}\;\text{the parallelipiped centered in }y_{\varepsilon}^{z,m}\text{, defined by }\mathcal{P}_{\varepsilon}^{z,m}=y_{\varepsilon}^{z,m}+T_z\mathcal{C}^{\alpha}\text{, where}\label{defn:P_z}\\
&\hspace{14mm} T_z\mathcal{C}^{\alpha}=\bigg[-\dfrac{\varepsilon^{\alpha}}{2p},\dfrac{\varepsilon^{\alpha}}{2p}\bigg]\times\bigg[-\dfrac{\varepsilon^{\alpha}}{2q},\dfrac{\varepsilon^{\alpha}}{2q}\bigg]\times\bigg[-\dfrac{r\varepsilon-\varepsilon^{\alpha}}{2r},\dfrac{r\varepsilon-\varepsilon^{\alpha}}{2r}\bigg];\\
& \bullet \mathcal{T}_z^m\text{ be the union of the four transparent faces of }\mathcal{P}_{\varepsilon}^{z,m}\text{ that have the length equal to }\dfrac{r\varepsilon-\varepsilon^{\alpha}}{r},\notag\\ 
&\text{which are represented in Figure \ref{fig:T_z}.\label{defn:T_z_m}}
\end{align}

\begin{figure}[h!]
     \centering
     \begin{subfigure}[b]{0.3\textwidth}
         \centering
         \includegraphics[width=\textwidth]{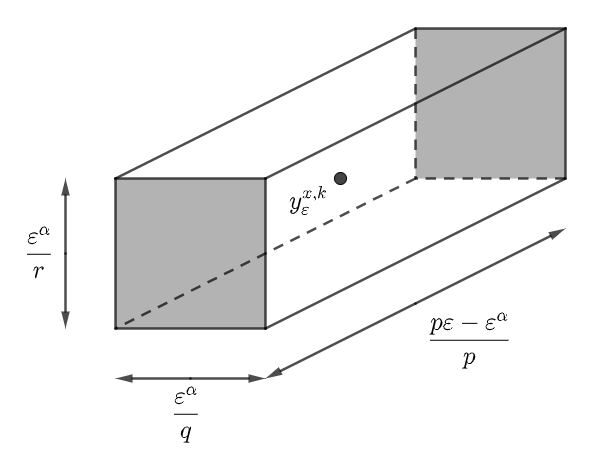}
         \caption{The parallelipiped $\mathcal{P}_{\varepsilon}^{x,k}$, with the center in $y_{\varepsilon}^{x,k}$, with lateral tran\-spa\-rent faces $\mathcal{T}_x^k$.}
         \label{fig:T_x}
     \end{subfigure}
     \hfill
     \begin{subfigure}[b]{0.3\textwidth}
         \centering
         \includegraphics[width=\textwidth]{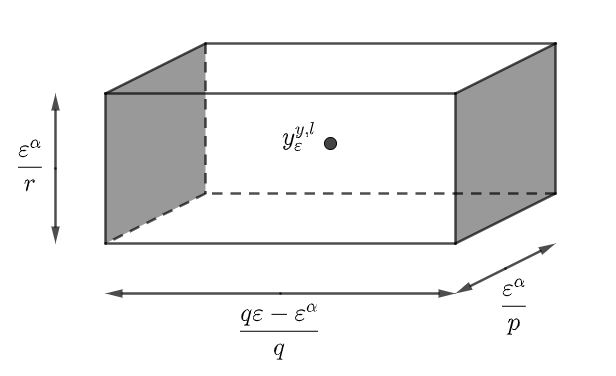}
         \caption{The parallelipiped $\mathcal{P}_{\varepsilon}^{y,l}$, with the center in $y_{\varepsilon}^{y,l}$, with lateral tran\-spa\-rent faces $\mathcal{T}_y^l$.}
         \label{fig:T_y}
     \end{subfigure}
     \hfill
     \begin{subfigure}[b]{0.3\textwidth}
         \centering
         \includegraphics[width=\textwidth]{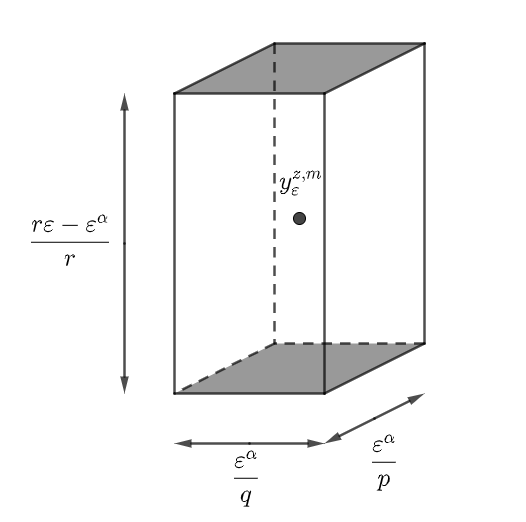}
         \caption{The parallelipiped $\mathcal{P}_{\varepsilon}^{z,m}$, with the center in $y_{\varepsilon}^{z,m}$, with lateral tran\-spa\-rent faces $\mathcal{T}_z^m$.}
         \label{fig:T_z}
     \end{subfigure}
        \caption{The three types parallelipipeds with centers from $\mathcal{Y}_{\varepsilon}$.}
\end{figure}

\begin{remark} We can already see that we need to set $\varepsilon^{\alpha-1}<\text{min}\{p,q,r\}$, otherwise we have $\dfrac{\varepsilon^{\alpha-1}}{p}\geq 1\Rightarrow \dfrac{\varepsilon^{\alpha}}{2p}\geq\dfrac{\varepsilon}{2}$ and, in the same way, $\dfrac{\varepsilon^{\alpha}}{2q}\geq\dfrac{\varepsilon}{2}$ and $\dfrac{\varepsilon^{\alpha}}{2r}\geq\dfrac{\varepsilon}{2}$, hence the inclusions from the family $\mathcal{C}_{\varepsilon}$ are not disjoint anymore and they overlap. More specifically, the gray parallelipipeds from Figure \ref{fig:nematiccage1} cannot be constructed anymore. Since the parameters $p$, $q$ and $r$ are fixed and we are interested what happens when $\varepsilon\rightarrow 0$, then the condition $\varepsilon^{\alpha-1}<\text{min}\{p,q,r\}$ implies that $\alpha\geq1$.  If $\alpha=1$, then it is easy to see that the volume of the scaffold does not tend to zero as $\varepsilon\rightarrow 0$, so we are not in the dilute regime anymore.
\end{remark}

\subsection{Volume and surface area of the scaffold}\label{subsection:appendix1}

\begin{prop}\label{prop:volumegoesto0}
The volume of the scaffold $\mathcal{N}_{\varepsilon}$ tends to $0$ as $\varepsilon\rightarrow 0$.
\end{prop}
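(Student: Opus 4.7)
The plan is to simply count the constituent pieces of $\mathcal{N}_\varepsilon=\mathcal{C}_\varepsilon\cup\mathcal{P}_\varepsilon$ and multiply by their individual volumes, exploiting $\alpha>1$ to get the right decay. Since $\Omega\subseteq[-L_0,L_0]\times[-l_0,l_0]\times[-h_0,h_0]$ by \eqref{defn:L_0_l_0_h_0}, and the centers $x_\varepsilon^i\in\mathcal{X}_\varepsilon$ sit on the $\varepsilon$-lattice, a trivial counting argument gives $N_\varepsilon\leq \frac{(2L_0+\varepsilon)(2l_0+\varepsilon)(2h_0+\varepsilon)}{\varepsilon^3}\lesssim\varepsilon^{-3}$. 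The same bound holds for $X_\varepsilon$, $Y_\varepsilon$ and $Z_\varepsilon$, since each index $k,l,m$ is associated with a midpoint between two adjacent lattice points and hence lies on a shifted $\varepsilon$-lattice contained in a bounded set.

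Next I would compute the volume of one cell of each type. From \eqref{defn:initial_cube_alpha} we have
\begin{equation*}
|\mathcal{C}^\alpha|=\frac{\varepsilon^{3\alpha}}{pqr},
\end{equation*}
and from \eqref{defn:P_x}, \eqref{defn:P_y}, \eqref{defn:P_z},
\begin{equation*}
|\mathcal{P}_\varepsilon^{x,k}|=\frac{(p\varepsilon-\varepsilon^\alpha)\varepsilon^{2\alpha}}{pqr},\qquad |\mathcal{P}_\varepsilon^{y,l}|=\frac{(q\varepsilon-\varepsilon^\alpha)\varepsilon^{2\alpha}}{pqr},\qquad |\mathcal{P}_\varepsilon^{z,m}|=\frac{(r\varepsilon-\varepsilon^\alpha)\varepsilon^{2\alpha}}{pqr},
\end{equation*}
all of which are $\lesssim \varepsilon^{2\alpha+1}$ (the prefactor absorbs $p,q,r$).

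Putting these together, the subadditivity of the Lebesgue measure gives
\begin{equation*}
|\mathcal{N}_\varepsilon|\leq N_\varepsilon\,|\mathcal{C}^\alpha|+X_\varepsilon\,|\mathcal{P}_\varepsilon^{x,1}|+Y_\varepsilon\,|\mathcal{P}_\varepsilon^{y,1}|+Z_\varepsilon\,|\mathcal{P}_\varepsilon^{z,1}|\lesssim \varepsilon^{3\alpha-3}+\varepsilon^{2\alpha-2}.
\end{equation*}
Since assumption $(A_2)$ gives $\alpha>1$, both exponents are strictly positive and we conclude $|\mathcal{N}_\varepsilon|\to 0$ as $\varepsilon\to 0$.

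There is no real obstacle: the only subtlety is making sure that one does not double-count pieces that overlap. Because $\mathcal{N}_\varepsilon$ is written as a union (not a disjoint union) of the pieces listed above, the inequality goes in the right direction for free. The proof relies only on $\alpha>1$, consistent with the remark in \cref{section:constructing_lattice} that $\alpha=1$ fails to produce the dilute regime.
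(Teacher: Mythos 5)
Your proposal is correct and follows essentially the same route as the paper: bound $N_\varepsilon$, $X_\varepsilon$, $Y_\varepsilon$, $Z_\varepsilon$ by $\lesssim\varepsilon^{-3}$ via the bounding box of $\Omega$, multiply by the volumes $\varepsilon^{3\alpha}/(pqr)$ and $(p\varepsilon-\varepsilon^{\alpha})\varepsilon^{2\alpha}/(pqr)$ (and its analogues), and use $\alpha>1$ to make the resulting exponents positive. The only cosmetic difference is that you invoke subadditivity explicitly while the paper writes the volume as an exact sum of the pieces, which changes nothing in the conclusion.
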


\begin{proof}
According to \eqref{defn:L_0_l_0_h_0}, \eqref{defn:points1}, \eqref{defn:points2}, \eqref{defn:Y_eps}, \eqref{defn:Y_eps_x_cardinal}, \eqref{defn:Y_eps_y_cardinal} and \eqref{defn:Y_eps_z_cardinal}, we have:
\vspace{2mm}

\begin{tabular}{llll}
$\bullet\; N_{\varepsilon}<\dfrac{L_0 l_0 h_0}{\varepsilon^3}$; & $\bullet\; X_{\varepsilon}<\bigg(\dfrac{L_0}{\varepsilon}-1\bigg)\cdot\dfrac{l_0h_0}{\varepsilon^2}$; & $\bullet\; Y_{\varepsilon}<\bigg(\dfrac{l_0}{\varepsilon}-1\bigg)\dfrac{L_0h_0}{\varepsilon^2}$; & $\bullet\; Z_{\varepsilon}<\bigg(\dfrac{h_0}{\varepsilon}-1\bigg)\dfrac{L_0l_0}{\varepsilon^2}$.
\end{tabular}

Furthermore, we have:
\begin{align*}
|\mathcal{N}_{\varepsilon}|&=N_{\varepsilon}\cdot\dfrac{\varepsilon^{3\alpha}}{pqr}+X_{\varepsilon}\cdot\dfrac{\varepsilon^{2\alpha}}{pqr}\big(p\varepsilon-\varepsilon^{\alpha}\big)+Y_{\varepsilon}\cdot\dfrac{\varepsilon^{2\alpha}}{pqr}\big(q\varepsilon-\varepsilon^{\alpha}\big)+Z_{\varepsilon}\cdot\dfrac{\varepsilon^{2\alpha}}{pqr}\big(r\varepsilon-\varepsilon^{\alpha}\big),
\end{align*}
where $\dfrac{\varepsilon^{3\alpha}}{pqr}$ represents the volume of a parallelipiped defined in \eqref{defn:ceps} and $\dfrac{\varepsilon^{2\alpha}}{pqr}\big(p\varepsilon-\varepsilon^{\alpha}\big)$, $\dfrac{\varepsilon^{2\alpha}}{pqr}\big(q\varepsilon-\varepsilon^{\alpha}\big)$ and \linebreak $\dfrac{\varepsilon^{2\alpha}}{pqr}\big(r\varepsilon-\varepsilon^{\alpha}\big)$ represent the volume of a parallelipiped $\mathcal{P}_{\varepsilon}^{x,k}$, $\mathcal{P}_{\varepsilon}^{y,l}$ and, respectively, $\mathcal{P}_{\varepsilon}^{z,m}$, which are defined in \eqref{defn:P_x}, \eqref{defn:P_y} and \eqref{defn:P_z}. Hence: 
\begin{align*}
|\mathcal{N}_{\varepsilon}|&<\dfrac{L_0l_0h_0(p+q+r)}{pqr}\varepsilon^{2(\alpha-1)}-2\dfrac{L_0l_0h_0}{pqr}\varepsilon^{3(\alpha-1)}-\bigg(\dfrac{L_0l_0}{pq}+\dfrac{L_0h_0}{pr}+\dfrac{l_0h_0}{qr}\bigg)\varepsilon^{2\alpha-1}+\\
&\;\;\;\;+\dfrac{L_0l_0+L_0h_0+l_0h_0}{pqr}\varepsilon^{3\alpha-2}.
\end{align*}

Because $\alpha>1$, according to ($A_2$), then $2(\alpha-1)>0$, $3(\alpha-1)>0$, $2\alpha-1>0$ and $3\alpha-2>0$, therefore $|\mathcal{N}_{\varepsilon}|\rightarrow 0$ as $\varepsilon\rightarrow 0$. 
\end{proof}

\begin{prop}\label{prop:C_s}
There exists an $\varepsilon$-independent constant $C_s=C_s(p,q,r,\Omega)>0$ such that: $$\lim_{\varepsilon\rightarrow 0}\dfrac{\varepsilon^{3}}{\varepsilon^{\alpha}(\varepsilon-\varepsilon^{\alpha})}|\partial\mathcal{N}_{\varepsilon}|<C_s.$$
\end{prop}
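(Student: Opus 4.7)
The plan is to bound $|\partial\mathcal{N}_\varepsilon|$ from above by the sum of the surface areas of all the constituent parallelipipeds of the scaffold (ignoring that interior interfaces are not part of the topological boundary), then multiply by the coefficient $\varepsilon^{3}/(\varepsilon^{\alpha}(\varepsilon-\varepsilon^{\alpha}))$ and track which contributions stay bounded and which vanish as $\varepsilon\to 0$.

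First I would enumerate the three types of contributions. Each of the $N_\varepsilon$ black parallelipipeds $\mathcal{C}^{i}_\varepsilon$ has surface area $\tfrac{2(p+q+r)}{pqr}\,\varepsilon^{2\alpha}$, by \eqref{defn:initial_cube_alpha}. Each of the $X_\varepsilon$ connecting parallelipipeds $\mathcal{P}^{x,k}_\varepsilon$ (of dimensions $\tfrac{p\varepsilon-\varepsilon^\alpha}{p}\times\tfrac{\varepsilon^\alpha}{q}\times\tfrac{\varepsilon^\alpha}{r}$, cf.\ \eqref{defn:P_x}) has surface area
\[
2\left(\frac{(p\varepsilon-\varepsilon^{\alpha})\,\varepsilon^{\alpha}}{pq}+\frac{(p\varepsilon-\varepsilon^{\alpha})\,\varepsilon^{\alpha}}{pr}+\frac{\varepsilon^{2\alpha}}{qr}\right),
\]
and analogous expressions hold for the $Y_\varepsilon$ and $Z_\varepsilon$ families coming from \eqref{defn:P_y} and \eqref{defn:P_z}.

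Next I would insert the bounds $N_\varepsilon\leq L_0l_0h_0/\varepsilon^3$, $X_\varepsilon\leq (L_0/\varepsilon)\cdot l_0h_0/\varepsilon^2$ and the analogous ones for $Y_\varepsilon,Z_\varepsilon$ that were established in the proof of \cref{prop:volumegoesto0}, and then multiply by the coefficient $\varepsilon^{3-\alpha}/(\varepsilon-\varepsilon^\alpha)$. The contribution of the black cubes is bounded by
$\frac{2(p+q+r)}{pqr}\cdot\frac{L_0l_0h_0\,\varepsilon^{\alpha-1}}{1-\varepsilon^{\alpha-1}}$,
which tends to $0$ since $\alpha>1$. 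For the $x$-connecting parallelipipeds, the two dominant faces contribute (using $\varepsilon^3\cdot X_\varepsilon\leq L_0l_0h_0$)
\[
\frac{2L_0l_0h_0\,(q+r)}{pqr}\cdot\frac{p\varepsilon-\varepsilon^{\alpha}}{\varepsilon-\varepsilon^{\alpha}}
\;=\;\frac{2L_0l_0h_0\,(q+r)}{pqr}\cdot\frac{p-\varepsilon^{\alpha-1}}{1-\varepsilon^{\alpha-1}}\;\longrightarrow\;\frac{2L_0l_0h_0(q+r)}{qr},
\]
while the small face $\tfrac{\varepsilon^{2\alpha}}{qr}$ contributes $O(\varepsilon^{\alpha-1})\to 0$. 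The same analysis for the $y$- and $z$-families yields finite limits $\tfrac{2L_0l_0h_0(p+r)}{pr}$ and $\tfrac{2L_0l_0h_0(p+q)}{pq}$ respectively.

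Summing the three finite limits gives an explicit candidate for $C_s$, depending only on $p,q,r$ and the box containing $\Omega$, so that
\[
\lim_{\varepsilon\to 0}\frac{\varepsilon^{3}}{\varepsilon^{\alpha}(\varepsilon-\varepsilon^{\alpha})}|\partial\mathcal{N}_\varepsilon|\;\leq\;2L_0l_0h_0\!\left(\frac{q+r}{qr}+\frac{p+r}{pr}+\frac{p+q}{pq}\right)=:C_s,
\]
from which the statement follows. There is no real obstacle here; the only point to be mildly careful about is to verify that, because $\alpha\in(1,3/2)$, every power of $\varepsilon$ other than those attached to the ``large'' faces $\tfrac{(k\varepsilon-\varepsilon^\alpha)\varepsilon^\alpha}{jl}$ of the connecting parallelipipeds carries a strictly positive exponent and hence vanishes in the limit.
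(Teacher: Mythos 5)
Your argument is correct and follows essentially the same route as the paper's proof: bound $|\partial\mathcal{N}_{\varepsilon}|$ by the total surface area of the constituent parallelipipeds, insert the counting bounds on $N_{\varepsilon}$, $X_{\varepsilon}$, $Y_{\varepsilon}$, $Z_{\varepsilon}$ from \cref{prop:volumegoesto0}, and pass to the limit using $\alpha>1$. The only (harmless) differences are that you also include the connectors' small end faces in the upper bound and you track which contributions vanish, which yields an explicit candidate for $C_s$, whereas the paper lumps everything into a single finite constant.
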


\begin{proof}

Using the same tehnique as in \cref{prop:volumegoesto0}, we have, considering relations \eqref{defn:T_x_k}, \eqref{defn:T_y_l}, \eqref{defn:T_z_m} and \eqref{defn:surface}: \begin{align*}
|\partial\mathcal{N}_{\varepsilon}|&<N_{\varepsilon}\cdot\varepsilon^{2\alpha}\cdot\dfrac{2(p+q+r)}{pqr}+X_{\varepsilon}\cdot\varepsilon^{\alpha}(p\varepsilon-\varepsilon^{\alpha})\cdot\dfrac{2(q+r)}{pqr}+\\
&\hspace{5mm}+Y_{\varepsilon}\cdot\varepsilon^{\alpha}(q\varepsilon-\varepsilon^{\alpha})\cdot\dfrac{2(p+r)}{pqr}+Z_{\varepsilon}\cdot\varepsilon^{\alpha}(r\varepsilon-\varepsilon^{\alpha})\cdot\dfrac{2(p+q)}{pqr}\\
&<C(p,q,r,L_0,l_0,h_0)\cdot \varepsilon^{\alpha-3}\cdot \big((p+q+r)\varepsilon-2\varepsilon^{\alpha}\big).
\end{align*} 
Hence 
$$\lim_{\varepsilon\rightarrow 0}\dfrac{\varepsilon^{3}}{\varepsilon^{\alpha}(\varepsilon-\varepsilon^{\alpha})}|\partial\mathcal{N}_{\varepsilon}|<C(p,q,r,L_0,l_0,h_0)\cdot \lim_{\varepsilon\rightarrow 0}\dfrac{(p+q+r)\varepsilon-2\varepsilon^{\alpha}}{\varepsilon-\varepsilon^{\alpha}}<+\infty.$$ We denote $C_{s}$ the constant obtained in the last inequality.
\end{proof}

\begin{prop}\label{prop:outer_surfaces_go_to_0}
Let $\partial\mathcal{N}_{\varepsilon}^{\mathcal{S}}$ be the set defined in \eqref{defn:surface}. Then, for $\varepsilon\rightarrow 0$, we have $|\partial\mathcal{N}_{\varepsilon}^{\mathcal{S}}|\rightarrow 0$. 
\end{prop}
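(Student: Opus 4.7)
The plan is to bound $|\partial\mathcal{N}_{\varepsilon}^{\mathcal{S}}|$ by two factors: the surface area contributed by a single ``outer'' black parallelipiped, and the total number of such parallelipipeds, and to check that their product vanishes as $\varepsilon\to 0$ thanks to the dilute regime condition $\alpha>1$.

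First I would estimate the area of each individual $\mathcal{S}^i$. By its definition in \eqref{defn:S_i}, $\mathcal{S}^i$ is a union of at most five of the six faces of the parallelipiped $\mathcal{C}^i_{\varepsilon}=x^i_\varepsilon+\mathcal{C}^{\alpha}$. From the explicit form of $\mathcal{C}^{\alpha}$ in \eqref{defn:initial_cube_alpha}, the total surface area of $\partial\mathcal{C}^{\alpha}$ equals $\dfrac{2(p+q+r)}{pqr}\varepsilon^{2\alpha}$, so
\begin{equation*}
\sigma(\mathcal{S}^i)\leq\sigma(\partial\mathcal{C}^{\alpha})=\dfrac{2(p+q+r)}{pqr}\,\varepsilon^{2\alpha},\qquad i\in\overline{1,N_{\varepsilon,2}}.
\end{equation*}

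Next, I would bound $N_{\varepsilon,2}$, the number of parallelipipeds from $\mathcal{C}_\varepsilon$ that touch the ``outer'' part of the scaffold. By construction, these correspond to centers $x^i_\varepsilon\in\mathcal{X}_\varepsilon$ for which at least one of the six adjacent lattice sites at distance $\varepsilon$ fails to lie in $\mathcal{X}_\varepsilon$; by \eqref{defn:points1} such centers lie within a boundary layer $\{x\in\Omega:\mathrm{dist}(x,\partial\Omega)<C\varepsilon\}$ of the domain. Since the centers form a cubic lattice of spacing $\varepsilon$ and $\overline{\Omega}$ is contained in $[-L_0,L_0]\times[-l_0,l_0]\times[-h_0,h_0]$ (cf.\ \eqref{defn:L_0_l_0_h_0}), an elementary slab count along the three coordinate directions gives, as already used in the proof of \cref{lemma:zerocontrbounded},
\begin{equation*}
N_{\varepsilon,2}\leq \dfrac{L_0 l_0+l_0 h_0+h_0 L_0}{\varepsilon^{2}}.
\end{equation*}

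Combining the two estimates, there exists an $\varepsilon$-independent constant $C=C(p,q,r,L_0,l_0,h_0)>0$ such that
\begin{equation*}
|\partial\mathcal{N}_{\varepsilon}^{\mathcal{S}}|=\sum_{i=1}^{N_{\varepsilon,2}}\sigma(\mathcal{S}^i)\leq N_{\varepsilon,2}\cdot\dfrac{2(p+q+r)}{pqr}\varepsilon^{2\alpha}\leq C\,\varepsilon^{2(\alpha-1)}.
\end{equation*}
Since assumption $(A_2)$ gives $\alpha>1$, we have $2(\alpha-1)>0$ and therefore $|\partial\mathcal{N}_{\varepsilon}^{\mathcal{S}}|\to 0$ as $\varepsilon\to 0$. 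There is no real obstacle: the only point requiring a touch of care is the justification that ``outer'' centers live in an $O(\varepsilon)$-thick shell around $\partial\Omega$, which is essentially a geometric counting argument on the cubic lattice $\mathcal{X}_\varepsilon$ and is already implicitly used earlier in the paper.
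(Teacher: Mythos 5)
Your proof is correct and follows essentially the same route as the paper: bound each $\sigma(\mathcal{S}^i)$ by the total surface area of the model parallelipiped, which is of order $\varepsilon^{2\alpha}$, bound $N_{\varepsilon,2}$ by the slab count $(L_0l_0+l_0h_0+h_0L_0)/\varepsilon^{2}$, and conclude that $|\partial\mathcal{N}_{\varepsilon}^{\mathcal{S}}|\lesssim\varepsilon^{2(\alpha-1)}\to 0$ since $\alpha>1$. The only cosmetic differences are constants (the paper drops the factor $2$ after restricting to at most five faces) and the level of detail in justifying the boundary-layer count of $N_{\varepsilon,2}$, neither of which affects the argument.
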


\begin{proof}
According to \eqref{defn:surface}, we have:
\begin{align*}
\partial\mathcal{N}_{\varepsilon}^{\mathcal{S}}=\bigg(\bigcup_{i=1}^{N_{\varepsilon,2}}\mathcal{S}^i\bigg),
\end{align*}
therefore, we can write:
\begin{align*}
\big|\partial\mathcal{N}_{\varepsilon}^{\mathcal{S}}\big|&\leq \sum_{i=1}^{N_{\varepsilon,2}}\big|\mathcal{S}^i\big|\leq\sum_{i=1}^{N_{\varepsilon,2}}\big|\mathcal{C}_{\varepsilon}^{i}\big|\leq \sum_{i=1}^{N_{\varepsilon,2}}\dfrac{\varepsilon^{2\alpha}(p+q+r)}{pqr}\leq \dfrac{\varepsilon^{2\alpha}(p+q+r)}{pqr}\cdot N_{\varepsilon,2},
\end{align*}
where we have used \eqref{defn:S_i} and \eqref{defn:ceps}. Since $N_{\varepsilon,2}$ counts only the parallelipipeds that are ``close" to the boundary of $\Omega$ (see \eqref{defn:N_eps_2}), then we can write:
\begin{align*}
N_{\varepsilon,2}<\dfrac{L_0\cdot l_0}{\varepsilon^2}+\dfrac{L_0\cdot h_0}{\varepsilon^2}+\dfrac{l_0\cdot h_0}{\varepsilon^2},
\end{align*}
where $L_0$, $l_0$ and $h_0$ are defined in \eqref{defn:L_0_l_0_h_0} and they describe the parallelipiped that contains the entire domain $\Omega$. From here, we obtain:
\begin{align*}
\big|\partial\mathcal{N}_{\varepsilon}^{\mathcal{S}}\big|&\leq \dfrac{\varepsilon^{2\alpha}(p+q+r)}{pqr}\cdot\dfrac{L_0l_0+l_0h_0+h_0L_0}{\varepsilon^2}\lesssim \varepsilon^{2(\alpha-1)}\rightarrow 0,\;\text{ as}\;\varepsilon\rightarrow 0,
\end{align*}
since $\alpha>1$.
\end{proof}

\subsection{Constructing an explicit extension of $Q$ inside the scaffold}\label{subsection:existence_of_extension}

The aim of this subsection is to prove that there exists a function $v\in H^1(\Omega)$ such that $v=Q$ on $\partial\mathcal{N}_{\varepsilon}$, $v=Q$ in $\Omega_{\varepsilon}$ and $\big\|\nabla v\big\|_{L^2(\Omega)}\lesssim \big\|\nabla Q\big\|_{L^2(\Omega_{\varepsilon})}$. 

In order to prove it, we first construct an explicit extension $u:\Omega_{\varepsilon}\cup\mathcal{N}_{\varepsilon}^{\mathcal{T}}\rightarrow\mathcal{S}_0$ such that $u\in H^1(\Omega_{\varepsilon}\cup\mathcal{N}_{\varepsilon}^{\mathcal{T}},\mathcal{S}_0)$ and there exists a constant $C$, independent of $\varepsilon$, for which we have
\begin{align*}
\big\|\nabla u\big\|_{L^2(\mathcal{N}_{\varepsilon}^{\mathcal{T}})}\leq C\big\|\nabla Q\big\|_{L^2(\Omega_{\varepsilon})},
\end{align*}
which implies $\big\|\nabla u\big\|_{L^2(\Omega_{\varepsilon}\cup\mathcal{N}_{\varepsilon}^{\mathcal{T}})}\leq C\big\|\nabla Q\big\|_{L^2(\Omega_{\varepsilon})}$. 

Then we construct $v:\Omega\rightarrow\mathcal{S}_0$ such that $v\in H^1(\Omega)$, $v\equiv u$ on $\Omega_{\varepsilon}\cup\mathcal{N}_{\varepsilon}^{\mathcal{T}}$, $v=u$ on $\partial\mathcal{N}_{\varepsilon}^{\mathcal{T}}$ and there exists a constant $c$ such that:
\begin{align*}
\big\|\nabla v\big\|_{L^2(\Omega)}\leq c\big\|\nabla u\big\|_{L^2(\Omega_{\varepsilon}\cup\mathcal{N}_{\varepsilon}^{\mathcal{T}})},
\end{align*}
which implies that $\big\|\nabla v\big\|_{L^2(\Omega)}\lesssim\big\|\nabla Q\big\|_{L^2(\Omega_{\varepsilon})}$, using the properties mentioned for $u$.

We prove first the following result.

\begin{lemma}\label{lemma:extension_u}
Let $z_0,a,b\in\mathbb{R}$, $a,b,z_0>0$ and let $A_{a,b}=\big\{(\rho\cos\theta,\rho\sin\theta)\;:\; 0\leq\theta< 2\pi,\;a<\rho<b\big\}$ be a  two dimensional annulus, $B_a=\big\{(\rho\cos\theta,\rho\sin\theta)\;:\;0\leq \theta<2\pi,\;0\leq\rho<a\big\}$ be a two dimensional ball with radius $a$, $\mathcal{A}_{a,b}^{z_0}=A_{a,b}\times(-z_0,z_0)\subset\mathbb{R}^3$ and $\mathcal{B}_a^{z_0}=B_a\times(-z_0,z_0)\subset\mathbb{R}^3$ be a three dimensional cylinder.

Let $Q\in H^1\big(\mathcal{A}_{1,2}^{z_0},\mathcal{S}_0\big)$. Then the function $u:\mathcal{B}_1^{z_0}\rightarrow\mathcal{S}_0$ defined for any $z\in(-z_0,z_0)$ as
\begin{equation}\label{eq:ext_def_u}
u(x,y,z)=\begin{cases}\varphi(\sqrt{x^2+y^2})\; Q\bigg(\bigg(\dfrac{2}{\sqrt{x^2+y^2}}-1\bigg)x,\bigg(\dfrac{2}{\sqrt{x^2+y^2}}-1\bigg)y,z\bigg)+\\
\hspace{3mm}+(1-\varphi(\sqrt{x^2+y^2}))\displaystyle{\fint_{A_{1,3/2}}Q(s,t,z)\emph{d}(s,t)},\;\emph{for}\;\dfrac{1}{2}\leq\sqrt{x^2+y^2}<1\\
\displaystyle{\fint_{A_{1,3/2}}Q(s,t,z)\emph{d}(s,t)},\;\emph{for}\;0\leq\sqrt{x^2+y^2}\leq\dfrac{1}{2}\end{cases}
\end{equation} 
is from $H^1(\mathcal{B}_1^{z_0},\mathcal{S}_0)$, where $\varphi\in C_c^{\infty}\bigg(\bigg(\dfrac{1}{2},\dfrac{3}{2}\bigg)\bigg)$ is the following bump function defined as
\begin{equation*}
\varphi(\rho)=\begin{cases}\exp\bigg\{4-\dfrac{4}{(2\rho-1)(3-2\rho)}\bigg\},\;\forall\rho\in\bigg(\dfrac{1}{2},\dfrac{3}{2}\bigg)\\
0,\;\forall\rho\in\mathbb{R}\setminus\bigg(\dfrac{1}{2},\dfrac{3}{2}\bigg)\end{cases},
\end{equation*}
the product $\varphi(\rho)\;Q$ represents product between a scalar and a Q-tensor and $\displaystyle{\fint}$ represents the average integral sign. Moreover, there exists a constant $c>0$, independent of $z_0$, such that: $\big\|u_{t}\big\|_{L^2(\mathcal{B}_1^{z_0})}\leq c\big\|Q_{t}\big\|_{L^2(\mathcal{A}_{1,3/2}^{z_0})}$ for any $t\in\{x,y,z\}$, where $u_t$ represents the partial derivative of $u$ with respect to $t$, and:
\begin{align*}
\|\nabla u\|_{L^2(\mathcal{B}_1^{z_0})}\leq c\|\nabla Q\|_{L^2(\mathcal{A}_{1,3/2}^{z_0})}.
\end{align*} 
\end{lemma}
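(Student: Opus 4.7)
The plan is to verify $u\in H^1(\mathcal{B}_1^{z_0})$ by matching traces across the two interfaces $\{\rho=1/2\}$ and $\{\rho=1\}$, and then to prove the gradient estimate through a pointwise computation in polar coordinates combined with a slice-wise Poincar\'e--Wirtinger inequality in $z$. The key geometric observation is that $T(x,y):=\bigl((2/\rho-1)x,(2/\rho-1)y\bigr)$ is, in polar coordinates, the involution $(\rho,\theta)\mapsto(2-\rho,\theta)$, i.e.\ a $C^\infty$-diffeomorphism from $\{1/2\le\rho\le 1\}$ to $\{1\le\rho'\le 3/2\}$ whose Jacobian is bounded above and below away from $0$ and $\infty$. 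Compatibility at $\{\rho=1/2\}$ is immediate because $\varphi(1/2)=0$ forces both branches to collapse to $\bar Q(z):=\fint_{A_{1,3/2}}Q(\cdot,z)\,d(s,t)$; at $\{\rho=1\}$ one has $\varphi(1)=1$ and $T=\mathrm{id}$, so $u=Q$ there, which is needed later when this explicit extension is glued inside the scaffold.

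On the shell $\mathcal{B}_1^{z_0}\setminus\mathcal{B}_{1/2}^{z_0}$ the formula $u=\varphi(\rho)\,Q\circ T+(1-\varphi(\rho))\,\bar Q(z)$ yields in polar coordinates
\begin{align*}
\partial_\rho u&=\varphi'(\rho)\,[Q\circ T-\bar Q(z)]-\varphi(\rho)\,(\partial_{\rho'}Q)\circ T,\\
\partial_\theta u&=\varphi(\rho)\,(\partial_\theta Q)\circ T,\qquad\partial_z u=\varphi(\rho)\,(\partial_z Q)\circ T+(1-\varphi(\rho))\,\partial_z\bar Q(z).
\end{align*}
Since $\rho\in[1/2,1]$ and $\rho'\in[1,3/2]$ give $\rho^{-2}\le 9\rho'^{-2}$, the polar identity $|\nabla_{x,y}u|^2=|\partial_\rho u|^2+\rho^{-2}|\partial_\theta u|^2$ produces a pointwise bound
\begin{equation*}
|\nabla u|^2\lesssim|\varphi'(\rho)|^2\,|Q\circ T-\bar Q(z)|^2+|(\nabla Q)\circ T|^2+\bigl(1-\varphi(\rho)\bigr)^2|\partial_z\bar Q(z)|^2.
\end{equation*}
The middle term integrates against $\|\nabla Q\|^2_{L^2(\mathcal{A}_{1,3/2}^{z_0})}$ after the change of variables $T$; on the inner region $\mathcal{B}_{1/2}^{z_0}$ only $\partial_z u=\partial_z\bar Q(z)$ survives, and Cauchy--Schwarz on the defining average gives $\int_{\mathcal{B}_{1/2}^{z_0}}|\partial_z u|^2\le\bigl(|B_{1/2}|/|A_{1,3/2}|\bigr)\|\partial_z Q\|^2_{L^2(\mathcal{A}_{1,3/2}^{z_0})}$.

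The main obstacle, and the only place where a non-trivial tool enters, is the bump-derivative term $|\varphi'(\rho)|^2\,|Q\circ T-\bar Q(z)|^2$: $\varphi'$ is bounded, but the deviation $Q\circ T-\bar Q(z)$ a priori has only $L^2$-control. The idea is to freeze $z$ and invoke the two-dimensional Poincar\'e--Wirtinger inequality on the fixed annulus $A_{1,3/2}$,
\begin{equation*}
\int_{A_{1,3/2}}|Q(\cdot,z)-\bar Q(z)|^2\,ds\,dt\le C_P\int_{A_{1,3/2}}|\nabla_{(s,t)}Q(\cdot,z)|^2\,ds\,dt,
\end{equation*}
with a constant $C_P$ depending only on $A_{1,3/2}$. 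Integrating over $z\in(-z_0,z_0)$ and applying $T$ converts the shell contribution of this term to a bound by $\|\nabla_{(s,t)}Q\|^2_{L^2(\mathcal{A}_{1,3/2}^{z_0})}$, with a constant independent of $z_0$ precisely because the Poincar\'e constant is. Summing the three contributions yields $\|\nabla u\|_{L^2(\mathcal{B}_1^{z_0})}\le c\|\nabla Q\|_{L^2(\mathcal{A}_{1,3/2}^{z_0})}$; the per-component bound for $t=z$ follows from the same computation applied only to the $z$-derivative, since $\partial_z$ commutes with $T$, and for $t\in\{x,y\}$ from the same polar decomposition combined with the rotational symmetry of the construction in the $(x,y)$-variables. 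Finally, piecewise $H^1$-regularity on each of the two regions together with the matching of traces at $\{\rho=1/2\}$ upgrades $u$ to an element of $H^1(\mathcal{B}_1^{z_0},\mathcal{S}_0)$.
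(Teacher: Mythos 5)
Your argument is correct and its analytic core coincides with the paper's: the same cut-off $\varphi$, the same reflection $T$ (the paper records it in Cartesian form and notes it is bi-Lipschitz, you use the cleaner polar form $(\rho,\theta)\mapsto(2-\rho,\theta)$), the same change of variables with Jacobian factor pinned between $1/3$ and $1$, the same slice-wise Poincar\'e--Wirtinger inequality on the fixed annulus $A_{1,3/2}$ to absorb the $\varphi'(\rho)\,[Q\circ T-\bar Q(z)]$ term with a constant independent of $z_0$, and Jensen on the inner cylinder. The genuine difference is procedural: the paper first approximates $Q$ by maps $Q_k\in C^\infty\big(\overline{\mathcal{A}_{1,3/2}^{z_0}}\big)$, proves all estimates for the smooth extensions $u_k$, and then identifies $u$ and its weak derivatives through an $L^2$/weak-$H^1$ limit passage; you instead differentiate $u$ directly, which is shorter but requires you to invoke (and you should say so explicitly) the chain rule for $H^1$ maps composed with a bi-Lipschitz diffeomorphism, the weak differentiability in $z$ of the slice average $\bar Q(z)$ via Fubini, and the gluing criterion for piecewise $H^1$ functions with matching traces on $\{\rho=1/2\}$ -- all standard, and exactly what the density argument is designed to sidestep. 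Two small points to tighten: membership in $H^1(\mathcal{B}_1^{z_0})$ also needs the elementary bound $\|u\|_{L^2(\mathcal{B}_1^{z_0})}\lesssim\|Q\|_{L^2(\mathcal{A}_{1,3/2}^{z_0})}$ (Jensen plus the change of variables; the paper proves it explicitly), and your appeal to rotational symmetry does not literally yield $\|u_x\|_{L^2}\le c\|Q_x\|_{L^2}$, since $\partial_x u$ mixes both in-plane derivatives of $Q$ through $T$ and through the Poincar\'e step; what one actually gets, as in the paper's own computation, is $\|u_t\|_{L^2(\mathcal{B}_1^{z_0})}\le c\|\nabla Q\|_{L^2(\mathcal{A}_{1,3/2}^{z_0})}$ for $t\in\{x,y\}$, which is all that is needed for the final gradient estimate.
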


\begin{proof}

First of all, we can assume without loss of generality that $Q$ and $u$ are scalar functions, instead of $Q$-tensors. Hence, we prove this lemma for each of the component of $Q$ and $u$. 

Let $T:A_{1/2,1}\rightarrow A_{1,3/2}$ be the reflection defined as:
\begin{equation*}
T(x,y)=\bigg(\bigg(\dfrac{2}{\sqrt{x^2+y^2}}-1\bigg)x,\bigg(\dfrac{2}{\sqrt{x^2+y^2}}-1\bigg)y\bigg):=(x',y'),\;\forall(x,y)\in A_{1/2,1}.
\end{equation*}

Then $T$ is invertible and also bi-Lipschitz.

Let $Q\in H^1(\mathcal{A}_{1,3/2}^{z_0})$ and $u$ defined by \eqref{eq:ext_def_u}. By Theorem 3.17 from \cite{Adams}, we can approximate the function $Q\in H^1(\mathcal{A}_{1,3/2}^{z_0})$ with smooth functions from $C^{\infty}\big(\overline{\mathcal{A}_{1,3/2}^{z_0}}\big)$.

Let $(Q_k)_{k\geq 1}\subset C^{\infty}\big(\overline{\mathcal{A}_{1,3/2}^{z_0}}\big)$ such that $Q_k\rightarrow Q$ strongly in $H^1(\mathcal{A}_{1,3/2}^{z_0})$ and, for any $k\geq 1$, let $u_k:\overline{\mathcal{B}_{1}^{z_0}}\rightarrow\mathbb{R}$ defined for all $z\in(-z_0,z_0)$ as:
\begin{equation*}
u_k(x,y,z)=\begin{cases}\varphi(\sqrt{x^2+y^2})\; Q_k(T(x,y),z)+\\
\hspace{3mm}+(1-\varphi(\sqrt{x^2+y^2}))\displaystyle{\fint_{A_{1,3/2}}Q_k(s,t,z)\text{d}(s,t)},\;\text{for}\;\dfrac{1}{2}\leq\sqrt{x^2+y^2}\leq 1\\
\displaystyle{\fint_{A_{1,3/2}}Q_k(s,t,z)\emph{d}(s,t)},\;\text{for}\;0\leq\sqrt{x^2+y^2}\leq\dfrac{1}{2}\end{cases}
\end{equation*}

By the above definition, we have that $u_k\in C^0\big(\overline{\mathcal{B}_1^{z_0}}\big)$ and that:

\begin{align*}
\dfrac{\partial u_k}{\partial x}(x,y,z)&=\begin{cases}\dfrac{x}{\sqrt{x^2+y^2}}\cdot \varphi'(\sqrt{x^2+y^2})\cdot\bigg(Q_k(x',y',z)-\displaystyle{\fint_{A_{1,3/2}}Q_k(s,t,z)\text{d}(s,t)}\bigg)+\\
\hspace{3mm}+\bigg(\dfrac{2y^2}{\big(\sqrt{x^2+y^2)}\big)^3}-1\bigg)\cdot \varphi(\sqrt{x^2+y^2})\cdot\bigg(\dfrac{\partial Q_k}{\partial x'}(x',y',z)\bigg)-\\
\hspace{3mm}-\dfrac{2xy}{\big(\sqrt{x^2+y^2}\big)^3}\cdot\varphi(\sqrt{x^2+y^2})\cdot \bigg(\dfrac{\partial Q_k}{\partial y'}(x',y',z)\bigg),\;\text{for}\;\dfrac{1}{2}\leq\sqrt{x^2+y^2}<1\\
0,\;\text{for}\;0\leq\sqrt{x^2+y^2}\leq\dfrac{1}{2}.
\end{cases}
\end{align*}

Since $\varphi\bigg(\dfrac{1}{2}\bigg)=\varphi'\bigg(\dfrac{1}{2}\bigg)=0$ and $\varphi\in C^{\infty}\bigg(\bigg(\dfrac{1}{2},\dfrac{3}{2}\bigg)\bigg)$, then we have $\dfrac{\partial u_k}{\partial x}\in C^0\big(\overline{\mathcal{B}_1^{z_0}}\big)$. Moreover, we obtain:
\begin{align*}
\dfrac{1}{3}\int_{A_{1/2,1}}\bigg|\dfrac{\partial u_k}{\partial x}\bigg|^2&(x,y,z)\text{d}(x,y) \leq \int_{A_{1/2,1}}\bigg|\dfrac{2xy}{\big(\sqrt{x^2+y^2}\big)^3}\bigg|^2 \big|\varphi(\sqrt{x^2+y^2})\big|^2\bigg|\dfrac{\partial Q_k}{\partial y'}\bigg|^2(x',y',z)\text{d}(x,y)+\\
&\hspace{2mm}+\int_{A_{1/2,1}}\bigg|\dfrac{2y^2}{\big(\sqrt{x^2+y^2}\big)^3}-1\bigg|^2\big|\varphi(\sqrt{x^2+y^2})\big|^2\bigg|\dfrac{\partial Q_k}{\partial x'}\bigg|^2(x',y',z)\text{d}(x,y)+\\
&\hspace{2mm}+\int_{A_{1/2,1}}\bigg|\dfrac{x}{\sqrt{x^2+y^2}}\bigg|^2\big|\varphi'(\sqrt{x^2+y^2})\big|^2\bigg|Q_k(x',y',z)-\displaystyle{\fint_{A_{1,3/2}}Q_k(s,t,z)\text{d}(s,t)}\bigg|^2\text{d}(x,y).
\end{align*}

By the definition of $\varphi$, we have $\|\varphi\|_{L^{\infty}(\mathbb{R})}=1$ and $\|\varphi'\|_{L^{\infty}(\mathbb{R})}=2\sqrt{9+6\sqrt{3}}\cdot e^{1-\sqrt{3}}\approx 4.23<5$ (the maximum is obtained for $\rho=1-\dfrac{1}{6}\sqrt{6\sqrt{3}-9}$).

For any $(x,y)$ such that $\dfrac{1}{2}\leq\sqrt{x^2+y^2}\leq 1$, we have: 
\begin{itemize}
\item[•] $\bigg|\dfrac{x}{\sqrt{x^2+y^2}}\bigg|^2=\dfrac{x^2}{x^2+y^2}\leq 1$;
\item[•] $\bigg|\dfrac{2xy}{x^2+y^2}\bigg|\leq 1\Rightarrow \bigg|\dfrac{2xy}{\big(\sqrt{x^2+y^2}\big)^3}\bigg|^2\leq\dfrac{1}{x^2+y^2}\leq 2$;
\item[•] $0\leq\dfrac{2y^2}{\big(\sqrt{x^2+y^2}\big)^3}\leq\dfrac{2(x^2+y^2)}{(\sqrt{x^2+y^2}\big)^3}\leq 4\Rightarrow -1\leq\dfrac{2y^2}{\big(\sqrt{x^2+y^2}\big)^3}-1\leq 3\Rightarrow \bigg|\dfrac{2y^2}{\big(\sqrt{x^2+y^2}\big)^3}\bigg|^2\leq 9.$
\end{itemize}

Therefore
\begin{align*}
\dfrac{1}{3}\int_{A_{1/2,1}}\bigg|\dfrac{\partial u_k}{\partial x}\bigg|^2(x,y,z)\text{d}(x,y)&\leq 25\int_{A_{1/2,1}}\bigg|Q_k(x',y',z)-\displaystyle{\fint_{A_{1,3/2}}Q_k(s,t,z)\text{d}(s,t)}\bigg|^2\text{d}(x,y)+\\
&\hspace{2mm}+9\int_{A_{1/2,1}}\bigg|\dfrac{\partial Q_k}{\partial x'}\bigg|^2(x',y',z)\text{d}(x,y)+4\int_{A_{1/2,1}}\bigg|\dfrac{\partial Q_k}{\partial y'}\bigg|^2(x',y',z)\text{d}(x,y).
\end{align*}

Using now the change of variables $(x',y')=T(x,y)$, we obtain
\begin{align*}
\text{d}(x,y)=\bigg(\dfrac{2}{\sqrt{(x')^2+(y')2}}-1\bigg)\text{d}(x',y')
\end{align*}
and since $(x',y')\in A_{1,3/2}$, we get $1\geq\dfrac{2}{\sqrt{(x')^2+(y')^2}}-1\geq\dfrac{1}{3}$, which implies
\begin{align*}
\int_{A_{1/2,1}}\bigg|\dfrac{\partial u_k}{\partial x}\bigg|^2(x,y,z)\text{d}(x,y) &\leq 75\int_{A_{1,3/2}}\bigg|Q_k(x',y',z)-\displaystyle{\fint_{A_{1,3/2}}Q_k(s,t,z)\text{d}(s,t)}\bigg|^2\text{d}(x',y')+\\
&\hspace{2mm}+27\int_{A_{1,3/2}}\bigg|\dfrac{\partial Q_k}{\partial x'}\bigg|^2(x',y',z)\text{d}(x',y')+12\int_{A_{1,3/2}}\bigg|\dfrac{\partial Q_k}{\partial y'}\bigg|^2(x',y',z)\text{d}(x',y').
\end{align*}

For the first term from the right hand side from the last inequality we can apply the Poincar\'{e} inequality, since $Q_k(\cdot,\cdot,z)\in H^1(A_{1,3/2})$, for any $z\in(-z_0,z_0)$. Therefore
\begin{align*}
\int_{A_{1/2,1}}\bigg|\dfrac{\partial u_k}{\partial x}\bigg|^2(x,y,z)\text{d}(x,y)&\leq 75\cdot C_P(A_{1,3/2})\int_{A_{1,3/2}}\bigg(\bigg|\dfrac{\partial Q_k}{\partial x'}\bigg|^2(x',y',z)+\bigg|\dfrac{\partial Q_k}{\partial y'}\bigg|^2(x',y',z)\bigg)\text{d}(x',y')+\\
&\hspace{2mm}+27\int_{A_{1,3/2}}\bigg|\dfrac{\partial Q_k}{\partial x'}\bigg|^2(x',y',z)\text{d}(x,y)+12\int_{A_{1,3/2}}\bigg|\dfrac{\partial Q_k}{\partial y'}\bigg|^2(x',y',z)\text{d}(x,y),
\end{align*}
where $C_P(A_{1,3/2})$ is the Poincar\'{e} constant for the two dimmensional domain $A_{1,3/2}$. Hence, there exists $c_1>0$, independent of $z_0$, such that:
\begin{align*}
\int_{A_{1/2,1}}\bigg|\dfrac{\partial u_k}{\partial x}\bigg|^2(x,y,z)\text{d}(x,y,z)&\leq c_1\int_{A_{1,3/2}}\bigg(\bigg|\dfrac{\partial Q_k}{\partial x'}\bigg|^2(x',y',z)+\bigg|\dfrac{\partial Q_k}{\partial y'}\bigg|^2(x',y',z)\bigg)\text{d}(x',y',z).
\end{align*}

Integrating now with respect to $z\in(-z_0,z_0)$, we get:
\begin{align*}
\int_{\mathcal{A}_{1/2,1}^{z_0}}\bigg|\dfrac{\partial u_k}{\partial x}\bigg|^2(x,y,z)\text{d}(x,y,z)&\leq c_1\int_{\mathcal{A}_{1,3/2}^{z_0}}\bigg(\bigg|\dfrac{\partial Q_k}{\partial x'}\bigg|^2(x',y',z)+\bigg|\dfrac{\partial Q_k}{\partial y'}\bigg|^2(x',y',z)\bigg)\text{d}(x',y',z)\Rightarrow\\
\Rightarrow\bigg\|\dfrac{\partial u_k}{\partial x}\bigg\|^2_{L^2(\mathcal{A}_{1/2,1}^{z_0})}&\leq c_1\big\|\nabla Q_k\|^2_{L^2(\mathcal{A}_{1,3/2}^{z_0})}.
\end{align*}

Using now the fact that $\dfrac{\partial u_k}{\partial x}\in C^0\big(\overline{\mathcal{B}_1^{z_0}}\big)$ and that $\dfrac{\partial u_k}{\partial x}(x,y,z)=0$ if $0\leq\sqrt{x^2+y^2}\leq\dfrac{1}{2}$, then we can write:
\begin{align*}
\bigg\|\dfrac{\partial u_k}{\partial x}\bigg\|^2_{L^2(\mathcal{B}_1^{z_0})}&\leq c_1\big\|\nabla Q_k\big\|^2_{L^2(\mathcal{A}_{1,3/2}^{z_0})}.
\end{align*}

In a similar fashion, $\dfrac{\partial u_k}{\partial y},\dfrac{\partial u_k}{\partial z}\in C^0\big(\overline{\mathcal{B}_1^{z_0}}\big)$, where
\begin{align*}
\dfrac{\partial u_k}{\partial y}(x,y,z)&=\begin{cases}\dfrac{y}{\sqrt{x^2+y^2}}\cdot \varphi'(\sqrt{x^2+y^2})\cdot\bigg(Q_k(x',y',z)-\displaystyle{\fint_{A_{1,3/2}}Q_k(s,t,z)\text{d}(s,t)}\bigg)-\\
\hspace{3mm}-\dfrac{2xy}{\big(\sqrt{x^2+y^2}\big)^3}\cdot\varphi(\sqrt{x^2+y^2})\cdot \bigg(\dfrac{\partial Q_k}{\partial x'}(x',y',z)\bigg)+\\
\hspace{3mm}+\bigg(\dfrac{2x^2}{\big(\sqrt{x^2+y^2)}\big)^3}-1\bigg)\cdot \varphi(\sqrt{x^2+y^2})\cdot\bigg(\dfrac{\partial Q_k}{\partial y'}(x',y',z)\bigg),\;\text{for}\;\dfrac{1}{2}\leq\sqrt{x^2+y^2}\leq 1\\
0,\;\text{for}\;0\leq\sqrt{x^2+y^2}\leq\dfrac{1}{2}
\end{cases}
\end{align*}
with
\begin{align*}
\bigg\|\dfrac{\partial u_k}{\partial y}\bigg\|^2_{L^2(\mathcal{B}_1^{z_0})}&\leq c_1\big\|\nabla Q_k\big\|^2_{L^2(\mathcal{A}_{1,3/2}^{z_0})}.
\end{align*}
and
\begin{align*}
\dfrac{\partial u_k}{\partial z}(x,y,z)=\begin{cases}\varphi(\sqrt{x^2+y^2})\; \dfrac{\partial Q_k}{\partial z}(x',y',z)+\\
\hspace{3mm}+(1-\varphi(\sqrt{x^2+y^2}))\displaystyle{\fint_{A_{1,3/2}}\dfrac{\partial Q_k}{\partial z}(s,t,z)\text{d}(s,t)},\;\text{for}\;\dfrac{1}{2}\leq\sqrt{x^2+y^2}\leq 1\\
\displaystyle{\fint_{A_{1,3/2}}\dfrac{\partial Q_k}{\partial z}(s,t,z)\emph{d}(s,t)},\;\text{for}\;0\leq\sqrt{x^2+y^2}\leq\dfrac{1}{2}\end{cases}
\end{align*}
since $Q\in C^{\infty}\big(\overline{\mathcal{A}_{1,3/2}^{z_0}}\big)$ and $A_{1,3/2}$ is independent of $z$, so we can move the derivative under the integral.

Then for any $(x,y,z)\in\mathcal{A}_{1/2,1}^{z_0}$, we have:
\begin{align*}
\dfrac{1}{2}\int_{A_{1/2,1}}\bigg|\dfrac{\partial u_k}{\partial z}\bigg|^2(x,y,z)\text{d}(x,y)&\leq \big\|\varphi\big\|^2_{L^{\infty}(\mathbb{R})}\int_{A_{1/2,1}}\bigg|\dfrac{\partial Q_k}{\partial z}\bigg|^2(x',y',z)\text{d}(x,y)+\\
&\hspace{2mm}+\big\|1-\varphi\big\|^2_{L^{\infty}(\mathbb{R})}\int_{A_{1/2,1}}\bigg|\displaystyle{\fint_{A_{1,3/2}}\dfrac{\partial Q_k}{\partial z}(s,t,z)\text{d}(s,t)}\bigg|^2\text{d}(x,y)\\
&\leq\int_{A_{1,3/2}}\bigg|\dfrac{\partial Q_k}{\partial z}\bigg|^2(x',y',z)\text{d}(x',y')+\dfrac{3\pi}{4}\cdot\dfrac{16}{25\pi^2}\int_{A_{1,3/2}}\bigg|\dfrac{\partial Q_k}{\partial z}\bigg|^2(x',y',z)\text{d}(x',y')
\end{align*}
and integrating with respect to $z\in(-z_0,z_0)$, we obtain
\begin{align*}
\int_{\mathcal{A}_{1/2,1}^{z_0}}\bigg|\dfrac{\partial u_k}{\partial z}\bigg|^2(x,y,z)\text{d}(x,y,z)&\leq\bigg(2+\dfrac{24}{25\pi}\bigg)\int_{\mathcal{A}_{1,3/2}^{z_0}}\bigg|\dfrac{\partial Q_k}{\partial z}\bigg|^2(x',y',z)\text{d}(x',y',z).
\end{align*}

For any $(x,y,z)\in\mathcal{B}_1^{z_0}$ with $0\leq\sqrt{x^2+y^2}\leq\dfrac{1}{2}$ we have:
\begin{align*}
\bigg|\dfrac{\partial u_k}{\partial z}\bigg|^2(x,y,z)&\leq\dfrac{16}{25\pi^2}\int_{A_{1,3/2}}\bigg|\dfrac{\partial Q_k}{\partial z}\bigg|^2(x',y',z)\text{d}(x',y')
\end{align*}
which implies
\begin{align*}
\int_{\overline{B_{1/2}}}\bigg|\dfrac{\partial u_k}{\partial z}\bigg|^2(x,y,z)\text{d}(x,y)&\leq\dfrac{4}{25\pi}\int_{A_{1,3/2}}\bigg|\dfrac{\partial Q_k}{\partial z}\bigg|^2(x',y',z)\text{d}(x',y',z)
\end{align*}
and from here we obtain that
\begin{align*}
\bigg\|\dfrac{\partial u_k}{\partial z}\bigg\|^2_{L^2(\mathcal{B}_1^{z_0})}&\leq\bigg(3+\dfrac{3}{25\pi}\bigg)\big\|\nabla Q_k\big\|^2_{L^2(\mathcal{A}_{1,3/2}^{z_0})}. 
\end{align*}

Now we prove that we can control $\|u_k\|_{L^2(\mathcal{B}_1^{z_0})}$ with $\|Q_k\|_{L^2(\mathcal{A}_{1,3/2}^{z_0})}$. For any $(x,y)\in A_{1/2,1}$, we have:
\begin{align*}
|u_k|^2(x,y,z)&\leq |\varphi(\sqrt{x^2+y^2}|^2\big|Q_k(x',y',z)|^2+(1-\varphi(\sqrt{x^2+y^2}))^2\bigg|\displaystyle{\fint_{A_{1,3/2}}Q_k(s,t,z)\text{d}(s,t)}\bigg|^2
\end{align*}
which implies
\begin{align*}
\dfrac{1}{2}\int_{A_{1/2,1}}|u_k|^2(x,y,z)\text{d}(x,y)&\leq\big\|1-\varphi\|^2_{L^{\infty}(\mathbb{R})}\cdot\dfrac{3}{4\pi}\cdot\dfrac{16}{25\pi^2}\int_{A_{1,3/2}}|Q_k|^2(x',y',z)\text{d}(x,y)+\\
&\hspace{2mm}+\big\|\varphi\|^2_{L^{\infty}(\mathbb{R})}\int_{A_{1/2,1}}|Q_k|^2(x',y',z)\text{d}(x,y).
\end{align*}

Using the same change of variables, the same bounds for $\varphi$ and for $1-\varphi$ and integrating with respect to $z\in(-z_0,z_0)$, we get:
\begin{align*}
\big\|u_k\big\|^2_{L^2(\mathcal{A}_{1/2,1}^{z_0})}\leq\bigg(2+\dfrac{24}{25\pi}\bigg)\big\|Q_k\big\|^2_{L^2(\mathcal{A}_{1,3/2}^{z_0})}.
\end{align*}

For any $(x,y)\in\overline{B_{1/2}}$ we have:
\begin{align*}
|u_k|^2(x,y,z)=\bigg|\displaystyle{\fint_{A_{1,3/2}}Q_k(s,t,z)\text{d}(s,t)}\bigg|^2
\end{align*}
which implies
\begin{align*}
\big\|u_k\big\|^2_{L^2(\overline{B_{1/2}}\times(-z_0,z_0))}\leq \dfrac{4}{25\pi}\big\|Q_k\big\|^2_{L^2(\mathcal{A}_{1,3/2}^{z_0})},
\end{align*}
hence
\begin{align*}
\big\|u_k\big\|^2_{L^2(\mathcal{B}_1^{z_0})}\leq\bigg(3+\dfrac{3}{25\pi}\bigg)\big\|Q_k\big\|^2_{L^2(\mathcal{A}_{1,3/2}^{z_0})}.
\end{align*}

Combining all the relations that we have obtained, we see that for any $k\geq 1$ we have:
\begin{itemize}
\item[•] $u_k\in H^1(\mathcal{B}_1^{z_0})$;
\item[•] $\big\|u_k\big\|_{L^2(\mathcal{B}_1^{z_0})}\leq c_2\big\|Q_k\big\|_{L^2(\mathcal{A}_{1,3/2}^{z_0})}$, where $c_2=\sqrt{3+\dfrac{3}{25\pi}}$;
\item[•] $\big\|\nabla u_k\big\|_{L^2(\mathcal{B}_1^{z_0})}\leq c_3\big\|\nabla Q_k\big\|_{L^2(\mathcal{A}_{1,3/2}^{z_0})}$, where $c_3=\max\big\{\sqrt{c_1},c_2\big\}$.
\end{itemize}

Now, if we repeat the same argument (as the one used in order to achieve the $L^2$ control between $u_k$ and $Q_k$) for the functions $(u_k-u)$, for any $k\geq 1$, we get:
\begin{align*}
\big\|u_k-u\big\|_{L^2(\mathcal{B}_1^{z_0})}\leq c_2\big\|Q_k-Q\big\|_{L^2(\mathcal{A}_{1,3/2}^{z_0})}
\end{align*}
and since $Q_k\rightarrow Q$ strongly in $H^1(\mathcal{A}_{1,3/2}^{z_0})$, hence in $L^2(\mathcal{A}_{1,3/2}^{z_0})$, we obtain that $u_k\rightarrow u$ strongly in $L^2(\mathcal{B}_1^{z_0})$.

Because $Q_k\rightarrow Q$ strongly in $H^1(\mathcal{A}_{1,3/2}^{z_0})$, then $(Q_k)_{k\geq 1}$ is a bounded sequence in $H^1(\mathcal{A}_{1,3/2}^{z_0})$ and using the inequalities proved before, we get that $(u_k)_{k\geq 1}$ is a bounded sequence in $H^1(\mathcal{B}_1^{z_0})$, therefore there exists a subsequence $(u_{k_j})_{j\geq 1}$ which has the property that $u_{k_j}\rightharpoonup u_0$, with $u_0\in H^1(\mathcal{B}_1^{z_0})$. From here, we have the following convergences in $L^2(\mathcal{B}_1^{z_0})$: $u_{k_j}\rightharpoonup u_0$ and $u_{k_j}\rightarrow u$, so $u=u_0$ a.e. in $\mathcal{B}_1^{z_0}$. However, since $u_0\in H^1(\mathcal{B}_1^{z_0})$, we obtain that $u\in H^1(\mathcal{B}_1^{z_0})$ with $\nabla u=\nabla u_0$ a.e. in $\mathcal{B}_1^{z_0}$.

Let $\tilde{u}_x:\mathcal{B}_1^{z_0}\rightarrow\mathbb{R}$ be the function defined as:

\begin{align*}
\tilde{u}_x(x,y,z)=\begin{cases}\dfrac{x}{\sqrt{x^2+y^2}}\cdot \varphi'(\sqrt{x^2+y^2})\cdot\bigg(Q(x',y',z)-\displaystyle{\fint_{A_{1,3/2}}Q(s,t,z)\text{d}(s,t)}\bigg)+\\
\hspace{3mm}+\bigg(\dfrac{2y^2}{\big(\sqrt{x^2+y^2)}\big)^3}-1\bigg)\cdot \varphi(\sqrt{x^2+y^2})\cdot\bigg(\dfrac{\partial Q}{\partial x'}(x',y',z)\bigg)-\\
\hspace{3mm}-\dfrac{2xy}{\big(\sqrt{x^2+y^2}\big)^3}\cdot\varphi(\sqrt{x^2+y^2})\cdot \bigg(\dfrac{\partial Q}{\partial y'}(x',y',z)\bigg),\;\text{for}\;\dfrac{1}{2}\leq\sqrt{x^2+y^2}<1\\
0,\;\text{for}\;0\leq\sqrt{x^2+y^2}\leq\dfrac{1}{2}.
\end{cases}
\end{align*}
for every $z\in(-z_0,z_0)$.

Using the same argument as before (we only control the $L^2$ norm), we can see that:
\begin{align*}
\bigg\|\dfrac{\partial u_k}{\partial x}-\tilde{u}_x\bigg\|_{L^2(\mathcal{B}_1^{z_0})}\leq c_3\big\|\nabla Q_k-\nabla Q\big\|_{L^2(\mathcal{A}_{1,3/2}^{z_0})}
\end{align*}
and since $\nabla Q_k\rightarrow \nabla Q$ strongly in $L^2(\mathcal{A}_{1,3/2}^{z_0})$, we obtain that $\dfrac{\partial u_k}{\partial x}\rightarrow\tilde{u}_x$ strongly in $L^2(\mathcal{B}_1^{z_0})$. But at the same time, we have $\dfrac{\partial u_k}{\partial x}\rightharpoonup\dfrac{\partial u}{\partial x}$ weakly in $L^2(\mathcal{B}_1^{z_0})$, hence $\dfrac{\partial u}{\partial x}=\tilde{u}_x$ a.e. in $L^2(\mathcal{B}_1^{z_0})$ and $\dfrac{\partial u_k}{\partial x}\rightarrow\dfrac{\partial u}{\partial x}$ strongly in $L^2(\mathcal{B}_1^{z_0})$. Applying the same argument, we finally prove that $\nabla u_k\rightarrow\nabla u$ strongly in $L^2(\mathcal{B}_1^{z_0})$.

In the end, we see that:
\begin{align*}
\big\|\nabla u\big\|_{L^2(\mathcal{B}_1^{z_0})}&\leq\big\|\nabla u-\nabla u_k\big\|_{L^2(\mathcal{B}_1^{z_0})}+\big\|\nabla u_k\big\|_{L^2(\mathcal{B}_1^{z_0})}\\
&\leq \big\|\nabla u-\nabla u_k\big\|_{L^2(\mathcal{B}_1^{z_0})}+c_3\big\|\nabla Q_k\big\|_{L^2(\mathcal{A}_{1,3/2}^{z_0})}\\
&\leq \big\|\nabla u-\nabla u_k\big\|_{L^2(\mathcal{B}_1^{z_0})}+c_3\big\|\nabla Q_k-\nabla Q\big\|_{L^2(\mathcal{A}_{1,3/2}^{z_0})}+c_3\big\|\nabla Q\big\|_{L^2(\mathcal{A}_{1,3/2}^{z_0})}.
\end{align*} 
Because $\nabla u_k\rightarrow \nabla u$ strongly in $L^2(\mathcal{B}_1^{z_0})$ and because $\nabla Q_k\rightarrow \nabla Q$ strongly in $L^2(\mathcal{A}_{1,3/2}^{z_0})$, we conclude that
\begin{align*}
\big\|\nabla u\big\|_{L^2(\mathcal{B}_1^{z_0})}\leq c_3\big\|\nabla Q\big\|_{L^2(\mathcal{A}_{1,3/2}^{z_0})}.
\end{align*}
\end{proof}

Now we transform in several steps the sets $\mathcal{B}_1^{z_0}$ and $\mathcal{A}_{1,3/2}^{z_0}$ from the previous lemma into the corresponding regions related to $\Omega_{\varepsilon}$ and $\mathcal{N}_{\varepsilon}^{\mathcal{T}}$, that is, $\mathcal{B}_1^{z_0}$ into $\mathcal{P}_{\varepsilon}^{z,m}$, which is included in $\mathcal{N}_{\varepsilon}^{\mathcal{T}}$, and $\mathcal{A}_{1,3/2}^{z_0}$ into a parallelipiped with an interior hole, surrounding $\mathcal{P}_{\varepsilon}^{z,m}$, which is included in $\Omega_{\varepsilon}$ (the hole is exactly the parallelipiped $\mathcal{P}_{\varepsilon}^{z,m}$).

Let $T_2:\mathbb{R}^3\rightarrow\mathbb{R}^3$ be the transformation defined as:
\begin{align*}
T_2(x,y,z)=\begin{cases}
(0,0,z) &\text{if}\;x=y=0,\\
\bigg(\sqrt{x^2+y^2},\dfrac{4}{\pi}\sqrt{x^2+y^2}\arctan{\dfrac{y}{x}},z\bigg)&\text{if}\;|y|\leq x,\;x>0,\\
\bigg(-\sqrt{x^2+y^2},-\dfrac{4}{\pi}\sqrt{x^2+y^2}\arctan{\dfrac{y}{x}},z\bigg)&\text{if}\;|y|\leq -x,\;x<0,\\
\bigg(\dfrac{4}{\pi}\sqrt{x^2+y^2}\arctan{\dfrac{x}{y}},\sqrt{x^2+y^2},z\bigg)&\text{if}\;|x|\leq y,\;y>0,\\
\bigg(-\dfrac{4}{\pi}\sqrt{x^2+y^2}\arctan{\dfrac{x}{y}},-\sqrt{x^2+y^2},z\bigg)&\text{if}\;|x|\leq -y,\;y<0,
\end{cases}
\end{align*}
with the inverse
\begin{align*}
T_2^{-1}(\xi,\eta,z)=\begin{cases}
(0,0,z)&\text{if}\;\eta=\xi=0,\\
\bigg(\xi\cos{\dfrac{\pi}{4}\dfrac{\eta}{\xi}},\xi\sin{\dfrac{\pi}{4}\dfrac{\eta}{\xi}},z\bigg)&\text{if}\;|\eta|\leq|\xi|,\;\xi\neq 0,\\
\bigg(\eta\sin{\dfrac{\pi}{4}\dfrac{\xi}{\eta}},\eta\cos{\dfrac{\pi}{4}\dfrac{\xi}{\eta}},z\bigg)&\text{if}\;|\xi|\leq|\eta|,\;\eta\neq 0.
\end{cases}
\end{align*}
More specifically, $T_2(x,y,z)=(\Lambda_2(x,y),z)$, where $\Lambda_2$ is, according to \cite{Rehberg}, a bi-Lipschitz continuous map that maps, in $\mathbb{R}^2$, the unit ball into the unit cube and the Jacobian of $\Lambda_2$ is constant almost everywhere in $\mathbb{R}^2$. Hence, the transformation $T_2$ is bi-Lipschitz and the Jacobian of $T_2$ is constant almost everywhere in $\mathbb{R}^3$. 

In our case, we have: $T_2(\mathcal{B}_1^{z_0})=(-1,1)^2\times(-z_0,z_0)$ and $T_2(\mathcal{A}_{1,3/2}^{z_0})=\big((-3/2,3/2)^2\setminus(-1,1)^2\big)\times(-z_0,z_0)$. 

Let $u\in H^1(\mathcal{B}_1^{z_0})$, $Q\in H^1(\mathcal{A}_{1,3/2}^{z_0})$ and the constant $c>0$ such that $\big\|\nabla u\big\|_{L^2(\mathcal{B}_1^{z_0})}\leq c\big\|\nabla Q\big\|_{L^2(\mathcal{A}_{1,3/2}^{z_0})}$ be given the previous lemma, constant which is independent of $z_0$. Then we obtain that the functions $\tilde{u}:=u\circ T_2^{-1}\in H^1((-1,1)^2\times(-z_0,z_0))$ and $\tilde{Q}:=Q\circ T_2^{-1}\in H^1\big(\big((-3/2,3/2)^2\setminus(-1,1)^2\big)\times(-z_0,z_0)\big)$ and that there exists constants $c_{j}$ and $c_{J}$, which are also independent of $z_0$, but dependent on the constants given by the Jacobians of $T_2$ and $T_2^{-1}$, such that:
\begin{align*}
c_j\big\|\nabla \tilde{u}\big\|^2_{L^2(T_2(\mathcal{B}_1^{z_0}))}\leq \big\|\nabla u\big\|^2_{L^2(\mathcal{B}_1^{z_0})}\leq c_J\big\|\nabla\tilde{u}\big\|^2_{L^2(T_2(\mathcal{B}_1^{z_0}))}
\end{align*}
and
\begin{align*}
c_j\big\|\nabla \tilde{Q}\big\|^2_{L^2(T_2(\mathcal{A}_{1,3/2}^{z_0}))}\leq \big\|\nabla Q\big\|^2_{L^2(\mathcal{A}_{1,3/2}^{z_0})}\leq c_J\big\|\nabla\tilde{Q}\big\|^2_{L^2(T_2(\mathcal{A}_{1,3/2}^{z_0}))}.
\end{align*}

Hence, the inequality $\big\|\nabla u\big\|_{L^2(\mathcal{B}_1^{z_0})}\leq c\big\|\nabla Q\big\|_{L^2(\mathcal{A}_{1,3/2}^{z_0})}$ implies that there exists a constant $c_0$, also independent of $z_0$, such that:
\begin{align*}
\big\|\nabla\tilde{u}\big\|_{L^2(T_2(\mathcal{B}_1^{z_0}))}\leq c_0\big\|\nabla\tilde{Q}\big\|_{L^2(T_2(\mathcal{A}_{1,3/2}^{z_0}))}.
\end{align*}

Now if we use the transformation $T_3(x,y,z)=\varepsilon^{\alpha}(x,y,z)$ and denote $\overline{u}:=\tilde{u}\circ T_3^{-1}$ and $\overline{Q}:=\tilde{Q}\circ T_3^{-1}$, we get:
\begin{align*}
\varepsilon^{-\alpha}\big\|\nabla\overline{u}\big\|^2_{L^2((T_3\circ T_2)(\mathcal{B}_1^{z_0}))}=\big\|\nabla\tilde{u}\big\|^2_{L^2(T_2(\mathcal{B}_1^{z_0}))}\leq c_0^2\big\|\nabla\tilde{Q}\big\|^2_{L^2(T_2(\mathcal{A}_{1,3/2}^{z_0}))}=c_0^2\varepsilon^{-\alpha}\big\|\nabla\overline{Q}\big\|^2_{L^2((T_3\circ T_2)(\mathcal{B}_1^{z_0}))}
\end{align*}
which implies that
\begin{align*}
\big\|\nabla\overline{u}\big\|_{L^2((T_3\circ T_2)(\mathcal{B}_1^{z_0}))}\leq c_0\big\|\nabla\overline{Q}\big\|_{L^2((T_3\circ T_2)(\mathcal{A}_{1,3/2}^{z_0}))}.
\end{align*}

Since the constant $c_0$ is independent of the choice of $z_0$, we can have $z_0=\dfrac{r\varepsilon-\varepsilon^{\alpha}}{\varepsilon^{\alpha}}$.

The final change of variables is based on the mapping $T_4:\mathbb{R}^3\rightarrow \mathbb{R}^3$ defined as: $T_4(x,y,z)=\bigg(\dfrac{x}{2p},\dfrac{y}{2q},\dfrac{z}{2r}\bigg)$, where $p$, $q$ and $r$ are from relation \eqref{defn:initial_cube_alpha}. In this way, if we translate the origin into the center of the parallelipiped $\mathcal{P}_{\varepsilon}^{z,m}$, we obtain that $(T_4\circ T_3\circ T_2)(\mathcal{B}_1^{z_0})=\mathcal{P}_{\varepsilon}^{z,m}$ and we denote by $\mathcal{R}_{\varepsilon}^{z,m}$ the set $(T_4\circ T_3\circ T_2)(\mathcal{A}_{1,3/2}^{z_0})$, which is the box contained in $\Omega_{\varepsilon}$ (for $\varepsilon$ small enough) that ``surrounds" $\mathcal{P}_{\varepsilon}^{z,m}$.

The transformation $T_4$ is bi-Lipschitz and applying the same arguments as before, we obtain that there exists a function $u\in H^1(\mathcal{P}_{\varepsilon}^{z,m})$ ($u$ can be seen as $\overline{u}\circ(T_4^{-1})$) such that $u=Q$ on the ``contact" faces $\mathcal{T}_z^m$ of $\mathcal{P}_{\varepsilon}^{z,m}$ and an $\varepsilon$-independent constant $c>0$ such that 
\begin{align*}
\big\|\nabla u\big\|_{L^2(\mathcal{P}_{\varepsilon}^{z,m})}\leq c\big\|\nabla Q\big\|_{L^2(\mathcal{R}_{\varepsilon}^{z,m})}.
\end{align*}

Since the objects $\mathcal{R}_{\varepsilon}^{z,m}$ are pairwise disjoint (if we look only at the boxes surrounding the parallelipipeds with centers in $\mathcal{Y}_{\varepsilon}^{z}$), repeating the same argument for every parallelipiped of this type (with centers in $\mathcal{Y}_{\varepsilon}^z$) and then repeating the same argument for any parallelipiped from $\mathcal{N}_{\varepsilon}^{\mathcal{T}}$ (that is, with centers in $\mathcal{Y}_{\varepsilon}$), we obtain $u\in H^1(\Omega_{\varepsilon}\cup\mathcal{N}_{\varepsilon}^{\mathcal{T}},\mathcal{S}_0)$ an extension of $Q\in H^1(\Omega_{\varepsilon},\mathcal{S}_0)$ such that:
\begin{align*}
\begin{cases}
u=Q\;\text{in}\;\Omega_{\varepsilon}\\
u=Q\;\text{on}\;\partial\mathcal{N}_{\varepsilon}^{\mathcal{T}}\\
\big\|\nabla u\big\|_{L^2(\Omega_{\varepsilon}\cup\mathcal{N}_{\varepsilon}^{\mathcal{T}})}\leq c\big\|\nabla Q\big\|_{L^2(\Omega_{\varepsilon})}.
\end{cases}
\end{align*}

Let $\Omega'_{\varepsilon}=\Omega_{\varepsilon}\cup\mathcal{N}_{\varepsilon}^{\mathcal{T}}$. We want now to construct a function $v:\mathcal{N}_{\varepsilon}^{\mathcal{S}}\rightarrow\mathcal{S}_0$ such that $v=u$ on $\partial\Omega'_{\varepsilon}$ and that there exists a constant $c>0$, independent of $\varepsilon$ such that $\big\|\nabla v\big\|_{L^2(\mathcal{N}_{\varepsilon}^{\mathcal{S}})}\leq c\big\|\nabla u\big\|_{L^2(\Omega'_{\varepsilon})}$.

But in the case of the family $\mathcal{N}_{\varepsilon}^{\mathcal{S}}$, the parallelipipeds are pairwise disjoint for $\varepsilon$ small enough, therefore we can construct $v$ in each $\mathcal{C}_{\varepsilon}^i$, for every $i\in\overline{1,N_{\varepsilon}}$ and control, independent of $\varepsilon$, $\big\|\nabla v\big\|_{L^2(\mathcal{C}_{\varepsilon}^i)}$ with $\big\|\nabla u\big\|_{L^2(\mathcal{P}_{\varepsilon}^i)}$, where $\mathcal{R}_{\varepsilon}^i$ is the ``surrounding" box for $\mathcal{C}_{\varepsilon}^i$, constructed in the same way as $\mathcal{R}_{\varepsilon}^{z,m}$.

\begin{lemma}\label{lemma:extension_v}
Let $a,b\in\mathbb{R}^*_+$ with $a<b$, let $\mathcal{B}_{a}=\big\{x\in\mathbb{R}^3\;\big|\;|x|<a\big\}$ and let $\mathcal{A}_{a,b}=\mathcal{B}_{b}\setminus\overline{\mathcal{B}_a}$.

Let $u\in H^1\big(\mathcal{A}_{1,2},\mathcal{S}_0\big)$. Then the function $v:\mathcal{B}_1\rightarrow\mathcal{S}_0$ defined as
\begin{equation*}
v(x,y,z)=\begin{cases}\varphi(\sqrt{x^2+y^2+z^2})\; u\bigg(\bigg(\dfrac{2}{\sqrt{x^2+y^2+z^2}}-1\bigg)(x,y,z)\bigg)+\\
\hspace{3mm}+(1-\varphi(\sqrt{x^2+y^2+z^2}))\displaystyle{\fint_{\mathcal{A}_{1,3/2}}u(\xi,\eta,\tau)\emph{d}(\xi,\eta,\tau)},\;\emph{for}\;\dfrac{1}{2}\leq\sqrt{x^2+y^2+z^2}<1\\
\displaystyle{\fint_{\mathcal{A}_{1,3/2}}u(\xi,\eta,\tau)\emph{d}(\xi,\eta,\tau)},\;\emph{for}\;0\leq\sqrt{x^2+y^2+z^2}\leq\dfrac{1}{2}\end{cases}
\end{equation*} 
is from $H^1(\mathcal{B}_1,\mathcal{S}_0)$, where $\varphi\in C_c^{\infty}\bigg(\bigg(\dfrac{1}{2},\dfrac{3}{2}\bigg)\bigg)$ is the following bump function defined as
\begin{equation*}
\varphi(\rho)=\begin{cases}\exp\bigg\{4-\dfrac{4}{(2\rho-1)(3-2\rho)}\bigg\},\;\forall\rho\in\bigg(\dfrac{1}{2},\dfrac{3}{2}\bigg)\\
0,\;\forall\rho\in\mathbb{R}\setminus\bigg(\dfrac{1}{2},\dfrac{3}{2}\bigg)\end{cases},
\end{equation*}
the product $\varphi(\rho)\;u$ represents product between a scalar and a Q-tensor and $\displaystyle{\fint}$ represents the average integral sign. Moreover, there exists a constant $c>0$ such that: $\big\|v_{t}\big\|_{L^2(\mathcal{B}_1)}\leq c\big\|u_{t}\big\|_{L^2(\mathcal{A}_{1,3/2})}$ for any $t\in\{x,y,z\}$, where $v_t$ represents the partial derivative of $v$ with respect to $t$, and:
\begin{align*}
\|\nabla v\|_{L^2(\mathcal{B}_1)}\leq c\|\nabla u\|_{L^2(\mathcal{A}_{1,3/2})}.
\end{align*} 
\end{lemma}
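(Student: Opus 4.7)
The plan is to adapt the proof of \cref{lemma:extension_u} to the fully three-dimensional radial setting. As before, I would begin by reducing to scalars, since each of the five independent components of the $Q$-tensor can be treated separately. Let $T:\mathcal{A}_{1/2,1}\to\mathcal{A}_{1,3/2}$ be the spherical reflection $T(x,y,z)=\left(\tfrac{2}{\sqrt{x^2+y^2+z^2}}-1\right)(x,y,z)$; this map is bi-Lipschitz, with Jacobian matrix $DT=\left(\tfrac{2}{|x|}-1\right)\mathbb{I}_3 - \tfrac{2}{|x|^3}(x\otimes x)$, whose entries are bounded on $\mathcal{A}_{1/2,1}$ by absolute constants, and the scalar Jacobian $|\det DT|$ is bounded above and below away from zero by absolute constants. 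Consequently the change of variables $(x,y,z)\leftrightarrow T(x,y,z)$ in any $L^2$ integral over these annuli costs only a universal constant.

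Next, I would approximate $u\in H^1(\mathcal{A}_{1,3/2},\mathcal{S}_0)$ by smooth maps $u_k\in C^\infty(\overline{\mathcal{A}_{1,3/2}})$ (Theorem 3.17 in \cite{Adams}), and define $v_k:\overline{\mathcal{B}_1}\to\mathbb{R}$ by the formula of the lemma with $u_k$ in place of $u$. Because $\varphi(\tfrac{1}{2})=\varphi'(\tfrac{1}{2})=0$ and $u_k$ is smooth, $v_k$ and all its classical partial derivatives are continuous on $\overline{\mathcal{B}_1}$; on $|x|\leq\tfrac{1}{2}$ they vanish (the mean is constant in space), and on the annular shell $\tfrac{1}{2}\leq|x|<1$ the chain rule gives, for $t\in\{x,y,z\}$,
\begin{equation*}
\partial_t v_k = \tfrac{t}{|x|}\varphi'(|x|)\Bigl(u_k\circ T-\fint_{\mathcal{A}_{1,3/2}}u_k\Bigr) + \varphi(|x|)\sum_{s}(\partial_s T_{\cdot})_t\,(\partial_s u_k)\circ T ,
\end{equation*}
where the coefficients $(\partial_s T_\cdot)_t$ are entries of $DT$, hence pointwise bounded by an absolute constant on $\mathcal{A}_{1/2,1}$.

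I would then square, integrate over $\mathcal{A}_{1/2,1}$, and apply the change of variables via $T^{-1}$ to each term: the first piece produces $\|u_k-\fint u_k\|_{L^2(\mathcal{A}_{1,3/2})}^2$, which is controlled by $\|\nabla u_k\|_{L^2(\mathcal{A}_{1,3/2})}^2$ via the Poincar\'e inequality on the smooth bounded domain $\mathcal{A}_{1,3/2}$; the remaining pieces produce $\|\nabla u_k\|_{L^2(\mathcal{A}_{1,3/2})}^2$ directly. This yields, with a constant $c$ depending only on $\varphi$ and on universal bounds for $DT$ and the Poincar\'e constant,
\begin{equation*}
\|\nabla v_k\|_{L^2(\mathcal{B}_1)}\leq c\,\|\nabla u_k\|_{L^2(\mathcal{A}_{1,3/2})},\qquad \|v_k\|_{L^2(\mathcal{B}_1)}\leq c\,\|u_k\|_{L^2(\mathcal{A}_{1,3/2})}.
\end{equation*}
The same argument applied to $v_k-v_j$ shows that $(v_k)_k$ is Cauchy in $L^2(\mathcal{B}_1)$, and its limit coincides pointwise a.e.\ with $v$ as defined in the statement.

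Finally, I would pass to the limit exactly as at the end of the proof of \cref{lemma:extension_u}: boundedness of $(v_k)_k$ in $H^1(\mathcal{B}_1)$ yields a weakly convergent subsequence, whose $L^2$-limit must be $v$, so $v\in H^1(\mathcal{B}_1)$. The explicit limiting partial derivatives $\tilde v_t$, obtained by replacing $u_k$ with $u$ in the chain-rule expression above, are shown to be the strong $L^2$-limits of $\partial_t v_k$ by applying the component-wise $L^2$ bound to $v_k - v$ (the analogue of the $\tilde u_x$ argument), and hence coincide a.e.\ with $\partial_t v$. Lower semicontinuity of the $L^2$ norms then upgrades the gradient bound from the approximants $u_k$ to $u$. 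The main obstacle I anticipate is the bookkeeping for the Jacobian $DT$ of the three-dimensional reflection and its matrix entries on $\mathcal{A}_{1/2,1}$: unlike the cylindrical setting of \cref{lemma:extension_u}, where the $z$-direction decouples and the planar reflection has essentially one-dimensional behaviour, here every partial derivative $\partial_t v_k$ mixes all three components of $\nabla u_k$, and one must check that each coefficient is bounded independently of the point of $\mathcal{A}_{1/2,1}$. Once this is in place, the remaining steps are a direct transcription of the two-dimensional argument.
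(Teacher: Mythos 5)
Your proposal is correct and matches the paper's intended argument: the paper itself only remarks that \cref{lemma:extension_v} "follows the same steps as \cref{lemma:extension_u}", and your adaptation (scalar reduction, the spherical reflection $T$ with bounded $DT$ and Jacobian determinant bounded above and below, smooth approximation, the chain-rule estimate exploiting $\varphi(\tfrac12)=\varphi'(\tfrac12)=0$, Poincar\'e on $\mathcal{A}_{1,3/2}$, and the limit passage) is exactly that transcription. The only differences are cosmetic (your $DT$-entry notation, and the fact that in 3D the average is a genuine constant, which actually simplifies the $z$-derivative step compared with the cylindrical case), so there is nothing to fix.
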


\begin{remark}
\cref{lemma:extension_v} is just a different version of \cref{lemma:extension_u}. The proof follows the same steps as in \cref{lemma:extension_u}.
\end{remark}

Now if we use instead of $T_2$ the transformation $\Lambda_3$, from \cite{Rehberg}, which is a bi-Lipschitz mapping that transforms the unit ball into the unit cube, and then the transformations $T_3$ and $T_4$ as before, we end up with the function $v$ being an extension of $u$ that satisfies:
\begin{align*}
\begin{cases}
v\in H^1(\mathcal{C}^i_{\varepsilon})\\
v=u\;\text{on}\;\partial\mathcal{C}^i_{\varepsilon}\\
\big\|\nabla v\big\|_{L^2(\mathcal{C}^i_{\varepsilon})}\leq c\big\|\nabla u\big\|_{L^2(\mathcal{R}_{\varepsilon}^i)}.
\end{cases}
\end{align*}

Because the objects $\mathcal{R}_{\varepsilon}^i$ are pairwise disjoint for $\varepsilon$ small enough, we construct therefore an extension $v\in H^1(\Omega,\mathcal{S}_0)$ of $u\in H^1(\Omega'_{\varepsilon},\mathcal{S}_0)$ such that:
\begin{align*}
\begin{cases}
v=u\;\text{in}\;\Omega'_{\varepsilon}\Rightarrow v=Q\;\text{in}\;\Omega_{\varepsilon}\;\text{and}\;v=Q\;\text{on}\;\partial\mathcal{N}_{\varepsilon}^{\mathcal{T}}\\
v=u\;\text{on}\;\partial\mathcal{N}_{\varepsilon}^{\mathcal{S}}\Rightarrow v=Q\;\text{on}\;\partial\mathcal{N}_{\varepsilon}^{\mathcal{S}}\\
\big\|\nabla v\big\|_{L^2(\Omega)}\leq c\big\|\nabla u\big\|_{L^2(\Omega'_{\varepsilon})}\leq \tilde{c}\big\|\nabla Q\big\|_{L^2(\Omega_{\varepsilon})}.
\end{cases}
\end{align*}

So we have $v\in H^1(\Omega)$, $v=Q$ in $\Omega_{\varepsilon}$, $v=Q$ on $\partial\mathcal{N}_{\varepsilon}$ and there exists an $\varepsilon$-independent constant such that:
\begin{align*}
\big\|\nabla v\big\|_{L^2(\Omega)}\leq c\big\|\nabla Q\big\|_{L^2(\Omega_{\varepsilon})}.
\end{align*}

\subsection{Integrated energy densities}

In this subsection, we present two propositions that are used in order to prove that using relation \eqref{defn:f_hom_sym}, that is:
\begin{align*}
f_{hom}(Q)=\dfrac{2}{p}\int_{\partial\mathcal{C}}f_s(Q,\nu)\text{d}\sigma,
\end{align*}
then by using, for example, the choice of the \textit{surface energy density} defined in \eqref{defn:f_s_LDG}, which is:
\begin{align*}
f_s^{LDG}(Q,\nu)=\dfrac{p}{4}\bigg((a'-a)(\nu\cdot Q^2\nu)-(b'-b)(\nu\cdot Q^3\nu)+2(c'-c)(\nu\cdot Q^4\nu)\bigg),
\end{align*}
we can obtain the corresponding homogenised functional defined in \eqref{defn:f_hom_LDG}, that is:
\begin{align*}
f_{hom}^{LDG}(Q)=(a'-a)\,\text{tr}(Q^2)-(b'-b)\,\text{tr}(Q^3)+(c'-c)\,\big(\text{tr}(Q^2)\big)^2.
\end{align*}

More specifically, \cref{prop:int_en_dens_LDG} treats the case of the classical quartic polynomial in the scalar invariants of $Q$ for the \textit{bulk energy}, defined in \eqref{defn:f_b_LDG}, where the choice of the \textit{surface energy density} is in \eqref{defn:f_s_LDG}, and the more general version of it, defined in \eqref{defn:f_b_gen}, with the \textit{surface energy density} defined in \eqref{defn:f_s_gen}. Both cases have all of the terms from the picked \textit{surface energy densities} of the form $\nu\cdot Q^k\nu$, with $k\geq 2$. \cref{prop:int_en_dens_RP} treats only the Rapini-Papoular case, where the \textit{surface energy density} is defined in \eqref{defn:f_s_RP}. 

\begin{prop}\label{prop:int_en_dens_LDG}
For any $k\in\mathbb{N}$, $k\geq 2$ and for a fixed matrix $Q\in\mathcal{S}_0$, we have:
\begin{align*}
\emph{tr}(Q^k)=\dfrac{1}{2}\int_{\partial\mathcal{C}}\big(\nu\cdot Q^k\nu\big)\emph{d}\sigma,
\end{align*}
where $\partial\mathcal{C}$ is defined in \eqref{defn:C_x_C_y_C_z} and $\nu$ is the exterior unit normal to $\partial\mathcal{C}$.
\end{prop}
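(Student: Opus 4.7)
The plan is to exploit the very simple geometry of $\partial\mathcal{C}$: it consists of exactly six unit-area faces, and on each face the outward unit normal $\nu$ is a constant vector equal to one of $\pm e_1, \pm e_2, \pm e_3$, where $\{e_1,e_2,e_3\}$ is the standard basis of $\mathbb{R}^3$. Since the integrand $\nu\cdot Q^k\nu$ is a quadratic form in $\nu$, the sign of $\nu$ is irrelevant, so the two opposite faces perpendicular to direction $i$ contribute the same amount.

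More concretely, I would partition $\partial\mathcal{C}=\mathcal{C}^x\cup\mathcal{C}^y\cup\mathcal{C}^z$ as in \eqref{defn:C_x_C_y_C_z}. On each of the two faces making up $\mathcal{C}^x$, $\nu=\pm e_1$ and so $\nu\cdot Q^k\nu=e_1\cdot Q^k e_1=(Q^k)_{11}$, a constant. Since each face is a unit square of area $1$,
\begin{equation*}
\int_{\mathcal{C}^x}\nu\cdot Q^k\nu\,\text{d}\sigma=2(Q^k)_{11}.
\end{equation*}
The same argument applied to $\mathcal{C}^y$ and $\mathcal{C}^z$ yields $2(Q^k)_{22}$ and $2(Q^k)_{33}$ respectively. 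Summing and using $\text{tr}(Q^k)=(Q^k)_{11}+(Q^k)_{22}+(Q^k)_{33}$ gives the claim.

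There is essentially no obstacle: the argument uses only that $Q^k$ is symmetric (inherited from $Q\in\mathcal{S}_0$, since powers of a symmetric matrix are symmetric), that $\nu$ is piecewise constant on $\partial\mathcal{C}$, and that each of the six faces has unit area. The result does not actually require $\text{tr}(Q)=0$ nor any property of $Q$ beyond being a real symmetric matrix, and it holds for any $k\geq 1$ (not just $k\geq 2$). The restriction to $k\geq 2$ in the statement presumably just reflects the use cases in \eqref{defn:f_s_LDG} and \eqref{defn:f_s_gen}.
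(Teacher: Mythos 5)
Your proposal is correct and follows essentially the same argument as the paper: computing the integral face by face using the constant normals $\pm e_1,\pm e_2,\pm e_3$, the unit area of each face, and summing the diagonal entries of $Q^k$ to recover $2\,\text{tr}(Q^k)$. Your additional observations (that only symmetry of $Q^k$ is used and that $k\geq 1$ suffices) are accurate but do not change the substance.
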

\begin{proof}
Let $Q^k=\begin{pmatrix}
q_{11,k} & q_{12,k} & q_{13,k}\\
q_{21,k} & q_{22,k} & q_{23,k}\\
q_{13,k} & q_{32,k} & q_{33,k}
\end{pmatrix}$. According to \eqref{defn:C_x_C_y_C_z}, we have $\partial\mathcal{C}=\mathcal{C}^x\cup\mathcal{C}^y\cup\mathcal{C}^z$. We compute first the intergral for $\mathcal{C}^x$, on which $\nu=(\pm 1,0,0)^T$:
\begin{align*}
\int_{\mathcal{C}^x}\big(\nu\cdot Q^k\nu\big)\text{d}\sigma=\int_{\mathcal{C}^x}\big((\pm 1,0,0)^T\cdot(\pm q_{11,k}, \pm q_{21,k}, \pm q_{31,k})^T\big)\text{d}\sigma=\int_{\mathcal{C}^x} q_{11,k}\text{d}\sigma =2q_{11,k},
\end{align*}
since $\mathcal{C}$ has length 1.

In the same way, we obtain:
\begin{align*}
\int_{\mathcal{C}^y}\big(\nu\cdot Q^k\nu\big)\text{d}\sigma=2q_{22,k}\hspace{3mm}\text{and}\hspace{3mm}\int_{\mathcal{C}^z}\big(\nu\cdot Q^k(x_0)\nu\big)\text{d}\sigma=2q_{33,k},
\end{align*}
from which we obtain
\begin{align*}
\int_{\partial\mathcal{C}}\big(\nu\cdot Q^k\nu\big)\text{d}\sigma=2\text{tr}(Q^k).
\end{align*}
\end{proof}

For the Rapini-Papoular case, we prove that:

\begin{prop}\label{prop:int_en_dens_RP}
For a fixed matrix $Q\in\mathcal{S}_0$, we have:
\begin{align*}
6\emph{tr}(Q^2)+4=\int_{\partial\mathcal{C}}\emph{tr}(Q-Q_{\nu})^2\emph{d}\sigma,
\end{align*}
where $\partial\mathcal{C}$ is defined in \eqref{defn:C_x_C_y_C_z}, $Q_{\nu}=\nu\otimes\nu-\mathbb{I}_3/3$, $\nu$ represents the exterior unit normal to $\partial\mathcal{C}$ and $\mathbb{I}_3$ is the $3\times 3$ identity matrix.
\end{prop}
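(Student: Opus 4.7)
The plan is to expand the squared trace and reduce everything either to $\text{tr}(Q^2)$, to a constant, or to surface integrals of the form $\int_{\partial\mathcal{C}}\nu\cdot Q\nu\,\text{d}\sigma$ that we can evaluate directly by the same computation used in \cref{prop:int_en_dens_LDG}.

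First I would expand
\begin{align*}
\text{tr}(Q-Q_\nu)^2 = \text{tr}(Q^2) - 2\,\text{tr}(Q\,Q_\nu) + \text{tr}(Q_\nu^2).
\end{align*}
Next I would compute the two cross terms explicitly. Using $Q_\nu=\nu\otimes\nu-\mathbb{I}_3/3$ and $\text{tr}(Q)=0$ (since $Q\in\mathcal{S}_0$),
\begin{align*}
\text{tr}(Q\,Q_\nu) = \text{tr}\bigl(Q(\nu\otimes\nu)\bigr)-\tfrac{1}{3}\text{tr}(Q) = \nu\cdot Q\nu.
\end{align*}
For $\text{tr}(Q_\nu^2)$ I would use $|\nu|=1$ which gives $(\nu\otimes\nu)^2=\nu\otimes\nu$, hence
\begin{align*}
Q_\nu^2 = \nu\otimes\nu - \tfrac{2}{3}(\nu\otimes\nu) + \tfrac{1}{9}\mathbb{I}_3 = \tfrac{1}{3}(\nu\otimes\nu)+\tfrac{1}{9}\mathbb{I}_3,
\end{align*}
so $\text{tr}(Q_\nu^2)=\tfrac{1}{3}+\tfrac{1}{3}=\tfrac{2}{3}$.

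Integrating over $\partial\mathcal{C}$, whose total surface area is $6$ (since $\mathcal{C}=[-1/2,1/2]^3$),
\begin{align*}
\int_{\partial\mathcal{C}}\text{tr}(Q-Q_\nu)^2\,\text{d}\sigma = 6\,\text{tr}(Q^2) - 2\int_{\partial\mathcal{C}}\nu\cdot Q\nu\,\text{d}\sigma + 6\cdot\tfrac{2}{3}.
\end{align*}
The remaining surface integral is handled by exactly the face-by-face argument of \cref{prop:int_en_dens_LDG} applied with $k=1$: on each pair of opposite faces $\mathcal{C}^x$, $\mathcal{C}^y$, $\mathcal{C}^z$ the normals are $\pm e_1,\pm e_2,\pm e_3$, so $\nu\cdot Q\nu$ equals the corresponding diagonal entry $Q_{ii}$, and integrating over the two faces of unit area each yields $2Q_{ii}$. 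Summing gives $2\,\text{tr}(Q)=0$. Substituting back produces $6\,\text{tr}(Q^2)+4$, as required.

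There is no real obstacle here: the statement is essentially algebraic, and the only subtle point is remembering to use $\text{tr}(Q)=0$ both when simplifying $\text{tr}(Q\,Q_\nu)$ and when discarding the $\int\nu\cdot Q\nu\,\text{d}\sigma$ term. Everything else is a direct expansion together with the single fact that faces of $\mathcal{C}$ are axis-aligned of unit area.
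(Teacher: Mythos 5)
Your proof is correct and follows essentially the same route as the paper: expand $\mathrm{tr}(Q-Q_\nu)^2$, integrate $\mathrm{tr}(Q_\nu^2)=2/3$ over the six unit-area faces to get $4$, show the cross terms integrate to zero, and pick up $6\,\mathrm{tr}(Q^2)$ from the total surface area. The only difference is cosmetic: you use the identities $(\nu\otimes\nu)^2=\nu\otimes\nu$ and $\mathrm{tr}(QQ_\nu)=\nu\cdot Q\nu$ together with $\mathrm{tr}(Q)=0$, whereas the paper carries out the same face-by-face computation with explicit matrix entries.
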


\begin{proof}
First of all, we can see that $\text{tr}(Q-Q_{\nu})^2=\text{tr}(Q^2)-\text{tr}(QQ_{\nu})-\text{tr}(Q_{\nu}Q)+\text{tr}(Q^2_{\nu})$. 

According to \eqref{defn:C_x_C_y_C_z}, we have $\partial\mathcal{C}=\mathcal{C}_x\cup\mathcal{C}_y\cup\mathcal{C}_z$. On $\mathcal{C}^x$, we have $\nu=\begin{pmatrix}
\pm 1\\
0\\
0
\end{pmatrix}$ and $Q_{\nu}=\begin{pmatrix}
2/3 & 0 & 0\\
0 & -1/3 & 0\\
0 & 0 & -1/3
\end{pmatrix}$. Then $\text{tr}(Q_{\nu}^2)=\bigg(\dfrac{2}{3}\bigg)^2+\bigg(-\dfrac{1}{3}\bigg)^2+\bigg(-\dfrac{1}{3}\bigg)^2=\dfrac{2}{3}$. We also obtain $\text{tr}(Q_{\nu}^2)=\dfrac{2}{3}$ on $\mathcal{C}^y$ and $\mathcal{C}^z$. Therefore we obtain:
\begin{align*}
\int_{\partial\mathcal{C}}\text{tr}(Q^2_{\nu})\text{d}\sigma=6\cdot\dfrac{2}{3}=4,
\end{align*}
where the constant 6 comes from the total surface of the cube $\mathcal{C}$.

Let $Q=\begin{pmatrix}
q_{11} & q_{12} & q_{13}\\
q_{12} & q_{22} & q_{23}\\
q_{13} & q_{23} & -q_{11}-q_{22}
\end{pmatrix}$. Using the computations done earlier for $Q_{\nu}$ on $\mathcal{C}^x$, $\mathcal{C}^y$ and $\mathcal{C}^z$, we get:
\begin{align*}
\int_{\mathcal{C}^x}\big(\text{tr}(Q_{\nu}Q)+\text{tr}(QQ_{\nu})\big)\text{d}\sigma &=2\bigg(\dfrac{2q_{11}}{3}-\dfrac{q_{22}}{3}+\dfrac{q_{11}+q_{22}}{3}\bigg)=2q_{11}\\
\int_{\mathcal{C}^y}\big(\text{tr}(Q_{\nu}Q)+\text{tr}(QQ_{\nu})\big)\text{d}\sigma &=2\bigg(-\dfrac{q_{11}}{3}+\dfrac{2q_{22}}{3}+\dfrac{q_{11}+q_{22}}{3}\bigg)=2q_{22}\\
\int_{\mathcal{C}^z}\big(\text{tr}(Q_{\nu}Q)+\text{tr}(QQ_{\nu})\big)\text{d}\sigma &=2\bigg(-\dfrac{q_{11}}{3}-\dfrac{q_{22}}{3}-\dfrac{2q_{11}+2q_{22}}{3}\bigg)=-2q_{11}-2q_{22}.
\end{align*}

Combining the last three relations, we get that
\begin{align*}
\int_{\partial\mathcal{C}}\big(\text{tr}(Q_{\nu}Q)+\text{tr}(QQ_{\nu})\big)\text{d}\sigma &=0,
\end{align*}
from which the conclusion follows, with the observation that the constant 6 in front of $\text{tr}(Q^2)$ appears from the total surface of the cube $\mathcal{C}$, which has the length equal to 1.
\end{proof}

\begin{remark}
The constant 4 from \cref{prop:int_en_dens_RP} is neglected when we are studying the asymptotic behaviour of the minimisers of the functional \eqref{defn:F_eps_RP}, since adding constants do not influence the form and the existence of the possible minimisers.
\end{remark}
\end{appendix}

\section*{Acknowledgements}

The work of Razvan-Dumitru Ceuca is supported by the Spanish Ministry of Economy and Competitiveness MINECO through BCAM Severo Ochoa excellence accreditation SEV-2013-0323-17-1 (BES-2017-080630) and through project MTM2017-82184-R funded by (AEI/FEDER, UE) and acronym “DESFLU”. The author would like to thank Jamie Taylor and Giacomo Canevari for insightful discussions that have benefited this work and for their support granted during the making of it. The author would also like to thank his Ph.D. supervisor, Arghir-Dani Zarnescu, for the constant mathematical and moral support offered during the proccess of generating this work.

\end{document}